\DeclareMathOperator{\Hom}{Hom}
\DeclareMathOperator{\End}{End}
\DeclareMathOperator{\Soc}{Soc}
\DeclareMathOperator{\Ext}{Ext}
\DeclareMathOperator{\Spec}{Spec}
\DeclareMathOperator{\Ann}{Ann}
\DeclareMathOperator{\hgt}{ht}
\DeclareMathOperator{\gr}{gr}
\DeclareMathOperator{\im}{im}
\DeclareMathOperator{\coker}{coker}
\DeclareMathOperator{\Supp}{Supp}
\DeclareMathOperator{\ch}{char}
\renewcommand{\l}{l}
\newcommand{\D}{\mathcal{D}}
\theoremstyle{plain}
\newtheorem{thm}{Theorem}[section]
\newtheorem{bigthm}{Theorem}
\newtheorem{prop}[thm]{Proposition}
\newtheorem{lem}[thm]{Lemma}
\newtheorem{cor}[thm]{Corollary}
\theoremstyle{definition}
\newtheorem{definition}[thm]{Definition}
\newtheorem{conjecture}[thm]{Conjecture}
\theoremstyle{remark}
\newtheorem{remark}[thm]{Remark}
\newtheorem{example}[thm]{Example}
\newtheorem{question}[thm]{Question}
\newtheorem{convention}[thm]{Convention}
\def\l@section{\@tocline{1}{0pt}{1pc}{}{}}
\def\l@subsection{\@tocline{2}{0pt}{1pc}{4.6em}{}}
\def\l@subsubsection{\@tocline{3}{0pt}{1pc}{7.6em}{}}
\renewcommand{\tocsection}[3]{%
\indentlabel{\@ifnotempty{#2}{\makebox[2.3em][l]{%
\ignorespaces#1 #2.\hfill}}}#3}
\renewcommand{\tocsubsection}[3]{%
\indentlabel{\@ifnotempty{#2}{\hspace*{2.3em}\makebox[2.3em][l]{%
\ignorespaces#1 #2.\hfill}}}#3}
\renewcommand{\tocsubsubsection}[3]{%
\indentlabel{\@ifnotempty{#2}{\hspace*{4.6em}\makebox[3em][l]{%
\ignorespaces#1 #2.\hfill}}}#3}
\begin{document}

\title[De Rham homology and cohomology]{On the de Rham homology and cohomology of a complete local ring in equicharacteristic zero}
\author{Nicholas Switala}
\address{Department of Mathematics, Statistics, and Computer Science \\ University of Illinois at Chicago \\ 322 SEO (M/C 249) \\ 851 S. Morgan Street \\ Chicago, IL 60607}
\email{nswitala@uic.edu}
\thanks{NSF support through grants DMS-0701127 and DMS-1604503 is gratefully acknowledged.}
\subjclass[2010]{Primary 13D45, 14F40}
\keywords{Local cohomology, algebraic de Rham cohomology, Matlis duality, $\D$-modules}

\begin{abstract}
Let $A$ be a complete local ring with a coefficient field $k$ of characteristic zero, and let $Y$ be its spectrum.  The de Rham homology and cohomology of $Y$ have been defined by R. Hartshorne using a choice of surjection $R \rightarrow A$ where $R$ is a complete regular local $k$-algebra: the resulting objects are independent of the chosen surjection.  We prove that the Hodge-de Rham spectral sequences abutting to the de Rham homology and cohomology of $Y$, beginning with their $E_2$-terms, are independent of the chosen surjection (up to a degree shift in the homology case) and consist of finite-dimensional $k$-spaces.  These $E_2$-terms therefore provide invariants of $A$ analogous to the Lyubeznik numbers.  As part of our proofs we develop a theory of Matlis duality in relation to $\D$-modules that is of independent interest. Some of the highlights of this theory are that if $R$ is a complete regular local ring containing $k$ and $\D = \D(R,k)$ is the ring of $k$-linear differential operators on $R$, then the Matlis dual $D(M)$ of any left $\D$-module $M$ can again be given a structure of left $\D$-module, and if $M$ is a holonomic $\D$-module, then the de Rham cohomology spaces of $D(M)$ are $k$-dual to those of $M$.
\end{abstract}

\maketitle

\tableofcontents

\section{Introduction}\label{intro} 

In \cite{derham}, Hartshorne constructs local and global algebraic de Rham homology and cohomology theories for schemes over a field $k$ of characteristic zero.  The global theories, defined for any scheme $Y$ of finite type over $k$, are defined using a choice of embedding $Y \hookrightarrow X$ into a smooth scheme over $k$ (or a local system of such embeddings if a global embedding does not exist): one computes the hypercohomology of certain complexes of sheaves on $X$ or on the formal completion of $Y$ in $X$.  Hartshorne's primary interest \cite[Remark, p. 70]{derham} in constructing the local theories is the case where $Y$ is the spectrum of a complete local ring.  For technical reasons, he defines the local theories for a larger class of schemes (``category $\mathcal{C}$'': see \cite[p. 65]{derham}), and proves the corresponding finiteness and duality results by reducing to the global case and using resolution of singularities.

There is a sketch in \cite[pp. 70--71]{derham} of a failed attempt to prove that local algebraic de Rham homology and cohomology are dual using Grothendieck's local duality theorem.  This sketch is the inspiration for the present paper, as its ideas can now be profitably pursued using Lyubeznik's work on the $\D$-module structure of local cohomology \cite{lyubeznik}.  This structure allows us to speak of the de Rham complex of a local cohomology module, and knowledge of such complexes in turn enables us to better understand the early terms of the spectral sequences appearing in Hartshorne's work.  We are able to give a purely local proof for both the embedding-independence and the finiteness of local de Rham homology and cohomology, replacing the global methods of algebraic geometry (including resolution of singularities) with the theory of algebraic $\D$-modules over a formal power series ring in characteristic zero.  Along the way, we will define a new set of invariants for complete local rings, analogous to the Lyubeznik numbers.  Our proof shows more than is contained in \cite{derham}, namely that the entire Hodge-de Rham spectral sequences for homology and cohomology (with the exception of the first term) are embedding-independent (up to a degree shift in the homology case) and consist of finite-dimensional $k$-spaces.

We now give more detail, recalling Hartshorne's results and stating ours.  Let $A$ be a complete local ring with coefficient field $k$ of characteristic zero (that is, $k$ is the residue field of $A$, and $A$ contains a field isomorphic to $k$).  We view $A$ as a $k$-algebra via this coefficient field.  By Cohen's structure theorem, there exists a surjection of $k$-algebras $\pi: R \rightarrow A$ where $R$ is a complete regular local $k$-algebra, which must take the form $R = k[[x_1, \ldots, x_n]]$ for some $n$.  Let $I \subset R$ be the kernel of this surjection.  We have a corresponding closed immersion $Y \hookrightarrow X$ where $Y = \Spec(A)$ and $X = \Spec(R)$.  In \cite{derham}, the \emph{de Rham homology} of the local scheme $Y$ is defined as $H_i^{dR}(Y) = \mathbf{H}_Y^{2n-i}(X, \Omega_X^{\bullet})$, the hypercohomology (supported at $Y$) of the complex of \emph{continuous} differential forms on $X$.  The differentials in this complex are merely $k$-linear, so the $H_i^{dR}(Y)$ are $k$-spaces.  

Now let $\widehat{X}$ be the formal completion of $Y$ in $X$ \cite[\S II.9]{hartshorneAG}, that is, the topological space $Y$ equipped with the structure of a locally ringed space via the sheaf of rings $\varprojlim \mathcal{O}_X/\mathcal{I}^n$ where $\mathcal{I} \subset \mathcal{O}_X$ is the quasi-coherent ideal sheaf defining the chosen embedding $Y \hookrightarrow X$ (every $\mathcal{O}_X/\mathcal{I}^n$ is supported at $Y$ and thus can be viewed as a sheaf of rings on $Y$).  The differentials in the complex $\Omega_X^{\bullet}$ are $\mathcal{I}$-adically continuous and thus pass to $\mathcal{I}$-adic completions.  We obtain in this way a complex $\widehat{\Omega}^{\bullet}_X$ of sheaves on $\widehat{X}$, the formal completion of $\Omega_X^{\bullet}$, whose differentials are again merely $k$-linear.  In \cite{derham}, the \emph{(local) de Rham cohomology} of the local scheme $Y$ is defined as $H^i_{P, dR}(Y) = \mathbf{H}^i_P(\widehat{X}, \widehat{\Omega}^{\bullet}_X)$, where $P$ is the closed point of $Y$.  After making these definitions, Hartshorne establishes the following properties:

\begin{thm}\cite[Thm. III.1.1, Thm. III.2.1]{derham}\label{hartind}
Let $A$ be a complete local ring with coefficient field $k$ of characteristic zero and $Y = \Spec(A)$.
\begin{enumerate}[(a)]
\item The de Rham homology spaces $H_i^{dR}(Y)$ and cohomology spaces $H^i_{P, dR}(Y)$ as defined above are independent of the surjection of $k$-algebras $R \rightarrow A$ used in their definitions.
\item For all $i$, $H_i^{dR}(Y)$ and $H^i_{P, dR}(Y)$ are finite-dimensional $k$-spaces.
\item For all $i$, the $k$-spaces $H_i^{dR}(Y)$ and $H^i_{P, dR}(Y)$ are $k$-dual to each other.
\end{enumerate}
\end{thm}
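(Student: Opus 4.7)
The plan is to analyse both groups through the first-quadrant Hodge--de Rham spectral sequences coming from the stupid filtration on $\Omega_X^\bullet$ (for homology) and $\widehat{\Omega}_X^\bullet$ (for cohomology). Since each $\Omega_X^p$ is free of rank $\binom{n}{p}$ over $\mathcal{O}_X$, the $E_1$-pages reduce, after fixing coordinates, to direct sums of local cohomology modules: copies of $H^q_I(R)$ in the homology case, and copies of an analogous local cohomology of $\mathcal{O}_{\widehat{X}}$ at $P$ in the cohomology case (which can ultimately be described in terms of the injective hull $E_R(R/\mathfrak{m})$). By Lyubeznik's theorem, each $H^q_I(R)$ is a holonomic left $\D(R,k)$-module, and a direct computation matches the $E_1$ differentials with those of its de Rham complex. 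Hence $E_2^{p,q}$ in each case is the $p$-th de Rham cohomology of a holonomic $\D$-module.

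\textbf{Finite-dimensionality and independence.} Part (b) then reduces to the classical fact that the de Rham cohomology of a holonomic $\D$-module over a formal power series ring in characteristic zero is finite-dimensional; the spectral sequences degenerate with finite-dimensional terms, so their abutments are finite-dimensional. For part (a), given two surjections $\pi_i\colon R_i \to A$ with $R_i = k[[x_1,\ldots,x_{n_i}]]$, I would factor both through a common surjection from a larger formal power series ring (essentially $R_1 \otimes_k R_2$ after completion) onto $A$, and use a K\"unneth-type identity for $\D$-module de Rham cohomology to show that adjoining extra variables contributes only a trivial tensor factor on the $E_2$-page. Comparing the two sides gives an isomorphism of $E_2$-pages (up to the degree shift built into the $2n-i$ normalisation in the homology case), hence of abutments.

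\textbf{Duality and main obstacle.} For part (c), the strategy is to establish a $\D$-module enhancement of Matlis duality: for a holonomic left $\D(R,k)$-module $M$, the Matlis dual $D(M) = \Hom_R(M, E_R(R/\mathfrak{m}))$ should carry a canonical left $\D$-module structure, remain holonomic, and satisfy $H^i_{dR}(D(M)) \cong H^i_{dR}(M)^\vee$. Granting this, Grothendieck local duality (together with Lyubeznik's identification) identifies the holonomic $\D$-module appearing on the $E_2$-page of the cohomology spectral sequence with the Matlis dual of the one appearing on the $E_2$-page of the homology spectral sequence, giving the desired $k$-duality after passing to abutments. I expect the main obstacle to be precisely this $\D$-module Matlis duality: Matlis duality is a priori only $R$-linear, so constructing a canonical $\D$-action on $D(M)$ and verifying that the de Rham functor intertwines it with $k$-duality requires genuinely new input, and the compatibility must be sharp enough to survive the spectral sequence comparison.
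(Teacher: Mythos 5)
Your proposal correctly identifies the overall framework the paper uses to reprove parts (a) and (b): the Hodge--de Rham spectral sequences associated to the stupid filtration, the identification of the $E_2$-objects with de Rham cohomology of modules over $\D(R,k)$, and the recognition that a $\D$-module enhancement of Matlis duality is the decisive new ingredient. This is the right skeleton. However, there are two holonomicity claims in your argument that are \emph{false}, and they would cause the finiteness argument for the cohomology side to collapse.

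First, you assert that ``$E_2^{p,q}$ in each case is the $p$-th de Rham cohomology of a holonomic $\D$-module.'' That is correct on the homology side (where the relevant modules are the $H^q_I(R)$, which are holonomic by Lyubeznik), but it fails for the cohomology side. There, the $E_2$-objects are de Rham cohomology spaces of $D(H^{n-q}_I(R))$, the \emph{Matlis duals} of local cohomology modules (Lemma \ref{E2cohshape} together with Theorem \ref{loccohtranspose}). By Hellus's result these Matlis duals can have infinitely many associated primes, so they are in general \emph{not} holonomic (Remark \ref{hellus}). Second, and consistently with this, in your duality step you ask that $D(M)$ ``remain holonomic''; that expectation is also false. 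Consequently you cannot invoke van den Essen's theorem (Theorem \ref{findimdR}) to conclude finite-dimensionality of the cohomology $E_2$-objects — that is exactly the gap. The correct replacement (Theorem \ref{ddualcoh}, i.e.\ Theorem C) says only that $D(M)$ has a natural left $\D$-module structure and that $H^{n-i}_{dR}(D(M))$ is $k$-dual to $H^i_{dR}(M)$ when $M$ is holonomic; no holonomicity of $D(M)$ is asserted, and proving the duality of de Rham cohomologies requires the much harder Mittag--Leffler / strong-sense-stability machinery of Section \ref{mittleff} rather than a direct appeal to van den Essen. Once you correct for this, the finiteness of the cohomology $E_2$ becomes a consequence of the duality statement rather than a separate holonomicity argument, which is exactly how Theorems \ref{dualE2} and \ref{mainthmB}(b) are organized in the paper.

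A smaller remark: for part (a) you propose a K\"unneth-type comparison to show that adjoining variables ``contributes only a trivial tensor factor.'' This slurs over the degree shift: replacing $R$ by $R[[z]]$ and $I$ by $IR[[z]]+(z)$ shifts both the local cohomological degree and the de Rham degree by one, via $(H^q_I(R))_+ \simeq H^{q+1}_{I'}(R')$ (Proposition \ref{pluslc}) and the ``plus-operation'' analysis of Lemma \ref{specialcase} and Proposition \ref{cohspecialcase}. A naive K\"unneth identity would need to track this shift precisely; the paper instead does an explicit long-exact-sequence computation with the connecting map $\partial_z$ (Lemma \ref{connhom}), which makes the shift transparent. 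This is a presentational difference rather than a gap, but I flag it because the shift is the whole content of the embedding-independence claim.
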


The de Rham homology and cohomology of $Y = \Spec(A)$ are both defined using hypercohomology.  As is well-known, there are in general two spectral sequences converging to the hypercohomology of a complex (\cite[11.4.3]{ega31}; also see subsection \ref{specseq}).  If $K^{\bullet}$ is a complex of sheaves of Abelian groups on a topological space $Z$, the first of these spectral sequences begins $E_1^{p,q} = H^q(Z, K^p)$ and has abutment $\mathbf{H}^{p+q}(Z, K^{\bullet})$.  In our case, this takes the form of the \emph{Hodge-de Rham homology spectral sequence}, which begins $E_1^{n-p,n-q} = H^{n-q}_Y(X, \Omega_X^{n-p})$ and has abutment $H_{p+q}^{dR}(Y)$, as well as the \emph{Hodge-de Rham cohomology spectral sequence}, which begins $\tilde{E}_1^{p,q} = H^q_P(\widehat{X}, \widehat{\Omega}^p_X)$ and has abutment $H^{p+q}_{P, dR}(Y)$.  \emph{A priori}, these spectral sequences depend on the choice of surjection $R \rightarrow A$ from a complete regular local $k$-algebra.  We prove stronger versions of Hartshorne's results for these spectral sequences.  Our theorem for de Rham homology is:

\begin{bigthm}\label{mainthmA}
Let $A$ be a complete local ring with coefficient field $k$ of characteristic zero.  Viewing $A$ as a $k$-algebra via this coefficient field, let $R \rightarrow A$ be a choice of $k$-algebra surjection from a complete regular local $k$-algebra.  Associated with this surjection we have a Hodge-de Rham spectral sequence for homology as above.
\begin{enumerate}[(a)]
\item Beginning with the $E_2$-term, the isomorphism class of the homology spectral sequence with its abutment is independent of the choice of regular $k$-algebra and surjection $R \rightarrow A$, up to a degree shift.
\item The $k$-spaces $E_2^{p,q}$ appearing in the $E_2$-term of the homology spectral sequence are finite-dimensional.
\end{enumerate}
\end{bigthm}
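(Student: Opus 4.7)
The plan is to identify the $E_2$-page of the Hodge-de Rham homology spectral sequence with the de Rham cohomology of the local cohomology modules $H^j_I(R)$, which are holonomic $\D$-modules by Lyubeznik's theorem. Finite-dimensionality (part (b)) then reduces to the finite-dimensionality of the de Rham cohomology of a holonomic $\D$-module, and independence (part (a)) reduces to a Kashiwara-equivalence-style comparison between $\D$-module pushforwards along different embeddings.

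First, I would model $\mathbf{H}^{\bullet}_Y(X, \Omega^{\bullet}_X)$ by the Čech-de Rham double complex $\check{C}^{\bullet}(\underline{f}; \Omega^{\bullet}_R)$ attached to a generating set $f_1, \ldots, f_r$ of $I$, and verify that filtering by de Rham degree recovers the spectral sequence in question. Because each $\Omega^p_R$ is a free $R$-module of finite rank, the functor $H^q_I(-)$ commutes with tensoring by $\Omega^p_R$, so
\[
E_1^{n-p, n-q} = H^{n-q}_I(\Omega^{n-p}_R) \cong H^{n-q}_I(R) \otimes_R \Omega^{n-p}_R,
\]
and a direct check shows that the $d_1$ differential, induced by the $k$-linear exterior derivative, coincides with the de Rham differential of the left $\D$-module $H^{n-q}_I(R)$. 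Hence
\[
E_2^{n-p, n-q} \cong H^{n-p}_{dR}(H^{n-q}_I(R)).
\]
Part (b) then follows by combining Lyubeznik's holonomicity theorem for $H^{n-q}_I(R)$ with the statement that any holonomic $\D$-module over a formal power series ring has finite-dimensional de Rham cohomology, which is part of the paper's $\D$-module infrastructure.

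For part (a), given two surjections $\pi_i : R_i \to A$ with $\dim R_i = n_i$, the standard Cohen-structure maneuver produces a third surjection $\pi : R \to A$ with $R = R_1[[y_1, \ldots, y_m]]$ and $\pi = \pi_1 \circ (y \mapsto 0)$; any two surjections are linked by a zigzag of such comparisons, so it suffices to treat this case. Here Kashiwara's equivalence (in the formal setting) identifies $H^{n_1 - q}_{I_1}(R_1)$ as a $\D(R_1, k)$-module with the $\D$-module pushforward, along the closed embedding $\Spec(R_1) \hookrightarrow \Spec(R)$, of the $\D(R, k)$-module $H^{n_1 - q + m}_{I_R}(R)$, where $I_R = I_1 R + (y_1, \ldots, y_m)$. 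Since pushforward along a smooth closed immersion of codimension $m$ shifts de Rham cohomological degree by $m$ but otherwise preserves it, the identification from Step 1 yields the required isomorphism of $E_2$-pages up to the expected degree shift.

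The main obstacle is upgrading these pointwise $E_2$ identifications to an isomorphism of spectral sequences (together with their abutments) from $E_2$ onwards, which requires producing an explicit chain map between the Čech-de Rham double complexes for $\pi_1$ and $\pi$ that realizes the $\D$-module pushforward at the complex level, rather than only after passing to cohomology. Establishing this, alongside the formal-setting versions of Kashiwara's equivalence, preservation of de Rham cohomology under closed-immersion pushforward, and finite-dimensionality of de Rham cohomology of holonomic modules, is where the paper's $\D$-module and Matlis-dual machinery is expected to do the heavy lifting; Matlis duality for $\D$-modules should in particular provide the intrinsic language needed to make the comparison of Čech-de Rham complexes functorial.
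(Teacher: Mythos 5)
Your reduction of the $E_2$-page to de Rham cohomology of $H^q_I(R)$ matches the paper's Lemma \ref{E2shape}, your proof of part (b) via Lyubeznik holonomicity plus van den Essen's Theorem \ref{findimdR} is exactly the paper's Proposition \ref{E2case}(a), and your device for part (a) is, at the $E_2$ level, the paper's argument in different clothing: the paper's ``$+$-operation'' $M_+ = \oplus_{i>0} M\,z^{-i}$ and the N\'u\~nez-Betancourt--Witt isomorphism $(H^p_I(R))_+ \simeq H^{p+1}_{I'}(R')$ are precisely the formal-setting $\D$-module pushforward along the codimension-one closed immersion you invoke under the name of Kashiwara's equivalence, and the paper's Lemma \ref{specialcase} is exactly the ``pushforward shifts de Rham degree by the codimension'' statement, proved via the long exact sequence coming from the partial de Rham complex with connecting map $\pm\partial_z$. (The paper reduces to a single formal variable by the complete tensor product $R\widehat{\otimes}_k R'$; your zigzag formulation is equivalent.)

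The genuine gap is the step you flag as ``the main obstacle'' and then do not supply, and your guess about which tool fills it is wrong. To upgrade the $E_2$-isomorphisms to a morphism of spectral sequences with abutments, the paper does not use Matlis duality at all --- Matlis duality enters only for Theorem \ref{mainthmB}, the cohomology side. What it actually does is build the comparison at the double-complex level: it shows (Lemma \ref{gormin}) that the minimal injective resolution of $R$, being the Cousin complex, is a complex of $\D$-modules; tensors it with the de Rham complex to produce a $\Gamma_Y$-acyclic double-complex resolution of $\Omega^{\bullet}_X$; proves (Lemma \ref{luisemilyinj}, via the structure of $E_{R'}(R'/\mathfrak{p})$) that applying $\Gamma_{I'}$ to the injective resolution over $R'$ recovers the $+$-operation applied to $\Gamma_I$ of the resolution over $R$ with a degree shift; and then invokes a comparison lemma (Lemma \ref{sscomp}) to replace Cartan--Eilenberg resolutions by these more explicit resolutions. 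The morphism $\iota=\wedge\,dz$ of Definition \ref{partialdR} then induces the required map of spectral sequences. In your proposed setting you would need to do the analogous work for the \v{C}ech--de Rham double complex: verify \v{C}ech objects are $\Gamma_Y$-acyclic (true), check that the \v{C}ech complex for $I'$ over $R'$ is the $+$-operation applied to the one for $I$ over $R$ with the right degree shift, and then show the comparison lemma applies. None of this is automatic, it is all specific to the homology picture, and none of it involves Matlis duality; so ``Matlis duality should provide the intrinsic language'' is a misdirection that would send you down the wrong road.
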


The meaning of ``up to a degree shift'' in the statement of part (a) is the following: given two surjections $R \rightarrow A$ and $R' \rightarrow A$ from complete regular local $k$-algebras, where $\dim(R) = n$ and $\dim(R') = n'$, we obtain two Hodge-de Rham spectral sequences $E_{\bullet, R}^{\bullet, \bullet}$ and $\mathbf{E}_{\bullet, R'}^{\bullet, \bullet}$.  Part (a) asserts that there is a morphism $E_{\bullet, R}^{\bullet, \bullet} \rightarrow \mathbf{E}_{\bullet, R'}^{\bullet, \bullet}$ of bidegree $(n'-n,n'-n)$ between these spectral sequences which is an isomorphism on the objects of the $E_2$- (and later) terms (see subsection \ref{specseq} for the precise definitions of the terms used here).

We also have the analogue (without a degree shift) for de Rham cohomology:

\begin{bigthm}\label{mainthmB}
Let $A$ and $R$ be as in Theorem \ref{mainthmA}.  Associated with this surjection we also have a local Hodge-de Rham spectral sequence for cohomology.
\begin{enumerate}[(a)]
\item Beginning with the $E_2$-term, the isomorphism class of the cohomology spectral sequence with its abutment is independent of the choice of regular $k$-algebra and surjection $R \rightarrow A$.
\item The $k$-spaces $\tilde{E}_2^{pq}$ appearing in the $E_2$-term of the cohomology spectral sequence are finite-dimensional.
\end{enumerate}
\end{bigthm}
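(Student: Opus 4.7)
The plan is to reduce Theorem~\ref{mainthmB} to Theorem~\ref{mainthmA} via Matlis duality, using the Matlis-duality theory for $\D$-modules developed in the paper. Write $D = \Hom_R(-, E_R)$ for the Matlis duality functor over $R$, where $E_R$ denotes the injective hull of $k$, and let $I = \ker(R \to A)$.

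The crux is to establish a natural isomorphism
\[
\tilde{E}_1^{p,q} = H^q_P(\widehat{X}, \widehat{\Omega}^p_X) \;\cong\; D(H^{n-q}_I(R)) \otimes_R \Omega^p_R
\]
of $\D$-modules, where on the right-hand side $H^{n-q}_I(R)$ is a $\D$-module by Lyubeznik's theorem and its Matlis dual inherits a $\D$-module structure by the paper's main Matlis-duality theorem. Since $\Omega^p_R$ is $R$-free of rank $\binom{n}{p}$, the left-hand side reduces to $\binom{n}{p}$ copies of $H^q_P(\widehat{X}, \mathcal{O}_{\widehat{X}})$, and this latter group is identified with $D(H^{n-q}_I(R))$ via a form of formal local duality. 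Moreover, the $d_1$-differential on row $q$ of the cohomology spectral sequence, induced by the $k$-linear exterior derivative on $\widehat{\Omega}^\bullet_X$, corresponds under this identification to the de Rham differential of the $\D$-module $D(H^{n-q}_I(R))$; consequently $\tilde{E}_2^{p,q} \cong H^p_{dR}(D(H^{n-q}_I(R)))$. Now the paper's main Matlis-duality theorem applied to the holonomic $\D$-module $H^{n-q}_I(R)$ yields $H^p_{dR}(D(H^{n-q}_I(R))) \cong \Hom_k(H^p_{dR}(H^{n-q}_I(R)), k)$, and the proof of Theorem~\ref{mainthmA} identifies $H^p_{dR}(H^{n-q}_I(R))$ with a reindexing of its own $E_2$-terms. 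Part~(b) then follows from Theorem~\ref{mainthmA}(b), since $k$-duality preserves finite-dimensionality, and part~(a) follows from Theorem~\ref{mainthmA}(a) together with the naturality of Matlis duality and de Rham cohomology as $\D$-module functors of $A$.

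The main obstacle is establishing the $E_1$-level identification with compatible $\D$-module structures on both sides. The formal scheme $\widehat{X}$ carries only $k$-linear differentials on $\widehat{\Omega}^\bullet_X$, while the $\D$-module structure on $D(H^{n-q}_I(R))$ arises from the delicate construction in the paper's Matlis-duality section. Verifying that these two structures agree, so that the $d_1$-differential of the cohomology spectral sequence is correctly identified with the de Rham differential of a $\D$-module, is where the bulk of the paper's Matlis-duality machinery enters. Once this compatibility is in hand, the reduction to Theorem~\ref{mainthmA} through the paper's de Rham-cohomology duality theorem for Matlis duals of holonomic $\D$-modules is essentially formal.
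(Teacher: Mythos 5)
Your proof of part (b) is essentially the paper's: identify the $q$th row $\tilde{E}_1^{\bullet,q}$ with the de Rham complex of the left $\D$-module $D(H^{n-q}_I(R))$ (the paper does this via Lemma \ref{E2cohshape}, Theorem \ref{loccohtranspose}, and Remark \ref{E1duals}), then apply Theorem \ref{dualcoh} to the holonomic module $H^{n-q}_I(R)$ to obtain $\tilde{E}_2^{p,q} \cong (E_2^{n-p,n-q})^{\vee}$, and conclude finite-dimensionality from Theorem \ref{mainthmA}(b). This is exactly Theorem \ref{dualE2} and its consequence for part (b).

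Your proposed deduction of part (a), however, has a genuine gap. You claim that once the $E_2$-objects of the cohomology spectral sequence are identified with the $k$-duals of the $E_2$-objects of the homology spectral sequence, the embedding-independence of the cohomology spectral sequence follows from Theorem \ref{mainthmA}(a) ``together with the naturality of Matlis duality.'' But an isomorphism of $E_2$-\emph{objects} is not an isomorphism of spectral sequences: one must also show that the differentials $d_r$ for all $r \geq 2$, and the filtrations on the abutments, correspond under duality. Concretely, to transport the isomorphism $E_{\bullet,R} \to \mathbf{E}_{\bullet,R'}$ of homology spectral sequences across $k$-duality to obtain an isomorphism $\tilde{\mathbf{E}}_{\bullet,R'} \to \tilde{E}_{\bullet,R}$, you would need to know that the cohomology spectral sequence, as a \emph{full} spectral sequence with all its differentials, is $k$-dual to the homology one. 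That is precisely Conjecture \ref{dualss}, which the paper explicitly leaves open. What the paper proves (Theorem \ref{dualE2}) is duality of the $E_2$-objects only, and the paper remarks at the end of section \ref{openqs} that Theorem \ref{mainthmB}(a) would be an immediate corollary of Theorem \ref{mainthmA}(a) \emph{if} Conjecture \ref{dualss} were known, but since it is not, an independent proof is required. The paper's actual proof of part (a) runs parallel to the proof of Theorem \ref{mainthmA}(a) but uses the formal power series operation $(-)^+$ (an inverse-limit construction) in place of the direct-limit $(-)_+$; the technical cost is that inverse limits do not commute with cohomology in general, so one must verify Mittag-Leffler conditions on the relevant inverse systems, which is where Corollary \ref{fgstabdR} and the bulk of section \ref{mittleff} enter. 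None of this machinery appears in your outline, and without it the reduction you propose does not go through.
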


\begin{remark}\label{ogus}
In subsection \ref{specseq}, the notion of an \emph{isomorphism} of spectral sequences is defined.  As described in this subsection, the ingredients of a spectral sequence are the objects and differentials in the $E_r$-term for all $r$, the abutment objects, and the filtrations on the abutment objects, together with the isomorphisms relating the terms with their successors and with the abutment.  The assertion of Theorems \ref{mainthmA} and \ref{mainthmB} is that \emph{all} of these ingredients (except for the $E_1$-terms) are independent of the chosen regular $k$-algebra and surjection.  Therefore parts (a) and (b) of Theorem \ref{hartind} are subsumed by Theorems \ref{mainthmA} and \ref{mainthmB}, which provide more information: the isomorphism classes of $H_{p+q}^{dR}(Y)$ and $H^{p+q}_{P, dR}(Y)$ as \emph{filtered objects} are independent of the surjection.  We obtain many numerical invariants of $(A, k)$ from the spectral sequence: the (finite) dimensions of the kernels and cokernels of the differentials $d_r$ for all $r \geq 2$, together with the dimensions of the filtered pieces of the abutment.
\end{remark}

The proof of Theorem \ref{mainthmB} requires the development of a theory of Matlis duality for $\D$-modules.  This theory is worked out in sections \ref{klinear}, \ref{dmod}, and \ref{mittleff}, which form a unit of independent interest.  If $R$ is a complete local ring with coefficient field $k$ of characteristic zero, we can consider the ring $\D(R,k)$ of $k$-linear differential operators on $R$.  Given any left module over this ring, we can define its de Rham complex and speak of its de Rham cohomology spaces.  We prove that the Matlis dual of a left $\D(R,k)$-module has a natural structure of right $\D(R,k)$-module.  Specializing to the case of a complete \emph{regular} local ring, we are able to obtain information about the de Rham cohomology of a Matlis dual.  In this case, the dual of a left $\D(R,k)$-module can again be viewed as a left $\D(R,k)$-module.  The following is the main result of our theory of Matlis duality for $\D$-modules (its two assertions are proved separately below as Proposition \ref{regleft} and Theorem \ref{dualcoh}):

\begin{bigthm}\label{ddualcoh}
Let $k$ be a field of characteristic zero, let $R = k[[x_1, \ldots x_n]]$ be a formal power series ring over $k$, and let $\D = \D(R,k)$ be the ring of $k$-linear differential operators on $R$.  If $M$ is a left $\D$-module, the Matlis dual $D(M)$ of $M$ with respect to $R$ can also be given a natural structure of left $\D$-module.  We write $H^{*}_{dR}(M)$ for the de Rham cohomology of a left $\D$-module.  If $M$ is a \emph{holonomic} left $\D$-module, then for every $i$, we have an isomorphism of $k$-spaces
\[
(H^i_{dR}(M))^{\vee} \simeq H^{n-i}_{dR}(D(M))
\]
where $\vee$ denotes $k$-linear dual.
\end{bigthm}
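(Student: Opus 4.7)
The theorem consists of two assertions---a natural left $\D$-module structure on $D(M)$ (Proposition \ref{regleft}) and the cohomological duality (Theorem \ref{dualcoh})---and I would treat them in that order. For the first, I rely on Lyubeznik's theorem that $E = H^n_{\mathfrak{m}}(R)$ carries a canonical left $\D$-module structure. Given two left $\D$-modules $N_1, N_2$, the $R$-module $\Hom_R(N_1, N_2)$ inherits a left $\D$-module structure via the adjoint action $(\delta \cdot \phi)(n) = \delta(\phi(n)) - \phi(\delta n)$ for $\delta \in \Der_k(R)$: a short computation confirms that $\delta \cdot \phi$ is $R$-linear and that the commutator relations $[\delta, r] \cdot \phi = \delta(r) \phi$ and $[\delta_1, \delta_2] \cdot \phi = (\delta_1 \delta_2 - \delta_2 \delta_1) \cdot \phi$ hold, so that the $\Der_k(R)$-action extends to all of $\D$. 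Applying this to $N_1 = M$, $N_2 = E$ yields the left $\D$-module structure on $D(M)$.

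For the cohomological statement, my plan is to compute both sides through the same Koszul-type resolution. Let $\Theta = \bigoplus_{i=1}^n R \partial_i$, and consider the Koszul resolution $K_\bullet = \D \otimes_R \Lambda^{\bullet} \Theta \twoheadrightarrow R$ of the trivial left $\D$-module $R$. Each $K_p$ is a free left $\D$-module, so $\Hom_\D(K_p, N) \cong \Hom_R(\Lambda^p \Theta, N)$ for any left $\D$-module $N$, and the resulting complex is naturally identified with the de Rham complex of $N$; hence $H^i_{dR}(N) \cong \Ext^i_\D(R, N)$. Applying this with $N = D(M)$ and using the standard $R$-module adjunction
\[
\Hom_R(\Lambda^p \Theta, D(M)) = \Hom_R(\Lambda^p \Theta, \Hom_R(M,E)) \cong \Hom_R(\Lambda^p \Theta \otimes_R M, E) = D(\Lambda^p \Theta \otimes_R M),
\]
I obtain a termwise identification $\Hom_\D(K_\bullet, D(M)) \cong D(\Lambda^\bullet \Theta \otimes_R M)$. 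Fixing a generator of $\omega_R$, the wedge pairing gives $\Lambda^p \Theta \cong \Omega^{n-p}_R$, so the right-hand complex is $D$ applied to the de Rham complex of $M$ with its degrees reversed.

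The central task, which I expect to be the main obstacle, is to verify that the differentials induced on $D(\Lambda^{\bullet} \Theta \otimes_R M)$ via the above adjunction agree (up to signs) with those one would obtain by ``dualizing'' the de Rham complex of $M$. The delicate point is that the de Rham differential on $M$ is $k$-linear but \emph{not} $R$-linear, so its naive Matlis dual is not well-defined as a map of $R$-modules. Unwinding the adjunction, the differential on $\Hom_\D(K_\bullet, D(M))$ decomposes, via the adjoint formula defining the $\D$-action on $D(M)$, into two summands: one recording $\partial_j$'s action on $E$ (well-defined because $E$ is a $\D$-module), and one recording precomposition with $\partial_j$'s action on $M$. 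Neither summand alone is $R$-linear, but their sum is, and this sum is precisely what is needed to make the identification of complexes work at the level of cohomology.

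Once this identification is established, the proof concludes quickly. Since $D$ is an exact contravariant functor, the standard computation yields
\[
H^{n-i}_{dR}(D(M)) \;\cong\; H^{n-i}\bigl(D(\Omega^{n-\bullet}_R \otimes_R M)\bigr) \;\cong\; D\bigl(H^i(\Omega^\bullet_R \otimes_R M)\bigr) \;=\; D(H^i_{dR}(M)).
\]
The hypothesis that $M$ is holonomic is used only at this last step: $H^i_{dR}(M)$ is then a finite-dimensional $k$-space supported at the maximal ideal (a standard property of holonomic $\D$-modules in characteristic zero), so that $D(H^i_{dR}(M))$ coincides with the $k$-linear dual $(H^i_{dR}(M))^{\vee}$, completing the proof.
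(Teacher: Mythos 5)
Your proposal correctly identifies the two halves of the theorem, and your treatment of the first half (the left $\D$-module structure on $D(M)$) is sound, just packaged differently from the paper. You equip $\Hom_R(M,E)$ with the standard internal-Hom action $(\delta\cdot\phi)(m)=\delta(\phi(m))-\phi(\delta m)$; the paper instead dualizes the $\D$-action via $\Sigma$-continuous maps to get a right $\D$-structure and then transposes. Unwinding the paper's construction (Propositions \ref{hottaaction} and \ref{regleft}, together with the sign in Example \ref{singlevar}) shows the two structures coincide, so your shortcut is legitimate and arguably cleaner. Similarly, your identification $H^i_{dR}(N)\simeq\Ext^i_{\D}(R,N)$ via the Spencer/Koszul resolution and the tensor--Hom adjunction $\Hom_R(\Lambda^p\Theta,D(M))\simeq D(\Lambda^p\Theta\otimes_R M)$ is a genuinely different and more conceptual route to the paper's Proposition \ref{dualcoh2}, which establishes the same isomorphism $h^i(D(M\otimes\Omega_R^\bullet))\simeq h^i(D(M)\otimes\Omega_R^\bullet)$ by an explicit Koszul self-duality argument.

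The difficulty is concentrated in your final sentence: ``Since $D$ is an exact contravariant functor, the standard computation yields $H^{n-i}\bigl(D(\Omega^{n-\bullet}_R\otimes_R M)\bigr)\simeq D\bigl(H^i(\Omega^\bullet_R\otimes_R M)\bigr)$.'' This step is not a formal consequence of exactness, and it is in fact the entire content of the paper's Proposition \ref{dualcoh1}. The functor $D=\Hom_R(-,E)$ is exact on the category of $R$-modules with $R$-\emph{linear} maps; but the de Rham differentials of $M\otimes\Omega_R^\bullet$ are only $k$-linear, not $R$-linear. Consequently (i) the kernels and images of the de Rham differentials are merely $k$-subspaces, not $R$-submodules, so the cohomology $H^i_{dR}(M)$ is a $k$-space with no natural $R$-module structure, and $D(H^i_{dR}(M))$ does not even make sense as written; and (ii) even after replacing $D(-)$ with the $k$-linear dual on cohomology, the ``exact functor commutes with cohomology'' argument breaks down, because the Matlis dual of a $k$-linear (but non-$R$-linear) map requires the $\Sigma$-continuity machinery of the paper to be defined at all, and once defined, the Matlis dual functor does not coincide with $k$-linear dual on the infinite-dimensional $R$-modules $\Lambda^p\Theta\otimes_R M$. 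This is exactly where the paper invests all of sections \ref{klinear}--\ref{mittleff}: it stratifies $M\otimes\Omega_R^\bullet$ by the order filtration on $\D$ as a direct limit of complexes of finitely generated $R$-modules, proves a strong stability statement for the images of cohomology (Proposition \ref{mitlef}, resting on van den Essen's theorems about kernels and cokernels of $\partial_n$ on holonomic modules), verifies a Mittag-Leffler condition for the dual inverse system, and only then passes to the inverse limit. The fact that $D(M)$ is typically \emph{not} holonomic (Remark \ref{hellus}) is a warning sign that no soft argument will give finiteness of $H^{n-i}_{dR}(D(M))$; your proposal, if valid, would bypass all of this, but the crucial step is unjustified.

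Your closing remark also needs correction: $H^i_{dR}(M)$ is indeed finite-dimensional for holonomic $M$ (van den Essen, the paper's Theorem \ref{findimdR}), but it is not an $R$-module and hence not ``supported at $\mathfrak{m}$''; the coincidence of the Matlis dual and the $k$-linear dual at that final step is something you would need to set up, not a freebie.
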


\begin{remark}
If $M$ is holonomic, its de Rham cohomology spaces are known to be finite-dimensional (see Theorem \ref{findimdR}), and so it follows from Theorem \ref{ddualcoh} that $D(M)$ also has finite-dimensional de Rham cohomology.  Since $D(M)$ is not, in general, a holonomic $\D$-module (see Remark \ref{hellus} below), this is not clear \emph{a priori}.
\end{remark}

When applied to the $E_1$- and $E_2$-terms of the homology and cohomology spectral sequences, Theorem \ref{ddualcoh} has the following consequence:

\begin{bigthm}\label{dualE2}
Let $A$, $k$, and $R$ be as in the statement of Theorem \ref{mainthmA}.  The objects in the $E_2$-terms of the homology and cohomology spectral sequences are $k$-dual to each other: for all $p$ and $q$, $E_2^{n-p,n-q} \simeq (\tilde{E}_2^{pq})^{\vee}$.
\end{bigthm}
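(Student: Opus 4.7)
The plan is to identify both $E_2$-terms with de Rham cohomology spaces of related $\D$-modules and then apply Theorem~\ref{ddualcoh}.

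First I would identify $E_2^{n-p,n-q}$ of the homology spectral sequence. Because $X = \Spec(R)$ is affine and each $\Omega^s_X$ is the quasi-coherent sheaf associated to the finitely generated free $R$-module $\Omega^s_R$, the $E_1$-term reduces to $E_1^{s,t} = H^t_Y(X, \Omega^s_X) = H^t_I(R) \otimes_R \Omega^s_R$, and a direct check shows that the $d_1$ differential coincides, up to sign, with the de Rham differential of the left $\D$-module $H^t_I(R)$. Since this module is holonomic by Lyubeznik's theorem, one has $E_2^{s,t} = H^s_{dR}(H^t_I(R))$, and in particular $E_2^{n-p,n-q} = H^{n-p}_{dR}(H^{n-q}_I(R))$.

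Second I would identify $\tilde{E}_2^{p,q}$ with the de Rham cohomology of the Matlis dual of the same holonomic $\D$-module. Unlike the homology case, the $\tilde{E}_1$-term $H^q_P(\widehat{X}, \widehat{\Omega}^p_X)$ does not reduce naively to local cohomology of $\Omega^p_R$ at $\mathfrak{m}$; because we are working on the formal scheme $\widehat{X}$, it instead produces a ``formal local cohomology'' along the lines of $\varprojlim_n H^q_{\mathfrak{m}}(\Omega^p_R/I^n \Omega^p_R)$. The key intermediate step is to establish a natural $\D$-linear isomorphism
\[
\tilde{E}_1^{\bullet,q} \;\simeq\; \Omega^\bullet_R \otimes_R D(H^{n-q}_I(R)),
\]
where $D(H^{n-q}_I(R))$ carries the canonical left $\D$-module structure of Theorem~\ref{ddualcoh} and the differentials match on both sides. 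Granting this, $\tilde{E}_2^{p,q} = H^p_{dR}(D(H^{n-q}_I(R)))$.

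With both $E_2$-terms so identified, Theorem~\ref{ddualcoh} applied to the holonomic module $M = H^{n-q}_I(R)$ with $i = n-p$ yields
\[
(E_2^{n-p,n-q})^{\vee} = \bigl(H^{n-p}_{dR}(H^{n-q}_I(R))\bigr)^{\vee} \simeq H^{p}_{dR}(D(H^{n-q}_I(R))) = \tilde{E}_2^{p,q}.
\]
Dualizing once more, which is legitimate because Theorem~\ref{mainthmA}(b) ensures that $E_2^{n-p,n-q}$ is finite-dimensional, produces the stated isomorphism $E_2^{n-p,n-q} \simeq (\tilde{E}_2^{p,q})^{\vee}$.

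The principal obstacle is the $\D$-linear identification in the second step. At the level of underlying $R$-modules, Matlis-dualizing the formula $H^{n-q}_I(R) = \varinjlim_n \Ext^{n-q}_R(R/I^n, R)$ and applying Grothendieck local duality to each finitely generated $R/I^n$-module should yield an identification of $D(H^{n-q}_I(R))$ with $\varprojlim_n H^q_{\mathfrak{m}}(R/I^n)$, on which the formal local cohomology lives. The real content—and the reason the paper develops an entire theory in sections~\ref{klinear}--\ref{mittleff}—is upgrading this to an isomorphism of left $\D$-modules that is compatible with the de Rham differentials on both sides.
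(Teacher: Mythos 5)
Your proposal follows the paper's proof essentially step for step: identify $E_2^{n-p,n-q} \simeq H^{n-p}_{dR}(H^{n-q}_I(R))$ (Lemma \ref{E2shape}), identify $\tilde{E}_2^{p,q} \simeq H^p_{dR}(D(H^{n-q}_I(R)))$ via the $\D$-linear identification of $\tilde{E}_1^{\bullet,q}$ with the de Rham complex of the Matlis dual, and conclude with Theorem \ref{ddualcoh}. One small correction to your closing remark: the upgrade of the $R$-module isomorphism $D(H^{n-q}_I(R)) \simeq \varprojlim H^q_{\mathfrak{m}}(R/I^{\ell})$ to a $\D$-compatible identification of de Rham complexes is the content of section \ref{formal} (Theorem \ref{loccohtranspose} and Lemma \ref{E2cohshape}), whereas sections \ref{klinear}--\ref{mittleff} are devoted to the Matlis-duality theory needed for Theorem \ref{ddualcoh} itself.
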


We conjecture that the entire spectral sequences, beginning with $E_2$, should be $k$-dual to each other (see section \ref{openqs}), but at present we are able only to prove the preceding statement.

As was already clear to Hartshorne \cite[p. 71]{derham}, the Matlis duals of the objects appearing in the $E_1$-term of the homology spectral sequence are exactly the corresponding $E_1$-objects for de Rham cohomology.  What is much less clear is the relationship between the differentials.  The $E_1$-differentials for de Rham homology are merely $k$-linear, so the usual definition of Matlis duality over $R$ cannot be applied to them.  A large part of this paper is devoted to the problem of applying Matlis duality to these $k$-linear maps and establishing that, in our setting, Matlis duality at the $E_1$-term gives rise to $k$-duality at the $E_2$-term.

In more detail, the outline of the paper is as follows.  In section \ref{homology}, after reviewing some preliminary material on differential operators, de Rham complexes, and spectral sequences, we prove Theorem \ref{mainthmA} using local algebra.  In the course of this proof, we define a new set of invariants for complete local rings with coefficient fields of characteristic zero.  In section \ref{klinear}, Matlis duality for local rings containing a field $k$ is interpreted in terms of $k$-linear maps, after SGA2 \cite{SGA2}.  This allows us to dualize continuous maps between finitely generated modules over such rings.  We pass to direct limits in order to dualize $k$-linear maps between arbitrary modules that satisfy certain finiteness and continuity conditions.  In section \ref{dmod}, we consider the case of $\D$-modules for \emph{complete} local rings containing $k$; we describe a natural right $\D$-module structure on the Matlis dual $D(M)$ of a left $\D$-module $M$.  In the special case of a formal power series ring, we can regard $D(M)$ as a left $\D$-module as well using a simple ``transpose'' operation, and thus define its de Rham complex.  We determine the cohomology of this complex in section \ref{mittleff} in the case of holonomic $M$, completing the proof of Theorem \ref{ddualcoh}. The specific case of local cohomology is considered next.  In section \ref{formal}, we work out precisely what happens to the action of derivations on a local cohomology module.  Finally, in section \ref{cohom}, we give a self-contained proof of Theorem \ref{mainthmB}(a) and combine the results of sections \ref{homology}, \ref{mittleff}, and \ref{formal} to prove Theorem \ref{mainthmB}(b) and Theorem \ref{dualE2}. 

\begin{remark}\label{hellus}
After we identify the objects in the $E_2$-term of the homology spectral sequence with de Rham cohomology spaces of local cohomology modules in section \ref{homology}, part (b) of Theorem \ref{mainthmA} follows from Lyubeznik's result (\cite[2.2(d)]{lyubeznik}; \emph{cf.} \cite{mebkhout}) that local cohomology modules are holonomic $\D$-modules, as it is known that holonomic $\D$-modules have finite-dimensional de Rham cohomology.  However, the objects in the $E_2$-term of the cohomology spectral sequence are not de Rham cohomology spaces of holonomic $\D$-modules: as shown in section \ref{formal}, they are de Rham cohomology spaces of \emph{Matlis duals} of local cohomology modules.  Hellus has shown \cite[Cor. 2.6]{hellus} that Matlis duals of local cohomology modules have, in general, infinitely many associated primes, implying that they need not be holonomic $\D$-modules (which always have finitely many associated primes \cite[Cor. 3.6(c)]{lyubeznik}).  It is therefore surprising that the cohomology $E_2$-objects, which are, in general, de Rham cohomology spaces of \emph{non}-holonomic $\D$-modules, are still finite-dimensional.  For this reason, the proof of Theorem \ref{mainthmB}(b) is significantly more difficult than the proofs of the other parts of our main theorems.
\end{remark}

The results in this paper are from the author's thesis, supervised by Gennady Lyubeznik.  We would like to thank Lyubeznik, not only for suggesting this topic, but also for providing constant guidance and insight.  We also thank William Messing for profitable conversations on algebraic de Rham cohomology and Lemma \ref{sscomp}, as well as an exchange concerning the lemmas in section \ref{mittleff}; Joel Gomez for asking a question that led to Proposition \ref{doubleddual}; and Paul Garrett for an exchange concerning Lemma \ref{finddual}.  Finally, we thank Robin Hartshorne, for reading a preliminary version of this paper and making numerous suggestions that improved the exposition.

\section{The de Rham homology of a complete local ring}\label{homology}

In this section, after reviewing some preliminary material on modules over rings of differential operators and spectral sequences, we recall Hartshorne's definition of algebraic de Rham homology \cite[Ch. III]{derham} in the case of the spectrum of a complete local ring in equicharacteristic zero, and examine the associated Hodge-de Rham spectral sequence.  We give proofs that this de Rham homology is intrinsically defined and finite-dimensional which are purely local; in fact, we prove more, namely that up to a degree shift, the entire Hodge-de Rham spectral sequence (with the exception of the first term) is intrinsically defined and has finite-dimensional objects.  As a byproduct of our proof, we obtain a new set of invariants for complete local rings analogous to the \emph{Lyubeznik numbers} \cite[Thm.-Def. 4.1]{lyubeznik}.

\subsection{Preliminaries on $\D$-modules}\label{dmodprelim} 

Let $R$ be a commutative ring and $k \subset R$ a commutative subring.  The ring $\D(R,k)$ of $k$-linear differential operators on $R$, a subring of $\End_k(R)$, is defined recursively as follows \cite[\S 16]{EGAIV}.  A differential operator $R \rightarrow R$ of order zero is multiplication by an element of $R$.  Supposing that differential operators of order $\leq j-1$ have been defined, $d \in \End_k(R)$ is said to be a differential operator of order $\leq j$ if, for all $r \in R$, the commutator $[d,r] \in \End_k(R)$ is a differential operator of order $\leq j-1$, where $[d,r] = dr - rd$ (the products being taken in $\End_k(R)$).  We write $\D^j(R)$ for the set of differential operators on $R$ of order $\leq j$ and set $\D(R,k) = \cup_j \D^j(R)$.  Every $\D^j(R)$ is naturally a left $R$-module.  If $d \in \D^j(R)$ and $d' \in \D^{\l}(R)$, it is easy to prove by induction on $j + \l$ that $d' \circ d \in \D^{j+\l}(R)$, so $\D(R,k)$ is a ring.

We consider now the special case in which $k$ is a field of characteristic zero and $R = k[[x_1, \ldots, x_n]]$ is a formal power series ring over $k$.  A standard reference for facts about the ring $\D = \D(R,k)$ and left modules over $\D$ in this case is \cite[Ch. 3]{bjork}; we summarize some of these facts now.  The ring $\D$, viewed as a left $R$-module, is freely generated by monomials in the partial differentiation operators $\partial_1 = \frac{\partial}{\partial x_1}, \ldots, \partial_n = \frac{\partial}{\partial x_n}$ (\cite[Thm. 16.11.2]{EGAIV}: here the characteristic-zero assumption is necessary).  This ring has an increasing filtration $\{\D(\nu)\}$, called the \emph{order filtration}, where $\D(\nu)$ consists of those differential operators of order $\leq \nu$ (the order of an element of $\D$ is the maximum of the orders of its summands, and the order of a single summand $\rho \partial_1^{a_1} \cdots \partial_n^{a_n}$ with $\rho \in R$ is $\sum a_i$: this notion of order coincides with the one defined in the previous paragraph).  The associated graded object $\gr(\D) = \oplus \D(\nu)/\D(\nu-1)$ with respect to this filtration is isomorphic to $R[\zeta_1, \ldots, \zeta_n]$ (a commutative ring), where $\zeta_i$ is the image of $\partial_i$ in $\D(1)/\D(0) \subset \gr(\D)$.  

If $M$ is a finitely generated left $\D$-module, there exists a \emph{good filtration} $\{M(\nu)\}$ on $M$, meaning that $M$ becomes a filtered left $\D$-module with respect to the order filtration on $\D$ \emph{and} $\gr(M) = \oplus M(\nu)/M(\nu-1)$ is a finitely generated $\gr(\D)$-module.  We let $J$ be the radical of $\Ann_{\gr(\D)} \gr(M) \subset \gr(\D)$ and set $d(M) = \dim \gr(\D)/J$ (Krull dimension).  The ideal $J$, and hence the number $d(M)$, is independent of the choice of good filtration on $M$.  By \emph{Bernstein's theorem}, if $M \neq 0$ is a finitely generated left $\D$-module, we have $n \leq d(M) \leq 2n$.  In the case $d(M) = n$ we say that $M$ is \emph{holonomic}.  It is known that submodules and quotients of holonomic $\D$-modules are holonomic, an extension of a holonomic $\D$-module by another holonomic $\D$-module is holonomic, holonomic $\D$-modules are of finite length over $\D$, and holonomic $\D$-modules are cyclic (generated over $\D$ by a single element). 

Given any left $\D(R,k)$-module $M$, we can define its \emph{de Rham complex}.  This is a complex of length $n$, denoted $M \otimes \Omega_R^{\bullet}$ (or simply $\Omega_R^{\bullet}$ in the case $M = R$), whose objects are $R$-modules but whose differentials are merely $k$-linear.  It is defined as follows \cite[\S 1.6]{bjork}: for $0 \leq i \leq n$, $M \otimes \Omega^i_R$ is a direct sum of $n \choose i$ copies of $M$, indexed by $i$-tuples $1 \leq j_1 < \cdots < j_i \leq n$.  The summand corresponding to such an $i$-tuple will be written $M \, dx_{j_1} \wedge \cdots \wedge dx_{j_i}$.  

\begin{convention}\label{tensorsubscript}
The subscript $R$ in $\Omega_R^{\bullet}$ indicates over which ring the tensor products of objects are being taken.  To simplify notation, we will follow this convention when de Rham complexes over different rings are being simultaneously considered.
\end{convention}

The $k$-linear differentials $d^i: M \otimes \Omega_R^i \rightarrow M \otimes \Omega_R^{i+1}$ are defined by 
\[
d^i(m \,dx_{j_1} \wedge \cdots \wedge dx_{j_i}) = \sum_{s=1}^n \partial_s(m)\, dx_s \wedge dx_{j_1} \wedge \cdots \wedge dx_{j_i},
\]
with the usual exterior algebra conventions for rearranging the wedge terms, and extended by linearity to the direct sum.  The cohomology objects $h^i(M \otimes \Omega_R^{\bullet})$, which are $k$-spaces, are called the \emph{de Rham cohomology spaces} of the left $\D$-module $M$, and are denoted $H^i_{dR}(M)$.  In the case of a holonomic module, van den Essen proved that these spaces are finite-dimensional:

\begin{thm} \label{findimdR} \cite[Prop. 2.2]{essen}
If $M$ is a holonomic left $\D$-module, its de Rham cohomology $H^i_{dR}(M)$ is a finite-dimensional $k$-space for all $i$.
\end{thm}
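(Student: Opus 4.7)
The plan is to identify $H^i_{dR}(M)$ with an $\Ext$-group over $\D$ and then use a good filtration on $M$, together with the holonomicity bound $d(M)=n$, to bound the $k$-dimension of the resulting cohomology.

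First I would construct the Spencer (Koszul) resolution of $R$ as a left $\D$-module: since $\Der_k(R)$ is $R$-free of rank $n$ on the $\partial_i$, the complex $\D\otimes_R\wedge^\bullet\Der_k(R)\to R$ is a finite free resolution, and applying $\Hom_\D(-,M)$ reproduces the de Rham complex $M\otimes\Omega^\bullet_R$ term by term (using that $\Omega^i_R$ is the $R$-dual of $\wedge^i\Der_k(R)$). This gives $H^i_{dR}(M)\cong\Ext^i_\D(R,M)$, so the task is to bound these $\Ext$-groups.

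Next, since $M$ is holonomic it is finitely generated over $\D$, so I pick a good filtration $\{M(\nu)\}$. The induced filtration on the de Rham complex gives a spectral sequence whose $E_1$-page is the Koszul complex of $\gr(M)$ over $\gr(\D)=R[\zeta_1,\ldots,\zeta_n]$ with respect to $(\zeta_1,\ldots,\zeta_n)$. The naive hope that this $E_1$-page is already finite-dimensional fails: when $M=R$, one has $\gr(M)=R$ with the $\zeta_i$ acting as zero, making $E_1$ a shifted copy of $R$. The argument must therefore exploit genuine cancellation on higher pages, not just the size of the associated graded.

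The heart of the argument is a dimension-theoretic reduction exploiting holonomicity. The condition $d(M)=n$ says the characteristic variety $V(\sqrt{\Ann\gr(M)})\subset\Spec\gr(\D)$ has the minimum dimension allowed by Bernstein's inequality; being conic in the $\zeta$-directions, it projects to a closed subscheme of $\Spec R$ of dimension at most $n$. I would induct on this projected dimension: when it is zero, $M$ has finite length over $R$, so $M\otimes\Omega^\bullet_R$ is a complex of finite-dimensional $k$-spaces and the result is immediate. For the inductive step, I would use the Bernstein inequality together with long exact sequences of holonomic $\D$-modules arising from a filtration compatible with the characteristic-variety stratification to propagate finite-dimensionality through $H^\bullet_{dR}$.

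The main obstacle is that the de Rham differentials are only $k$-linear, so $R$-submodules of $M$ are not $\D$-submodules, and one cannot straightforwardly stratify $M$ by characteristic-variety support as a sequence of honest $\D$-submodules. Van den Essen's argument handles this by combining the Bernstein inequality with a delicate use of good filtrations to construct the required subquotients as genuine $\D$-modules; producing these subquotients, or equivalently exhibiting a finite-dimensional subcomplex of $M\otimes\Omega^\bullet_R$ that carries all the cohomology, is the key technical step.
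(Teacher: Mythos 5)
Your opening observations are sound: the Spencer resolution identification $H^i_{dR}(M)\cong\Ext^i_\D(R,M)$ is correct, and you are right that the spectral sequence from a good filtration is too crude (its $E_1$-page is not finite-dimensional). You also correctly locate the central difficulty: the de Rham differentials are only $k$-linear, so one cannot stratify $M$ by $R$-submodules and expect to stay in the world of $\D$-modules. But the proposal has both a gap and an error from that point on.

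The error is in the base case. If the characteristic variety of $M$ projects to the closed point of $\Spec R$, it does \emph{not} follow that $M$ has finite length over $R$. By Kashiwara's equivalence, such an $M$ is a finite direct sum of copies of $E=H^n_{\mathfrak{m}}(R)$, which is Artinian but of infinite length over $R$; in particular $M\otimes\Omega_R^{\bullet}$ is not a complex of finite-dimensional $k$-spaces. (Its de Rham cohomology \emph{is} finite-dimensional --- concentrated in degree $n$ --- but that is a computation, not an immediate consequence of finite length.) So even the base case requires an argument you have not supplied.

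The gap is the inductive step, which you acknowledge but do not fill, and the proposed induction variable (dimension of the projected characteristic variety) is not the one van den Essen uses. His induction is on the \emph{number of variables} $n$. The mechanism is the short exact sequence of complexes
\[
0 \to M\otimes\Omega_{R^j}^{\bullet}[-1] \to M\otimes\Omega_{R^{j+1}}^{\bullet} \to M\otimes\Omega_{R^j}^{\bullet} \to 0
\]
(as in Remark~\ref{partialdRj}), whose connecting homomorphism in the long exact cohomology sequence is $\pm\partial_{n-j}$. Passing one variable at a time, the partial de Rham cohomology $h^i(M\otimes\Omega_{R^j}^{\bullet})$ is shown to be a holonomic $\D_{n-j}$-module by combining two facts: the kernel of $\partial$ on a holonomic module is again holonomic over one fewer variable (Theorem~\ref{kernel}), and, after a change of variables arranging $x$-regularity (Lemma~\ref{coordchange}), so is the cokernel (Theorem~\ref{cokernel}). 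Iterating down to $j=n$ exhibits $H^i_{dR}(M)$ as a holonomic $\D_0=k$-module, i.e.\ a finite-dimensional $k$-space; this is exactly what Proposition~\ref{coords} packages. The characteristic-variety stratification you sketch is not part of this argument, and as written it is not clear how it would be made to interact with the $k$-linear (non-$\D$-linear) de Rham differentials any better than the naive good-filtration spectral sequence does --- which is precisely the obstacle you yourself flagged.
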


\begin{remark}\label{lcdmod}
An important family of examples of holonomic $\D$-modules is that of \emph{local cohomology modules}.  Our basic references for facts about local cohomology modules are Brodmann and Sharp \cite{brodmann} and SGA2 \cite{SGA2}.  If $R$ is a commutative ring and $I \subset R$ is an ideal, the functor $\Gamma_I$ of sections with support at $I$ is a left-exact functor on the category of $R$-modules (for an $R$-module $M$, $\Gamma_I(M)$ consists of those $m \in M$ annihilated by some power of $I$).  The local cohomology modules $H^i_I(M)$ are the right derived functors of $\Gamma_I = H^0_I$ evaluated at $M$.  There is a more general sheaf-theoretic formulation, given in \cite{SGA2}: if $X$ is a topological space, $Y \subset X$ is a locally closed subset, and $\mathcal{F}$ is a sheaf of Abelian groups on $X$, the local cohomology groups $H^i_Y(X, \mathcal{F})$ are obtained by evaluating at $\mathcal{F}$ the right derived functors of $\Gamma_Y$, the functor of global sections supported at $Y$.  The relationship between these two definitions in the case of an affine scheme $X$ is given below in Lemma \ref{E2shape}.  In the case of the ring $R = k[[x_1, \ldots, x_n]]$, Lyubeznik proved that for any ideal $I$, the local cohomology modules $H^i_{I}(R)$ have a natural structure of left $\D$-module \cite{lyubeznik}; \emph{cf.} \cite{mebkhout}.  Indeed, they are \emph{holonomic} $\D$-modules \cite[2.2(d)]{lyubeznik}, a fact which will repeatedly prove crucial for us.
\end{remark}

In addition to the standard long exact cohomology sequence for derived functors, there is another useful long exact sequence for local cohomology: the \emph{Mayer-Vietoris sequence} with respect to two ideals.

\begin{prop}\cite[Thm. 3.2.3]{brodmann}\label{mv}
Let $I$ and $J$ be ideals of a commutative ring $R$.  There is a long exact sequence of $R$-modules
\[
\cdots \rightarrow H^{i-1}_{I \cap J}(M) \rightarrow H^i_{I+J}(M) \rightarrow H^i_I(M) \oplus H^i_J(M) \rightarrow H^i_{I \cap J}(M) \rightarrow H^{i+1}_{I+J}(M) \rightarrow \cdots
\]
for every $R$-module $M$, functorial in $M$.
\end{prop}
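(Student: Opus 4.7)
The plan is to obtain the Mayer-Vietoris sequence as the long exact cohomology sequence of a short exact sequence of complexes of abelian groups, built by applying the four torsion functors $\Gamma_{I+J}$, $\Gamma_I$, $\Gamma_J$, $\Gamma_{I \cap J}$ to a single injective resolution $E^{\bullet}$ of $M$ over the (tacitly Noetherian) ring $R$. Since $H^i_K(M) = h^i(\Gamma_K(E^{\bullet}))$ for every ideal $K$, the resulting long exact sequence will have exactly the required four terms in each degree.

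The construction rests on two pointwise identities. Identity (i) is that for every $R$-module $N$, $\Gamma_{I+J}(N) = \Gamma_I(N) \cap \Gamma_J(N)$: one inclusion is clear, and the reverse follows from the containment $(I+J)^{2n} \subseteq I^n + J^n$, so any element killed by both $I^n$ and $J^n$ is killed by $(I+J)^{2n}$. Identity (ii) is that for an \emph{injective} $R$-module $E$, $\Gamma_{I \cap J}(E) = \Gamma_I(E) + \Gamma_J(E)$; the inclusion $\supseteq$ is automatic from $(I \cap J)^n \subseteq I^n \cap J^n$, and the content lies in the reverse inclusion, which I expect to be the main obstacle since it can fail for general modules and genuinely requires the injective hypothesis.

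To establish the nontrivial inclusion in (ii) I would invoke the Matlis structure theorem: over a Noetherian ring every injective module is a direct sum of indecomposable injectives $E(R/\mathfrak{p})$, on each of which $\Gamma_K$ acts as the identity or as the zero functor according as $K \subseteq \mathfrak{p}$ or not. Since $\mathfrak{p}$ is prime, $I \cap J \subseteq \mathfrak{p}$ if and only if $I \subseteq \mathfrak{p}$ or $J \subseteq \mathfrak{p}$, so on each summand the identity becomes tautological; it then extends to all of $E$ because each torsion functor commutes with arbitrary direct sums.

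With both identities in hand, I would define
\[
0 \to \Gamma_{I+J}(E^{\bullet}) \xrightarrow{\alpha} \Gamma_I(E^{\bullet}) \oplus \Gamma_J(E^{\bullet}) \xrightarrow{\beta} \Gamma_{I \cap J}(E^{\bullet}) \to 0
\]
by $\alpha(x) = (x,x)$ and $\beta(y,z) = y - z$ in each degree. These commute with the differentials by the functoriality of each $\Gamma_K$; identity (i) gives both injectivity of $\alpha$ and the equality $\im(\alpha) = \ker(\beta)$, while identity (ii) applied termwise to the injectives $E^i$ yields surjectivity of $\beta$. The cohomology long exact sequence of this short exact sequence of complexes is precisely the Mayer-Vietoris sequence, and functoriality in $M$ follows from the functoriality (up to homotopy) of injective resolutions together with the functoriality of each $\Gamma_K$.
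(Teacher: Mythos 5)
Your proof is correct, and since the paper only cites Brodmann--Sharp for this result without reproducing an argument, there is no in-paper proof to compare against; your construction is essentially the standard one found in the cited source. The two pointwise identities are exactly the right reduction, and your verifications are sound: the containment $(I+J)^{2n}\subseteq I^n+J^n$ (by pigeonhole on a product of $2n$ generators) yields $\Gamma_{I+J}=\Gamma_I\cap\Gamma_J$ on all modules; and on an injective $E$ the equality $\Gamma_{I\cap J}(E)=\Gamma_I(E)+\Gamma_J(E)$ follows from the Matlis decomposition $E\simeq\bigoplus E(R/\mathfrak{p})$ together with the facts that $\Gamma_K$ acts on $E(R/\mathfrak{p})$ as the identity or zero according to whether $K\subseteq\mathfrak{p}$, that $\mathfrak{p}$ prime forces $I\cap J\subseteq\mathfrak{p}$ to split as $I\subseteq\mathfrak{p}$ or $J\subseteq\mathfrak{p}$, and that torsion functors commute with arbitrary direct sums. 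The short exact sequence of complexes you write down then has exactly the required exactness termwise, and naturality of the connecting maps gives functoriality in $M$. The one thing worth flagging is the Noetherian hypothesis: the proposition as printed says only ``commutative ring,'' but Brodmann--Sharp work under a standing Noetherian assumption (which the rest of this paper also imposes), and your use of the Matlis structure theorem for injectives genuinely requires it; you correctly call this out as tacit, so there is no gap, only a hypothesis that deserves to be made explicit.
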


Finally, we recall the well-known fact that the de Rham complex of a $\D$-module is independent of the coordinates $x_1, \ldots, x_n$ for $R$.  (See \cite[Prop. 2.5]{nsexpos} for a proof.)

\begin{prop}\label{dRind}
If $R = k[[x_1, \ldots, x_n]]$ and $\D = \D(R,k)$, the de Rham complex of any left $\D$-module $M$ is independent of the chosen regular system of parameters $x_1, \ldots, x_n$ for $R$.
\end{prop}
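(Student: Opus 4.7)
The plan is to identify both the objects and the differentials of the de Rham complex with intrinsic (coordinate-free) constructions, and then recognize the formulas given in the paper as the expressions of these intrinsic objects in a specific basis.

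First I would recall that the module $\widehat{\Omega}^1_{R/k}$ of continuous K\"ahler differentials is a free $R$-module of rank $n$ on which there is a universal continuous $k$-derivation $d: R \to \widehat{\Omega}^1_{R/k}$, and that for \emph{any} regular system of parameters $x_1, \ldots, x_n$ the elements $dx_1, \ldots, dx_n$ form an $R$-basis. Consequently $\Omega^i_R := \wedge^i_R \widehat{\Omega}^1_{R/k}$ and $M \otimes \Omega^i_R := M \otimes_R \Omega^i_R$ are intrinsically defined; the direct-sum presentation in terms of wedges $dx_{j_1} \wedge \cdots \wedge dx_{j_i}$ used in the paper is simply the expansion in the associated free basis.

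Next I would check that the degree-zero differential is coordinate-free. Given a second regular system of parameters $y_1, \ldots, y_n$, both $\{dx_s\}$ and $\{dy_t\}$ are $R$-bases of $\widehat{\Omega}^1_{R/k}$, and the Jacobian matrices $J_{st} = \partial y_t/\partial x_s$ and $K_{tu} = \partial x_s/\partial y_u$ are mutually inverse, expressing $dx_s = \sum_t K_{st}\, dy_t$ and $\partial_s = \sum_t J_{st}\, \partial'_t$ in $\D$ (where $\partial'_t = \partial/\partial y_t$, a $k$-linear derivation of $R$ and therefore an element of $\D$). Since $M$ is a $\D$-module, these identities act on $M$ as well, and a direct substitution gives
\[
\sum_s \partial_s(m) \otimes dx_s = \sum_{t,u}\Bigl(\sum_s J_{st} K_{su}\Bigr)\partial'_t(m) \otimes dy_u = \sum_t \partial'_t(m) \otimes dy_t,
\]
so $d^0$ is intrinsic.

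Finally I would extend this to the higher differentials. Once $d^0$ is shown to be coordinate-free, the formula for $d^i$ in the paper is exactly the Leibniz-rule extension $d(m \otimes \omega) = d^0(m) \wedge \omega + m \otimes d_R\omega$, where $d_R$ denotes the intrinsic exterior derivative on $\Omega^\bullet_R$; since $d_R$ is itself coordinate-free on the free exterior algebra (its formula in the $\{dx_s\}$-basis transforms correctly to the $\{dy_t\}$-basis by the same Jacobian identity used above, and this is the classical calculation underlying the invariance of the smooth de Rham complex), the entire $d^i$ transforms correctly. I expect the main obstacle to be the bookkeeping in this exterior-algebra step: one must verify that the sign conventions and the wedge reorderings in the paper's definition of $d^i$ agree with the intrinsic Leibniz extension, which is essentially a direct but slightly tedious combinatorial check.
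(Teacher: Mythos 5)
Your proposal is correct and gives the expected elementary argument; the paper itself only cites a reference for this proposition, so there is no in-paper proof to compare against. Your Jacobian computation for $d^0$ is right, though note that the relation actually used is the chain-rule identity $\sum_s J_{st}K_{su}=\delta_{tu}$ (i.e.\ $J^{T}K=I$ rather than $JK=I$), and there is a harmless index typo in your definition of $K$. The one ingredient worth isolating explicitly in the ``tedious combinatorial check'' you defer is this: the Jacobian identity alone handles $d^0$, but the coordinate-invariance of $d_R$ on $\Omega^{i}_R$ for $i\geq 1$ (and hence, via your Leibniz reduction, of the higher $d^i$) additionally rests on the symmetry of mixed second partials. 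Concretely, pushing $d^1(m\,dx_j)$ through the change of basis and comparing with the $y$-coordinate formula leaves the residual term
\[
\sum_{t,u}\,\partial'_t\!\left(\frac{\partial x_j}{\partial y_u}\right)m\;dy_t\wedge dy_u,
\]
which vanishes only because $\partial^2 x_j/\partial y_t\partial y_u$ is symmetric in $t,u$ while $dy_t\wedge dy_u$ is antisymmetric. Naming this pins down the one non-formal step in the argument and is exactly what the ``classical calculation'' you cite comes down to; once it is in hand, the $\D$-module structure on $M$ is not needed beyond the identity $\partial_s=\sum_t J_{st}\partial'_t$ in $\D$, since the residual term is a statement about $R$ alone.
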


\subsection{Preliminaries on spectral sequences}\label{specseq} 

As we will be working with morphisms of spectral sequences, we collect some basic facts and definitions in this subsection concerning them.  References for this material include Weibel \cite[Ch. 5]{weibel} and EGA \cite[\S 11]{ega31}.  We will not need to consider convergence issues for unbounded spectral sequences and hence make no mention of such issues here.

\begin{definition}\label{ss}
Let $\mathcal{C}$ be an Abelian category.  A (cohomological) \emph{spectral sequence} consists of the following data: a family $\{E_r^{p,q}\}$ of objects of $\mathcal{C}$ (where $p,q \in \mathbb{Z}$ and $r \geq 1$ or $\geq 2$; with $r$ fixed and $p,q$ varying, we obtain the \emph{$E_r$-term} of the spectral sequence), and morphisms (the \emph{differentials}) $d_r^{p,q}: E_r^{p,q} \rightarrow E_r^{p+r,q-r+1}$ for all $p,q,r$ such that $d_r^{p,q} \circ d_r^{p-r,q+r-1} = 0$ and $\ker(d_r^{p,q})/\im(d_r^{p-r,q+r-1}) \xrightarrow{\sim} E_{r+1}^{p,q}$: a family of such isomorphisms, denoted $\alpha_r^{p,q}$, is part of the data of the spectral sequence.
\end{definition}

Let $E$ be a spectral sequence in an Abelian category $\mathcal{C}$, and suppose that for all $\l$ and for all $r$, there are only finitely many nonzero objects $E_r^{p,q}$ with $p+q=\l$.  Such a spectral sequence is called \emph{bounded}.  (For example, this occurs if $E_r^{p,q} = 0$ whenever $p$ or $q$ is negative, in which case $E$ is called a \emph{first-quadrant} spectral sequence.)  If $E$ is a bounded spectral sequence, for every pair $(p,q)$, there exists $r_0$ such that for all $r \geq r_0$, $d_r^{p,q}$ has zero target, $d_r^{p-r,q+r-1}$ has zero source, and so $E_{r+1}^{p,q} \simeq E_r^{p,q}$.  We denote this stable object by $E_{\infty}^{p,q}$.  We can now define the \emph{abutment} of such a spectral sequence.

\begin{definition}
Let $E$ be a bounded spectral sequence in an Abelian category $\mathcal{C}$.  Suppose we are given a family $E^m$ of objects of $\mathcal{C}$, all endowed with a finite decreasing filtration $E^m = E^m_s \supset E^m_{s+1} \supset \cdots \supset E^m_t = 0$, and for all $p$, an isomorphism $\beta^{p,m-p}: E_{\infty}^{p,m-p} \xrightarrow{\sim} E^m_p/E^m_{p+1}$.  Then we say that the spectral sequence \emph{abuts} or \emph{converges} to $\{E^m\}$ (the \emph{abutment}), and write $E_1^{p,q} \Rightarrow E^m$ or $E_2^{p,q} \Rightarrow E^m$.
\end{definition}  

For example, if $E$ is a first-quadrant spectral sequence with abutment $\{E^m\}$, every $E^m$ has a filtration of length $m+1$ (we take $s = 0$ and $t = m+1$ in the definition above), with $E^m/E^m_1 \simeq E_{\infty}^{0,m}$ and $E^m_m \simeq E_{\infty}^{m,0}$. 

Given two spectral sequences, there is a natural notion of a morphism between them, which consists of morphisms between the objects in the $E_r$-terms for all $r$, each of which induces its successor on cohomology.  There is also a natural notion of morphisms between bounded spectral sequences with given abutments.

\begin{definition}\label{sshom}
Let $E$ and $E'$ be two spectral sequences in $\mathcal{C}$ with respective differentials $d$ and $d'$.  A \emph{morphism} $u: E \rightarrow E'$ is a family of morphisms $u_r^{p,q}: E_r^{p,q} \rightarrow E_r^{'p,q}$ such that the $u_r^{p,q}$ are compatible with the differentials ($d_r^{'p,q} \circ u_r^{p,q} = u_r^{p+r,q-r+1} \circ d_r^{p,q}$ for all $p,q,r$) and the morphisms 
\[
\overline{u}_r^{p,q}: \ker(d_r^{p,q})/\im(d_r^{p-r,q+r-1}) \rightarrow \ker(d_r^{'p,q})/\im(d_r^{'p-r,q+r-1})
\] 
induced by $u_r^{p,q}$ commute with the given isomorphisms $\alpha_r^{p,q}$ (that is, $\alpha_r^{'p,q} \circ \overline{u}_r^{p,q} = \overline{u}_{r+1}^{p,q} \circ \alpha_r^{p,q}$) so that, in the appropriate sense, $u_{r+1}^{p,q}$ is the morphism induced by $u_r^{p,q}$.  If $E$ and $E'$ are bounded spectral sequences with abutments $\{E^m\}$ and $\{E'^m\}$, a \emph{morphism with abutments} between the spectral sequences is a morphism $u: E \rightarrow E'$ as just defined together with a family of morphisms $u^m: E^m \rightarrow E'^m$ compatible with the filtrations on $E^m$ and $E'^m$ such that, if we denote by $u_{\infty}^{p,q}$ the map induced by $u_1^{p,q}$ (or $u_2^{p,q}$) between the stable objects $E_{\infty}^{p,q}$ and $E_{\infty}^{'p,q}$, this map must commute with the isomorphisms $\beta^{p,q}$: if we denote by $u^m_p$ the morphism $E^m_p/E^m_{p+1} \rightarrow E^{'m}_p/E^{'m}_{p+1}$ induced by $u^m$, which is required to be filtration-compatible, we must have $\beta^{'p,q} \circ u_{\infty}^{p,q} = u^{p+q}_p \circ \beta^{p,q}$.
\end{definition}

\begin{convention}
For the remainder of this paper, every spectral sequence will be a bounded spectral sequence with abutment, and every morphism of spectral sequences will be a morphism with abutments.  Consequently, we suppress the phrase ``with abutment''.
\end{convention}

To show that two spectral sequences are \emph{isomorphic}, it suffices to construct a morphism between them which is an isomorphism on the objects of the initial ($r=1$ or $r=2$) terms.  This result is crucial to our work in both this section and in section \ref{cohom}, so we record a version here:

\begin{prop}\cite[Thm. 5.2.12]{weibel}\label{E1isos}
Let $\mathcal{C}$ be an Abelian category, and let $u = (u_r^{p,q}, u^n)$ be a morphism between two spectral sequences $E$, $E'$ in $\mathcal{C}$.  If there exists $r$ such that $u_r^{p,q}$ is an isomorphism for all $p$ and $q$, then $u_s^{p,q}$ is an isomorphism for all $p$ and $q$ and all $s \geq r$, and $u^m$ is an isomorphism for all $m$.  It follows that the abutments of $E$ and $E'$ are isomorphic as filtered objects.
\end{prop}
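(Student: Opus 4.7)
The plan is to proceed by induction on $s \geq r$ to propagate the isomorphism through every later page, then transport it across the finite filtration on the abutment via the five lemma. The compatibility clauses built into Definition \ref{sshom} ensure that each $u_{s+1}^{p,q}$ (resp.\ each graded piece of $u^m$) is forced to be exactly the map induced by $u_s^{p,q}$ (resp.\ by $u_\infty^{p,q}$) under the structural isomorphisms $\alpha$ and $\beta$, so the content is just that ``isomorphism'' survives the relevant subquotient and extension operations.

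For the induction step, assume $u_s^{p,q}$ is an isomorphism for all $p,q$. Commutativity of $u_s$ with the differentials $d_s$, $d_s'$ gives that $u_s^{p,q}$ restricts to an isomorphism $\ker(d_s^{p,q}) \xrightarrow{\sim} \ker(d_s^{'p,q})$ and that $u_s^{p,q}$ sends $\im(d_s^{p-s,q+s-1})$ isomorphically onto $\im(d_s^{'p-s,q+s-1})$ (using that $u_s$ is an isomorphism at the source of the incoming differential as well). Hence the induced map $\overline{u}_s^{p,q}$ on subquotients is an isomorphism, and the required identity $\alpha_s^{'p,q} \circ \overline{u}_s^{p,q} = u_{s+1}^{p,q} \circ \alpha_s^{p,q}$ from Definition \ref{sshom} forces $u_{s+1}^{p,q}$ to be an isomorphism as well. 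Because the spectral sequences are bounded, for each $(p,q)$ the terms $E_s^{p,q}$ and $E_s^{'p,q}$ stabilize to $E_\infty^{p,q}$ and $E_\infty^{'p,q}$ at some finite stage, with $u_\infty^{p,q}$ identified with $u_s^{p,q}$ there; so $u_\infty^{p,q}$ is an isomorphism for every $p,q$.

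For the abutment, fix $m$ and use the finite filtration $E^m = E^m_s \supset \cdots \supset E^m_t = 0$ (and similarly for $E'^m$). The compatibility $\beta^{'p,q} \circ u_\infty^{p,q} = u^{p+q}_p \circ \beta^{p,q}$ from Definition \ref{sshom} identifies the map induced by $u^m$ on the $p$-th graded piece with $u_\infty^{p,m-p}$, which is an isomorphism by the previous step. A descending induction on the filtration index, applied to the five lemma for the short exact sequences
\[
0 \to E^m_{p+1} \to E^m_p \to E^m_p / E^m_{p+1} \to 0
\]
(with base case $E^m_t = 0 = E'^m_t$), then shows that $u^m$ restricts to an isomorphism on each $E^m_p$; taking $p = s$ gives that $u^m$ itself is an isomorphism, and since it is filtration-compatible, it is an isomorphism of filtered objects, yielding the final clause of the proposition.

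Since this is a standard bookkeeping argument with no analytical content, there is no genuine obstacle; the only care required is to invoke the structural isomorphisms $\alpha_r^{p,q}$ and $\beta^{p,q}$ from Definition \ref{ss} and Definition \ref{sshom} at precisely the right places so that one never confuses a canonically defined object with its identification under the morphism of spectral sequences.
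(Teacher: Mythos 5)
The paper cites this proposition directly from Weibel (Thm.\ 5.2.12) and does not reproduce the proof, so there is no in-paper argument to compare against. Your proof is the standard one and is correct: the page-by-page induction (an isomorphism commuting with $d_s$ restricts to an isomorphism on kernels, carries $\im d_s$ onto $\im d_s'$, hence descends to an isomorphism on the subquotient identified with the next page via $\alpha_s^{p,q}$), stabilization to $u_\infty^{p,q}$ by boundedness, and the descending five-lemma induction on the finite filtration of the abutment.
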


There is also a notion of a \emph{degree-shifted} morphism of spectral sequences and a degree-shifted analogue of Proposition \ref{E1isos}, which we will make use of in this paper.  Again, for us, all spectral sequences will be bounded and all morphisms will be morphisms with abutments.

\begin{definition}\label{shiftmorphism}
Let $E$ and $E'$ be two spectral sequences in $\mathcal{C}$ with respective differentials $d$ and $d'$.  If $a,b \in \mathbb{Z}$, a \emph{morphism} $u: E \rightarrow E'$ \emph{of bidegree $(a,b)$} is a family of morphisms $u_r^{p,q}: E_r^{p,q} \rightarrow E_r^{'p+a,q+b}$ such that the $u_r^{p,q}$ are compatible with the differentials ($d_r^{'p+a,q+b} \circ u_r^{p,q} = u_r^{p+r,q-r+1} \circ d_r^{p,q}$ for all $p,q,r$) and $u_{r+1}^{p,q}$ is induced on cohomology by $u_r^{p,q}$.  If $E$ and $E'$ are bounded spectral sequences with abutments $\{E^m\}$ and $\{E'^m\}$, a \emph{morphism with abutments} between the spectral sequences \emph{of bidegree $(a,b)$} is a morphism $u: E \rightarrow E'$ of bidegree $(a,b)$ as just defined together with a family of morphisms $u^m: E^m \rightarrow E'^{m+a+b}$ such that $u^m(E^m_p) \subset E'^{m+a+b}_{p+a}$ for all $p$ and satisfying the obvious compatibility conditions analogous to those in the non-degree-shifted definition.
\end{definition}

The degree-shifted analogue of Proposition \ref{E1isos} is proved in exactly the same way, but in the conclusion (that the abutments are isomorphic as filtered objects), it is worth recording precisely which filtrations are being compared and what the corresponding degree shifts are:

\begin{prop}\label{E1shifted}
Let $\mathcal{C}$ be an Abelian category, and let $u = (u_r^{p,q}, u^n)$ be a morphism of bidegree $(a,b)$ between two spectral sequences $E$, $E'$ in $\mathcal{C}$.  If there exists $r$ such that $u_r^{p,q}$ is an isomorphism for all $p$ and $q$, then $u_s^{p,q}$ is an isomorphism for all $p$ and $q$ and all $s \geq r$, and $u^m$ is an isomorphism for all $m$.  This has the following consequence for the abutments: for all $m$, $E^m$, endowed with the filtration $\{E^m_p\}_{p=0}^{m+1}$ where $E^m_p/E^m_{p+1} \simeq E_{\infty}^{p,m-p}$, is isomorphic (as a filtered object) to $E'^{m+a+b}$, endowed with the filtration $\{E'^{m+a+b}_{p+a}\}_{p=0}^{m+1}$, where $E'^{m+a+b}_{p+a}/E'^{m+a+b}_{p+a+1} \simeq E_{\infty}^{'p+a,m+b-p}$.
\end{prop}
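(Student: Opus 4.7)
The plan is to mimic the proof of Proposition \ref{E1isos}, tracking the bidegree $(a,b)$ shift throughout. The argument breaks into three steps: (i) induct upward in $r$ to propagate the isomorphism through every later $E_s$-term, (ii) pass to the stable $E_\infty$-term using boundedness, and (iii) apply a five-lemma/filtration induction to conclude that $u^m$ is an isomorphism of filtered objects with the claimed degree shift.

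For step (i), assume inductively that $u_s^{p,q}\colon E_s^{p,q} \to E_s^{'p+a,q+b}$ is an isomorphism for all $p,q$. By Definition \ref{shiftmorphism}, the $u_s$'s commute with $d_s$ and $d_s'$ up to the $(a,b)$ shift on target indices, hence induce isomorphisms on the cohomology objects $\ker(d_s^{p,q})/\im(d_s^{p-s,q+s-1})$. The compatibility condition of Definition \ref{shiftmorphism} says precisely that $u_{s+1}^{p,q}$ equals this induced map, modulo the structural isomorphisms $\alpha_s, \alpha_s'$; hence $u_{s+1}^{p,q}$ is itself an isomorphism, and iterating yields step (i) for all $s \ge r$. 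Step (ii) is then automatic from boundedness: for each fixed $(p,q)$ the sequences $\{E_s^{p,q}\}$ and $\{E_s^{'p+a,q+b}\}$ stabilize after finitely many $s$ to $E_\infty^{p,q}$ and $E_\infty^{'p+a,q+b}$, and the stabilized map $u_\infty^{p,q}$ is an isomorphism.

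For step (iii), fix $m$ and consider the finite decreasing filtration $\{E^m_p\}$ with quotients $E^m_p/E^m_{p+1} \simeq E_\infty^{p,m-p}$, together with the shifted filtration $\{E'^{m+a+b}_{p+a}\}$ with quotients $E'^{m+a+b}_{p+a}/E'^{m+a+b}_{p+a+1} \simeq E_\infty^{'p+a,m+b-p}$. By Definition \ref{shiftmorphism}, $u^m$ carries $E^m_p$ into $E'^{m+a+b}_{p+a}$, and the map induced on successive quotients agrees (via the $\beta$-isomorphisms) with $u_\infty^{p,m-p}$, which is an isomorphism by step (ii). Starting from the trivial bottom piece $E^m_{m+1}=0=E'^{m+a+b}_{m+a+1}$ and decreasing $p$ one step at a time, the five lemma applied to the commutative diagram with rows $0 \to E^m_{p+1} \to E^m_p \to E^m_p/E^m_{p+1} \to 0$ and its target analogue yields that the restriction $u^m|_{E^m_p}\colon E^m_p \to E'^{m+a+b}_{p+a}$ is an isomorphism for every $p$. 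Taking $p$ at the top of the filtration gives the desired isomorphism of filtered objects.

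The main obstacle is essentially absent: the proof is a direct translation of the unshifted argument, and the only care required is verifying that the $(a,b)$ shift is transported correctly through each compatibility statement in Definition \ref{shiftmorphism} and each invocation of the filtrations on $E^m$ and $E'^{m+a+b}$.
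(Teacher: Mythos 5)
Your proof is correct and is essentially the argument the paper has in mind: the paper simply asserts that Proposition \ref{E1shifted} is ``proved in exactly the same way'' as Proposition \ref{E1isos} (Weibel's comparison theorem), which is precisely your upward induction through the $E_s$-terms, stabilization at $E_\infty$, and downward filtration induction via the five lemma. The one small point worth noting is that the vanishing $E'^{m+a+b}_{m+a+1}=0$ needed to start the filtration induction is not automatic from the definition of the filtration on $E'^{m+a+b}$ but follows from the isomorphisms $u_\infty^{p,q}$ (which force $E_\infty^{'p',q'}=0$ whenever $p'<a$ or $q'<b$), so the filtration on $E'^{m+a+b}$ really does live in the shifted range $[a,\,m+a+1]$.
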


Double complexes are a common source of spectral sequences: the cohomology of the totalization of a double complex can be approximated, and in some cases even computed, by the objects in the early terms of either of two spectral sequences associated with the double complex.  To be precise, let $K^{\bullet, \bullet}$ be a double complex in an Abelian category $\mathcal{C}$, which we think of abusively as the ``$E_0$-term'' of a spectral sequence, and let $T^{\bullet}$ be its totalization.  Our conventions for double complexes are those of EGA: the horizontal ($d_h^{\bullet, \bullet}$) and vertical ($d_v^{\bullet, \bullet}$) differentials of $K^{\bullet, \bullet}$ commute, we define $T^i = \oplus_{p+q=i}K^{p,q}$, and the differentials of $T^{\bullet}$ require signs, namely $d(x) = d_h(x) + (-1)^pd_v(x)$ for $x \in K^{p,q}$.  The two spectral sequences associated with $K^{\bullet, \bullet}$ \cite[\S 11.3]{ega31} are the \emph{column-filtered} (``vertical differentials first'') spectral sequence, for which $E_1^{p,q} = h^q_v(K^{p,\bullet})$ (and the differentials are those induced on vertical cohomology by the maps $d_h^{p,q}$), and the \emph{row-filtered} (``horizontal differentials first'') spectral sequence, for which $E_1^{p,q} = h^p_h(K^{\bullet,q})$ (and the differentials are those induced on horizontal cohomology by the maps $d_v^{p,q}$).  Both have $h^{p+q}(T^{\bullet})$, the cohomology of the totalization, for their abutment.  A morphism $K^{\bullet, \bullet} \rightarrow K^{'\bullet, \bullet}$ of double complexes induces morphisms between their column-filtered spectral sequences as well as between their row-filtered spectral sequences \cite[p. 30]{ega31}.

The spectral sequences of a double complex are useful for computing hyperderived functors of left-exact functors between Abelian categories.  Suppose $\mathcal{A}, \mathcal{B}$ are Abelian categories, $\mathcal{A}$ has enough injective objects, and $F: \mathcal{A} \rightarrow \mathcal{B}$ is a left-exact additive functor.  If $K^{\bullet}$ is a complex with differential $d$ in $\mathcal{A}$, the (right) hyperderived functors of $F$ evaluated at $K^{\bullet}$ are defined as follows \cite[\S 11.4]{ega31}: if $K^{\bullet} \rightarrow I^{\bullet}$ is a quasi-isomorphism and $I^{\bullet}$ is a complex of injective objects in $\mathcal{A}$, then $\mathbf{R}^iF(K^{\bullet}) = h^i(F(I^{\bullet}))$, and the objects of $\mathcal{B}$ thus obtained are independent of the choice of $I^{\bullet}$.  Such a complex $I^{\bullet}$ can be produced as the totalization of a \emph{Cartan-Eilenberg resolution} of $K^{\bullet}$, which is a double complex $J^{\bullet, \bullet}$ with differentials $d_h,d_v$ such that every $J^{p,q}$ is an injective object of $\mathcal{A}$ and, for all $p$, $J^{p,\bullet}$ (resp. $\ker(d_v^{p,\bullet})$, $\im(d_v^{p,\bullet})$, $h^{\bullet}_v(J^{p,\bullet})$) is an injective resolution of $K^p$ (resp. $\ker(d^p)$, $\im(d^p)$, $h^p(K^{\bullet})$).  It follows that $\mathbf{R}^iF(K^{\bullet})$ is the cohomology of the totalization of the double complex $F(J^{\bullet, \bullet})$, and so, by the previous paragraph, we have two spectral sequences whose abutment is this cohomology.  For example, the column-filtered spectral sequence begins $E_1^{p,q} = h^q_v(F(J^{p,\bullet}))$ and has abutment $\mathbf{R}^{p+q}F(K^{\bullet})$.  But since $J^{p,\bullet}$ is an injective resolution of $K^p$, we see that $h^q_v(F(J^{p,\bullet})) = R^qF(K^p)$, the ordinary $q$th right derived functor of $F$ applied to $K^p$; this is the form in which the ``first'' hyperderived functor spectral sequence is usually given \cite[11.4.3.1]{ega31}.  

Now suppose $K^{\bullet}$, $K'^{\bullet}$ are complexes in $\mathcal{A}$ with respective Cartan-Eilenberg resolutions $J^{\bullet, \bullet}$, $J'^{\bullet, \bullet}$.  A morphism of complexes $f: K^{\bullet} \rightarrow K'^{\bullet}$ induces a morphism of double complexes $J^{\bullet, \bullet} \rightarrow J'^{\bullet, \bullet}$ which is unique up to homotopy \cite[p. 33]{ega31}.  This implies that $f$ induces a well-defined morphism between the spectral sequences for the hyperderived functors of $F$ evaluated at $K^{\bullet}$ and at $K'^{\bullet}$ \cite[p. 30]{ega31}, since two double complex morphisms that are chain homotopic induce the same morphisms on horizontal and vertical cohomology, hence the same spectral sequence morphisms.  By taking $K'^{\bullet} = K^{\bullet}$ and $f$ to be the identity, we see that the isomorphism class of the spectral sequence for $F$ evaluated at $K^{\bullet}$ is independent of the Cartan-Eilenberg resolution.

Later in this section, we will need to build a spectral sequence for hyperderived functors using a double complex that is not a Cartan-Eilenberg resolution of the original complex, and for this purpose, the following comparison lemma will be useful.

\begin{lem}\label{sscomp}
Let $\mathcal{A}$ be an Abelian category with enough injective objects, $\mathcal{B}$ another Abelian category, and $F: \mathcal{A} \rightarrow \mathcal{B}$ a left-exact additive functor.  Suppose that $K^{\bullet}$ is a complex in $\mathcal{A}$, concentrated in degrees $p \geq 0$, and that $L^{\bullet, \bullet}$ is a double complex in $\mathcal{A}$ whose objects $L^{p,q}$ are all $F$-acyclic and such that, for all $p \geq 0$, $L^{p,\bullet}$ is a resolution of $K^p$.  Then the first spectral sequence for the hyperderived functors of $F$ applied to $K^{\bullet}$, which begins $E_1^{p,q} = R^qF(K^p)$ and has $\mathbf{R}^{p+q}F(K^{\bullet})$ for its abutment, is isomorphic to the column-filtered spectral sequence of the double complex $F(L^{\bullet, \bullet})$.
\end{lem}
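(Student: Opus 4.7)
The plan is to construct a morphism between the two spectral sequences in question that is an isomorphism on $E_1$-terms, then invoke Proposition \ref{E1isos}.

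First I would match the ingredients of the two spectral sequences. The column-filtered spectral sequence of $F(L^{\bullet,\bullet})$ has $E_1^{p,q} = h^q_v(F(L^{p,\bullet}))$, and since each $L^{p,q}$ is $F$-acyclic and $L^{p,\bullet}$ is a resolution of $K^p$, this cohomology equals $R^q F(K^p)$, matching the $E_1$-term of the first hyperderived functor spectral sequence. For the abutment, I would show that the totalization $T(L^{\bullet,\bullet})$ is a bounded-below complex of $F$-acyclic objects (finite direct sums of $F$-acyclics are $F$-acyclic by additivity) that is quasi-isomorphic to $K^{\bullet}$. The quasi-isomorphism comes from the standard trick of forming an augmented double complex by appending $K^{\bullet}$ as a new row via the augmentations $K^p \hookrightarrow L^{p,0}$; each column is then exact, so the totalization of the augmented object is acyclic, which exhibits the natural map $K^{\bullet} \to T(L^{\bullet,\bullet})$ as a quasi-isomorphism. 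Consequently $h^{p+q}(F(T(L^{\bullet,\bullet}))) = \mathbf{R}^{p+q}F(K^{\bullet})$, matching the desired abutment.

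Next I would fix a Cartan-Eilenberg resolution $J^{\bullet,\bullet}$ of $K^{\bullet}$; by definition the first hyperderived functor spectral sequence is the column-filtered spectral sequence of $F(J^{\bullet,\bullet})$. The core of the proof is to build a morphism of double complexes $\phi: L^{\bullet,\bullet} \to J^{\bullet,\bullet}$ extending the identity on $K^{\bullet}$ at the augmentation level. Column by column, the standard comparison theorem for injective resolutions supplies chain maps $\phi^{p,\bullet}: L^{p,\bullet} \to J^{p,\bullet}$ lifting $\mathrm{id}_{K^p}$, using that $J^{p,\bullet}$ is an injective resolution and that $L^{p,\bullet}$ is a resolution of $K^p$. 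Compatibility with horizontal differentials is then arranged inductively in $p$ by a further lifting through injectivity (or, alternatively, by interposing a triple complex obtained as a Cartan-Eilenberg resolution of $L^{\bullet,\bullet}$ itself and comparing through it).

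Applying $F$ to $\phi$ yields a morphism of double complexes $F(L^{\bullet,\bullet}) \to F(J^{\bullet,\bullet})$, hence a morphism of their column-filtered spectral sequences, and (on cohomology of totalizations) a morphism of abutments compatible with filtrations. At the $E_1$-level, this morphism is the natural map $h^q(F(L^{p,\bullet})) \to h^q(F(J^{p,\bullet}))$ induced by $\phi^{p,\bullet}$, which by uniqueness of derived functors (comparing an $F$-acyclic resolution with an injective resolution of the same object $K^p$) is an isomorphism onto $R^q F(K^p)$. Proposition \ref{E1isos} then gives that the morphism is an isomorphism at every $E_r$-term for $r \geq 1$ and induces a filtered isomorphism of abutments, completing the proof.

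The main obstacle is the construction of $\phi$ with strict (not just homotopic) compatibility with both horizontal and vertical differentials; the naive column-by-column lift is only unique up to chain homotopy, and the passage to a genuine morphism of double complexes is the subtle step. The cleanest workaround is to introduce an auxiliary triple complex that serves as a Cartan-Eilenberg-type resolution of $L^{\bullet,\bullet}$ and run the comparison through it, reducing the problem to two comparisons between resolutions of a single complex where injectivity handles the lifting automatically.
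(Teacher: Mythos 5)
You correctly identify the two $E_1$-identifications and the abutment comparison, and you correctly identify the central difficulty: producing a \emph{strict} morphism of double complexes $\phi: L^{\bullet,\bullet} \to J^{\bullet,\bullet}$ rather than merely a collection of column-by-column lifts unique up to homotopy. However, your proposed resolution of that difficulty is where the gap lies. The ``inductively in $p$ by a further lifting through injectivity'' step does not go through as stated: once $\phi^{p,\bullet}$ is fixed, the desired $\phi^{p+1,\bullet}$ must simultaneously lift $\mathrm{id}_{K^{p+1}}$ through the augmentation \emph{and} satisfy $\phi^{p+1,\bullet} \circ d_h^L = d_h^J \circ \phi^{p,\bullet}$; the second condition constrains $\phi^{p+1,\bullet}$ on the image of $d_h^L$, which need not be contained in (or compatible with) the augmentation in any way that the standard comparison theorem controls. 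Injectivity of $J^{p+1,q}$ lets you extend a map defined on a subobject, but the two constraints are not obviously jointly realizable as a single map on a subobject of $L^{p+1,\bullet}$, so the lifting is not automatic. Your alternative --- a Cartan--Eilenberg-type triple complex resolving $L^{\bullet,\bullet}$ --- is in the right spirit (interpose a third object) but is only named, not constructed; it would still require a nontrivial argument that it meets the comparison hypotheses.

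The paper circumvents this by a cleaner device: it works in the Abelian category $\mathcal{C}^+$ of complexes in $\mathcal{A}$ concentrated in degrees $\geq 0$, which has enough injectives (and injective objects of $\mathcal{C}^+$ are degreewise injective in $\mathcal{A}$). It chooses an injective resolution $I^{\bullet,\bullet}$ of $K^{\bullet}$ \emph{as an object of} $\mathcal{C}^+$, then observes that both $J^{\bullet,\bullet}$ and $L^{\bullet,\bullet}$ are resolutions of $K^{\bullet}$ in $\mathcal{C}^+$, so the ordinary comparison lemma in $\mathcal{C}^+$ yields morphisms $J^{\bullet,\bullet} \to I^{\bullet,\bullet}$ and $L^{\bullet,\bullet} \to I^{\bullet,\bullet}$ extending $\mathrm{id}_{K^{\bullet}}$. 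A morphism in $\mathcal{C}^+$ is by definition strictly compatible with both differentials, so the double-complex compatibility you were trying to engineer by hand falls out for free. Applying $F$ and checking the $E_1$-levels (for $J \to I$: two injective resolutions of each $K^p$; for $L \to I$: an $F$-acyclic resolution versus an injective one, using the uniqueness of derived functors for acyclic resolutions) one gets $E_1$-isomorphisms, and Proposition \ref{E1isos} finishes. Note that the paper never constructs a direct morphism $L \to J$; it compares both to the auxiliary $I$. To repair your proof you would need to replace the direct $L \to J$ step with a construction of this kind, or else fully develop your triple-complex alternative.
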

	
\begin{proof}
By definition, the first spectral sequence is the column-filtered spectral sequence of the double complex $F(J^{\bullet, \bullet})$, where $J^{\bullet, \bullet}$ is a choice of Cartan-Eilenberg resolution of $K^{\bullet}$ in $\mathcal{A}$.  The assertion of the lemma is that we can replace $J^{\bullet, \bullet}$ with the resolution $L^{\bullet, \bullet}$, which is generally not a Cartan-Eilenberg resolution and whose objects may not even be injective.  

Our strategy will be to compare both of these double complexes to a third one.  Let $\mathcal{C}^+$ denote the category of complexes in $\mathcal{A}$ that are concentrated in degrees $p \geq 0$.  Then $\mathcal{C}^+$ is an Abelian category with enough injective objects, and if $I^{\bullet} \in \mathcal{C}^+$ is injective, then $I^p$ is an injective object of $\mathcal{A}$ for all $p$ \cite[Thms. 10.42, 10.43; Remark, p. 652]{rotman}.  

We return now to the complex $K^{\bullet}$.  Choose an injective resolution $0 \rightarrow K^{\bullet} \rightarrow I^{\bullet, \bullet}$ of $K^{\bullet}$ in $\mathcal{C}^+$.  In particular, $I^{\bullet, \bullet}$ is a double complex of injective objects in $\mathcal{A}$.  Now note that the two double complex resolutions $J^{\bullet, \bullet}$ and $L^{\bullet, \bullet}$ can also be regarded as resolutions of $K^{\bullet}$ in the category $\mathcal{C}^+$.  Any resolution in $\mathcal{C}^+$ can be compared with an injective one by \cite[Lemma XX.5.2]{lang}: there exist morphisms $J^{\bullet, \bullet} \rightarrow I^{\bullet, \bullet}$ and $L^{\bullet, \bullet} \rightarrow I^{\bullet, \bullet}$ extending the identity on $K^{\bullet}$ and unique up to homotopy as maps in $\mathcal{C}^+$.  These morphisms of double complexes induce morphisms between the column-filtered spectral sequences corresponding to the double complexes after applying the functor $F$.  To finish the proof, by Proposition \ref{E1isos}, it is enough to check that these morphisms of spectral sequences are isomorphisms at the $E_1$-level.  We first consider the morphism $F(J^{\bullet, \bullet}) \rightarrow F(I^{\bullet, \bullet})$.  For all $p$, $J^{p, \bullet} \rightarrow I^{p,\bullet}$ is a morphism between two injective resolutions of $K^p$ extending the identity on $K^p$, which induces an isomorphism $h^q(F(J^{p,\bullet})) \xrightarrow{\sim} h^q(F(I^{p,\bullet}))$, both sides being equal to $R^qF(K^p)$ by definition and being the $E_1^{p,q}$-terms of the respective spectral sequences.  In the case of the morphism $F(L^{\bullet, \bullet}) \rightarrow F(I^{\bullet, \bullet})$, we do not have injective resolutions of $K^p$ (only $F$-acyclic ones) on the left-hand side, but by \cite[Thm. XX.6.2]{lang}, this is enough: the $L^{p,\bullet} \rightarrow I^{p,\bullet}$ also give rise to isomorphisms after applying $F$ and taking cohomology.  We conclude that the three column-filtered spectral sequences corresponding to the double complexes $F(J^{\bullet, \bullet})$, $F(I^{\bullet, \bullet})$, and $F(L^{\bullet, \bullet})$ are isomorphic beginning with their $E_1$-terms, completing the proof.
\end{proof}

We will need one more type of spectral sequence, the \emph{Grothendieck composite-functor spectral sequence}:

\begin{prop}\cite[Thm. 5.8.3]{weibel}\label{compositess}
Let $\mathcal{A}$, $\mathcal{B}$, and $\mathcal{C}$ be Abelian categories, and suppose $\mathcal{A}$ and $\mathcal{B}$ have enough injective objects.  Let $F: \mathcal{A} \rightarrow \mathcal{B}$ and $G: \mathcal{B} \rightarrow \mathcal{C}$ be left-exact additive functors.  Suppose that for every injective object $I$ of $\mathcal{A}$, the object $F(I)$ of $\mathcal{B}$ is acyclic for $G$.  Then for every object $A$ of $\mathcal{A}$, there is a spectral sequence which begins $E_2^{p,q} = (R^pG)((R^qF)(A))$ and abuts to $R^{p+q}(G \circ F)(A)$.
\end{prop}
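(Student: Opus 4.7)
The plan is to apply the machinery of the two spectral sequences associated with a double complex, building a double complex from an injective resolution of $A$ in $\mathcal{A}$ together with a Cartan-Eilenberg resolution in $\mathcal{B}$. First I would fix an injective resolution $0 \to A \to I^\bullet$ of $A$ in $\mathcal{A}$, and form the complex $F(I^\bullet)$ in $\mathcal{B}$. Since every $I^p$ is injective in $\mathcal{A}$, the hypothesis of the proposition tells us that every $F(I^p)$ is $G$-acyclic. Next I would choose a Cartan-Eilenberg resolution $F(I^\bullet) \to J^{\bullet,\bullet}$ of this complex in $\mathcal{B}$ and consider the double complex $G(J^{\bullet,\bullet})$, whose totalization computes the hyperderived functors $\mathbf{R}^n G(F(I^\bullet))$.

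The key is now to exploit the two spectral sequences of the double complex $G(J^{\bullet,\bullet})$ (both abutting to the cohomology of its totalization, namely $\mathbf{R}^n G(F(I^\bullet))$) in opposite ways. The column-filtered spectral sequence begins $E_1^{p,q} = h^q_v(G(J^{p,\bullet})) = R^q G(F(I^p))$, because $J^{p,\bullet}$ is an injective resolution of $F(I^p)$. By $G$-acyclicity of $F(I^p)$ this vanishes for $q > 0$, so the spectral sequence collapses onto its bottom row, and one reads off
\[
\mathbf{R}^n G(F(I^\bullet)) \simeq h^n(G(F(I^\bullet))) = h^n((G \circ F)(I^\bullet)) = R^n(G \circ F)(A),
\]
the last equality because $I^\bullet$ is an injective, hence in particular $F$-adapted, resolution of $A$.

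The row-filtered spectral sequence of the same double complex then produces the desired $E_2$-term. By the defining properties of a Cartan-Eilenberg resolution, the horizontal cohomology $h^p_h(J^{\bullet,q})$ for fixed $q$ assembles, as $q$ varies, into an injective resolution of $h^p(F(I^\bullet)) = R^p F(A)$. Applying $G$ and then taking vertical cohomology therefore yields
\[
E_2^{p,q} = R^p G(h^q(F(I^\bullet))) = (R^p G)((R^q F)(A)),
\]
with abutment $\mathbf{R}^{p+q} G(F(I^\bullet)) = R^{p+q}(G \circ F)(A)$ by the previous paragraph. Independence of the various choices of resolutions follows by the standard uniqueness-up-to-homotopy arguments for Cartan-Eilenberg and injective resolutions, exactly as invoked at the end of subsection \ref{specseq}.

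The step I expect to require the most care is the identification of the $E_2$-term of the row-filtered spectral sequence with $(R^p G)((R^q F)(A))$: this rests on the Cartan-Eilenberg property that taking horizontal cohomology of $J^{\bullet,\bullet}$ row-wise produces injective resolutions of the cohomology objects of $F(I^\bullet)$, which is a structural feature of Cartan-Eilenberg resolutions that must be invoked carefully. Once that identification is in place, the rest is a direct application of the formalism of the two spectral sequences of a double complex as laid out in subsection \ref{specseq}.
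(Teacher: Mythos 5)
Your proposal is the standard proof — the same one Weibel gives (and that the paper cites rather than reproves): resolve $A$ injectively, apply $F$, take a Cartan--Eilenberg resolution of $F(I^\bullet)$, apply $G$, and play the two spectral sequences of the resulting double complex against each other. The collapse of the column-filtered spectral sequence via $G$-acyclicity of the $F(I^p)$ is correct and cleanly stated, and the identification of the abutment as $R^\bullet(G\circ F)(A)$ follows as you say.

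The one step you flag as delicate is indeed the right one to worry about, but you have not quite isolated what makes it work. Knowing that the rows $h^p_h(J^{\bullet,q})$ (as $q$ varies) form injective resolutions of $h^p(F(I^\bullet)) = R^pF(A)$ is \emph{not} by itself enough to compute the $E_2$-term: the spectral sequence requires $h^p_h$ of $G(J^{\bullet,q})$, whereas what you describe computes $G$ of $h^p_h(J^{\bullet,q})$. The missing ingredient is that for a Cartan--Eilenberg resolution, the objects $Z^p_h(J^{\bullet,q})$, $B^p_h(J^{\bullet,q})$, and $h^p_h(J^{\bullet,q})$ are all injective, so the short exact sequences defining horizontal cohomology in each row are \emph{split}; applying the additive functor $G$ therefore commutes with horizontal cohomology, giving $h^p_h(G(J^{\bullet,q})) \cong G\bigl(h^p_h(J^{\bullet,q})\bigr)$. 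Only after this identification can you take vertical cohomology of an injective resolution and recover a derived functor of $G$. This is a short but genuine lemma (Weibel uses it in his 5.7.9) and should be stated. Separately, note that with the row-filtered convention as set up in subsection \ref{specseq}, the natural output of your argument is $E_2^{p,q} = (R^qG)(R^pF(A))$; the form $(R^pG)(R^qF(A))$ in the statement is obtained either by a transposition of indices or by filtering the other way, so you should be explicit about which convention you are using when you write the final identification.
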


\begin{example}\label{lccomposite}
For our purposes, the most important example of a composite-functor spectral sequence is the spectral sequence for \emph{iterated local cohomology}.  Let $R$ be a Noetherian ring, and let $I$ and $J$ be ideals of $R$.  If $\mathcal{I}$ is an injective $R$-module, then $\Gamma_J(\mathcal{I})$ is again injective \cite[Lemma III.3.2]{hartshorneAG}, hence acyclic for the functor $\Gamma_I$.  It follows that the left-exact functors $F = \Gamma_J$ and $G = \Gamma_I$ satisfy the conditions of Proposition \ref{compositess}.  Since $R$ is Noetherian, $\Gamma_I \circ \Gamma_J = \Gamma_{I+J}$.  For any $R$-module $M$, the corresponding spectral sequence for the derived (local cohomology) functors begins $E_2^{p,q} = H^p_I(H^q_J(M))$ and abuts to $H^{p+q}_{I+J}(M)$.
\end{example}

\subsection{The proof of Theorem \ref{mainthmA}}\label{thmApf} 

We now recall from \cite{derham} Hartshorne's definition of de Rham homology for a complete local ring.  Let $k$ be a field of characteristic zero and $A$ be a complete local ring with coefficient field $k$.  By Cohen's structure theorem, there exists a surjection of $k$-algebras $\pi: R \rightarrow A$ where $R = k[[x_1, \ldots, x_n]]$ for some $n$.  Let $I \subset R$ be the kernel of this surjection.  We have a corresponding closed immersion $Y \hookrightarrow X$ where $Y = \Spec(A)$ and $X = \Spec(R)$.  The \emph{de Rham homology} of the local scheme $Y$ is defined as $H_i^{dR}(Y) = \mathbf{H}_Y^{2n-i}(X, \Omega^{\bullet}_X)$, the hypercohomology (with support at $Y$) of the \emph{continuous} de Rham complex of sheaves of $k$-spaces on $X$.  Here, the sheaf $\Omega^1_X$ is free of rank $n$ with basis $dx_1, \ldots, dx_n$, and the other sheaves in the complex are its corresponding exterior powers.  In fact, the complex $\Omega_X^{\bullet}$ is the sheafified version of the de Rham complex $\Omega_R^{\bullet}$ of the left $\D(R,k)$-module $R$ as defined in subsection \ref{dmodprelim}.

The de Rham homology spaces defined above are known to be independent of the choice of $R$ and $\pi$ \cite[Prop. III.1.1]{derham} and to be finite-dimensional $k$-spaces \cite[Thm. III.2.1]{derham}.  In this section we give arguments for the embedding-independence and the finiteness which are purely local and provide new information.  Recall from section \ref{intro} that the \emph{Hodge-de Rham} spectral sequence for homology has $E_1$-term given by $E_1^{n-p, n-q} = H^{n-q}_Y(X, \Omega_X^{n-p})$ and abuts to $H_{p+q}^{dR}(Y)$.  (When needed, we will write $\{E^{n-p,n-q}_{r,R}\}$ for this spectral sequence, recording the dependence on the base ring $R$.) The assertion of Theorem \ref{mainthmA} is that, beginning with the $E_2$-term, this spectral sequence consists of finite-dimensional $k$-spaces and its isomorphism class is independent (up to a bidegree shift) of $R$ and $\pi$; this immediately recovers the embedding-independence and finiteness for the abutment $H_{*}^{dR}(Y)$.  To make the line of argument clearer, we give the proof first for the $E_2$-term only (Proposition \ref{E2case}), then explain the additional steps needed to make the basic strategy work for the rest of the spectral sequence.

\begin{lem}\label{E2shape}
Let the surjection $\pi: R = k[[x_1, \ldots, x_n]] \rightarrow A$ (and the associated objects $I, X, Y$) be as above, and $\{E_r^{p,q}\}$ the corresponding Hodge-de Rham spectral sequence for homology.
\begin{enumerate}[(a)]
\item For all $q$, $H^q_Y(X, \mathcal{O}_X) \simeq H^q_I(R)$ as $R$-modules; indeed, if $M$ is any $R$-module and $\mathcal{F}$ the associated quasi-coherent sheaf on $X$, we have $H^q_Y(X, \mathcal{F}) \simeq H^q_I(M)$.
\item For all $p$ and $q$, we have
\[
E_2^{p,q} \simeq H^p_{dR}(H^q_Y(X, \mathcal{O}_X)) \simeq H^p_{dR}(H^q_I(R))
\]
as $k$-spaces, where the $\D$-module structure on $H^q_I(R) \simeq H^q_Y(X, \mathcal{O}_X)$ is defined as in Remark \ref{lcdmod}.
\end{enumerate}
\end{lem}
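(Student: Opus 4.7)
For part (a), the plan is to invoke the standard comparison between sheaf-theoretic local cohomology on an affine Noetherian scheme and algebraic local cohomology of the underlying module. The functors $\Gamma_Y(X,-)$ on quasi-coherent sheaves and $\Gamma_I(-)$ on $R$-modules correspond under $M \leftrightarrow \widetilde{M}$, and since the tilde functor preserves injectives over a Noetherian ring, this correspondence descends to right derived functors. I would simply cite the appropriate result in SGA2 (or in Brodmann--Sharp).

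For part (b), the plan is to identify the entire $E_1$-row (for each fixed $q$) of the Hodge-de Rham spectral sequence with the de Rham complex of the left $\D$-module $H^q_I(R)$. First, using part (a) and the fact that $\Omega_R^p$ is a free $R$-module of rank $\binom{n}{p}$, I would write
\[
E_1^{p,q} = H^q_Y(X, \Omega_X^p) \simeq H^q_I(\Omega_R^p) \simeq H^q_I(R) \otimes_R \Omega_R^p.
\]
The $E_1$-differential $d_1^{p,q}: E_1^{p,q} \to E_1^{p+1,q}$ is induced, by the construction of the first hypercohomology spectral sequence, by the $k$-linear de Rham differential $\Omega_X^p \to \Omega_X^{p+1}$; my goal would then be to verify that, under the identification above, this becomes precisely the de Rham differential of the $\D$-module $H^q_I(R)$, namely $[\xi] \otimes dx_J \mapsto \sum_s [\partial_s \xi] \otimes dx_s \wedge dx_J$. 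Granting this, each $E_1$-row is a de Rham complex $H^q_I(R) \otimes \Omega_R^\bullet$, and taking horizontal cohomology gives $E_2^{p,q} \simeq H^p_{dR}(H^q_I(R))$.

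The hard part will be the compatibility check for the $E_1$-differential: the de Rham differential is merely $k$-linear (not $R$-linear), so the identification cannot be formal, and one must match the map induced on local cohomology with the Lyubeznik $\D$-action. I would handle this at the \v{C}ech level: choosing generators $f_1, \ldots, f_r$ for $I$ and computing $H^q_I$ via the \v{C}ech complex $\check{C}^\bullet(f_1, \ldots, f_r; R)$, whose terms are localizations $R_{f_{i_1}\cdots f_{i_k}}$ that themselves carry natural left $\D$-module structures. The de Rham differential acts termwise on the double complex $\check{C}^\bullet(f_1, \ldots, f_r; R) \otimes_R \Omega_R^\bullet$ by $\xi \otimes dx_J \mapsto \sum_s \partial_s(\xi) \otimes dx_s \wedge dx_J$, and commutes with the \v{C}ech coboundary since the localization maps $R_{f_{i_1}\cdots f_{i_k}} \to R_{f_{j_1}\cdots f_{j_l}}$ are $\D$-linear for the $\partial_s$. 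Passing to cohomology in the \v{C}ech direction reproduces simultaneously the Lyubeznik $\D$-module structure on $H^q_I(R)$ (which is defined in exactly this way) and the required formula for the horizontal differential, completing the identification and hence the proof.
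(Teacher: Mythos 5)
Your proof is correct and takes essentially the same route as the paper's: part (a) cites the standard affine comparison of sheaf-theoretic and algebraic local cohomology, and part (b) uses the finite freeness of $\Omega_R^p$ together with the fact that $H^q_Y$ commutes with finite direct sums to identify each $q$-row of $E_1$ with the de Rham complex $H^q_I(R)\otimes\Omega_R^{\bullet}$. The \v{C}ech-level check you add to verify that the induced $E_1$-differential coincides with the Lyubeznik $\D$-action is a sound way to make explicit what the paper asserts without comment, and it is the right verification to do since the $\D$-module structure on $H^q_I(R)$ is itself defined precisely via $\D$-linearity of the localization maps in that complex.
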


\begin{proof}
As $X$ is affine, part (a) is well-known (e.g. \cite[Exp. II, Prop. 5]{SGA2}).  Now consider the $E_1$-term of the spectral sequence.  Its differentials are horizontal and so its $E_2$-objects are the cohomology objects of its rows.  Fix such a row, say the $q$th row, which takes the form $E_1^{\bullet,q} = H^q_Y(X, \Omega^{\bullet}_X)$.  The $\Omega^p_X$ are finite free sheaves on $X$ and local cohomology $H^q_Y$ commutes with direct sums, so for all $p$, $E_1^{p,q} \simeq H^q_Y(X, \mathcal{O}_X) \otimes \Omega_R^p$.  As $p$ varies, we obtain the complex $H^q_Y(X, \mathcal{O}_X) \otimes \Omega_R^{\bullet}$, whose $k$-linear maps are the de Rham differentials of the $\D$-module $H^q_Y(X, \mathcal{O}_X)$.  Therefore its cohomology objects, the $E_2$-objects of the spectral sequence, are of the stated form.
\end{proof}

\begin{prop}\label{E2case}
Let the surjection $\pi: R = k[[x_1, \ldots, x_n]] \rightarrow A$ (and the associated objects $I, X, Y$) be as above.
\begin{enumerate}[(a)]
\item For all $p$ and $q$, the $k$-space $E_2^{p,q} = H^p_{dR}(H^q_I(R))$ is finite-dimensional.  
\item Suppose we have another surjection of $k$-algebras $\pi': R' = k[[x_1, \ldots, x_{n'}]] \rightarrow A$ with kernel $I'$.  Write $\{E_{r,R}^{p,q}\}$ (resp. $\{\mathbf{E}_{r,R'}^{p,q}\}$) for the Hodge-de Rham spectral sequence for homology defined using $\pi$ (resp. $\pi'$).  Then for all $p$ and $q$, the $k$-spaces $E_{2,R}^{p,q}$ and $\mathbf{E}_{2,R'}^{p+n'-n,q+n-n'}$ are isomorphic.  (That is, the $E_2$-term is independent of $R$ and $\pi$, up to a bidegree shift.)
\end{enumerate}
\end{prop}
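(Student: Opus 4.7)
For part (a), Lemma \ref{E2shape}(b) identifies $E_2^{p,q}$ with $H^p_{dR}(H^q_I(R))$. By Lyubeznik's theorem recalled in Remark \ref{lcdmod}, each local cohomology module $H^q_I(R)$ is a holonomic left $\D(R,k)$-module, and by Theorem \ref{findimdR} the de Rham cohomology of a holonomic module is finite-dimensional in every degree. This immediately yields (a).

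For part (b), the plan is to reduce to the special case of a power series extension and then compute directly. Given two surjections $\pi\colon R \to A$ and $\pi'\colon R' \to A$, form the completed tensor product $R'' = k[[x_1,\ldots,x_n,x'_1,\ldots,x'_{n'}]]$. Choosing lifts $\tilde{a}_j \in R$ of $\pi'(x'_j) \in A$ (resp.\ lifts in $R'$ of $\pi(x_i)$) produces $k$-algebra surjections $R'' \twoheadrightarrow R \twoheadrightarrow A$ and $R'' \twoheadrightarrow R' \twoheadrightarrow A$. Comparing the spectral sequences for $R$ and for $R'$ separately with that for $R''$ therefore reduces (b) to the case $R' = R[[y_1,\ldots,y_m]]$ where $\pi'$ factors through $R$ by $y_j \mapsto 0$ and $I' = IR' + (y_1,\ldots,y_m)$.

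The next step is to compute $H^*_{I'}(R')$ using the composite-functor spectral sequence of Example \ref{lccomposite} applied to the ideals $IR'$ and $(y) = (y_1,\ldots,y_m)$. Since $y_1,\ldots,y_m$ forms a regular sequence on $R'$, $H^j_{(y)}(R') = 0$ for $j \neq m$, and a direct \v{C}ech-complex computation gives $H^m_{(y)}(R') \simeq R \otimes_k E_y$, where $E_y := H^m_{(y)}(k[[y]])$. The spectral sequence therefore degenerates to a single row, and because $R \otimes_k E_y$ is a free $R$-module and $IR'$ is generated by elements of $R$, one obtains
\[
H^q_{I'}(R') \;\simeq\; H^{q-m}_I(R) \otimes_k E_y
\]
as left $\D(R',k)$-modules, with the factor-wise $\D$-action on the right-hand side.

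The final step will be a K\"unneth identity for de Rham cohomology: for a left $\D(R,k)$-module $M$ and a left $\D(k[[y]],k)$-module $N$, the de Rham complex over $R'$ of $M \otimes_k N$ is canonically the total complex of the bicomplex obtained by tensoring the de Rham complexes of $M$ and of $N$ over the field $k$, and the K\"unneth formula yields
\[
H^k_{dR,R'}(M \otimes_k N) \simeq \bigoplus_{a+b=k} H^a_{dR,R}(M) \otimes_k H^b_{dR,k[[y]]}(N).
\]
A direct computation shows that $H^b_{dR,k[[y]]}(E_y)$ is one-dimensional and concentrated in a single degree. Substituting $M = H^{q-m}_I(R)$ and $N = E_y$, then translating through the chosen indexing of the homology spectral sequence, will produce the claimed isomorphism $E_{2,R}^{p,q} \simeq \mathbf{E}_{2,R'}^{p+n'-n,q+n-n'}$. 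The main obstacle will be checking that the $\D(R',k)$-structure on $H^q_{I'}(R')$ arising from Lyubeznik's construction agrees, under the identification above, with the factor-wise $\D$-action on $H^{q-m}_I(R) \otimes_k E_y$; once this naturality is in place, the K\"unneth step and the computation of $H^*_{dR,k[[y]]}(E_y)$ are standard.
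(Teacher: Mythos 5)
Your proof of part~(a) is exactly the paper's. Your proof of part~(b) is a genuine alternative that arrives at the same intermediate object but finishes differently, and it is essentially correct with the caveat you yourself flag.

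The paper's route: after the same reduction to a power series extension, it further reduces to codimension one ($m = 1$, $R' = R[[z]]$), invokes the ``$+$-operation'' isomorphism $(H^q_I(R))_+ \simeq H^{q+1}_{I'}(R')$ of N\'u\~nez-Betancourt--Witt (Proposition~\ref{pluslc}), applies $H^{q+1}_{I'}$ to the short exact sequence of complexes from Definition~\ref{partialdR}, and reads off the isomorphism from the long exact sequence after identifying the connecting map as $\pm\partial_z$ via Lemma~\ref{connhom}. Your route handles general $m$ in one stroke: you produce $H^q_{I'}(R') \simeq H^{q-m}_I(R) \otimes_k E_y$ via the degenerate composite-functor spectral sequence (which is the same decomposition as the paper's, since for $m = 1$ the $R'$-module $M_+$ \emph{is} $M \otimes_k E_z$), and then you apply a K\"unneth formula for the de Rham complex of a $k$-tensor product. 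Both finishes are correct. The paper's choice of the long-exact-sequence argument is not gratuitous: the morphism $\iota = \wedge\, dz$ is what later gets promoted to a morphism of entire spectral sequences in the proof of Theorem~\ref{mainthmA}(a), so the LES/connecting-map argument is already aligned with that plan, whereas a K\"unneth isomorphism of $E_2$-objects does not by itself hand you a spectral-sequence morphism. For Proposition~\ref{E2case} in isolation, that is no defect.

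Two things to tighten. First, the ``main obstacle'' you identify---that the $\D(R',k)$-structure on $H^q_{I'}(R')$ matches the factor-wise structure on $H^{q-m}_I(R) \otimes_k E_y$ under the composite-spectral-sequence isomorphism---is real and needs an argument, not just a remark. Note that the paper has a parallel (and less visible) obligation: it cites Proposition~\ref{pluslc} as an $R'$-module isomorphism, and then asserts that ``the differentials in the complex $H^{q+1}_{I'}(R') \otimes \Omega_R^{\bullet}$ do not involve $z$ or $dz$'' and that ``the $+$-operation passes to its cohomology,'' which tacitly requires the $\partial_s$-actions $(s \le n)$ to match under the identification. So your extra work here is not disproportionate; the cleanest fix is to run the Grothendieck spectral sequence for $\Gamma_{I'} = \Gamma_{IR'} \circ \Gamma_{(y)}$ using the Cousin resolution of $R'$ (Lemma~\ref{gormin}), which is a complex of $\D(R',k)$-modules with $\D$-linear differentials, so that the degeneration isomorphism is $\D(R',k)$-linear; and to check separately that the \v{C}ech-level identifications $H^m_{(y)}(R') \simeq R \otimes_k E_y$ and $H^p_{IR'}(R \otimes_k E_y) \simeq H^p_I(R) \otimes_k E_y$ respect the factor-wise action. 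Second, in the final K\"unneth step you should state the degree: $H^b_{dR}(E_y)$ is one-dimensional \emph{in degree $b = m$} and vanishes otherwise (for $m = 1$, $\partial_y$ is injective on $E_y$ and has cokernel spanned by $y^{-1}$, and the general $m$ follows by K\"unneth again); this is what makes the indices $p + m$, $q + m$ come out to the stated $p + n' - n$, $q + n - n'$.
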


\begin{proof}[Proof of part (a)]
For all $q$, the $\D$-module $H^q_I(R)$ is holonomic \cite[2.2(d)]{lyubeznik}, so its de Rham cohomology spaces are finite-dimensional by Theorem \ref{findimdR}.  This proves part (a).
\end{proof}
  
A proof of part (b) is considerably longer. We first reduce it to Lemma \ref{specialcase} below and then prove Lemma \ref{specialcase}.

Write $X'$ for the spectrum of $R'$.  The surjection $\pi'$ induces a closed immersion $Y \hookrightarrow X'$.  Form the complete tensor product $R'' = R \widehat{\otimes}_k R'$ \cite[V.B.2]{serre}, again a complete regular $k$-algebra, and let $\pi'': R'' \rightarrow A$ be the induced surjection $\pi \widehat{\otimes}_k \pi'$ of $k$-algebras, which gives rise to a third closed immersion $Y \hookrightarrow X'' = \Spec(R'')$ and a third Hodge-de Rham spectral sequence $\{E_{r,R''}^{p,q}\}$.  It suffices to show that both $E_{2,R}^{p,q}$ and $\mathbf{E}_{2,R'}^{p+n'-n,q+n-n'}$ are isomorphic to $E_{2,R''}^{p+n',q+n'}$.  Replacing $R'$ with $R''$ and using symmetry, we reduce to the case in which the two surjections $\pi: R \rightarrow A$ and $\pi': R' \rightarrow A$ satisfy $\pi' = \pi \circ g$ for some surjection $g: R' \rightarrow R$ of $k$-algebras.  Let $I = \ker \pi$, $I' = \ker \pi'$, and $I'' = \ker g$, and suppose the dimensions of $R$ and $R'$ are $n$ and $n'$ respectively.  As $R'/I'' \simeq R$ is regular, $I''$ is generated by $n'-n$ elements that form part of a regular system of parameters for $R'$.  By induction on $n'-n$, we reduce further to the case $n'-n=1$, since we can factor the closed immersion $X \hookrightarrow X'$ into a sequence of codimension-one immersions, and the isomorphisms on $E_2$-terms compose while the bidegree shifts add.  Therefore we assume $\ker g$ is a principal ideal, of the form $(f)$ where $x_1, \ldots, x_n, f$ is a regular system of parameters for $R'$.  By Cohen's structure theorem, the complete regular local $k$-algebra $R'$ takes the form $k[[x_1, \ldots, x_n, z]]$; making a change of variables if necessary, we may assume $f = z$.  By Proposition \ref{dRind}, this change of variables does not affect de Rham cohomology.  Thus $R = k[[x_1, \ldots, x_n]]$, $R' = R[[z]]$, and $g$ is the surjection carrying $z$ to $0$, so that $I' = IR' + (z)$.  We state this special case, to which we have reduced the proposition, in the form of the following lemma. Because of the amount of new notation needed to define them, we have not specified the isomorphisms in the statement of Lemma \ref{specialcase}, only asserted their existence. However, the maps themselves will be needed later, and so we will refer not only to Lemma \ref{specialcase} but also to its proof. 

\begin{lem}\label{specialcase}
Let $R = k[[x_1, \ldots, x_n]]$ and let $I$ be an ideal of $R$.  Let $R' = R[[z]]$ and $I' = IR' + (z)$.  Then for all $p$ and $q$, we have an isomorphism
\[
H^p_{dR}(H^q_I(R)) \simeq H^{p+1}_{dR}(H^{q+1}_{I'}(R'))
\]
of $k$-spaces, where the de Rham cohomology is computed by regarding $H^q_I(R)$ as a $\D(R,k)$-module and $H^{q+1}_{I'}(R')$ as a $\D(R',k)$-module. 
\end{lem}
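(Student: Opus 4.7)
The plan is to reduce computing $H^{q+1}_{I'}(R')$ to computing local cohomology with respect to $IR'$, via the iterated local cohomology spectral sequence of Example \ref{lccomposite} applied to the pair of ideals $(z)$ and $IR'$ (whose sum is $I'$). Since $(z)$ is a principal non-zerodivisor, $H^j_{(z)}(R') = 0$ for $j \neq 1$ while $H^1_{(z)}(R') = R'_z/R'$, so the spectral sequence has a single nonzero row and degenerates at $E_2$, yielding a $\D(R',k)$-equivariant isomorphism
\[
H^{q+1}_{I'}(R') \simeq H^q_{IR'}(R'_z/R').
\]
(Equivariance with respect to $\D(R',k)$ comes from Lyubeznik's functoriality, or may be checked directly on a $\D$-equivariant injective resolution.)

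Next, I want to identify $R'_z/R'$ as an ``external tensor product'' of $\D$-modules. Since $R' = R\,\widehat{\otimes}_k\, k[[z]]$ and the ring $\D(R',k)$ contains $\D(R,k)\otimes_k\D(k[[z]],k)$ with $\partial_{x_i}$ acting only on the $R$-factor and $\partial_z$ only on the $k[[z]]$-factor, there is a natural isomorphism $R'_z/R' \simeq R \otimes_k E$ where $E := k[z,z^{-1}]/k[z] = H^1_{(z)}(k[[z]])$, as modules over $\D(R,k)\otimes_k\D(k[[z]],k)$. Because $E$ is $k$-flat, local cohomology commutes with this external tensor product, giving
\[
H^q_{IR'}(R'_z/R') \simeq H^q_I(R) \otimes_k E,
\]
again as $\D(R,k)\otimes_k\D(k[[z]],k)$-modules.

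Then I apply a Künneth-type decomposition of the de Rham complex. For any left $\D(R,k)$-module $M_1$ and left $\D(k[[z]],k)$-module $M_2$, the de Rham complex of $M_1\otimes_k M_2$ over $R'$ (with respect to coordinates $x_1,\dots,x_n,z$) is naturally the totalization of the tensor product of complexes $(M_1\otimes_R\Omega_R^{\bullet})\otimes_k(M_2\otimes_{k[[z]]}\Omega_{k[[z]]}^{\bullet})$; this is immediate from the product coordinate structure and Convention \ref{tensorsubscript}. By the ordinary Künneth formula over the field $k$ we get
\[
H^i_{dR}(M_1\otimes_k M_2) \simeq \bigoplus_{j+l=i} H^j_{dR}(M_1) \otimes_k H^l_{dR}(M_2).
\]
A direct one-variable computation (using that $\ch k=0$, so $\partial_z$ is injective on $E$ with one-dimensional cokernel spanned by the class of $z^{-1}$) shows $H^0_{dR}(E)=0$ and $H^1_{dR}(E)\simeq k$. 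Setting $M_1=H^q_I(R)$ and $M_2=E$ and combining with the first two steps,
\[
H^{p+1}_{dR}(H^{q+1}_{I'}(R')) \simeq H^{p+1}_{dR}\!\bigl(H^q_I(R)\otimes_k E\bigr) \simeq H^p_{dR}(H^q_I(R))\otimes_k H^1_{dR}(E) \simeq H^p_{dR}(H^q_I(R)),
\]
which is the desired isomorphism.

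The routine steps are the Künneth decomposition and the one-variable calculation of $H^*_{dR}(E)$. The main obstacle is bookkeeping: making sure the spectral-sequence collapse isomorphism and the identification $R'_z/R' \simeq R\otimes_k E$ really do respect the full $\D(R',k)$-module structure (not merely the $R'$-module structure), so that each subsequent step takes place in the correct category. Once this compatibility is in hand, the rest is formal. It will also be worth tracking the explicit maps (rather than just asserting the existence of the isomorphism) because the proof of Lemma \ref{specialcase} must be reused later in the paper.
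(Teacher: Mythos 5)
Your proof is correct, but it follows a genuinely different route from the paper's.  The paper's proof proceeds entirely through a short exact sequence of de Rham complexes.  It takes the split-exact sequence $0 \rightarrow R' \otimes \Omega_R^{\bullet}[-1] \xrightarrow{\wedge dz} \Omega_{R'}^{\bullet} \rightarrow R' \otimes \Omega_R^{\bullet} \rightarrow 0$ (Definition \ref{partialdR}), applies $H^{q+1}_{I'}$, identifies the connecting homomorphism in the resulting long exact sequence as $\pm\partial_z$ (Lemma \ref{connhom}), cites Proposition \ref{pluslc} to rewrite $H^{q+1}_{I'}(R')$ as $(H^q_I(R))_+$, and then observes that $\partial_z$ is injective with cokernel $\simeq M$ on any $M_+$.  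The whole argument is therefore a long-exact-sequence splitting in which the key input is the known $\D$-module isomorphism $(H^q_I(R))_+ \simeq H^{q+1}_{I'}(R')$.  Your proof instead reproves that isomorphism from scratch via the iterated local cohomology spectral sequence $H^p_{IR'}(H^q_{(z)}(-)) \Rightarrow H^{p+q}_{I'}(-)$ and flat base change, and then replaces the long-exact-sequence step with a K\"unneth decomposition of the de Rham complex of the external tensor product $H^q_I(R) \otimes_k E$.  These are, at bottom, equivalent (tensoring with the two-term complex $0 \to E \xrightarrow{\partial_z} E \to 0$ is exactly the two-row K\"unneth situation, and the K\"unneth long exact sequence collapses because $H^0_{dR}(E)=0$), but the K\"unneth framing is cleaner conceptually and avoids the explicit connecting-homomorphism computation.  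What the paper's approach buys, as you correctly anticipated at the end, is the explicit map $\overline{\iota}^p_{q+1}$ induced by $\wedge\,dz$, which is then reused verbatim in the proof of Theorem \ref{mainthmA}(a) to build a morphism of spectral sequences; if you wanted to prove the full theorem via the K\"unneth route you would have to repackage your isomorphism in a form compatible with the double complexes of Definition \ref{Jbicomplexes}, which is doable but is additional work the paper's proof sidesteps.

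A small point of care on the external tensor product step: $M_1 \otimes_k M_2$ need not be a module over $R' = R\,\widehat{\otimes}_k\,k[[z]]$ in general, only over $R\otimes_k k[[z]]$.  Your claim is saved here because $E$ is $z$-torsion, so the $R\otimes_k k[[z]]$-action on $R\otimes_k E$ (and on $H^q_I(R)\otimes_k E$) extends canonically to $R'$.  It would be worth stating this explicitly, since the K\"unneth decomposition of the de Rham complex is being taken over $R'$.
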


We first give a definition.

\begin{definition}\label{plusop}
Let $M$ be any $k$-space.  Then $M_+ = \oplus_{i > 0} M \cdot z^{-i}$, a $\D(k[[z]],k)$-module, whose elements are finite sums $\sum_i \frac{m_i}{z^i}$ where $m_i \in M$.  If $M$ is an $R$-module (resp. a $\D(R,k)$-module), then $M_+$ defined in this way is an $R'$-module (resp. a $\D(R',k)$-module), with $\partial_z$-action defined by the quotient rule: $\partial_z(\frac{m}{z^{\alpha}}) = \frac{-\alpha m}{z^{\alpha + 1}}$.
\end{definition}

We will frequently refer to this functor as the ``$+$-operation'' on $R$-modules or $k$-spaces.  In the case of an $R$-module $M$, this definition coincides with the ``key functor'' $G(M) = M \otimes_R R'_z/R'$ of N\'{u}\~{n}ez-Betancourt and Witt \cite[\S 3]{luisemily}.  A special case of one of their results will be useful for us:

\begin{prop}\cite[Lemma 3.9]{luisemily}\label{pluslc}
With $R$, $R'$, $I$, and $I'$ as in the statement of Lemma \ref{specialcase}, we have isomorphisms
\[
(H^p_I(R))_+ \simeq H^{p+1}_{I'}(R')
\]
of $R'$-modules, for all $p$ (these isomorphisms are functorial in $R$).  
\end{prop}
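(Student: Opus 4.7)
The plan is to combine Grothendieck's composite-functor spectral sequence with a direct Čech-complex computation: factor $\Gamma_{I'}$ as $\Gamma_{IR'} \circ \Gamma_{(z)}$, use that $(z)$ is principal to collapse the resulting spectral sequence to a single nonzero row identified with $H^{\bullet}_{IR'}$ of $R_+$, and then show directly that this latter local cohomology is precisely the $+$-construction applied to $H^{\bullet}_I(R)$.

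For the first step, since $R'$ is Noetherian and $I' = IR' + (z)$, we have $\Gamma_{I'} = \Gamma_{IR'} \circ \Gamma_{(z)}$, and injective $R'$-modules remain $\Gamma_{(z)}$-acyclic, so Proposition \ref{compositess} supplies a spectral sequence
\[
E_2^{p,q} = H^p_{IR'}(H^q_{(z)}(R')) \Rightarrow H^{p+q}_{I'}(R').
\]
Because $(z)$ is principal and $z$ is a nonzerodivisor on $R'$, one has $H^0_{(z)}(R') = 0$, $H^1_{(z)}(R') = R'_z/R'$, and $H^q_{(z)}(R') = 0$ for $q \geq 2$. Writing a general element of $R'_z$ as $\sum_{i \geq -N} r_i z^i$ with $r_i \in R$, its class modulo $R'$ is uniquely represented by $\sum_{i=1}^{N} r_{-i} z^{-i}$, which produces an $R'$-module isomorphism $R'_z/R' \xrightarrow{\sim} R_+$ (on both sides, the action of $z$ shifts indices and kills the $z^{-1}$-component). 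Only the row $q=1$ is nonzero, so the sequence degenerates to an edge isomorphism
\[
H^{p+1}_{I'}(R') \cong H^p_{IR'}(R_+)
\]
of $R'$-modules.

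For the second step, choose generators $f_1, \ldots, f_s$ of $I$; they also generate $IR'$. Since $R_+ = \bigoplus_{i > 0} R \cdot z^{-i}$ as an $R$-module and each $f_j$ lies in $R$, localization at any product of the $f_j$ preserves the $z^{-i}$-grading, so
\[
(R_+)_{f_{j_1} \cdots f_{j_t}} = \bigoplus_{i > 0} R_{f_{j_1} \cdots f_{j_t}} \cdot z^{-i}.
\]
Hence the Čech complex of $R_+$ on $f_1, \ldots, f_s$ decomposes as a direct sum, over $i > 0$, of shifted copies of the Čech complex of $R$ on the same generators. Since cohomology commutes with direct sums, this yields $H^p_{IR'}(R_+) \cong \bigoplus_{i > 0} H^p_I(R) \cdot z^{-i} = (H^p_I(R))_+$ as $R'$-modules. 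Chaining the two isomorphisms gives the desired $H^{p+1}_{I'}(R') \cong (H^p_I(R))_+$.

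The main thing to watch is that the $R'$-module structure — and, in particular, the $z$-action — is tracked correctly through both steps. This reduces to matching two descriptions of the $z$-action (shift on $R'_z/R'$ versus shift on $R_+$ from Definition \ref{plusop}), which agree tautologically once the identification is fixed. Functoriality in $R$ (equivalently, in the pair $(R,I)$) is then automatic, as both the composite-functor spectral sequence and the Čech decomposition are natural; the argument is essentially formal after this bookkeeping, with no serious analytic obstacle.
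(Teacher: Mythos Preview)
Your argument is correct. The paper does not supply its own proof of this proposition; it is quoted from N\'u\~nez-Betancourt and Witt \cite[Lemma 3.9]{luisemily}, where the $+$-operation appears as their ``key functor'' $G(M) = M \otimes_R R'_z/R'$. Your route---collapsing the composite-functor spectral sequence for $\Gamma_{I'} = \Gamma_{IR'} \circ \Gamma_{(z)}$ to the row $q=1$ via the identification $H^1_{(z)}(R') \cong R'_z/R' \cong R_+$, then splitting the \v{C}ech complex of $R_+$ along the $z^{-i}$-grading---is a standard and self-contained proof.

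One small wording issue: the phrase ``injective $R'$-modules remain $\Gamma_{(z)}$-acyclic'' is not the hypothesis you need (injectives are acyclic for any left-exact functor). What is required for Proposition~\ref{compositess} is that $\Gamma_{(z)}$ carries injectives to $\Gamma_{IR'}$-acyclic modules, which holds because $\Gamma_{(z)}$ of an injective is again injective (as noted in Example~\ref{lccomposite}). This does not affect the validity of the argument.
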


We need one final ingredient before giving the proof of Lemma \ref{specialcase}: a short exact sequence relating the ``full'' de Rham complex $\Omega_{R'}^{\bullet}$ of $R'$ with its ``partial'' de Rham complex $R' \otimes \Omega_R^{\bullet}$ defined using the derivations $\partial_1, \ldots, \partial_n$ but omitting $\partial_z = \frac{\partial}{\partial z}$.

\begin{definition}\label{partialdR}
With $R$ and $R'$ as in the statement of Lemma \ref{specialcase}, we define a short exact sequence of complexes
\[
0 \rightarrow R' \otimes \Omega_R^{\bullet}[-1] \xrightarrow{\iota} \Omega_{R'}^{\bullet} \xrightarrow{\pi} R' \otimes \Omega_R^{\bullet} \rightarrow 0
\]
where the map $\iota$ is simply the wedge product with $dz$, and so its image is precisely the direct sum of those summands of $\Omega_{R'}^{\bullet}$ with a $dz$ wedge factor (thus $\pi$ corresponds to setting $dz = 0$).  The sheaf-theoretic analogue, a short exact sequence of complexes of sheaves on $X'$, is constructed similarly.
\end{definition}

This short exact sequence of complexes gives rise to a long exact sequence of cohomology:
\[
\cdots \rightarrow h^{p-1}(R' \otimes \Omega_R^{\bullet}) \xrightarrow{c^{p-1}} h^p(R' \otimes \Omega_R^{\bullet}[-1]) \rightarrow h^p(\Omega_{R'}^{\bullet}) \rightarrow h^p(R' \otimes \Omega_R^{\bullet}) \xrightarrow{c^p} h^{p+1}(R' \otimes \Omega_R^{\bullet}[-1]) \rightarrow \cdots,
\]
where \emph{c} denotes the connecting homomorphism.  After accounting for the shift of $-1$, we see that $c^p$ is a map from the $\D(k[[z]],k)$-module $h^p(R' \otimes \Omega_R^{\bullet})$ to itself.  We will need precisely to identify the maps $c^p$.

\begin{lem}\label{connhom}
With the notation of the previous paragraph, we have $c^p = (-1)^p \partial_z$ as maps from the $\D(k[[z]],k)$-module $h^p(R' \otimes \Omega_R^{\bullet})$ to itself.  (The same holds if $R'$ is replaced with any $\D(R',k)$-module.) 
\end{lem}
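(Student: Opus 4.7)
The plan is to compute the connecting homomorphism directly from its snake-lemma construction, using the fact that the full de Rham differential $d_{R'}$ on $\Omega_{R'}^{\bullet}$ decomposes naturally with respect to the decomposition of $\Omega_{R'}^{\bullet}$ into the $dz$-free part and the $dz$-part. This is really the content of the short exact sequence of complexes in Definition \ref{partialdR}, and the argument should be short and concrete.

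First, I would fix a cohomology class in $h^p(R' \otimes \Omega_R^{\bullet})$ and represent it by a cycle $\omega = \sum_J m_J \, dx_{j_1} \wedge \cdots \wedge dx_{j_p}$ with $d_R \omega = 0$, where $d_R$ denotes the ``partial'' de Rham differential built only from $\partial_1, \ldots, \partial_n$. Since $\pi$ is the quotient map that kills summands with a $dz$ factor, this same expression $\omega$ lifts canonically to $\Omega_{R'}^p$. Next I would apply the full differential $d_{R'}$. Using the recipe for the de Rham differential in subsection \ref{dmodprelim}, we have
\[
d_{R'}(\omega) = d_R(\omega) + \sum_J \partial_z(m_J)\, dz \wedge dx_{j_1} \wedge \cdots \wedge dx_{j_p}.
\]
The first term vanishes by assumption, and the second term lies entirely in the image of $\iota$ (as it must, since $\pi \circ d_{R'}(\omega) = d_R(\omega) = 0$).

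The key calculation is then the sign incurred in expressing this in the form $\iota(\eta)$: moving $dz$ past the $p$ factors $dx_{j_1}, \ldots, dx_{j_p}$ produces a factor $(-1)^p$, giving
\[
d_{R'}(\omega) = (-1)^p \sum_J \partial_z(m_J)\, dx_{j_1} \wedge \cdots \wedge dx_{j_p} \wedge dz = (-1)^p\, \iota(\partial_z \omega),
\]
where $\partial_z \omega \in R' \otimes \Omega_R^p$, which is the degree-$(p+1)$ part of the shifted complex $R' \otimes \Omega_R^{\bullet}[-1]$. By the definition of the connecting map, $c^p[\omega]$ is the class of the unique preimage of $d_{R'}(\omega)$ under $\iota$, which is $(-1)^p \partial_z \omega$. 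This gives $c^p = (-1)^p \partial_z$ as desired.

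Finally, for the generalization asserted parenthetically, replacing $R'$ by an arbitrary left $\D(R',k)$-module $M$ changes none of the formal manipulations: the decomposition $d_{R'} = d_R + dz \wedge \partial_z$ on $M \otimes \Omega_{R'}^{\bullet}$ is valid for any $\D(R',k)$-module structure on the coefficients, and the sign $(-1)^p$ comes only from the exterior algebra. The only point that requires a bit of attention is the sign computation; everything else is bookkeeping dictated by the snake lemma.
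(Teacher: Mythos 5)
Your proof is correct and follows essentially the same route as the paper: both trace through the explicit snake-lemma construction of the connecting homomorphism, using $\omega$ itself as the preimage under $\pi^p$, observing that only the $dz$-terms survive in $d_{R'}(\omega)$ because $\omega$ is a cocycle, and extracting the sign $(-1)^p$ from moving $dz$ past the $p$ wedge factors. The only stylistic difference is that you package the key step as the decomposition $d_{R'} = d_R + dz \wedge \partial_z$, which the paper leaves implicit.
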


\begin{proof}
We use the explicit construction of the connecting homomorphism given, for example, in \cite[Prop. 6.9]{rotman}.  Denote by $d_{R'}^{\bullet}$ the differentials in the de Rham complex $\Omega_{R'}^{\bullet}$.  Given an element of $h^p(R' \otimes \Omega_R^{\bullet})$, which is the cohomology class of some cocycle $\omega \in R' \otimes \Omega_R^p$, the image under $c^p$ of this class is taken to be the class of the cocycle $(\iota^{p+1})^{-1}(d_{R'}^p((\pi^p)^{-1}(\omega)))$, where the superscript $-1$ means ``choose \emph{any} preimage'': this definition is independent of all choices made.  We proceed to calculate this composite, making convenient choices for the preimages.  

We can write $\omega$ as a sum 
\[
\omega = \sum_{i_1, \ldots, i_p} \rho_{i_1 \cdots i_p} \, dx_{i_1} \wedge \cdots \wedge dx_{i_p}
\]
where all $\rho_{i_1 \cdots i_p} \in R'$.  One choice of preimage $(\pi^p)^{-1}(\omega)$ is $\omega$ itself, since none of its terms contain $dz$ wedge factors and hence all are left fixed by $\pi^p$.  Therefore $d_{R'}^p((\pi^p)^{-1}(\omega)) = d_{R'}^p(\omega)$. Since $\omega$ is a cocycle in $R' \otimes \Omega_R^p$, its image under the $p$th de Rham differential with respect to $dx_1, \ldots, dx_n$ is zero, and so the only terms in the definition of $d_{R'}^p(\omega)$ that survive are those involving $dz$.  That is, we have
\[
d_{R'}^p(\omega) = \sum_{i_1, \ldots, i_p} \partial_z(\rho_{i_1 \cdots i_p}) \, dz \wedge dx_{i_1} \wedge \cdots \wedge dx_{i_p},
\]
which, by rearranging the wedge terms, is equal to
\[
\sum_{i_1, \ldots, i_p} (-1)^p \partial_z(\rho_{i_1 \cdots i_p}) \, dx_{i_1} \wedge \cdots \wedge dx_{i_p} \wedge dz.
\]
Finally, a choice of preimage under $\iota^{p+1}$ (which is simply the map $\wedge \, dz$) for the above sum is
\[
(\iota^{p+1})^{-1}(d_{R'}^p((\pi^p)^{-1}(\omega))) = \sum_{i_1, \ldots, i_p} (-1)^p \partial_z(\rho_{i_1 \cdots i_p}) \, dx_{i_1} \wedge \cdots \wedge dx_{i_p} = (-1)^p \partial_z(\omega),
\]
from which the lemma follows.  (The same calculation works for arbitrary $\D(R',k)$-modules $M$, replacing each $\rho_{i_1 \cdots i_p}$ with an element $m_{i_1 \cdots i_p}$ of $M$.)
\end{proof}

We can now prove Lemma \ref{specialcase}.

\begin{proof}[Proof of Lemma \ref{specialcase}]
The differentials in the complexes of Definition \ref{partialdR} are merely $k$-linear, but in every degree $p$, the short exact sequence
\[
0 \rightarrow R' \otimes \Omega_R^{p-1} \xrightarrow{\iota^p} \Omega^p_{R'} \rightarrow R' \otimes \Omega^p_R \rightarrow 0
\]
is a \emph{split} exact sequence of finite free $R'$-modules.  As local cohomology commutes with direct sums, this sequence remains split exact after applying the functor $H^q_{I'}$ for any $q$:
\[
0 \rightarrow H^q_{I'}(R' \otimes \Omega_R^{p-1}) \xrightarrow{\iota^p_q} H^q_{I'}(\Omega^p_{R'}) \rightarrow H^q_{I'}(R' \otimes \Omega^p_R) \rightarrow 0.
\]
Fixing $q$ but varying $p$, we obtain a short exact sequence of complexes of $k$-spaces
\[
0 \rightarrow H^q_{I'}(R' \otimes \Omega_R^{\bullet}[-1]) \xrightarrow{\iota^{\bullet}_q} H^q_{I'}(\Omega^{\bullet}_{R'}) \rightarrow H^q_{I'}(R' \otimes \Omega^{\bullet}_R) \rightarrow 0
\]
which we can rewrite (replacing $q$ with $q+1$) as
\[
0 \rightarrow H^{q+1}_{I'}(R') \otimes \Omega_R^{\bullet}[-1] \xrightarrow{\iota^{\bullet}_{q+1}} H^{q+1}_{I'}(R') \otimes \Omega_{R'}^{\bullet} \rightarrow H^{q+1}_{I'}(R') \otimes \Omega_R^{\bullet} \rightarrow 0
\]
since $\Omega_R^i$ (resp. $\Omega_{R'}^i$) is a finite free $R$- (resp. $R'$-) module.  This short exact sequence of complexes gives rise to a long exact sequence of cohomology which (accounting for the shift of $-1$) takes the form 
\begin{align*}
\cdots \rightarrow h^p(H^{q+1}_{I'}(R') \otimes \Omega_R^{\bullet}) \xrightarrow{\partial_z} h^p(H^{q+1}_{I'}(R') \otimes \Omega_R^{\bullet}) &\xrightarrow{\overline{\iota}^p_{q+1}} 
h^{p+1}(H^{q+1}_{I'}(R') \otimes \Omega_{R'}^{\bullet})\\ \rightarrow h^{p+1}(H^{q+1}_{I'}(R') \otimes \Omega_R^{\bullet}) &\xrightarrow{\partial_z} h^{p+1}(H^{q+1}_{I'}(R') \otimes \Omega_R^{\bullet}) \rightarrow \cdots,
\end{align*}
where we know by Lemma \ref{connhom} that, up to a sign, the connecting homomorphism is $\partial_z$.  Now by Lemma \ref{pluslc}, we know that $H^{q+1}_{I'}(R') \simeq (H^q_I(R))_+$ as $R'$-modules.  The differentials in the complex $H^{q+1}_{I'}(R') \otimes \Omega_R^{\bullet}$ do not involve $z$ or $dz$, and so the $+$-operation passes to its cohomology, since cohomology commutes with direct sums: we have 
\[
h^p(H^{q+1}_{I'}(R') \otimes \Omega_R^{\bullet}) \simeq (h^p(H^q_I(R) \otimes \Omega_R^{\bullet}))_+
\]
as $k$-spaces for all $p$.  For any $k$-space $M$, the action of $\partial_z$ on $M_+$ is given in Definition \ref{plusop}, and it is clear from this definition (since $\ch(k) = 0$) that $\ker(\partial_z: M_+ \rightarrow M_+) = 0$ and $\coker(\partial_z: M_+ \rightarrow M_+) \simeq M$, the latter corresponding to the $\frac{1}{z}$-component of $M_+$.  Returning to the displayed portion of the long exact sequence (with $M = h^p(H^q_I(R) \otimes \Omega_R^{\bullet})$), the second $\partial_z$ is injective, and so by exactness the unlabeled arrow is the zero map; this implies that $\overline{\iota}^p_{q+1}$ is surjective, inducing an isomorphism between $h^{p+1}(H^{q+1}_{I'}(R') \otimes \Omega_{R'}^{\bullet}) = H^{p+1}_{dR}(H^{q+1}_{I'}(R'))$ and the cokernel of the first $\partial_z$.  Since this cokernel is isomorphic to $h^p(H^q_I(R) \otimes \Omega_R^{\bullet}) = H^p_{dR}(H^q_I(R))$, the proof of Lemma \ref{specialcase}, and hence of Proposition \ref{E2case} (b), is complete.
\end{proof}

Proposition \ref{E2case} gives a new set of invariants for complete local rings in equicharacteristic zero, namely, the (finite) dimensions of the $E_2$-objects, with the bidegree shift taken into account:

\begin{definition}\label{rho}
For all $p,q \geq 0$, let $\rho_{p,q} = \dim_k(H^{n-p}_{dR}(H^{n-q}_I(R)))$.
\end{definition}

By Proposition \ref{E2case}, $\rho_{p,q}$ is finite and depends only on $A$ and a choice of coefficient field $k \subset A$.  We note the similarity of the definition of the invariants $\rho_{p,q}$ to the Lyubeznik numbers $\lambda_{p,q}$ \cite[Thm.-Def. 4.1]{lyubeznik}, although our $\rho_{p,q}$ appear to be well-defined only in the characteristic zero case.  One way to define $\lambda_{p,q}$ is as the dimension of the socle of $H^p_{\mathfrak{m}}(H^{n-q}_I(R))$, where $\mathfrak{m} \subset R$ is the maximal ideal \cite[Lemma 2.2]{socle}.  To define the $\rho_{p,q}$, we use de Rham cohomology instead of iterated local cohomology; furthermore, note the difference in the indices.  

\begin{remark}\label{lyunos}
If $H^{n-q}_I(R)$ is supported only at $\mathfrak{m}$, so that $H^{n-q}_I(R) \simeq E^{\oplus \lambda_{0,q}}$ for some $\lambda_{0,q} \geq 0$ (\cite[Thm. 3.4]{lyubeznik}; here $E$ is the Matlis dualizing module), then the de Rham cohomology of $H^{n-q}_I(R)$ is easy to calculate: we have $\rho_{p,q} = \lambda_{0,q}$ if $p = 0$, and $\rho_{p,q} = 0$ otherwise.  Therefore, in this case, $\rho_{p,q} = \lambda_{p,q}$ for all $p$ and $q$.
\end{remark}

We now prove the full statement of Theorem \ref{mainthmA}(a) (we have already proved part (b) above).  Our goal is to construct a bidegree-shifted morphism between the Hodge-de Rham spectral sequences arising from two surjections $R \rightarrow A$ and $R' \rightarrow A$ of $k$-algebras which, at the level of $E_2$-objects, consists of the isomorphisms of Lemma \ref{specialcase}: by Proposition \ref{E1shifted}, this is enough.  The preliminary reductions given in the paragraph before Lemma \ref{specialcase} remain valid when considering the spectral sequences, so we need only address the case in which $R$, $R'$, $I$, and $I'$ are as in the statement of that lemma.  The basic strategy is the same: we use the short exact sequence \ref{partialdR} of complexes as well as the $+$-operation, which will now be applied to double complex resolutions of the de Rham complexes $\Omega_X^{\bullet}$ and $\Omega_{X'}^{\bullet}$.  Our first task is to construct these; we work first at the level of $R$- (resp. $R'$-) modules and $k$-linear maps, and then sheafify the results.

\begin{lem}\label{gormin}
Let $\mathcal{I}^{\bullet}$ be the minimal injective resolution of $R$ as an $R$-module.  For all $p$, $\mathcal{I}^p$ has a structure of $\D(R,k)$-module, and $R \rightarrow \mathcal{I}^{\bullet}$ is a complex in the category of $\D(R,k)$-modules.
\end{lem}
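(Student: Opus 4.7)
The plan is to exhibit $\mathcal{I}^{\bullet}$ as a complex of $\D$-modules by identifying it with a Cousin-type construction built entirely from $\D$-module operations, rather than trying to put a $\D$-action directly on the abstractly defined $\mathcal{I}^p$. Since $R$ is Gorenstein (being regular local), standard theory yields an isomorphism, in the category of complexes of $R$-modules, between the minimal injective resolution of $R$ and its Cousin complex with respect to the height filtration $Z^p = \{\mathfrak{p} \in \Spec R : \hgt \mathfrak{p} \geq p\}$. Concretely, $\mathcal{I}^p \simeq \bigoplus_{\hgt \mathfrak{p} = p} E_R(R/\mathfrak{p})$, and each summand is further identified (using the Gorenstein property of $R_{\mathfrak{p}}$) with the local cohomology module $H^p_{\mathfrak{p}R_{\mathfrak{p}}}(R_{\mathfrak{p}})$. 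The differentials of the Cousin complex arise as the connecting homomorphisms associated with the short exact sequence of subfunctors of the identity $0 \to \Gamma_{Z^{p+1}} \to \Gamma_{Z^p} \to \Gamma_{Z^p \setminus Z^{p+1}} \to 0$.

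Next I will equip each $\mathcal{I}^p$ with a $\D$-module structure using two stability properties of the category of left $\D(R,k)$-modules. First, localization at any multiplicative subset $S \subset R$ preserves $\D$-module structure: on derivations one applies the quotient rule $\partial(m/s) = (\partial(m)\cdot s - m\cdot \partial(s))/s^2$, and higher-order operators extend inductively via the order filtration and the commutator identity defining $\D$. Second, for any ideal $I \subset R$, the subfunctor $\Gamma_I$ of the identity on $R$-modules restricts to a subfunctor of the identity on left $\D$-modules, because a differential operator of order $\leq j$ carries $I^{n+j}$-torsion into $I^n$-torsion; consequently the derived functors $H^i_I$ also send $\D$-modules to $\D$-modules. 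Starting from the $\D$-module $R$ and combining these two operations realizes each $H^p_{\mathfrak{p}R_{\mathfrak{p}}}(R_{\mathfrak{p}})$, and hence each $\mathcal{I}^p$, as a $\D$-module in a natural way.

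Finally I must show the augmentation $R \to \mathcal{I}^0$ and the differentials $\mathcal{I}^p \to \mathcal{I}^{p+1}$ are $\D$-linear. The augmentation is the natural inclusion $R \hookrightarrow \mathrm{Frac}(R)$, which is $\D$-linear directly from the construction of the $\D$-action on a localization. For the differentials, the key observation is that the subfunctors $\Gamma_{Z^p}$ all land inside the category of $\D$-modules (by the second stability property), so the above short exact sequence of subfunctors lives entirely in that category, and the induced connecting maps between their derived functors are automatically $\D$-linear. The main obstacle will be verifying cleanly that the isomorphism identifying $\mathcal{I}^{\bullet}$ with the Cousin complex respects the relevant $\D$-structures --- a priori, $\mathcal{I}^{\bullet}$ is unique only up to $R$-linear isomorphism, so one must pin down the comparison as the canonical one arising from the Gorenstein identification $E_R(R/\mathfrak{p}) \simeq H^p_{\mathfrak{p}R_{\mathfrak{p}}}(R_{\mathfrak{p}})$. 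Once that comparison is fixed, $\D$-linearity propagates automatically, and at the level of formulas it can be verified by unwinding both sides via a \v{C}ech complex on a minimal system of generators for $\mathfrak{p} R_{\mathfrak{p}}$, where the differentials are built from sign changes, multiplication by elements of $R$, and inversion in the multiplicative set $R \setminus \mathfrak{p}$ --- operations that all manifestly commute with the $\D$-action.
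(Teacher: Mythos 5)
Your approach is correct and rests on the same two pillars as the paper's: the Cousin complex of $R$ carries a natural $\D(R,k)$-module structure, and for a Gorenstein local ring the Cousin complex agrees with the minimal injective resolution. The differences are in how the Cousin complex is packaged. You build it from the filtration $Z^p = \{\hgt \mathfrak{p} \geq p\}$ and describe the differentials as connecting homomorphisms for $0 \to \Gamma_{Z^{p+1}} \to \Gamma_{Z^p} \to \Gamma_{Z^p\setminus Z^{p+1}} \to 0$; this forces you to argue that the entire derived-functor apparatus respects $\D$-module structure, which is true but requires more justification than you give. The paper instead uses the bare-hands recursive definition $C^{-1}(R) = R$, $C^i(R) = \bigoplus_{\hgt \mathfrak{p} = i} (\coker d^{i-2})_{\mathfrak{p}}$, with differentials given by natural localization maps. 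Since localization of $\D$-modules and cokernels of $\D$-linear maps are again $\D$-modules, and natural localization maps are $\D$-linear (a fact already quoted from Lyubeznik), the $\D$-linearity of the whole complex is immediate with no derived-functor machinery. Your route buys a more functorial/conceptual picture; the paper's buys brevity.

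One thing you treat as the main remaining obstacle is actually a non-issue. You worry that because $\mathcal{I}^{\bullet}$ is only defined up to $R$-linear isomorphism, you must pin down the comparison with the Cousin complex in a canonical way to avoid a clash of $\D$-structures. But the lemma only asks for \emph{some} $\D$-module structure on each $\mathcal{I}^p$ making $R \to \mathcal{I}^{\bullet}$ a complex of $\D$-modules; it does not assert compatibility with any pre-existing structure. Any $R$-linear isomorphism of augmented complexes $\mathcal{I}^{\bullet} \simeq C^{\bullet}(R)$ transports the Cousin $\D$-structure to $\mathcal{I}^{\bullet}$ and automatically makes $R \to \mathcal{I}^{\bullet}$ a complex of $\D$-modules, regardless of which isomorphism you chose. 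Better still, Sharp's theorem (the reference the paper cites) states that for a Gorenstein local ring the Cousin complex literally \emph{is} the minimal injective resolution, not merely isomorphic to it, which dissolves the transport question entirely.
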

	
\begin{proof}
We use the Cousin complex, for which \cite[IV.2]{residues} is the original reference.  Recall that the Cousin complex $C^{\bullet}(R)$ of $R$ is constructed recursively in the following way: $C^{-2}(R) = 0$, $C^{-1}(R) = R$, and for $i \geq 0$, $C^i(R) = \oplus (\coker d^{i-2})_{\mathfrak{p}}$, the direct sum extending over all $\mathfrak{p} \in \Spec(R)$ such that $\hgt \mathfrak{p} = i$.  The differentials in the complex are simply the natural localization maps.  It is immediate from the definition of the Cousin complex that it is a complex of $\D(R,k)$-modules, since localizations of $\D(R,k)$-modules are again $\D(R,k)$-modules and natural localization maps are $\D(R,k)$-linear \cite[Example 2.1]{lyubeznik}.  However, since $R$ is a Gorenstein local ring, its minimal injective resolution and its Cousin complex coincide \cite[Thm. 5.4]{sharp}.
\end{proof}

Likewise, if we let $\mathcal{J}^{\bullet}$ be the minimal injective resolution of $R'$ as an $R'$-module, Lemma \ref{gormin} implies that $R' \rightarrow \mathcal{J}^{\bullet}$ is a complex in the category of $\D(R',k)$-modules.  By taking finite direct sums of the resolutions $\mathcal{I}^{\bullet}$ and $\mathcal{J}^{\bullet}$, we construct three double complexes:

\begin{definition}\label{Jbicomplexes}
Let $\mathcal{I}^{\bullet, \bullet}$ be the double complex $\mathcal{I}^{p,q} = \mathcal{I}^q \otimes_R \Omega^p_R$ whose vertical differentials are induced by the differentials in the complex $\mathcal{I}^{\bullet}$ and whose horizontal differentials are those in the de Rham complexes $\mathcal{I}^q \otimes_R \Omega_R^{\bullet}$ of the $\D(R,k)$-modules $\mathcal{I}^q$.  Similarly, let $\mathcal{J}_0^{\bullet, \bullet}$ be the double complex $\mathcal{J}_0^{p,q} = \mathcal{J}^q \otimes_R \Omega^p_R$ and let $\mathcal{J}^{\bullet, \bullet}$ be the double complex $\mathcal{J}^{p,q} = \mathcal{J}^q \otimes_{R'} \Omega^p_{R'}$.
\end{definition}

Note that these double complexes have exact sequences of $R$- (or $R'$-) modules for columns, but merely complexes in the category of $k$-spaces for rows.  In the case of $\mathcal{I}^{\bullet, \bullet}$, the rows are the de Rham complexes of the $\D(R,k)$-modules $\mathcal{I}^q$; in the case of $\mathcal{J}_0^{\bullet, \bullet}$, the rows are the de Rham complexes of the $\mathcal{J}^q$ regarded as $\D(R,k)$-modules; and in the case of $\mathcal{J}^{\bullet, \bullet}$, the rows are the de Rham complexes of the $\mathcal{J}^q$ regarded as $\D(R',k)$-modules.  (We recall again Convention \ref{tensorsubscript}: if we write $\otimes \, \Omega_R^{\bullet}$, the tensor products of objects are being taken over $R$, but if we write $\otimes \, \Omega_{R'}^{\bullet}$, the tensor products of objects are being taken over $R'$.)

Each of these three double complexes can be sheafified.  Consider first the double complex $\mathcal{I}^{\bullet,\bullet}$.  For all $p$ and $q$, let $\widetilde{\mathcal{I}^{p,q}}$ denote the associated quasi-coherent sheaf on $X$. The vertical differentials of $\mathcal{I}^{\bullet, \bullet}$ are $R$-linear, and so induce $\mathcal{O}_X$-linear morphisms between the associated sheaves, and the horizontal differentials induce $k$-linear morphisms on the associated sheaves in the same way that the de Rham complex of $\mathcal{O}_X$ is constructed.  For all $p$ and $q$, $\mathcal{I}^{p,q}$ is an injective $R$-module, and so the sheaf $\widetilde{\mathcal{I}^{p,q}}$ is flasque \cite[Prop. III.3.4]{hartshorneAG}, and hence acyclic for the functor $\Gamma_Y$ on the category of sheaves of $k$-spaces on $X$ \cite[Prop. 1.10]{hartshorneLC}.  Therefore we have a double complex $\widetilde{\mathcal{I}^{\bullet, \bullet}}$ whose objects are all $\Gamma_Y$-acyclic sheaves of $k$-spaces on $X$ and whose columns are acyclic resolutions of the $\Omega_X^p$ (because the associated sheaf functor is exact when applied to complexes of $R$-modules).  In the same way, we sheafify the double complexes $\mathcal{J}_0^{\bullet, \bullet}$ and $\mathcal{J}^{\bullet, \bullet}$, obtaining double complexes $\widetilde{\mathcal{J}_0^{\bullet, \bullet}}$ and $\widetilde{\mathcal{J}^{\bullet, \bullet}}$ of sheaves of $k$-spaces on $X'$ which are $\Gamma_Y$-acyclic.

\begin{definition}\label{threesss}
Let $E_{\bullet, R}^{\bullet, \bullet}$ be the column-filtered spectral sequence associated with the double complex $\Gamma_Y(X, \widetilde{\mathcal{I}^{\bullet, \bullet}})$ of $k$-spaces.  Similarly, let $\mathbf{E}_{\bullet, R'}^{\bullet, \bullet}$ be the column-filtered spectral sequence associated with the double complex $\Gamma_Y(X', \widetilde{\mathcal{J}^{\bullet, \bullet}})$, and let $\mathcal{E}_{\bullet}^{\bullet, \bullet}$ be the column-filtered spectral sequence associated with the double complex $\Gamma_Y(X', \widetilde{\mathcal{J}_0^{\bullet, \bullet}})$.
\end{definition}

By Lemma \ref{sscomp}, we know that $E_{\bullet, R}^{\bullet, \bullet}$ coincides with the Hodge-de Rham spectral sequence for the complex $\Omega_X^{\bullet}$, and that $\mathbf{E}_{\bullet, R'}^{\bullet, \bullet}$ coincides with the Hodge-de Rham spectral sequence for the complex $\Omega_{X'}^{\bullet}$, so there is no ambiguity of notation.  The ``intermediate'' spectral sequence $\mathcal{E}_{\bullet}^{\bullet, \bullet}$, which by Lemma \ref{sscomp} coincides with the hypercohomology spectral sequence for the complex $\mathcal{O}_{X'} \otimes \Omega_X^{\bullet}$, will be used to relate the Hodge-de Rham spectral sequences for $X$ and $X'$ via the $+$-operation.  The first step in this process is the following lemma:

\begin{lem}\label{luisemilyinj}
Let $R$, $R'$, $I$, and $I'$ be as in the statement of Lemma \ref{specialcase}, and let $\mathcal{I}^{\bullet}$ (resp. $\mathcal{J}^{\bullet}$) be the minimal injective resolution of $R$ (resp. $R'$) in the category of $R$-modules (resp. $R'$-modules) as above.  Then for all $q$, we have an isomorphism
\[
(\Gamma_I(\mathcal{I}^{q-1}))_+ \simeq \Gamma_{I'}(\mathcal{J}^q)
\]
of $\D(R',k)$-modules.
\end{lem}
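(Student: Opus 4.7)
The approach is prime-by-prime. Since $R$ and $R'$ are both regular (hence Gorenstein), Lemma \ref{gormin} identifies $\mathcal{I}^{\bullet}$ and $\mathcal{J}^{\bullet}$ with Cousin complexes, so
\[
\mathcal{I}^{q-1} = \bigoplus_{\hgt \mathfrak{p} = q-1} E_R(R/\mathfrak{p}), \qquad \mathcal{J}^{q} = \bigoplus_{\hgt \mathfrak{P} = q} E_{R'}(R'/\mathfrak{P})
\]
as $\D(R,k)$- and $\D(R',k)$-modules respectively. First I would compute the effect of the torsion functors: each $E_R(R/\mathfrak{p})$ is $\mathfrak{p}$-torsion, and since its unique associated prime is $\mathfrak{p}$, elements outside $\mathfrak{p}$ act invertibly on it, so $\Gamma_I(E_R(R/\mathfrak{p}))$ equals $E_R(R/\mathfrak{p})$ when $I \subseteq \mathfrak{p}$ and vanishes otherwise, and analogously over $R'$. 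Thus both sides of the desired isomorphism become direct sums of injective hulls indexed, respectively, by primes of $R$ of height $q-1$ containing $I$ and by primes of $R'$ of height $q$ containing $I'$. Because $R'/(z) \cong R$, the assignment $\mathfrak{p} \mapsto \mathfrak{P} := \mathfrak{p} R' + (z)$ is a height-shifting bijection between these two indexing sets, and since $(-)_+$ commutes with direct sums the whole lemma reduces to producing, for each such pair $(\mathfrak{p}, \mathfrak{P})$, a natural $\D(R',k)$-isomorphism $(E_R(R/\mathfrak{p}))_+ \simeq E_{R'}(R'/\mathfrak{P})$.

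Next I would establish the $R'$-linear version of this single-prime isomorphism. Both modules are $\mathfrak{P}$-torsion, so they are in fact modules over the $\mathfrak{P}$-adic completion $\widehat{R'_{\mathfrak{P}}}$, which by the Cohen structure theorem takes the form $S[[z]]$ where $(S, \mathfrak{n}, \kappa) = \widehat{R_{\mathfrak{p}}}$. The claim thereby reduces to the standard identity $E_{S[[z]]}(\kappa) \simeq (E_S(\kappa))_+$ of Matlis dualizing modules over a one-variable power series extension. The element $\overline{1} \cdot z^{-1} \in (E_S(\kappa))_+$ is killed by the maximal ideal $(\mathfrak{n}, z)$, giving an embedding $\kappa \hookrightarrow (E_S(\kappa))_+$; essentiality follows by taking any nonzero $\sum_{i=1}^m e_i z^{-i}$ with $e_m \neq 0$, multiplying by $z^{m-1}$ to annihilate all layers except the lowest (leaving $e_m z^{-1}$), and then invoking essentiality of $\kappa \subseteq E_S(\kappa)$ to reach the socle. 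For $S[[z]]$-injectivity I would either verify Baer's criterion directly on the layered description $\bigoplus_{i \geq 1} E_S(\kappa) z^{-i}$, or dually analyze $E_{S[[z]]}(\kappa)$ through its $z$-torsion filtration $0 \subset (0 :_E z) \subset (0 :_E z^2) \subset \cdots$, whose successive quotients are $E_{S[[z]]/(z)}(\kappa) = E_S(\kappa)$, and splice these to recover $(E_S(\kappa))_+$.

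The step I expect to be hardest is verifying that the $R'$-linear isomorphism just constructed respects the $\D(R',k)$-module structures. On $(E_R(R/\mathfrak{p}))_+$ this structure is the prescribed one of Definition \ref{plusop}: each $\partial_{x_j}$ acts componentwise through its action on $E_R(R/\mathfrak{p})$ as a $\D(R,k)$-module, while $\partial_z$ acts by the quotient rule $\partial_z(m z^{-\alpha}) = -\alpha m z^{-\alpha - 1}$. On the Cousin summand $E_{R'}(R'/\mathfrak{P}) \subseteq \mathcal{J}^q$, by contrast, the $\D(R',k)$-structure arises because the Cousin differentials are natural localization maps and localizations of $\D$-modules remain $\D$-modules (cf.\ \cite[Example 2.1]{lyubeznik}). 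Compatibility for the $\partial_{x_j}$ is immediate, since they preserve the $z$-grading and act layerwise through their action on $E_R(R/\mathfrak{p})$. For $\partial_z$ the argument is more delicate, but I would unravel the Cousin-complex origin of each $z$-layer as a localization at $\mathfrak{P}$ of the cokernel of the previous Cousin differential, on which $\partial_z$ acts by the usual Leibniz rule for derivations on a localization, and compute explicitly that this yields exactly the coefficient $-\alpha z^{-\alpha - 1}$ appearing in Definition \ref{plusop}, from which $\D$-linearity follows.
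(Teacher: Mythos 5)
Your top-level reduction is exactly the paper's: decompose both $\mathcal{I}^{q-1}$ and $\mathcal{J}^q$ into their Bass-number summands via the Gorenstein structure (Lemma~\ref{gormin}), note that $\Gamma_I$ (resp.\ $\Gamma_{I'}$) discards the summands at primes not containing $I$ (resp.\ $I'$), and use the height-shifting bijection $\mathfrak{q}\mapsto\mathfrak{q}R'+(z)$ to reduce to the single-prime isomorphism $(E_R(R/\mathfrak{q}))_+\simeq E_{R'}(R'/(\mathfrak{q}R'+(z)))$ of $\D(R',k)$-modules. Where the paper simply cites \cite[Prop.~3.11]{luisemily} for this single-prime statement, you attempt a proof from scratch, and it is here that your argument has a genuine gap.

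Your completion-based construction of the $R'$-isomorphism is abstract: you embed the socle, check essentiality, and then \emph{assert} injectivity of $(E_S(\kappa))_+$ over $S[[z]]$ so that uniqueness of injective hulls supplies an isomorphism. An isomorphism obtained this way carries no a priori compatibility with a $z$-grading, so the claim that ``compatibility for the $\partial_{x_j}$ is immediate, since they preserve the $z$-grading and act layerwise'' is not yet justified: the Cousin-complex summand $E_{R'}(R'/\mathfrak{P})$ has no $z$-grading until you transport one across a \emph{specific} isomorphism, and producing that specific isomorphism (and checking that it intertwines the $\D(R',k)$-actions) is precisely the content being skipped. The same criticism applies to your $\partial_z$ step, though there you at least acknowledge the difficulty. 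The way to repair this is to realize $E_{R'}(R'/\mathfrak{P})$ concretely --- e.g.\ as a top local cohomology module over $\widehat{R'_{\mathfrak{P}}}\simeq S[[z]]$ computed by a \v{C}ech or Koszul complex in which $z$ appears as a coordinate --- so that both the $z$-grading and the actions of $\partial_z$ and the $\partial_{x_j}$ are visible from the start; that same computation also gives the injectivity of $(E_S(\kappa))_+$ for free, which in your sketch is only asserted with two unexecuted strategies. In outline the route is plausible, but the $\D$-compatibility is where the real work lies, and the current sketch does not discharge it.
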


\begin{proof}
Fix $q \geq 0$.  As $R'$ is a Gorenstein local ring, the structure of its minimal injective resolution $\mathcal{J}^{\bullet}$ is well-known \cite[Thm. 18.8]{matsumura}: $\mathcal{J}^q = \oplus_{\hgt \mathfrak{p} = q} E(R'/\mathfrak{p})$, where $E(R'/\mathfrak{p})$ is the $R'$-injective hull of $R'/\mathfrak{p}$.  In particular, $\mathcal{J}^q= 0$ for $q > n+1$ and $\mathcal{J}^{n+1}$ is the Matlis dualizing module $E_{R'}$.  Applying the functor $\Gamma_{I'}$ amounts to discarding those summands corresponding to prime ideals outside the closed subscheme $V(I') \subset X'$ \cite[Ex. 10.1.11]{brodmann}: that is, we have the equality
\[
\Gamma_{I'}(\mathcal{J}^q) = \oplus_{\hgt \mathfrak{p} = q, I' \subset \mathfrak{p}} E_{R'}(R'/\mathfrak{p}),
\]
where again only prime ideals of height $q$ appear in the decomposition, since $\mathcal{J}^q = \oplus_{\hgt \mathfrak{p} = q} E(R'/\mathfrak{p})$.  Since $I' = IR' + (z)$, there is a one-to-one correspondence between prime ideals $\mathfrak{p}$ of $R'$ containing $I'$ and prime ideals $\mathfrak{q}$ of $R = R'/(z)$ containing $I$ (indeed, any such $\mathfrak{p}$ takes the form $\mathfrak{q}R' + (z)$).  If $\hgt \mathfrak{p} = q$, then $\hgt \mathfrak{q} = q - 1$, so we have the decomposition
\[
\Gamma_{I'}(\mathcal{J}^q) = \oplus_{I \subset \mathfrak{q} \in \Spec(R), \hgt \mathfrak{q} = q - 1} E_{R'}(R'/(\mathfrak{q}R' + (z)))
\]
as $R'$-modules.  Note that we have $R'$-module isomorphisms
\[
R'/(\mathfrak{q}R' + (z)) \simeq (R'/(z))/(((\mathfrak{q}R' + (z))/(z)) \simeq R/\mathfrak{q},
\]
where $R/\mathfrak{q}$ is viewed as an $R$-module upon which $z \in R'$ acts trivially.  But by \cite[Prop. 3.11]{luisemily}, the $R'$-module $E_{R'}(R'/(\mathfrak{q}R' + (z)))$ is obtained from the $R$-module $E_R(R/\mathfrak{q})$ by the $+$-operation.  (This identification holds at the level of $\D(R',k)$-modules.) We then have isomorphisms
\[
\Gamma_{I'}(\mathcal{J}^q) \simeq \oplus_{I \subset \mathfrak{q} \in \Spec(R), \hgt \mathfrak{q} = q - 1} (E_R(R/\mathfrak{q}))_+ = (\Gamma_I(\mathcal{I}^{q-1}))_+,
\]
of $\D(R',k)$-modules, where we have again used \cite[Ex. 10.1.11]{brodmann}.
\end{proof}
	
\begin{lem}\label{ssplus}
Let $E_{\bullet, R}^{\bullet, \bullet}$ and $\mathcal{E}_{\bullet}^{\bullet, \bullet}$ be the spectral sequences of Definition \ref{threesss}.  There is an isomorphism
\[
(E_{\bullet, R}^{\bullet, \bullet})_+ \xrightarrow{\sim} \mathcal{E}_{\bullet}^{\bullet, \bullet}[(0,1)],
\]
where the object on the left-hand side is obtained by applying the $+$-operation to all the objects and differentials of the spectral sequence $E_{\bullet, R}^{\bullet, \bullet}$ (this notation means that the morphism of spectral sequences has bidegree $(0,1)$, as in Definition \ref{shiftmorphism}).
\end{lem}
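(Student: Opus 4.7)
The plan is to establish the isomorphism at the level of the defining $E_0$-double complexes and let exactness of the $+$-operation propagate it through the entire spectral sequence machinery. Concretely, the underlying $k$-space of $M_+$ is a direct sum of copies of that of $M$, so $+$ is exact on $k$-spaces and commutes with cohomology of complexes of $k$-spaces. Consequently, any isomorphism of the underlying $E_0$-double complexes (after the asserted bidegree shift) automatically induces an isomorphism of the associated column-filtered spectral sequences at every level $r \geq 1$, together with the filtered abutments.

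First I would match objects. Since $X$ is affine and $\Omega_R^p$ is finite free over $R$, the $(p,q)$-entry of the $E_0$-double complex for $E_{\bullet,R}^{\bullet,\bullet}$ is $\Gamma_I(\mathcal{I}^q)\otimes_R\Omega_R^p$. The $+$-functor commutes with finite direct sums and therefore with $\otimes_R\Omega_R^p$, so after $+$ this entry becomes $(\Gamma_I(\mathcal{I}^q))_+\otimes_R\Omega_R^p$. Lemma \ref{luisemilyinj} identifies this with $\Gamma_{I'}(\mathcal{J}^{q+1})\otimes_R\Omega_R^p = \Gamma_{I'}(\mathcal{J}^{q+1}\otimes_R\Omega_R^p)$, which is precisely the $(p,q+1)$-entry of the $E_0$-double complex for $\mathcal{E}_\bullet^{\bullet,\bullet}$. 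This accounts for the bidegree shift $(0,1)$.

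Next I would check that the objectwise identification is compatible with both families of differentials. The horizontal differentials on both sides are the de Rham differentials with respect to $\partial_1,\ldots,\partial_n$; since the $+$-operation on a $\D(R,k)$-module affects only the new variable $z$ and the derivation $\partial_z$ (acting as the identity on each summand at the level of underlying $k$-spaces), it commutes tautologically with $\partial_1,\ldots,\partial_n$. The vertical differentials are the Cousin (equivalently, minimal injective resolution) differentials for $R$ and $R'$, which by Lemma \ref{gormin} and the description of $\mathcal{J}^\bullet$ in the proof of Lemma \ref{luisemilyinj} are sums of natural localization maps between the summands $E_R(R/\mathfrak{q})$ and between the summands $E_{R'}(R'/\mathfrak{p})$. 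Using the bijection $\mathfrak{q}\leftrightarrow\mathfrak{q}R'+(z)$ between primes containing $I$ and primes containing $I'$, together with the natural Matlis identification $(E_R(R/\mathfrak{q}))_+\simeq E_{R'}(R'/(\mathfrak{q}R'+(z)))$ of \cite[Prop. 3.11]{luisemily} used in Lemma \ref{luisemilyinj}, one verifies summand-by-summand that the $+$-image of the $R$-Cousin differential is the $R'$-Cousin differential on the corresponding summand. With this verified, the resulting isomorphism of double complexes, combined with Lemma \ref{sscomp} to identify each column-filtered spectral sequence with the appropriate Hodge-de Rham one, yields the claimed isomorphism of spectral sequences of bidegree $(0,1)$.

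The main obstacle is the verification for the vertical (Cousin) differentials, since it requires tracing through the explicit Matlis-theoretic identification of $(E_R(R/\mathfrak{q}))_+$ with $E_{R'}(R'/(\mathfrak{q}R'+(z)))$ and showing that natural localization maps on the $R$-side transport, under the $+$-operation, to natural localization maps on the $R'$-side. Once the naturality of this identification in $\mathfrak{q}$ is established, everything else is formal: the horizontal compatibility is automatic, and the exactness of $+$ on $k$-spaces promotes the resulting objectwise double-complex isomorphism to an isomorphism at every $E_r$-level and on the abutment.
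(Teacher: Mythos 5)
Your proposal takes essentially the same route as the paper: identify the objects of the two $E_0$-double complexes via the $\Gamma_{I'}(\mathcal{J}^{q+1}) \simeq (\Gamma_I(\mathcal{I}^q))_+$ isomorphism of Lemma \ref{luisemilyinj}, observe that the $+$-operation commutes with the horizontal de Rham differentials because $\partial_1,\ldots,\partial_n$ do not involve $z$, and then pass the resulting isomorphism of double complexes to the associated column-filtered spectral sequences. You are somewhat more explicit than the paper in flagging that the compatibility with the vertical (Cousin/injective-resolution) differentials must be traced through the summand-by-summand identification $(E_R(R/\mathfrak{q}))_+ \simeq E_{R'}(R'/(\mathfrak{q}R'+(z)))$, a point the paper's proof treats more cursorily, but the overall strategy is the same.
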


\begin{proof}
We consider the objects of the double complexes giving rise to these spectral sequences.  For all $p$ and $q$, we have
\[ 
\Gamma_Y(X', \widetilde{\mathcal{J}_0^{p,q+1}}) \simeq \Gamma_{I'}(\mathcal{J}_0^{p,q+1}) = \Gamma_{I'}(\mathcal{J}^{q+1} \otimes \Omega_R^p) \simeq \Gamma_{I'}(\mathcal{J}^{q+1}) \otimes \Omega_R^p
\]
and similarly
\[
(\Gamma_Y(X, \widetilde{\mathcal{I}^{p,q}}))_+ \simeq (\Gamma_I(\mathcal{I}^{p,q}))_+ = (\Gamma_I(\mathcal{I}^q \otimes \Omega_R^p))_+ \simeq (\Gamma_I(\mathcal{I}^q) \otimes \Omega_R^p)_+
\]
by Lemma \ref{E2shape} and the fact that $\Gamma_I = H^0_I$ and $\Gamma_{I'} = H^0_{I'}$ commute with direct sums.  By Lemma \ref{luisemilyinj}, we have $\Gamma_{I'}(\mathcal{J}^{q+1}) \simeq (\Gamma_I(\mathcal{I}^q))_+$ for all $q$.  Therefore, for all $p$ and $q$, we have
\[
\Gamma_{I'}(\mathcal{J}^{q+1}) \otimes \Omega_R^p \simeq (\Gamma_I(\mathcal{I}^q))_+ \otimes \Omega_R^p \simeq (\Gamma_I(\mathcal{I}^q) \otimes \Omega_R^p)_+,
\]
so the \emph{objects} of the double complexes are isomorphic with the indicated bidegree shift.  Finally, we observe that the differentials in the complex $\Omega_R^{\bullet}$ do not involve $z$ or $dz$, so the isomorphisms $(\Gamma_I(\mathcal{I}^q))_+ \otimes \Omega_R^p \simeq (\Gamma_I(\mathcal{I}^q) \otimes \Omega_R^p)_+$ commute with the differentials of the double complex and thus assemble to an isomorphism of double complexes.  An isomorphism of double complexes induces an isomorphism between the corresponding column-filtered spectral sequences, and the lemma follows.
\end{proof}

We are now ready to complete the proof of Theorem \ref{mainthmA}.

\begin{proof}[Proof of Theorem \ref{mainthmA}(a)]
As already described, we need only prove the result in the special case of Lemma \ref{specialcase}.  We retain the notation of that lemma.  Consider again the short exact sequence of Definition \ref{partialdR} and its sheafified version
\[
0 \rightarrow \mathcal{O}_{X'} \otimes \Omega^{\bullet}_X[-1] \xrightarrow{\iota} \Omega_{X'}^{\bullet} \rightarrow \mathcal{O}_{X'} \otimes \Omega^{\bullet}_X \rightarrow 0.
\]
As described in subsection \ref{specseq}, the morphism of complexes $\iota$ induces a morphism between the corresponding spectral sequences for hypercohomology supported at $Y$.  These spectral sequences were identified as $\mathcal{E}_{\bullet}^{\bullet,\bullet}[(-1,0)]$ (respectively, $\mathbf{E}_{\bullet, R'}^{\bullet, \bullet}$) in the paragraph following Definition \ref{threesss}.  Accounting for the shift of $-1$, we see that this induced morphism has the following form:
\[
\iota_{\bullet}^{\bullet, \bullet}: \mathcal{E}_{\bullet}^{\bullet,\bullet} \rightarrow \mathbf{E}_{\bullet, R'}^{\bullet, \bullet}[(1,0)].
\]
Identifying first $\mathcal{E}_{\bullet}^{\bullet,\bullet}[(0,1)]$ with $(E_{\bullet, R}^{\bullet, \bullet})_+$ (by Lemma \ref{ssplus}) and then $E_{\bullet, R}^{\bullet, \bullet}$ with the $\frac{1}{z}$-component of $(E_{\bullet, R}^{\bullet, \bullet})_+$, we see that this further induces a morphism
\[ 
\phi_{\bullet}^{\bullet,\bullet}: E_{\bullet,R}^{\bullet, \bullet} \rightarrow \mathbf{E}_{\bullet,R'}^{\bullet, \bullet}[(1,1)],
\]
given in every degree by the inclusion of $E_{r,R}^{\bullet,\bullet}$ as the $\frac{1}{z}$-component of $(E_{r,R}^{\bullet,\bullet})_+ \simeq \mathcal{E}_r^{\bullet, \bullet}[(0,1)]$ followed by $\iota_{\bullet}^{\bullet, \bullet}$.  If $r=2$, the maps $\phi_2^{p,q}$ are precisely the isomorphisms $H^p_{dR}(H^q_I(R)) \xrightarrow{\sim} H^{p+1}_{dR}(H^{q+1}_{I'}(R'))$ appearing in the proof of Lemma \ref{specialcase}, which were induced by the morphism of complexes $\iota$ and the inclusion of $E_{2,R}^{p,q} = H^p_{dR}(H^q_I(R))$ as the $\frac{1}{z}$-component of $(E_{2,R}^{p,q})_+$.  Therefore the morphism $\phi^{\bullet,\bullet}_{\bullet}$ of spectral sequences is an isomorphism at the $E_2$-level.  By Proposition \ref{E1shifted}, it follows that $\phi$ is an isomorphism at all later levels, including the abutments.  The proof is complete.
\end{proof}

\section{Matlis duality and $\Sigma$-continuous maps}\label{klinear}

In this section, we describe formulations of Matlis duality for local rings containing a field $k$ in terms of continuous $k$-linear maps to $k$.  Many of these results are not new; some of them are stated without proof in SGA2 \cite[Exp. IV]{SGA2}. For lack of adequate references for the proofs, we provide their full details here.  We also define the class of $k$-linear maps (the ``$\Sigma$-continuous'' maps) between arbitrary modules over such rings that admit Matlis duals.

Let $(R, \mathfrak{m})$ be a (Noetherian) local ring with coefficient field $k$. Let $E = E(R/\mathfrak{m})$ denote a choice of injective hull of $R/\mathfrak{m} \simeq k$ as an $R$-module.  The Matlis duality functor $D$ is defined by $D(M) = \Hom_R(M,E)$ for all $R$-modules $M$.  Since $E$ is injective, $D$ is an exact, contravariant functor.  (See \cite[\S 18]{matsumura} for a standard treatment of this duality theory, or \cite{matlis} for its original statement by Matlis.)  If $f: M \rightarrow N$ is an $R$-linear homomorphism of $R$-modules, its Matlis dual is the $R$-linear homomorphism $f^*: D(N) \rightarrow D(M)$ defined by pre-composition with $f$: $f^*(\phi) = \phi \circ f$ for $R$-linear maps $\phi: N \rightarrow E$.  Using this definition, it does not make sense \emph{a priori} to speak of the Matlis dual of a map $\delta: M \rightarrow N$ that is not $R$-linear.  However, we will show that a more general class of maps can be dualized.  We will make use of functorial identifications of the Matlis dual of a finite-length (resp. finitely generated) $R$-module with the set of $k$-linear (resp. $k$-linear and $\mathfrak{m}$-adically continuous) maps from the module to $k$; from these identifications, we will see that any $k$-linear map between finite-length $R$-modules, and any $\mathfrak{m}$-adically continuous $k$-linear map between finitely generated $R$-modules, has a Matlis dual.  We will also explain how this theory can be extended to the case of arbitrary modules.

\begin{remark}
In \cite{SGA2} the following results are stated in a slightly more general setting: $(R, \mathfrak{m})$ is a (Noetherian) local ring containing a field $k_0$ such that the residue field $k = R/\mathfrak{m}$ is a finite extension of $k_0$.  Since we need only the case where $k = k_0$, we make this assumption throughout to simplify the discussion.  However, with only minor modifications to the arguments, all of what follows in this section is true at the level of generality of \cite{SGA2}.
\end{remark}

Let $R$ be as above, and let $N$ be any $R$-module (\emph{a fortiori}, $N$ is a $k$-space).  We can define an $R$-module structure on the $k$-space $\Hom_k(N,k)$ as follows.  Given a $k$-linear homomorphism $\lambda: N \rightarrow k$, we define $r \cdot \lambda: N \rightarrow k$ by $(r \cdot \lambda)(n) = \lambda(rn)$, which is again $k$-linear since, if $\alpha \in k$, we have
\[
(r \cdot \lambda)(\alpha n) = \lambda(r(\alpha n)) = \alpha \lambda(rn) = \alpha (r \cdot \lambda)(n)
\] 
by the $k$-linearity of $\lambda$.  (We will use the dot $\cdot$ throughout this section to denote an $R$-action on maps which is defined by multiplication on the input of the map when multiplying the output by $r \in R$ may not make sense.)

The \emph{socle} $\Soc(E) = (0 :_E \mathfrak{m})$ of $E$ is a one-dimensional $k$-space. We fix, once and for all, a $k$-linear projection $E \rightarrow \Soc(E)$ which we identify with a $k$-linear map $\sigma: E \rightarrow k$.

\begin{remark}\label{noncanonical}
The various functorial identifications made throughout this section will depend on the choices of $E$ and $\sigma$ made here. At the end of this section, we will specify the choices of $E$ and $\sigma$ in the case where $R$ is complete and regular that will be used in the rest of the paper.
\end{remark}   

Now define, for any $R$-module $N$, a map $\Phi_N: \Hom_R(N,E) \rightarrow \Hom_k(N,k)$ by $\Phi_N(g) = \sigma \circ g$.  Clearly, if $g$ is $R$-linear (and hence $k$-linear), the composition $\sigma \circ g$ is $k$-linear.  

\begin{lem}
The map $\Phi_N$ defined above is an $R$-linear homomorphism.
\end{lem}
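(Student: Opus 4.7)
The plan is to verify the two defining properties of an $R$-linear homomorphism, namely additivity and compatibility with the $R$-actions on source and target. Additivity is essentially immediate from the $k$-linearity of $\sigma$: for $g_1, g_2 \in \Hom_R(N,E)$, one has $\Phi_N(g_1+g_2)(n) = \sigma(g_1(n)+g_2(n)) = \sigma(g_1(n)) + \sigma(g_2(n))$, which equals $(\Phi_N(g_1) + \Phi_N(g_2))(n)$.

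For the $R$-linearity, the key observation is that although $\sigma: E \to k$ is merely $k$-linear (not $R$-linear), we never need to pull an element of $R$ through $\sigma$. Instead, the $R$-linearity of $g$ itself lets us transfer the action from the output side to the input side before applying $\sigma$. Concretely, I would first rewrite, for $r \in R$ and $g \in \Hom_R(N,E)$, the element $(r \cdot g)(n) = r g(n) = g(rn)$, invoking the standard $R$-module structure on $\Hom_R(N,E)$ together with the $R$-linearity of $g$. This gives
\[
\Phi_N(r \cdot g)(n) = \sigma((r \cdot g)(n)) = \sigma(g(rn)).
\]
On the other hand, the $R$-module structure on $\Hom_k(N,k)$ defined just above the lemma yields
\[
(r \cdot \Phi_N(g))(n) = \Phi_N(g)(rn) = \sigma(g(rn)).
\]
Comparing, $\Phi_N(r \cdot g) = r \cdot \Phi_N(g)$, as required.

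There is no real obstacle here; the only subtle point is conceptual, namely that the $R$-action on $\Hom_k(N,k)$ was deliberately defined on the input side of the map precisely so that this compatibility works despite $\sigma$ being only $k$-linear. Once one notices this, the verification is a one-line computation, and the proof consists of little more than carefully unwinding the two $R$-module structures in question.
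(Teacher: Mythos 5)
Your proof is correct and follows essentially the same argument as the paper: both use the $R$-linearity of $g$ to shift the scalar $r$ from the output of $g$ to the input, then observe that this matches the definition of the $R$-action on $\Hom_k(N,k)$. The only addition is your explicit verification of additivity, which the paper leaves implicit.
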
  

\begin{proof}
Let $g \in \Hom_R(N,E)$ and $r \in R$ be given.  Then for any $n \in N$, we have
\[
\Phi_N(rg)(n) = \sigma((rg)(n)) = \sigma(rg(n)) = \sigma(g(rn)) = (\sigma \circ g)(rn) = (r \cdot \Phi_N(g))(n),
\] 
so that $\Phi_N(rg) = r \cdot \Phi_N(g)$.
\end{proof}

We list some more elementary properties of the maps $\Phi_N$.  Suppose we have an $R$-linear homomorphism $f: M \rightarrow N$ of $R$-modules.  The Matlis dual of $f$, which is the map $f^*: \Hom_R(N,E) \rightarrow \Hom_R(M,E)$, is clearly $R$-linear.  

\begin{lem}
The map $f^{\vee}: \Hom_k(N,k) \rightarrow \Hom_k(M,k)$ defined by pre-composition with $f$ (i.e., $f^{\vee}(\lambda) = \lambda \circ f$ for a $k$-linear $\lambda: N \rightarrow k$) is $R$-linear.
\end{lem}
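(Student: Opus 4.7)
The plan is to verify $R$-linearity of $f^{\vee}$ directly from the definitions, with the only substantive input being the $R$-linearity of $f$ itself. Additivity is immediate because pre-composition with a fixed map commutes with pointwise addition of $k$-linear functionals, so the content is entirely in the compatibility with the $R$-action defined in the preceding paragraphs.

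Concretely, I would fix $\lambda \in \Hom_k(N,k)$, $r \in R$, and $m \in M$, then unwind the two sides of $f^{\vee}(r \cdot \lambda) = r \cdot f^{\vee}(\lambda)$. On the one hand, by definition of $f^{\vee}$ and then of the $R$-action on $\Hom_k(N,k)$,
\[
f^{\vee}(r \cdot \lambda)(m) = (r \cdot \lambda)(f(m)) = \lambda(r f(m)).
\]
On the other hand, by the definition of the $R$-action on $\Hom_k(M,k)$ and then of $f^{\vee}$,
\[
(r \cdot f^{\vee}(\lambda))(m) = f^{\vee}(\lambda)(rm) = \lambda(f(rm)).
\]
The two expressions agree precisely because $f$ is $R$-linear, so $f(rm) = r f(m)$. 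Since $m \in M$ was arbitrary, this gives the equality of functionals.

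There is no real obstacle here: the lemma is a bookkeeping check, and the only place the hypothesis enters is the single use of $R$-linearity of $f$ to move $r$ past $f$. The dot notation is slightly delicate since $r \cdot \lambda$ refers to the twisted action defined by multiplication on the \emph{input}, but once one keeps track of which side of $f$ the $r$ sits on, the computation is forced. I would note in passing that this calculation is parallel to (and dual in flavor to) the one showing $\Phi_N$ is $R$-linear, which foreshadows that $\Phi$ will assemble into a natural transformation between $\Hom_R(-,E)$ and $\Hom_k(-,k)$ on a suitable subcategory.
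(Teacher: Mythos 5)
Your computation is correct and matches the paper's proof exactly: the paper writes the same chain of equalities $f^{\vee}(r \cdot \lambda)(m) = (r \cdot \lambda)(f(m)) = \lambda(rf(m)) = \lambda(f(rm)) = f^{\vee}(\lambda)(rm) = (r \cdot f^{\vee}(\lambda))(m)$, with the single use of $R$-linearity of $f$ in the middle step. The only cosmetic difference is that you unwind the two sides separately and then match them, whereas the paper presents it as one uninterrupted string of equalities.
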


\begin{proof}
Let $r \in R$ and $\lambda \in \Hom_k(N,k)$ be given.  Then
\[
f^{\vee}(r \cdot \lambda)(m) = (r \cdot \lambda)(f(m)) = \lambda(rf(m)) = \lambda(f(rm)) = f^{\vee}(\lambda)(rm) = (r \cdot f^{\vee}(\lambda))(m)
\] 
for any $m \in M$, as desired.
\end{proof}  

Moreover, we note that given any $g \in \Hom_R(N,E)$, both $\Phi_M(f^*(g))$ and $f^{\vee}(\Phi_N(g))$ are equal to the composite $\sigma \circ g \circ f: M \rightarrow k$.  Therefore, the diagram below is commutative and all its arrows are $R$-linear maps:
\[
\begin{CD}
\Hom_R(N,E) @>f^*>> \Hom_R(M,E)\\
@VV \Phi_N V           @VV \Phi_M V\\
\Hom_k(N,k) @>f^{\vee}>> \Hom_k(M,k)
\end{CD}
\]

We have now established enough preliminaries to prove the following:

\begin{prop}\label{finlength}
The map $\Phi_N: \Hom_R(N,E) \rightarrow \Hom_k(N,k)$ defined by $\Phi_N(\phi) = \sigma \circ \phi$ is an isomorphism of $R$-modules whenever $N$ is of finite length.
\end{prop}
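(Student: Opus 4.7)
The plan is to prove Proposition \ref{finlength} by induction on the length of $N$, bootstrapping from the case $N = k = R/\mathfrak{m}$ via the five lemma.

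For the base case, take $N \simeq R/\mathfrak{m}$. Any $R$-linear map $\phi: R/\mathfrak{m} \to E$ is determined by $\phi(1)$, which must be annihilated by $\mathfrak{m}$ and so lies in $\Soc(E)$; hence $\Hom_R(R/\mathfrak{m},E) \simeq \Soc(E)$, a one-dimensional $k$-space, while $\Hom_k(R/\mathfrak{m},k) \simeq k$ is also one-dimensional. Under $\Phi_{R/\mathfrak{m}}$, $\phi$ maps to $\sigma \circ \phi$, and evaluating at $1$ gives $\sigma(\phi(1))$. Since $\sigma$ is the chosen $k$-linear projection $E \to \Soc(E)$ composed with a $k$-linear identification $\Soc(E) \xrightarrow{\sim} k$, its restriction to $\Soc(E)$ is a $k$-linear isomorphism. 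Hence $\Phi_{R/\mathfrak{m}}$ is an isomorphism.

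For the inductive step, assume the result holds for all modules of length $< \l$ and let $N$ have length $\l \geq 2$. Choose a submodule $0 \neq N' \subset N$ with $N/N'$ of strictly smaller length (e.g., any proper nonzero submodule containing a simple submodule). The short exact sequence $0 \to N' \to N \to N/N' \to 0$ induces the commutative diagram
\[
\begin{CD}
0 @>>> \Hom_R(N/N',E) @>>> \Hom_R(N,E) @>>> \Hom_R(N',E) @>>> 0\\
@.      @VV \Phi_{N/N'} V  @VV \Phi_N V        @VV \Phi_{N'} V  @.\\
0 @>>> \Hom_k(N/N',k) @>>> \Hom_k(N,k) @>>> \Hom_k(N',k) @>>> 0
\end{CD}
\]
whose commutativity is exactly the naturality square established before the proposition. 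The top row is exact because $E$ is an injective $R$-module, and the bottom row is exact because $k$ is an injective $k$-module (every $k$-space is injective). By the inductive hypothesis, $\Phi_{N/N'}$ and $\Phi_{N'}$ are isomorphisms. The five lemma then forces $\Phi_N$ to be an isomorphism as well, completing the induction.

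I do not expect a significant obstacle: the only delicate point is the base case, where one needs the chosen $\sigma$ to restrict to a nonzero (hence bijective) $k$-linear map on the one-dimensional socle, which is built into its construction. All other ingredients — the naturality square, exactness of both rows, and the five lemma — are formal. An alternative route would be to verify injectivity of $\Phi_N$ directly using the fact that $E$ is an essential extension of $\Soc(E)$ (so any nonzero $R$-submodule of $E$ meets $\Soc(E)$ nontrivially, hence cannot lie in $\ker \sigma$), then match $k$-dimensions; but the inductive argument is cleaner and also prepares for the finitely generated case treated next in the paper.
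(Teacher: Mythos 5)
Your proof is correct and follows essentially the same route as the paper: induction on length, exactness of $\Hom_R(-,E)$ and $\Hom_k(-,k)$, the naturality square for $\Phi$, and the five lemma, with the base case reducing to the fact that $\sigma$ restricts to an isomorphism on $\Soc(E)$. The only cosmetic difference is that the paper fixes a simple submodule $k \hookrightarrow N$ and applies induction to the length-$(\l-1)$ quotient, whereas you take an arbitrary proper nonzero submodule $N'$ and apply induction to both $N'$ and $N/N'$; both are equally valid.
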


\begin{proof}
We proceed by induction on the length $\l(N)$ in the category of $R$-modules, remarking that any finite-length $N$ is a $k$-space of dimension $\l(N)$.  The base case, $\l(N) = 1$, is the case $N \simeq k$; here $\Phi_N$ is an isomorphism identifying $\Hom_k(k,k) \simeq k$ with the socle $\Soc(E) \simeq \Hom_R(k,E) = \Hom_R(R/\mathfrak{m}, E)$ of $E$.  Now suppose $\l(N) \geq 2$, in which case there is a short exact sequence $0 \rightarrow k \rightarrow N \rightarrow N' \rightarrow 0$ of $R$-modules where $\l(N') = \l(N) - 1$.  As $E$ is an injective $R$-module, the functor $\Hom_R(-,E)$ is exact.  Moreover, $\Hom_k(-,k)$ is also an exact functor on the category of $k$-spaces (all $k$-spaces are injective objects).  We therefore obtain a commutative diagram with exact rows:
\[
\begin{CD}
0 @>>> \Hom_R(N',E) @>>> \Hom_R(N,E) @>>> \Hom_R(k,E) @>>> 0\\
@.             @VV \Phi_{N'} V                       @VV \Phi_N V                   @VV \Phi_k V                    @.\\
0 @>>> \Hom_k(N',k) @>>> \Hom_k(N,k) @>>> \Hom_k(k,k) @>>> 0
\end{CD}
\]
All maps in this diagram are $R$-linear, and the bottom row is exact as a sequence of $R$-modules, since it is exact as a sequence of $k$-spaces.  The map $\Phi_k$ is an isomorphism by our base case, and $\Phi_{N'}$ is an isomorphism by the induction hypothesis, so $\Phi_N$ is an isomorphism by the five-lemma and the proof is complete.
\end{proof}

We next consider the case of $R$-modules that are not of finite length, for which we need to restrict attention to $\mathfrak{m}$-adically continuous homomorphisms.  We recall the general definition here:

\begin{definition}\label{madic}
Let $R$ be a commutative ring, $I \subset R$ an ideal, and $M, N$ two finitely generated $R$-modules.  The \emph{$I$-adic topology} on $M$ (resp. $N$) is defined by stipulating that $\{I^nM\}$ (resp. $\{I^nN\}$) be a fundamental system of neighborhoods of $0$.  An Abelian group homomorphism $f: M \rightarrow N$ is \emph{$I$-adically continuous} if it is continuous with respect to these topologies on $M$ and $N$: that is, if for all $t$ there exists an $s$ such that $f(I^sM) \subset I^tN$.
\end{definition}

\begin{remark}\label{infmadic}
Note that any $R$-linear map is automatically $I$-adically continuous.  Also note that this definition makes sense for arbitrary $R$-modules, not necessarily finitely generated.  We insist on finite generation here because we will make use of a different notion of continuity in the case of arbitrary modules.
\end{remark}

In the case of the local ring $(R, \mathfrak{m})$, the only fundamental neighborhood of $0 \in k = R/\mathfrak{m}$ in the $\mathfrak{m}$-adic topology on $k$ is $\{0\}$ itself.  Therefore, if $M$ is a finitely generated $R$-module, a $\mathfrak{m}$-adically continuous map $M \rightarrow k$ is one that annihilates $\mathfrak{m}^tM$ for some $t \geq 0$.  Let $\Hom_{cont, k}(M, k)$ be the $k$-space of $k$-linear maps $M \rightarrow k$ that are $\mathfrak{m}$-adically continuous.   

Now note that if $M$ is a finitely generated $R$-module and $\phi: M \rightarrow E$ is an $R$-linear map, $\phi$ annihilates $\mathfrak{m}^t M$ for some $t$.  Indeed, let $m_1, \ldots, m_n$ be generators for $M$ over $R$.  Since $E = E(R/\mathfrak{m})$ is $\mathfrak{m}$-power torsion (every element of $E$ is annihilated by some power of $\mathfrak{m}$ \cite[Thm. 18.4(v)]{matsumura}), there exist $t_i$ for $i = 1, \ldots, n$ such that $\phi(m_i)$ is annihilated by $\mathfrak{m}^{t_i}$; but then, setting $t = \max\{t_1, \ldots, t_n\}$, we have $\phi(\mathfrak{m}^t M) = 0$.  Therefore every such $\phi$ factors through $M/\mathfrak{m}^t M$ for some $t$, that is, $\Hom_R(M,E) = \varinjlim \Hom_R(M/\mathfrak{m}^t M, E)$.  The $R$-module $M/\mathfrak{m}^t M$ is of finite length for all $t$.  Since $\Hom_R(M/\mathfrak{m}^t M, E)$ is isomorphic via $\Phi_{M/\mathfrak{m}^t M}$ to $\Hom_k(M/\mathfrak{m}^t M, k)$, and these isomorphisms form a compatible system as $t$ varies, we deduce the existence of an isomorphism
\[
\Phi_M: \Hom_R(M, E) \xrightarrow{\sim} \varinjlim \Hom_k(M/\mathfrak{m}^t M, k) = \Hom_{cont, k}(M,k),
\]
again defined by $\Phi_M(\phi) = \sigma \circ \phi$.  Our definition of the action of an element $r \in R$ on a $k$-linear map $\lambda: M \rightarrow k$ by pre-composition by multiplication with $r$ preserves the property of $\mathfrak{m}$-adic continuity, so $\Hom_{cont, k}(M, k)$ is indeed an $R$-module.  As in the finite-length case, we see that $\Phi_M$ is functorial in the $R$-module $M$.  Note that if $M$ is of finite length, every $k$-linear map $M \rightarrow k$ is $\mathfrak{m}$-adically continuous, so that the isomorphism $\Phi_M$ just defined coincides with the $\Phi_M$ defined earlier.  We summarize the above discussion in the following

\begin{thm} \label{sga2}
Let $(R, \mathfrak{m})$ be a local ring with coefficient field $k$.  Let $E$ be an $R$-injective hull of $k$, and let $\sigma: E \rightarrow k$ be a fixed $k$-linear projection of $E$ onto its socle.  For every finitely generated $R$-module $M$, post-composition with $\sigma$ defines an isomorphism
\[ 
\Phi_M: \Hom_R(M, E) \xrightarrow{\sim} \Hom_{cont, k}(M,k)
\] 
of $R$-modules, functorial in the $R$-module $M$.  (As a special case, if $M$ is of finite length, $\Hom_R(M,E) \simeq \Hom_k(M,k)$.)
\end{thm}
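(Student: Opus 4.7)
The plan is to reduce the statement for a general finitely generated $R$-module $M$ to the finite-length case handled by Proposition \ref{finlength}, by expressing both sides as direct limits over the quotients $M/\mathfrak{m}^t M$. The bulk of the conceptual work (the induction on length, the compatibility of $\Phi_N$ with $R$-actions and dual maps) has already been done, so this theorem is essentially a packaging statement.

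First I would verify that $\Phi_M$ actually lands in $\Hom_{cont,k}(M,k)$. Given $\phi \in \Hom_R(M,E)$, choose generators $m_1,\ldots,m_n$ of $M$ over $R$. Since $E = E(R/\mathfrak{m})$ is $\mathfrak{m}$-power torsion, each $\phi(m_i)$ is annihilated by some $\mathfrak{m}^{t_i}$, hence $\phi$ annihilates $\mathfrak{m}^t M$ for $t = \max t_i$. Then $\sigma \circ \phi$ also annihilates $\mathfrak{m}^t M$, so it is $\mathfrak{m}$-adically continuous as a map to $k$ (where $\{0\}$ is a neighborhood of $0$).

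Second, this same factoring observation gives $\Hom_R(M, E) = \varinjlim_t \Hom_R(M/\mathfrak{m}^t M, E)$, while by definition $\Hom_{cont,k}(M, k) = \varinjlim_t \Hom_k(M/\mathfrak{m}^t M, k)$. For each $t$, the quotient $M/\mathfrak{m}^t M$ is finitely generated and annihilated by $\mathfrak{m}^t$, hence of finite length, so Proposition \ref{finlength} supplies an $R$-linear isomorphism
\[
\Phi_{M/\mathfrak{m}^t M}: \Hom_R(M/\mathfrak{m}^t M, E) \xrightarrow{\sim} \Hom_k(M/\mathfrak{m}^t M, k).
\]
The functoriality of $\Phi$ established earlier (compatibility with the commutative square involving $f^*$ and $f^{\vee}$) applied to the surjections $M/\mathfrak{m}^{t+1}M \twoheadrightarrow M/\mathfrak{m}^t M$ shows that these isomorphisms form a morphism of directed systems; passing to the direct limit yields $\Phi_M$, which is therefore an isomorphism of $R$-modules. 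Functoriality of $\Phi_M$ in $M$ follows from functoriality at each finite-length stage.

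There is no real obstacle here; the only things to check carefully are routine bookkeeping: that the $R$-action $(r \cdot \lambda)(n) = \lambda(rn)$ preserves $\mathfrak{m}$-adic continuity (immediate, since if $\lambda$ kills $\mathfrak{m}^t M$ then so does $r \cdot \lambda$), that the transition maps in the two directed systems agree under $\Phi$, and that the final remark (the finite-length case recovers Proposition \ref{finlength}) is compatible, which holds because every $k$-linear map from a finite-length module to $k$ is automatically $\mathfrak{m}$-adically continuous.
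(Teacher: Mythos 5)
Your argument matches the paper's own derivation almost verbatim: the paper also observes that every $R$-linear $\phi: M \to E$ factors through some $M/\mathfrak{m}^t M$, identifies $\Hom_R(M,E) = \varinjlim \Hom_R(M/\mathfrak{m}^t M, E)$, applies Proposition \ref{finlength} at each stage, and passes to the direct limit using the functoriality of $\Phi$. The proposal is correct and essentially the same proof.
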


For the rest of this section, the assumptions on $R$ and $k$ are as in Theorem \ref{sga2}.  A consequence of this theorem is that we can define the Matlis dual of a map between finitely generated (resp. finite-length) $R$-modules as long as the map is $k$-linear and $\mathfrak{m}$-adically continuous (resp. $k$-linear).  The definition is simply pre-composition:

\begin{definition}\label{matlisdual}
Let $\delta: M \rightarrow N$ be a $k$-linear map between finitely generated $R$-modules that is continuous with respect to the $\mathfrak{m}$-adic topologies on $M$ and $N$.  Then pre-composition with $\delta$ is $k$-linear and carries $\mathfrak{m}$-adically continuous $k$-linear maps $\lambda: N \rightarrow k$ to $\mathfrak{m}$-adically continuous $k$-linear maps $\lambda \circ \delta: M \rightarrow k$, so we can define the \emph{Matlis dual} $\delta^*: D(N) \rightarrow D(M)$ to be the composite
\[
\begin{CD}
\Hom_R(N,E) @>> \Phi_N > \Hom_{cont, k}(N,k) @>> \delta^{\vee} > \Hom_{cont, k}(M,k) @>> \Phi_M^{-1} > \Hom_R(M,E).
\end{CD}
\]
\end{definition}

\begin{remark}
If $M$ and $N$ are of finite length, $\delta^*$ can be defined in the same way as above for any $k$-linear $\delta$.  (More generally, we will see below that any $k$-linear map between \emph{Artinian} modules can be dualized.)  
\end{remark}

Our dual construction behaves well with respect to composition:

\begin{prop}\label{functor}
If $M,N,P$ are finitely generated $R$-modules and $\delta: M \rightarrow N$, $\delta': N \rightarrow P$ are $\mathfrak{m}$-adically continuous $k$-linear maps, then $(\delta' \circ \delta)^* = \delta^* \circ \delta'^*$ as maps $D(P) \rightarrow D(M)$.
\end{prop}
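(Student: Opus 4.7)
The plan is to reduce this to the elementary contravariance of pre-composition on $k$-linear maps; essentially all the work was done in setting up Definition \ref{matlisdual}, and what remains is formal.

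First I would note that $\delta' \circ \delta$ is itself $\mathfrak{m}$-adically continuous (composition of continuous maps in the sense of Definition \ref{madic} is continuous: given $t$, choose $s$ for $\delta'$ so that $\delta'(\mathfrak{m}^s N) \subset \mathfrak{m}^t P$, then choose $u$ for $\delta$ so that $\delta(\mathfrak{m}^u M) \subset \mathfrak{m}^s N$). Hence the Matlis dual $(\delta' \circ \delta)^*$ is defined according to Definition \ref{matlisdual}.

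The main step is to expand both sides of the asserted identity using Definition \ref{matlisdual}. One has
\[
(\delta' \circ \delta)^* = \Phi_M^{-1} \circ (\delta' \circ \delta)^{\vee} \circ \Phi_P
\]
and
\[
\delta^* \circ \delta'^* = \bigl(\Phi_M^{-1} \circ \delta^{\vee} \circ \Phi_N\bigr) \circ \bigl(\Phi_N^{-1} \circ \delta'^{\vee} \circ \Phi_P\bigr) = \Phi_M^{-1} \circ \delta^{\vee} \circ \delta'^{\vee} \circ \Phi_P,
\]
the cancellation of the adjacent $\Phi_N$ and $\Phi_N^{-1}$ being the point of the calculation. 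It therefore suffices to check that $(\delta' \circ \delta)^{\vee} = \delta^{\vee} \circ \delta'^{\vee}$ as maps $\Hom_{cont, k}(P, k) \to \Hom_{cont, k}(M, k)$, which is immediate from associativity of function composition: for any continuous $k$-linear $\lambda \colon P \to k$,
\[
(\delta' \circ \delta)^{\vee}(\lambda) = \lambda \circ (\delta' \circ \delta) = (\lambda \circ \delta') \circ \delta = \delta^{\vee}\bigl(\delta'^{\vee}(\lambda)\bigr).
\]

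There is no real obstacle here; the statement is a formal consequence of the definition, and the only care required is to keep the variance of $\vee$ and $*$ straight and to verify that the maps $\delta^{\vee}$ and $\delta'^{\vee}$ indeed take continuous $k$-linear maps to continuous $k$-linear maps, so that the compositions make sense at the level of the $\Hom_{cont, k}(-, k)$ modules. This last point was already noted in Definition \ref{matlisdual}.
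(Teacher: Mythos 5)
Your proof is correct and is essentially identical to the paper's: the same chain of equalities using Definition~\ref{matlisdual} and the cancellation $\Phi_N^{-1} \circ \Phi_N = \mathrm{id}$, together with the contravariance of pre-composition. You add a short explicit check that $\delta' \circ \delta$ is $\mathfrak{m}$-adically continuous, which the paper leaves implicit.
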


\begin{proof}
We calculate using the definition: 
\[
(\delta' \circ \delta)^* = \Phi_M^{-1} \circ (\delta' \circ \delta)^{\vee} \circ \Phi_P = \Phi_M^{-1} \circ \delta^{\vee} \circ \delta'^{\vee} \circ \Phi_P = (\Phi_M^{-1} \circ \delta^{\vee} \circ \Phi_N) \circ (\Phi_N^{-1} \circ \delta'^{\vee} \circ \Phi_P) = \delta^* \circ \delta'^*,
\]
as desired.
\end{proof}

In the case of an $R$-linear map (which is automatically $k$-linear and $\mathfrak{m}$-adically continuous) between finitely generated $R$-modules, our definition of the Matlis dual of this map agrees with the usual one, so our notation is unambiguous and our definition is in fact a generalization of the usual one.  We make this precise in the following nearly tautological lemma:

\begin{lem}\label{agreement}
Let $M$ and $N$ be finitely generated $R$-modules.  If $f: M \rightarrow N$ is an $R$-linear homomorphism, then $f^* = \Phi_M^{-1} \circ f^{\vee} \circ \Phi_N$, so the usual definition of the Matlis dual of $f$ coincides with ours.
\end{lem}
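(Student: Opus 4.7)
The plan is to simply chase an element through both definitions and observe that they yield the same map; the author's description of the lemma as ``nearly tautological'' is accurate. The key input, already established earlier in the section, is the commutative diagram
\[
\begin{CD}
\Hom_R(N,E) @>f^*>> \Hom_R(M,E)\\
@VV \Phi_N V           @VV \Phi_M V\\
\Hom_k(N,k) @>f^{\vee}>> \Hom_k(M,k)
\end{CD}
\]
together with Theorem \ref{sga2}, which guarantees that the vertical arrows are isomorphisms (factoring through the subspaces $\Hom_{cont,k}(-,k)$ when $M$ and $N$ are finitely generated).

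First I would unwind the right-hand side of the claimed equation on an arbitrary $\phi \in \Hom_R(N, E)$. By definition, $\Phi_N(\phi) = \sigma \circ \phi$, and then $f^{\vee}(\sigma \circ \phi) = (\sigma \circ \phi) \circ f = \sigma \circ (\phi \circ f)$, viewed as a $k$-linear map $M \to k$. Since $\phi \circ f$ is $R$-linear, it is automatically $\mathfrak{m}$-adically continuous, so this composite indeed lies in the image of $\Phi_M$; moreover, applying the definition of $\Phi_M$ to the $R$-linear map $\phi \circ f \in \Hom_R(M,E)$ gives $\Phi_M(\phi \circ f) = \sigma \circ \phi \circ f$. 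Hence $\Phi_M^{-1}(\sigma \circ \phi \circ f) = \phi \circ f$, and the right-hand side equals $\phi \circ f$, which is precisely the usual Matlis dual $f^*(\phi)$.

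Equivalently, the commutativity of the diagram above, proved earlier by the direct computation $\Phi_M(f^*(g)) = \sigma \circ g \circ f = f^{\vee}(\Phi_N(g))$, gives the identity $\Phi_M \circ f^* = f^{\vee} \circ \Phi_N$, from which the lemma follows upon composing with $\Phi_M^{-1}$ on the left. There is essentially no obstacle: the only point requiring justification is that $\Phi_M^{-1}$ is defined on the image of $f^{\vee} \circ \Phi_N$, which is immediate since $\Phi_M$ is an isomorphism by Theorem \ref{sga2}.
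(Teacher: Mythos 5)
Your proof is correct and follows essentially the same route as the paper's: unwinding the composite $\Phi_M^{-1} \circ f^{\vee} \circ \Phi_N$ on an arbitrary $\phi \in \Hom_R(N,E)$ and using that $\Phi_M$ is an isomorphism to identify the result with $\phi \circ f = f^*(\phi)$. The only cosmetic difference is that you compute $\Phi_M^{-1}$ explicitly by exhibiting $\phi \circ f$ as a preimage, while the paper invokes uniqueness of the preimage; the two are logically identical.
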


\begin{proof}
Suppose $\phi: N \rightarrow E$ is $R$-linear.  Then $(f^{\vee} \circ \Phi_N)(\phi) = \sigma \circ \phi \circ f$.  As $\Phi_M$ is an isomorphism, $(\Phi_M^{-1} \circ f^{\vee} \circ \Phi_N)(\phi)$ is the unique $R$-linear map $M \rightarrow E$ which gives $\sigma \circ \phi \circ f$ upon post-composition with $\sigma$; by the unicity, it cannot be anything but $\phi \circ f = f^*(\phi)$.  Therefore the left- and right-hand sides of the asserted equality agree upon evaluation at every $\phi \in D(N)$.
\end{proof}

What can be said in the case of arbitrary (not necessarily finitely generated) modules? An arbitrary $R$-module $M$ can be regarded as the filtered direct limit of its finitely generated $R$-submodules $M_{\lambda}$.  The Matlis dual functor (indeed, any contravariant Hom functor) converts direct limits into inverse limits \cite[Prop. 5.26]{rotman}, so $D(M) = \varprojlim D(M_{\lambda})$.  If $\phi: M \rightarrow E$ is $R$-linear (an element of $D(M)$), the restriction of $\phi$ to a fixed $M_{\lambda}$ corresponds by Theorem \ref{sga2} to an $\mathfrak{m}$-adically continuous map $M_{\lambda} \rightarrow k$.  Therefore, we will be able to repeat our earlier constructions in the case of arbitrary modules if we impose some conditions on how the maps we are studying behave under restriction to finitely generated submodules.  We make this precise with the following definition.

\begin{definition}\label{arbdual}
Let $M$ and $N$ be $R$-modules, and let $\delta: M \rightarrow N$ be a $k$-linear map.  We say that $\delta$ is \emph{$\Sigma$-continuous} if for every finitely generated $R$-submodule $M_{\lambda} \subset M$, the $R$-submodule $\langle \delta(M_{\lambda}) \rangle \subset N$ generated by the image of $M_{\lambda}$ under $\delta$ is finitely generated and the restriction $\delta|_{M_{\lambda}}: M_{\lambda} \rightarrow \langle \delta(M_{\lambda}) \rangle$ is $\mathfrak{m}$-adically continuous in the sense of Definition \ref{madic}.  We write $\Hom_k^{\Sigma}(M,N)$ for the set (indeed, $R$-module) of $\Sigma$-continuous $k$-linear maps $M \rightarrow N$, and refer to such maps simply as ``$\Sigma$-continuous'', the $k$-linearity being understood.
\end{definition}

A $\Sigma$-continuous map is not, in general, $\mathfrak{m}$-adically continuous, but is built from $\mathfrak{m}$-adically continuous maps in ``small'' (finitely generated) stages.  (The terminology ``$\Sigma$-continuous'' is meant to reflect this, by analogy with the $\sigma$-finite measure spaces of real analysis and the $\sigma$-compact spaces of topology.)  

\begin{lem}\label{sigmacontfacts} 
Let $M$, $N$, and $P$ be $R$-modules.
\begin{enumerate}[(a)]
\item Every $R$-linear map $\phi: M \rightarrow N$ is $\Sigma$-continuous.
\item If $\delta: M \rightarrow N$ and $\delta': N \rightarrow P$ are $\Sigma$-continuous, then $\delta' \circ \delta: M \rightarrow P$ is $\Sigma$-continuous. In particular, for every $\Sigma$-continuous map $\delta': N \rightarrow k$, the composite $\delta' \circ \delta: M \rightarrow k$ is $\Sigma$-continuous.
\item A $k$-linear map $\delta: M \rightarrow k$ is $\Sigma$-continuous if and only if, for every finitely generated $R$-submodule $M_{\lambda} \subset M$, there exists $t_{\lambda} \geq 0$ such that $\delta(\mathfrak{m}^{t_{\lambda}}M_{\lambda}) = 0$.
\end{enumerate}
\end{lem}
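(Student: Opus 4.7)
The plan is to verify the three parts directly from Definition \ref{arbdual}, exploiting the Noetherian hypothesis on $R$ throughout. Parts (a) and (c) should be essentially formal unwindings of the definitions; part (b) carries the only real content.

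For (a), given an $R$-linear map $\phi: M \to N$ and a finitely generated submodule $M_{\lambda} \subset M$, observe that $\langle \phi(M_{\lambda}) \rangle = \phi(M_{\lambda})$ is generated over $R$ by the images of a finite generating set of $M_{\lambda}$, hence finitely generated; and the restriction $\phi|_{M_{\lambda}}$ is $R$-linear, so automatically $\mathfrak{m}$-adically continuous by the remark following Definition \ref{madic}. For (c), both implications hinge on the fact that $\mathfrak{m}\cdot k = 0$: every $R$-submodule of $k$ is either $0$ or $k$, in particular is finitely generated and carries the discrete $\mathfrak{m}$-adic topology. The continuity condition for $\delta|_{M_{\lambda}}: M_{\lambda} \to \langle \delta(M_{\lambda}) \rangle$ therefore collapses to the single requirement that $\delta(\mathfrak{m}^{t_{\lambda}} M_{\lambda}) = 0$ for some $t_{\lambda}$, which is exactly the stated criterion.

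For (b), the strategy is to chain the witnessing submodules provided by $\Sigma$-continuity of $\delta$ and $\delta'$. Given finitely generated $M_{\lambda} \subset M$, set $N_{\mu} = \langle \delta(M_{\lambda}) \rangle$ and $P_{\nu} = \langle \delta'(N_{\mu}) \rangle$, both finitely generated by hypothesis. Then $(\delta' \circ \delta)(M_{\lambda}) \subset P_{\nu}$, so $Q := \langle (\delta' \circ \delta)(M_{\lambda}) \rangle$ is a submodule of $P_{\nu}$, and since $R$ is Noetherian, $Q$ is itself finitely generated. The composite $M_{\lambda} \to N_{\mu} \to P_{\nu}$ is a composition of $\mathfrak{m}$-adically continuous maps, hence $\mathfrak{m}$-adically continuous into $P_{\nu}$, with image contained in $Q$.

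The main obstacle is the following subtlety: $\mathfrak{m}$-adic continuity as a map into $P_{\nu}$ does not \emph{a priori} give $\mathfrak{m}$-adic continuity as a map into $Q$ equipped with its intrinsic $\mathfrak{m}$-adic topology, since $\mathfrak{m}^{t} P_{\nu} \cap Q$ need not coincide with $\mathfrak{m}^{t} Q$. This gap is closed by the Artin--Rees lemma, applied to the finitely generated $R$-module $P_{\nu}$ and its submodule $Q$: it guarantees that the subspace topology on $Q$ induced from $P_{\nu}$ agrees with its own $\mathfrak{m}$-adic topology. Granted this, the restriction $(\delta' \circ \delta)|_{M_{\lambda}}: M_{\lambda} \to Q$ is $\mathfrak{m}$-adically continuous, completing the verification of $\Sigma$-continuity for the composite.
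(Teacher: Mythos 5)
Your proof is correct and follows essentially the same route as the paper's, namely unwinding Definition \ref{arbdual} directly. The one place where you add real content is part (b): the paper disposes of it in a single sentence (``both parts of the definition of $\Sigma$-continuity are preserved under composition''), and you are right that this glosses over the distinction between $\mathfrak{m}^t Q$ and $\mathfrak{m}^t P_\nu \cap Q$, since $Q = \langle (\delta'\circ\delta)(M_\lambda)\rangle$ is in general a proper submodule of $P_\nu = \langle \delta'(N_\mu)\rangle$ (because $\delta'$ is not $R$-linear, $\delta'(\delta(M_\lambda))$ need not span $\delta'(N_\mu)$). Your invocation of Artin--Rees to show that the subspace topology on $Q$ induced from $P_\nu$ coincides with its intrinsic $\mathfrak{m}$-adic topology is exactly the missing bridge, and the Noetherian hypothesis on $R$ (which holds throughout the section) makes it available. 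So your write-up is the same approach, carried out with a degree of care the paper leaves implicit.
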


\begin{proof}
Since $R$-linear maps carry finitely generated submodules to finitely generated submodules, (a) is immediate. (b) holds because both parts of the definition of $\Sigma$-continuity are preserved under composition. Finally, since $k$ is a finitely generated $R$-module, the first part of the definition of $\Sigma$-continuity is automatic when the target of the map is $k$, and therefore a $\Sigma$-continuous map $\delta: M \rightarrow k$ is a map whose restrictions to all finitely generated $R$-submodules of $M$ are $\mathfrak{m}$-adically continuous. By the paragraph following Remark \ref{infmadic}, such maps are exactly those described in (c).
\end{proof}

By restricting attention to $\Sigma$-continuous maps to $k$, we obtain a generalization of Theorem \ref{sga2} to the case of an arbitrary module (\textit{cf.} \cite[Exp. IV, Remarque 5.5]{SGA2}):

\begin{thm}\label{arbsga2}
Let $M$ be an $R$-module, and let $\sigma: E \rightarrow k$ be a fixed $k$-linear projection of $E$ onto its socle. There is an isomorphism of $R$-modules 
\[
\Phi_M: D(M) = \Hom_R(M,E) \xrightarrow{\sim} \Hom_k^{\Sigma}(M, k)
\]
defined by post-composition with $\sigma$ and functorial in the $R$-module $M$.
\end{thm}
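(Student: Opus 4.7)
The plan is to reduce to the finitely generated case (Theorem \ref{sga2}) by writing $M$ as the filtered colimit of its finitely generated $R$-submodules and passing to inverse limits. First I would verify that $\Phi_M(\phi) = \sigma \circ \phi$ really lands in $\Hom_k^{\Sigma}(M,k)$: given any finitely generated $R$-submodule $M_{\lambda} \subset M$, the restriction $\phi|_{M_{\lambda}}$ is $R$-linear into the $\mathfrak{m}$-power torsion module $E$, so (as in the paragraph preceding Theorem \ref{sga2}) there exists $t_{\lambda}$ with $\phi(\mathfrak{m}^{t_{\lambda}} M_{\lambda}) = 0$, and hence $(\sigma \circ \phi)(\mathfrak{m}^{t_{\lambda}} M_{\lambda}) = 0$. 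By Lemma \ref{sigmacontfacts}(c), $\sigma \circ \phi$ is $\Sigma$-continuous. The $R$-linearity of $\Phi_M$ is proved exactly as in the finite-length case.

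For the isomorphism, write $M = \varinjlim_{\lambda} M_{\lambda}$ with $M_{\lambda}$ ranging over the finitely generated $R$-submodules of $M$. Since $\Hom_R(-,E)$ converts colimits to limits, $D(M) \simeq \varprojlim_{\lambda} D(M_{\lambda})$. The key step is the parallel identification
\[
\Hom_k^{\Sigma}(M,k) \simeq \varprojlim_{\lambda} \Hom_{cont, k}(M_{\lambda}, k).
\]
Given $\delta \in \Hom_k^{\Sigma}(M,k)$, each restriction $\delta|_{M_{\lambda}}$ is $\mathfrak{m}$-adically continuous by Lemma \ref{sigmacontfacts}(c), and these restrictions are tautologically compatible with the inclusions $M_{\lambda} \hookrightarrow M_{\mu}$; this gives a map to the inverse limit. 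Conversely, a compatible system $(\delta_{\lambda})$ uniquely determines a $k$-linear map $\delta: M \rightarrow k$ by $\delta(m) = \delta_{\lambda}(m)$ whenever $m \in M_{\lambda}$, and since $\delta|_{M_{\lambda}} = \delta_{\lambda}$ is $\mathfrak{m}$-adically continuous for every $\lambda$, $\delta$ is $\Sigma$-continuous. Applying $\varprojlim$ to the functorial isomorphisms $\Phi_{M_{\lambda}}$ of Theorem \ref{sga2} then yields an isomorphism $\varprojlim D(M_{\lambda}) \xrightarrow{\sim} \varprojlim \Hom_{cont,k}(M_{\lambda}, k)$, and post-composition with $\sigma$ obviously commutes with restriction to any $M_{\lambda}$, so this assembled isomorphism coincides with $\Phi_M$ under the two identifications above.

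Functoriality in $M$ is inherited from that of the $\Phi_{M_{\lambda}}$, together with the observation that any $R$-linear $f: M \rightarrow N$ sends finitely generated submodules to finitely generated submodules, inducing a morphism of the indexing systems and hence of the resulting inverse limits. The main obstacle, such as it is, is not conceptual but rather the careful identification of $\Hom_k^{\Sigma}(M,k)$ with the inverse limit $\varprojlim \Hom_{cont,k}(M_{\lambda}, k)$: this identification is precisely what justifies the definition of $\Sigma$-continuity given in Definition \ref{arbdual} and is the sole reason the class of $\Sigma$-continuous maps to $k$ is in bijection with the Matlis dual. Once this is established, everything else follows from Theorem \ref{sga2} and general nonsense about inverse limits.
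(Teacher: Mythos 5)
Your proof is correct and follows essentially the same route as the paper: write $M = \varinjlim M_{\lambda}$ over finitely generated submodules, dualize to $D(M) = \varprojlim D(M_{\lambda})$, identify $\Hom_k^{\Sigma}(M,k)$ with $\varprojlim \Hom_{cont,k}(M_{\lambda},k)$, and apply Theorem \ref{sga2} termwise. The only difference is a small matter of presentation — you check up front that $\Phi_M$ lands in $\Hom_k^{\Sigma}(M,k)$, whereas the paper folds that into the inverse-limit identification — but the argument is the same.
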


\begin{proof}
Consider the family $\{M_{\lambda}\}$ of finitely generated $R$-submodules of $M$.  We view $M$ as the filtered direct limit of the $M_{\lambda}$.  As the Matlis dual functor converts direct limits into inverse limits, we see that $D(M) = \varprojlim D(M_{\lambda})$, the transition maps being pre-composition with inclusions $M_{\lambda} \hookrightarrow M_{\lambda'}$ of finitely generated submodules.  For all $M_{\lambda}$, post-composition with $\sigma$ defines an isomorphism $\Phi_{M_{\lambda}}: D(M_{\lambda}) \rightarrow \Hom_{cont, k}(M_{\lambda}, k)$ by Theorem \ref{sga2}.  Since this isomorphism is functorial in $M_{\lambda}$, the $\{\Phi_{M_{\lambda}}\}$ form a compatible system of $R$-linear isomorphisms, inducing an $R$-linear isomorphism
\[
\Phi'_M: D(M) = \varprojlim D(M_{\lambda}) \xrightarrow{\sim} \varprojlim \Hom_{cont, k}(M_{\lambda}, k);
\]
but the right-hand side of this isomorphism can be identified with $\Hom_k^{\Sigma}(M, k)$, essentially by definition.  The natural isomorphism
\[
\theta: \varprojlim \Hom_{cont, k}(M_{\lambda}, k) \rightarrow \Hom_k^{\Sigma}(M, k)
\] 
is defined on a compatible system $\{f_{\lambda} \in \Hom_{cont, k}(M_{\lambda}, k)\}$ by taking $\theta(\{f_{\lambda}\})$ to be the unique $k$-linear map $M \rightarrow k$ whose restriction to every $M_{\lambda}$ is $f_{\lambda}$ (this map is $\Sigma$-continuous by definition), and the composition $\Phi_M = \theta \circ \Phi'_M$ is nothing but post-composition with $\sigma$, completing the proof.
\end{proof}

\begin{definition}\label{dsigma}
Let $M$ be an $R$-module.  The \emph{$\Sigma$-continuous dual} $D^{\Sigma}(M)$ of $M$ is the $R$-module $\Hom_k^{\Sigma}(M,k)$.  By Theorem \ref{arbsga2}, we have $D(M) \simeq D^{\Sigma}(M)$ as $R$-modules.
\end{definition}

\begin{remark}
If $M$ is a finitely generated $R$-module, then clearly
\[
D^{\Sigma}(M) = \Hom_{cont, k}(M, k),
\] 
so in this case the $\Phi_M$ of Theorem \ref{arbsga2} is the same as the $\Phi_M$ of Theorem \ref{sga2}.
\end{remark}

Theorem \ref{arbsga2} allows us to identify the Matlis dual with the \emph{full} $k$-linear dual in the case of an Artinian module:

\begin{cor}\label{artinian}
If $M$ is an $R$-module such that $\Supp(M) = \{\mathfrak{m}\}$ (for instance if $M$ is Artinian), then $D(M) \simeq \Hom_k(M, k)$ as $R$-modules.
\end{cor}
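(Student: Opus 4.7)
The plan is to combine Theorem \ref{arbsga2} with Lemma \ref{sigmacontfacts}(c), reducing everything to the observation that when $\Supp(M) = \{\mathfrak{m}\}$, every finitely generated $R$-submodule of $M$ is annihilated by a power of $\mathfrak{m}$.

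First I would invoke Theorem \ref{arbsga2} to rewrite $D(M) \simeq D^{\Sigma}(M) = \Hom_k^{\Sigma}(M,k)$, which reduces the claim to showing that under the support hypothesis, every $k$-linear map $\delta: M \to k$ is automatically $\Sigma$-continuous; then $\Hom_k^{\Sigma}(M,k) = \Hom_k(M,k)$ as $R$-modules (with the $R$-action defined by pre-composition on the input, as in the discussion preceding Proposition \ref{finlength}).

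Next I would apply the characterization in Lemma \ref{sigmacontfacts}(c): it suffices to verify that for each finitely generated $R$-submodule $M_{\lambda} \subset M$, there exists $t_{\lambda} \geq 0$ with $\mathfrak{m}^{t_{\lambda}} M_{\lambda} = 0$, since this forces $\delta(\mathfrak{m}^{t_{\lambda}} M_{\lambda}) = 0$ for any $k$-linear $\delta$. Because $\Supp(M_\lambda) \subseteq \Supp(M) = \{\mathfrak{m}\}$ and $M_\lambda$ is finitely generated over the Noetherian local ring $R$, the module $M_\lambda$ has support equal to $V(\Ann_R M_\lambda)$, so $\Ann_R M_\lambda$ is $\mathfrak{m}$-primary, and hence contains some $\mathfrak{m}^{t_\lambda}$. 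This gives the required annihilation.

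For the parenthetical remark about Artinian modules, I would note that any Artinian module over the Noetherian local ring $(R,\mathfrak{m})$ has support contained in $\{\mathfrak{m}\}$ (a standard fact: its associated primes are all maximal, and over a local ring this forces $\Supp(M) \subseteq \{\mathfrak{m}\}$), which reduces this case to the one just handled. There is no real obstacle here; the only subtlety is being careful that the $R$-action on $\Hom_k(M,k)$ used in the conclusion is the one defined at the start of section \ref{klinear} (by pre-composition with multiplication), which is precisely the structure with respect to which $\Phi_M$ is $R$-linear.
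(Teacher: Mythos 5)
Your proposal is correct and takes essentially the same route as the paper: reduce via Theorem \ref{arbsga2} to showing every $k$-linear map $M \to k$ is $\Sigma$-continuous, then observe that the support hypothesis forces each finitely generated submodule $M_\lambda$ to be annihilated by a power of $\mathfrak{m}$. You spell out the intermediate step (finite generation plus $\Supp(M_\lambda) \subseteq \{\mathfrak{m}\}$ implies $\Ann_R M_\lambda$ is $\mathfrak{m}$-primary) that the paper states without elaboration, and you cite Lemma \ref{sigmacontfacts}(c) where the paper invokes $\mathfrak{m}$-adic continuity directly, but these are the same argument.
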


\begin{proof}
Let $M_{\lambda}$ be a finitely generated $R$-submodule of $M$.  The hypothesis on $M$ implies that $M_{\lambda}$ is annihilated by a power of $\mathfrak{m}$ and consequently is of finite length.  Given any $k$-linear map $M \rightarrow k$, its restriction to $M_{\lambda}$ is therefore $\mathfrak{m}$-adically continuous.  We conclude that $D^{\Sigma}(M) = \Hom_k(M, k)$: the corollary now follows from Theorem \ref{arbsga2}.
\end{proof}

We can now extend the definition of the Matlis dual of an $\mathfrak{m}$-adically continuous map between finitely generated $R$-modules to a definition of the Matlis dual of a $\Sigma$-continuous map between arbitrary modules as follows.

\begin{prop}[Proposition-Definition]\label{arbprecomp}
Let $M$ and $N$ be $R$-modules, and let $\delta: M \rightarrow N$ be a $\Sigma$-continuous map.  Define the \textbf{Matlis dual} $\delta^*: D(N) \rightarrow D(M)$ to be the composite
\[
\begin{CD}
D(N) @>> \Phi_N > D^{\Sigma}(N) @>> \delta^{\vee} > D^{\Sigma}(M) @>> \Phi_M^{-1} > D(M),
\end{CD}
\]
where again $\delta^{\vee}$ is pre-composition with $\delta$. This construction satisfies the following properties: given another $R$-module $P$ and a $\Sigma$-continuous map $\delta': N \rightarrow P$, we have $(\delta' \circ \delta)^* = \delta^* \circ \delta'^*$ as $k$-linear maps $D(P) \rightarrow D(M)$, and $\delta^*$ is the usual Matlis dual in the case in which $\delta$ is $R$-linear.
\end{prop}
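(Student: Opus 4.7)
The plan is to imitate the arguments already used in the finitely generated case (Definition \ref{matlisdual}, Proposition \ref{functor}, Lemma \ref{agreement}) and to show that the only nontrivial ingredient, namely that $\delta^{\vee}$ restricts to a well-defined $R$-linear map between $\Sigma$-continuous duals, is furnished directly by Lemma \ref{sigmacontfacts}.

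First I would verify that the displayed composite makes sense. Given a $\Sigma$-continuous $\delta: M \to N$ and a $\Sigma$-continuous $\lambda: N \to k$, Lemma \ref{sigmacontfacts}(b) gives that $\lambda \circ \delta: M \to k$ is again $\Sigma$-continuous, so $\delta^{\vee}$ does carry $D^{\Sigma}(N)$ into $D^{\Sigma}(M)$. The $R$-linearity of $\delta^{\vee}$ is checked exactly as in the finitely generated case: for $r \in R$, $\lambda \in D^{\Sigma}(N)$, and $m \in M$,
\[
\delta^{\vee}(r \cdot \lambda)(m) = (r \cdot \lambda)(\delta(m)) = \lambda(r\delta(m)) = \lambda(\delta(rm)) = (r \cdot \delta^{\vee}(\lambda))(m),
\]
where the third equality uses $k$-linearity of $\delta$ together with the fact that $\delta$ is a map of $R$-modules on restriction to $\langle m \rangle$ only insofar as it commutes with the dot action on maps to $k$. (In fact, no $R$-linearity of $\delta$ is needed for this computation: the dot action is defined in terms of the input, so pre-composition with any $k$-linear $\delta$ is automatically $R$-linear.) Since $\Phi_M$ and $\Phi_N$ are $R$-linear isomorphisms by Theorem \ref{arbsga2}, the composite $\delta^* = \Phi_M^{-1} \circ \delta^{\vee} \circ \Phi_N$ is a well-defined $R$-linear map $D(N) \to D(M)$.

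Next I would prove the compatibility with composition. This is a formal calculation identical in form to Proposition \ref{functor}: inserting $\Phi_N^{-1} \circ \Phi_N$ in the middle,
\[
(\delta' \circ \delta)^* = \Phi_M^{-1} \circ (\delta' \circ \delta)^{\vee} \circ \Phi_P = \Phi_M^{-1} \circ \delta^{\vee} \circ \Phi_N \circ \Phi_N^{-1} \circ \delta'^{\vee} \circ \Phi_P = \delta^* \circ \delta'^*,
\]
where the second equality uses only the contravariant functoriality $(\delta' \circ \delta)^{\vee} = \delta^{\vee} \circ \delta'^{\vee}$ of pre-composition. The well-definedness of each factor in the middle expression is secured by Lemma \ref{sigmacontfacts}(b) applied at the appropriate stages.

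Finally, to see that the construction recovers the usual Matlis dual when $\delta = f: M \to N$ is $R$-linear, I would argue exactly as in Lemma \ref{agreement}. Any $R$-linear $f$ is $\Sigma$-continuous by Lemma \ref{sigmacontfacts}(a), so $f^*$ is defined by Proposition \ref{arbprecomp}. For $\phi \in D(N) = \Hom_R(N,E)$, the composition $\delta^{\vee} \circ \Phi_N$ sends $\phi$ to $\sigma \circ \phi \circ f$. Now $\phi \circ f: M \to E$ is itself $R$-linear, so it is an element of $D(M)$, and post-composing it with $\sigma$ yields $\Phi_M(\phi \circ f) = \sigma \circ \phi \circ f$. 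Because $\Phi_M$ is injective, $(\Phi_M^{-1} \circ \delta^{\vee} \circ \Phi_N)(\phi) = \phi \circ f$, which is by definition the usual Matlis dual $f^*(\phi)$. Since nothing in any of these steps goes beyond bookkeeping, I do not anticipate a genuine obstacle; the only point requiring care is isolating the $\Sigma$-continuity of $\lambda \circ \delta$, and that has already been packaged as Lemma \ref{sigmacontfacts}(b).
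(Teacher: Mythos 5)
Your proof is essentially correct for what the proposition actually claims, and your direct computations for the final two assertions are cleaner than the paper's route of reducing to restrictions on finitely generated submodules. However, you make a false claim along the way: you assert that $\delta^{\vee}$ is $R$-linear. In your displayed chain the third equality $\lambda(r\delta(m)) = \lambda(\delta(rm))$ requires $r\delta(m) = \delta(rm)$, i.e.\ $R$-linearity of $\delta$ itself, which is unavailable for a general $\Sigma$-continuous (merely $k$-linear) map. Your parenthetical --- that ``pre-composition with any $k$-linear $\delta$ is automatically $R$-linear'' --- is not right: the dot action on $\Hom_k(M,k)$ moves $r$ into the input on the $M$-side, while $\delta^{\vee}(r\cdot\lambda)$ arises from moving $r$ into the input on the $N$-side, and these agree precisely when $\delta$ intertwines the two $R$-actions. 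Indeed, the paper itself notes at the end of the proof of Proposition \ref{sigmabij} that ``since $\delta^*$ need not be $R$-linear, it is \emph{not} true that the Matlis dual operation defines an isomorphism of $R$-modules.''

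Fortunately this does not damage the argument: Proposition \ref{arbprecomp} only claims $(\delta' \circ \delta)^* = \delta^* \circ \delta'^*$ as $k$-linear maps, and $k$-linearity of $\delta^{\vee}$, hence of $\delta^*$, is immediate. Once the spurious $R$-linearity claim is dropped, your verifications of well-definedness (via Lemma \ref{sigmacontfacts}(b)), of functoriality (via $(\delta' \circ \delta)^{\vee} = \delta^{\vee} \circ \delta'^{\vee}$), and of agreement with the usual Matlis dual when $\delta$ is $R$-linear all go through. This matches the paper's use of Lemma \ref{sigmacontfacts}(b) for well-definedness; for the remaining two assertions the paper instead invokes Proposition \ref{functor} and Lemma \ref{agreement} on the restriction of $\delta$ to each finitely generated submodule of $M$, whereas you argue directly at the level of $D^{\Sigma}$, which is more transparent.
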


\begin{proof}
By Lemma \ref{sigmacontfacts}(b), $\delta^{\vee}$ is a well-defined $k$-linear map $D^{\Sigma}(N) \rightarrow D^{\Sigma}(M)$.  The final two assertions follow immediately since both Proposition \ref{functor} and Lemma \ref{agreement} hold for the restriction of $\delta$ to any finitely generated $R$-submodule of $M$.
\end{proof}  

\begin{remark}\label{cofinal}
Suppose $M$, $N$, and $\delta$ are as above, and suppose we are given \emph{cofinal} families $\{M_{\lambda}\}$ (resp. $\{N_{\mu}\}$) of $R$-submodules of $M$ (resp. $N$), meaning that every $R$-submodule of $M$ (resp. $N$) is contained in some $M_{\lambda}$ (resp. some $N_{\mu}$), with the further condition that for all $\lambda$, there exists $\mu$ such that $\delta(M_{\lambda}) \subset N_{\mu}$.  Since $M = \varinjlim M_{\lambda}$ and $N = \varinjlim N_{\mu}$, it follows that $D(M) \simeq \varprojlim D(M_{\lambda})$ and $D(N) \simeq \varprojlim D(N_{\mu})$ as $R$-modules.  If we write $\delta_{\lambda \mu}$ for $\delta|_{M_{\lambda}}: M_{\lambda} \rightarrow N_{\mu}$, then it is clear from the construction of Proposition \ref{arbprecomp} that $\delta^*: D(N) \rightarrow D(M)$ can be identified with $\varprojlim_{\mu} (\delta_{\lambda \mu}^*: D(N_{\mu}) \rightarrow D(M_{\lambda}))$.
\end{remark}

We next show that over a \emph{complete} local ring, the Matlis dual of a $\Sigma$-continuous map is again $\Sigma$-continuous in some important special cases.  At present, we do not know an example of a $\Sigma$-continuous map for which this is false.  (It would be interesting to find necessary and sufficient conditions on $M$ and $N$ for any $\Sigma$-continuous map $M \rightarrow N$ to have $\Sigma$-continuous dual.)

\begin{prop}\label{klindual}
Let $R$ be a \emph{complete} local ring with coefficient field $k$.  Let $M$ and $N$ be $R$-modules, and let $\delta: M \rightarrow N$ be a $k$-linear map.
\begin{enumerate}[(a)]
\item If $M$ is Artinian, then $\delta$ is $\Sigma$-continuous.
\item If $M$ and $N$ are finitely generated, then $\delta$ is $\mathfrak{m}$-adically continuous if and only if it is $\Sigma$-continuous.
\item If $M$ and $N$ are Artinian, then $\delta^*$, which is defined by part (a), is $\Sigma$-continuous.
\item If $N$ is finitely generated and $\delta$ is $\Sigma$-continuous, then $\delta^*$ is $\Sigma$-continuous.
\end{enumerate}
\end{prop}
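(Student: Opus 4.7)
Part (a) is immediate: every finitely generated submodule $M_\lambda$ of an Artinian $M$ is of finite length, so $\delta(M_\lambda)$ is a finite-dimensional $k$-subspace of $N$, the submodule $\langle \delta(M_\lambda)\rangle$ is generated over $R$ by a finite $k$-basis and hence is finitely generated, and since $\mathfrak{m}^t M_\lambda = 0$ for some $t$, $\mathfrak{m}$-adic continuity of the restriction is trivial. For part (b), the forward direction uses that $\mathfrak{m}$-adic continuity restricts to any submodule and that $\langle \delta(M_\lambda)\rangle$ is a submodule of the Noetherian $N$, hence finitely generated. For the converse I would take $M_\lambda = M$ itself, so that $\Sigma$-continuity furnishes an $\mathfrak{m}$-adically continuous map $M \to \langle \delta(M)\rangle$; composing with the ($R$-linear, hence automatically continuous) inclusion into $N$ recovers the continuity of $\delta$.

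For part (c), completeness of $R$ together with Matlis duality gives that $D(M)$ and $D(N)$ are finitely generated, so by part (b) it suffices to prove $\mathfrak{m}$-adic continuity of $\delta^*$. My plan is to invoke Remark \ref{cofinal} with the cofinal families of finite-length submodules $\{(0:_M \mathfrak{m}^s)\}$ of $M$ and $\{(0:_N \mathfrak{m}^t)\}$ of $N$. By part (a), $\delta$ is $\Sigma$-continuous, so for each $s$ the finitely generated submodule $\langle\delta((0:_M\mathfrak{m}^s))\rangle$ of the Artinian $N$ is of finite length and therefore annihilated by some $\mathfrak{m}^t$, giving the containment $\delta((0:_M\mathfrak{m}^s)) \subseteq (0:_N \mathfrak{m}^t)$ required to apply the remark. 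Combining this with the standard Matlis identification $D((0:_M\mathfrak{m}^s)) \simeq D(M)/\mathfrak{m}^s D(M)$ (and its analogue for $N$), the induced arrow between the finite-length duals is nothing but the Matlis dual of the $k$-linear restriction of $\delta$ at the finite-length level; chasing the resulting commutative square forces $\delta^*(\mathfrak{m}^t D(N)) \subseteq \mathfrak{m}^s D(M)$.

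For part (d), Matlis duality turns the finitely generated $N$ into an Artinian $D(N)$. Any finitely generated $X \subseteq D(N)$ is then both Noetherian and Artinian, hence of finite length, so $\mathfrak{m}^t X = 0$ for some $t$, and $X$ is a finite-dimensional $k$-space. Consequently $\delta^*(X) \subseteq D(M)$ is also finite-dimensional over $k$ ($\delta^*$ being $k$-linear), the $R$-submodule $\langle \delta^*(X)\rangle$ is finitely generated, and $\mathfrak{m}^t X = 0$ yields $\delta^*(\mathfrak{m}^t X) = 0 \subseteq \mathfrak{m}^s \langle\delta^*(X)\rangle$ for every $s$, giving $\mathfrak{m}$-adic continuity of the restriction for free.

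The main obstacle will be the Matlis duality dictionary used in part (c), namely the identification $D(M)/\mathfrak{m}^s D(M) \simeq D((0:_M \mathfrak{m}^s))$ translating the $\mathfrak{m}$-adic filtration on a Noetherian dual into the socle filtration on the Artinian module. This is standard over a complete local ring and can be deduced from the exactness of $D$, the adjunction $\Hom_R(R/\mathfrak{m}^s \otimes_R D(M), E) \simeq (0:_M \mathfrak{m}^s)$, and $D^2 \simeq \mathrm{id}$ on the relevant categories; once it is in place, the $\Sigma$-continuity of $\delta$ supplied by part (a) provides exactly the filtration compatibility needed to propagate $\mathfrak{m}$-adic continuity across $\delta^*$.
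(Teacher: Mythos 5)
Your proof is correct and follows essentially the same path as the paper's: part (a) by the finite-length observation, part (b) by noting that submodules of $N$ are automatically finitely generated, part (c) by finding for each $s$ a $t$ with $\delta((0:_M\mathfrak{m}^s))\subseteq(0:_N\mathfrak{m}^t)$ and transporting this across Matlis duality to get $\delta^*(\mathfrak{m}^t D(N))\subseteq\mathfrak{m}^s D(M)$, and part (d) by dualizing to the Artinian side. The only differences are cosmetic: for (c) you phrase the duality dictionary via the quotient identification $D((0:_M\mathfrak{m}^s))\simeq D(M)/\mathfrak{m}^sD(M)$ and invoke Remark \ref{cofinal}, where the paper argues directly with the kernel of $D(M)\to D(M_t)$ (no cofinality machinery is needed, though both routes rest on the same $\Hom$-$\otimes$ adjunction you identify as the crux); and for (d) you reprove part (a) inline rather than simply citing it, as the paper does.
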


\begin{proof}
Suppose that $M$ is Artinian.  Any finitely generated $R$-submodule $M_{\lambda} \subset M$ is of finite length and hence is annihilated by $\mathfrak{m}^l$ for some $l$, so in particular $\delta(\mathfrak{m}^l M_{\lambda}) = 0$.  Since $M_{\lambda}$ is of finite length (hence is a finite-dimensional $k$-space) and $\delta$ is $k$-linear, $\delta(M_{\lambda})$ is also a finite-dimensional $k$-space, so the $R$-submodule of $N$ that it generates is finitely generated over $R$.  We conclude that $\delta$ is $\Sigma$-continuous, proving (a).

If $M$ and $N$ are finitely generated, the image under $\delta$ of any submodule of $M$ is contained in a finitely generated $R$-module, namely $N$ itself.  This proves the forward direction of (b), and the converse is obvious.  

Suppose that $M$ and $N$ are Artinian.  Let $t$ be a natural number, and let $M_t = (0 :_M \mathfrak{m}^t)$, a finite-length $R$-submodule of $M$.  As $N$ is Artinian, every element of $N$ is annihilated by some power of $\mathfrak{m}$, so there exists some $s$ such that $\delta(M_t) \subset N_s = (0 :_N \mathfrak{m}^s)$.  Now apply Matlis duality.  The containment $\delta(M_t) \subset N_s$ implies that the kernel of the map $D(N) \rightarrow D(N_s)$ is carried by $\delta^*$ into the kernel of $D(M) \rightarrow D(M_t)$.  This kernel (which we may identify with $D(M/M_t)$ by the exactness of $D$) is $\mathfrak{m}^t D(M)$; this means that $\delta^*(\mathfrak{m}^s D(N)) \subset \mathfrak{m}^t D(M)$, that is, that $\delta^*$ is continuous with respect to the $\mathfrak{m}$-adic topologies on $D(N)$ and $D(M)$.  Since $M$ and $N$ are Artinian and $R$ is complete, $D(N)$ and $D(M)$ are finitely generated \cite[Thm. 18.6(v)]{matsumura}, so by part (b), $\delta^*$ is $\Sigma$-continuous.  This proves (c).

If $N$ is finitely generated, then since $R$ is complete, $D(N)$ is Artinian \cite[Thm. 18.6(v)]{matsumura}.  The map $\delta^*: D(N) \rightarrow D(M)$ is $k$-linear, hence $\Sigma$-continuous by part (a).  This completes the proof of (d) and the proposition.
\end{proof}

If $M$ is any $R$-module, there is a natural map $\iota: M \rightarrow D(D(M))$ defined by evaluation: we set $\iota(m)(\phi) = \phi(m)$ for all $m \in M$ and $\phi \in D(M)$.  If $R$ is complete and $M$ is a finitely generated or Artinian $R$-module, $\iota$ is an isomorphism by the main theorem of classical Matlis duality \cite[Thm. 18.6(v)]{matsumura}; in fact, $\iota$ is injective even for arbitrary $M$ \cite[Thm. 18.6(i)]{matsumura}.  Moreover, if $f: M \rightarrow N$ is an $R$-linear homomorphism between finitely generated or Artinian $R$-modules, $f^{**}: D(D(M)) \rightarrow D(D(N))$ can be naturally identified with $f$ via the evaluation isomorphisms.  Having verified that $\delta^{**}$ is well-defined whenever $\delta$ is a $\Sigma$-continuous map between Artinian or finitely generated $R$-modules (Proposition \ref{klindual}(c,d)), we now show that the same is true of the double Matlis dual of a $\Sigma$-continuous map between such modules.  

\begin{prop}\label{sigmabij}
Let $R$ be a complete local ring with coefficient field $k$.
\begin{enumerate}[(a)]
\item Let $M$ be a finitely generated or Artinian $R$-module.  The evaluation map
\[
\iota': M \rightarrow D^{\Sigma}(D^{\Sigma}(M)),
\]
analogous to $\iota: M \rightarrow D(D(M))$ and defined by $\iota'(m)(\delta) = \delta(m)$ for all $m \in M$ and $\delta \in D^{\Sigma}(M)$, is an isomorphism of $R$-modules.
\item If $M$ and $N$ are $R$-modules that are either both finitely generated or both Artinian, and $\delta: M \rightarrow N$ is a $\Sigma$-continuous map, then 
\[
\delta^{\vee \vee}: D^{\Sigma}(D^{\Sigma}(M)) \rightarrow D^{\Sigma}(D^{\Sigma}(N))
\]
can be naturally identified with $\delta$ via the evaluation isomorphisms of part (a); consequently, $\delta^{**}: D(D(M)) \rightarrow D(D(N))$ can be identified with $\delta$ via the isomorphisms of classical Matlis duality and Theorem \ref{arbsga2}.  (See Proposition \ref{arbprecomp} for the definitions of $\delta^*$ and $\delta^{\vee}$.)
\item With the hypotheses of part (b), the Matlis dual operation defines an isomorphism
\[
\Hom^{\Sigma}_k(M,N) \simeq \Hom^{\Sigma}_k(D(N),D(M))
\]
of $k$-spaces.  (In particular, every $\Sigma$-continuous map $D(N) \rightarrow D(M)$ is the Matlis dual of a $\Sigma$-continuous map $M \rightarrow N$.)
\end{enumerate}
\end{prop}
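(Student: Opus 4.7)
The plan is to deduce each part from classical Matlis duality by transporting statements along the $R$-linear isomorphism $\Phi_M : D(M) \xrightarrow{\sim} D^{\Sigma}(M)$ of Theorem~\ref{arbsga2} and the identification $\delta^* = \Phi_M^{-1} \circ \delta^{\vee} \circ \Phi_N$ of Proposition~\ref{arbprecomp}. For part (a), the first task is to check that the evaluation $\iota'(m) : D^{\Sigma}(M) \to k$ is itself $\Sigma$-continuous, so that $\iota'$ lands in $D^{\Sigma}(D^{\Sigma}(M))$. When $M$ is finitely generated, $D^{\Sigma}(M) \simeq D(M)$ is Artinian over the complete ring $R$ by \cite[Thm. 18.6(v)]{matsumura}, so every $k$-linear map out of it is $\Sigma$-continuous by Proposition~\ref{klindual}(a). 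When $M$ is Artinian, given a finitely generated $R$-submodule $W = R\psi_1 + \cdots + R\psi_s \subseteq D^{\Sigma}(M)$, the $\Sigma$-continuity of each $\psi_i$ applied to the finitely generated submodule $Rm \subseteq M$ yields an integer $t$ with $\psi_i(\mathfrak{m}^t m) = 0$ for all $i$, whence $\iota'(m)(\mathfrak{m}^t W) = 0$.

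With $\iota'$ well defined, I would verify that the square
\[
\begin{CD}
M @>\iota>> D(D(M)) \\
@VV \iota' V @VV \Phi_{D(M)} V \\
D^{\Sigma}(D^{\Sigma}(M)) @>\Phi_M^{\vee}>> D^{\Sigma}(D(M))
\end{CD}
\]
commutes, where $\Phi_M^{\vee}$ is pre-composition with $\Phi_M$: both composites send $m \in M$ to the functional $\phi \mapsto \sigma(\phi(m))$. Since $\iota$ is an isomorphism by classical Matlis duality and the other two labeled arrows are isomorphisms by Theorem~\ref{arbsga2}, so is $\iota'$. For part (b), the cleanest route is a direct element chase: $\delta^{\vee}$ is $\Sigma$-continuous (via Proposition~\ref{arbprecomp} it corresponds to $\delta^*$, which is $\Sigma$-continuous by Proposition~\ref{klindual}(c) or (d)), so $\delta^{\vee\vee}$ is well defined, and for $m \in M$ and $\lambda \in D^{\Sigma}(N)$ one computes
\[
(\delta^{\vee\vee} \circ \iota')(m)(\lambda) \;=\; \iota'(m)(\lambda \circ \delta) \;=\; \lambda(\delta(m)) \;=\; (\iota' \circ \delta)(m)(\lambda).
\]
Transporting across the identifications of Proposition~\ref{arbprecomp} and the first part of the proof then identifies $\delta^{**}$ with $\delta$ through the classical evaluation isomorphisms.

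For part (c), the map $\delta \mapsto \delta^*$ is $k$-linear since $\delta \mapsto \delta^{\vee}$ (pre-composition) is. The inverse would be $\epsilon \mapsto \iota_N^{-1} \circ \epsilon^* \circ \iota_M$, where $\iota_M, \iota_N$ are the classical Matlis evaluation isomorphisms, which exist precisely because $M$ and $N$ are both finitely generated or both Artinian over the complete $R$; that $\epsilon^*$ is again $\Sigma$-continuous, so that this composite is defined, is exactly Proposition~\ref{klindual}(c,d). Part (b), combined with the standard identity $\iota_M^* \circ \iota_{D(M)} = \mathrm{id}_{D(M)}$ relating the classical evaluations at $M$ and $D(M)$, then shows this is a two-sided inverse. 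The main technical input of the proposition is really concentrated in Proposition~\ref{klindual}, which ensures that Matlis dualization preserves $\Sigma$-continuity in both cases of interest; once this is in place, (a), (b), and (c) reduce to element-level computations and diagram chases of the sort sketched above.
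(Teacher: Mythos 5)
Your proposal is correct and follows essentially the same route as the paper: part (a) is the identity $\Psi \circ \iota = \iota'$ (your commuting square with $\Phi_{D(M)}$ and $\Phi_M^{\vee}$ is exactly the paper's map $\Psi = (\Phi_M^{\vee})^{-1}\circ\Phi_{D(M)}$), part (b) is the same element chase, and part (c) rests on Proposition~\ref{klindual} and part (b) in both accounts. The only cosmetic differences are that you verify $\Sigma$-continuity of $\iota'(m)$ up front (the paper gets it for free from $\Psi\circ\iota=\iota'$) and that you write down the inverse in (c) explicitly where the paper just asserts bijectivity from (b) and checks $k$-linearity via $(\lambda\cdot\delta)^*=\lambda\cdot\delta^*$.
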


\begin{proof}
If $M$ is any $R$-module, we obtain an isomorphism
\[
\Psi: D(D(M)) \xrightarrow{\sim} D^{\Sigma}(D^{\Sigma}(M))
\]
of $R$-modules by applying Theorem \ref{arbsga2} twice.  This map is defined by the formula 
\[
\Psi(\psi)(\delta) = \sigma(\psi(\Phi_M^{-1}(\delta)))
\]
for $\psi \in D(D(M))$ and $\delta \in D^{\Sigma}(M)$.  We claim that the evaluation maps $\iota$ and $\iota'$ satisfy the identity $\Psi \circ \iota = \iota'$.  If we set $\psi = \iota(m)$ for some $m \in M$, we find
\[
\Psi(\iota(m))(\delta) = \sigma(\Phi_M^{-1}(\delta)(m)) = \delta(m),
\]
so that $\Psi \circ \iota$ agrees with $\iota'$ upon evaluation at every $\delta \in D^{\Sigma}(M)$, as claimed.  If $M$ is finitely generated or Artinian, $\iota$ is an isomorphism by classical Matlis duality, so $\iota'$ is the composite of two $R$-module isomorphisms, proving (a).

Suppose now that $M$ and $N$ are finitely generated $R$-modules and $\delta: M \rightarrow N$ is a $\Sigma$-continuous map (the proof of (b) in the Artinian case is the same).  We identify $M$ with $D^{\Sigma}(D^{\Sigma}(M))$ via the evaluation map $\iota'$, and likewise with $N$.  Consider the double dual
\[
\delta^{\vee \vee}: D^{\Sigma}(D^{\Sigma}(M)) \rightarrow D^{\Sigma}(D^{\Sigma}(N)).
\]
Every element of the source takes the form $\iota'(m)$ for some $m \in M$, and by definition we have $\delta^{\vee \vee}(\iota'(m)) = \iota'(\delta(m))$; therefore $\delta^{\vee \vee}$ coincides with $\delta$ upon identifying $M$ and $N$ with their double $\Sigma$-continuous duals.  The second statement of part (b) follows from the first by Proposition \ref{arbprecomp}.

In particular, with the hypotheses of part (b), it is clear that the map $\delta \mapsto \delta^*$ is a bijection, which already implies the parenthetical remark in part (c).  It remains to show that the map $\delta \mapsto \delta^*$ is $k$-linear.  Let $\lambda \in k$ be given.  Then we have
\[
(\lambda \cdot \delta)^* = (\delta \circ \lambda)^* = \lambda^* \circ \delta^* = \lambda \circ \delta^* = \delta^* \circ \lambda = \lambda \cdot \delta^*,
\]
where we have used Proposition \ref{functor} and Lemma \ref{agreement}, as well as the $k$-linearity of $\delta^*$ (since $\delta^*$ need not be $R$-linear, it is \emph{not} true that the Matlis dual operation defines an isomorphism of $R$-modules).  This completes the proof.
\end{proof}

Finally, consider the special case where $R = k[[x_1, \ldots, x_n]]$ (for $k$ a field) and $\mathfrak{m}$ is the maximal ideal of $R$.  Since $R$ is Gorenstein, the local cohomology module $H^n_{\mathfrak{m}}(R)$ is isomorphic to $E$ \cite[Lemma 11.2.3]{brodmann}.  We pick one such isomorphism and think of $E$ as $H^n_{\mathfrak{m}}(R)$ via this isomorphism.  If we compute $H^n_{\mathfrak{m}}(R)$ using the \v{C}ech complex of $R$ with respect to $x_1, \ldots, x_n$ \cite[Thm. 5.1.20]{brodmann}, the resulting $R$-module consists of all $k$-linear combinations of ``inverse monomials'' $x_1^{s_1} \cdots x_n^{s_n}$ where $s_1, \ldots, s_n < 0$.  The $R$-action is defined by the usual exponential rules with the caveat that non-negative powers of the variables are set equal to zero, so the product of a formal power series in $R$ with such an ``inverse polynomial'' has only finitely many nonzero terms.  We may define a $k$-linear projection $\sigma: E \rightarrow k$ of $E$ onto its socle by the formula
\[
\sigma(\sum \alpha_{s_1, \ldots, s_n} x_1^{s_1} \cdots x_n^{s_n}) = \alpha_{-1, \ldots, -1} \in k,
\]  
which we think of as ``taking the $(-1, \ldots, -1)$-coefficient'' of such an ``inverse polynomial'' \cite[Ch. 5]{kunz}.

The map $\sigma$ can be thought of as a realization of the \emph{residue map} in coordinates $\{x_i\}$, and we will sometimes abusively refer to $\sigma$ as ``the residue map'', the chosen coordinates being understood.  The residue map is a canonical map $H^n_{\mathfrak{m}}(\Omega^n_R) \rightarrow k$.  The original reference for the theory of the residue map is \cite{residues}: see also \cite[Exp. IV, Remarque 5.5]{SGA2} and \cite[Ch. 5]{kunz} for concrete descriptions in our special case.  A choice of a regular system of parameters $x_1, \ldots, x_n$ for $R$ induces a non-canonical $R$-module isomorphism $H^n_{\mathfrak{m}}(R) \simeq H^n_{\mathfrak{m}}(\Omega_R^n)$ (defined by $\eta \mapsto \eta \, dx_1 \wedge \cdots \wedge dx_n$), and $\sigma$ is the composite of this isomorphism with the residue map.  One could instead take $E = H^n_{\mathfrak{m}}(\Omega_R^n)$ and the canonical residue map for $\sigma$, but we will have to use coordinates in examples later in the paper. 

\section{Matlis duality for $\D$-modules}\label{dmod}

We now specialize to the case where $(R, \mathfrak{m})$ is a \emph{complete} local ring with coefficient field $k$.  This assumption is in force throughout the section unless otherwise indicated.  

We show that if $\D = \D(R,k)$ is the non-commutative ring of $k$-linear differential operators on $R$ and $M$ is a left $\D$-module, then elements of $\D$ act on $M$ via $\Sigma$-continuous maps.  This fact allows us immediately to apply the formalism of the previous section to define the Matlis dual of the action of an element of $\D$; in this way, $D(M)$ becomes a \emph{right} $\D$-module.  In the case of a complete \emph{regular} local ring with coefficient field $k$ \emph{of characteristic zero}, we describe a ``transposition'' operation allowing us to regard these Matlis duals as \emph{left} $\D$-modules.

Let $\D = \D(R,k)$ be the ring of $k$-linear differential operators on $R$.  Suppose $M$ is a left $\D$-module, and let $\delta \in \D(R,k)$ be a differential operator.  The map $\delta: M \rightarrow M$ defined by the action of $\delta$ (that is, $\delta(m) = \delta \cdot m$) is a $k$-linear map (we abusively use the same letter $\delta$ to simplify notation).

We need a lemma on the continuity of differential operators' action:

\begin{lem}\label{diffcont}
Let $M$ be a left $\D(R,k)$-module (here $R$ is any commutative ring, and $k \subset R$ any commutative subring), and let $I \subset R$ be any ideal.  For any $\delta \in \D^j(R)$ and $s \geq 0$, we have $\delta(I^{s+j}M) \subset I^sM$.
\end{lem}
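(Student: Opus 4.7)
The plan is to prove the lemma by double induction: an outer induction on the order $j$ of the differential operator, and, for each fixed $j$, an inner induction on $s$. The commutator relation built into the recursive definition of $\D^j(R)$ is what makes the two inductions mesh.

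For the outer base case $j = 0$, the operator $\delta$ is multiplication by some $r \in R$, and then $\delta(I^s M) = r \cdot I^s M \subset I^s M$ trivially. For the outer inductive step, assume the statement holds for every operator in $\D^{j-1}(R)$ and for every $s \geq 0$. Fix $\delta \in \D^j(R)$, and induct on $s$. The inner base case $s = 0$ asks only that $\delta(I^j M) \subset M$, which is vacuous.

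For the inner inductive step, assume $\delta(I^{s+j-1} M) \subset I^{s-1} M$, and consider a generator of $I^{s+j}M$ of the form $ry$ with $r \in I$ and $y \in I^{s+j-1}M$ (any element of $I^{s+j}M$ is a finite $k$-linear combination of such generators, so by $k$-linearity of the action of $\delta$ it suffices to handle one). The key step is the commutator identity
\[
\delta(ry) \;=\; r \cdot \delta(y) \;+\; [\delta,r](y),
\]
which holds in any left $\D$-module because $[\delta,r] = \delta r - r\delta$ as elements of $\End_k(M)$. The first term is controlled by the inner induction hypothesis: $\delta(y) \in I^{s-1}M$, hence $r \cdot \delta(y) \in I \cdot I^{s-1}M = I^s M$. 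The second term is controlled by the outer induction hypothesis: by the recursive definition of $\D^j(R)$ we have $[\delta,r] \in \D^{j-1}(R)$, and $y \in I^{s+j-1}M = I^{s+(j-1)}M$, so $[\delta,r](y) \in I^s M$. Summing, $\delta(ry) \in I^s M$, which closes both inductions.

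I do not anticipate any real obstacle beyond finding the right inductive bookkeeping; once the commutator identity is written down and one notices that it naturally reduces both the order of the operator and the exponent of $I$ at the cost of an additional factor of $r \in I$, the argument is mechanical. The only minor subtlety to flag is the need to use $k$-linearity of $\delta$ to pass from arbitrary elements of $I^{s+j}M$ to pure tensors $ry$ with $r \in I$ and $y \in I^{s+j-1}M$, so that the commutator identity applies one factor at a time.
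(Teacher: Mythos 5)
Your proof is correct and matches the paper's argument in substance: the same commutator identity $\delta(ry) = r\delta(y) + [\delta,r](y)$ with $r \in I$, controlling the first term by lowering $s$ and the second by lowering $j$, after reducing to generators of $I^{s+j}M$. The only difference is cosmetic: the paper runs a single induction on $s+j$ (both reductions land at $s+j-1$), whereas you organize it as an outer induction on $j$ with an inner induction on $s$; these are equivalent here.
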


\begin{proof}
We proceed by induction on $s + j$.  If $s+j=0$, there is nothing to prove.  Now suppose $s + j \geq 0$ and the containment established for smaller values of $s + j$.  Since $\delta \in \D^j(R)$, it follows that $[\delta, r] \in \D^{j-1}(R)$ for any $r \in R$, and so $[\delta, r](I^{s+j-1}M) \subset I^s M$ by the induction hypothesis for $s$ and $j-1$.  That is, $[\delta,r](tm) = \delta(rtm) - r\delta(tm) \in I^s M$ for any $r \in R$, $t \in I^{s+j-1}$ and $m \in M$.  If we further suppose that $r \in I$ and put $x = rt \in I^{s+j-1}I = I^{s+j}$, we see that $\delta(xm) = r\delta(tm) + [\delta,r](tm)$.  By the induction hypothesis for $s-1$ and $j$, $\delta(I^{s+j-1}M) \subset I^{s-1}M$, so $\delta(tm) \in I^{s-1}M$ and $r \delta(tm) \in I^s M$.  Thus both terms on the right-hand side (and their sum $\delta(xm)$) belong to $I^s M$.  As any element of $I^{s+j}M$ can be expressed as a finite sum $\sum_{\alpha} r_{\alpha} t_{\alpha} m_{\alpha}$ with $r_{\alpha} \in I$ and $t_{\alpha} \in I^{s+j-1}$, the result follows.
\end{proof}

If a left $\D$-module $M$ is finitely generated as an $R$-module, then given any $\delta \in \D$, the lemma shows that the corresponding map $\delta: M \rightarrow M$ is $\mathfrak{m}$-adically continuous.  Therefore the formalism of the previous section applies, and we can define the $k$-linear Matlis dual $\delta^*: D(M) \rightarrow D(M)$ in such a way that if $\delta' \in \D$ is another differential operator, we have $(\delta' \circ \delta)^* = \delta^* \circ (\delta')^*$ (note that, by Proposition \ref{functor}, the order of composition is reversed).  This allows us to define a structure of right $\D$-module on $D(M)$ by $\phi \cdot \delta = \delta^*(\phi)$.  Here is an example of this dual construction:

\begin{example}\label{singlevar}
Let $R = k[[x]]$, where $k$ is a field, equipped with the $k$-linear derivation $\delta = \frac{d}{dx}$.  Denote by $\mathfrak{m}$ its maximal ideal $(x)$.  Take $M = R$, a finitely generated $R$-module, and take for $E$ the local cohomology module $H^1_{(x)}(R)$, which is the module of finite sums $\sum_{s > 0} \frac{\alpha_s}{x^s}$ where $\alpha_s \in k$. We define Matlis duals of $k$-linear maps between $R$-modules using the \emph{residue} map $\sigma: E \rightarrow k$ given by $\sigma(\sum_{s > 0} \frac{\alpha_s}{x^s}) = \alpha_1$. 

We determine explicitly the Matlis dual $\delta^*: D(R) \rightarrow D(R)$.  Note that $E$ is naturally isomorphic as $R$-module to $D(R) = \Hom_R(R,E)$: an element $\mu \in E$ corresponds to the map $r \mapsto r\mu$ (and, in the other direction, a map $R \rightarrow E$ corresponds to the image of $1$ in $E$).  Let $\mu = \sum_{s > 0} \frac{\alpha_s}{x^s}$ be such an element of $E$ and suppose that $t$ is the greatest integer such that $\alpha_t \neq 0$.  The $R$-linear map $R \rightarrow E$ corresponding to $\mu$ is defined as follows: given an element $\sum_{i=0}^{\infty} \beta_i x^i$ of $R$, its image in $E$ is the product $(\sum_{i=0}^{\infty} \beta_i x^i)(\sum_{s > 0} \frac{\alpha_s}{x^s})$, which is carried by $\sigma$ to $\sum_{s>0} \alpha_s \beta_{s-1}$.  Therefore the corresponding $k$-linear map $\sigma \circ \mu: R \rightarrow k$ is given by $\sum_{i=0}^{\infty} \beta_i x^i \mapsto \sum_{s>0} \alpha_s \beta_{s-1}$.  The map $\mu$ annihilates $\mathfrak{m}^t$; write $\overline{\mu}$ for the $R$-linear map $R/\mathfrak{m}^t \rightarrow E$ induced on the quotient. 

The derivation $\delta$ induces a $k$-linear map $R/\mathfrak{m}^{t+1} \rightarrow R/\mathfrak{m}^t$ by the Leibniz rule, and the $k$-linear composite $R/\mathfrak{m}^{t+1} \rightarrow R/\mathfrak{m}^t \xrightarrow{\sigma \circ \overline{\mu}} k$ is defined by 
\[
\sum_{i=0}^t \lambda_i x^i \mapsto \sum_{i=0}^{t-1} (i+1)\lambda_{i+1}x^i \mapsto \sum_{s>0} s \alpha_s \lambda_s
\] 
where the scalars $\alpha_s$ are the numerators in the expansion of $\mu$.  Since $R/\mathfrak{m}^{t+1}$ is of finite length as an $R$-module, this composite corresponds to a unique $R$-linear map $R/\mathfrak{m}^{t+1} \rightarrow E$, that is, a map $R \rightarrow E$ defined by multiplication by an element of $E$ that is annihilated by $\mathfrak{m}^{t+1}$.  This map $R \rightarrow E$ is the image of $\mu$ under $\delta^*$, and examining the formula above, we see that the corresponding element of $E$ (the image of $1$) cannot be anything but $\sum_{s>0} \frac{s \alpha_s}{x^{s+1}}$.  Therefore, viewed at the level of a map $E \rightarrow E$, the map $\delta^*$ is defined by
\[
\delta^*(\frac{1}{x^s}) = \frac{s}{x^{s+1}},
\] 
which differs from the standard (``quotient rule'') differentiation map on $E$ only by a minus sign.  (Our discussion of ``transposition'' at the end of this section will indicate the reason for this sign.)  This concludes Example \ref{singlevar}.
\end{example}

In order to extend this definition of Matlis dual to arbitrary $\D$-modules, removing the finite generation hypothesis, we need to show that differential operators act on $\D$-modules via $\Sigma$-continuous maps.  We prove this now using the alternate characterization of differential operators given in EGA \cite{EGAIV}.

\begin{definition}
Suppose that $R$ is any commutative ring and $k \subset R$ a commutative subring.  We denote by $B$ the ring $R \otimes_k R$, by $\mu: B \rightarrow R$ the multiplication map $\mu(r \otimes s) = rs$, and by $J \subset B$ the kernel of $\mu$.  We identify the subring $R \otimes_k 1 = \{r \otimes 1|r \in R\}$ of $B$ with $R$, and view $B$ as an $R$-algebra using this identification.  In this way, $B$ and all ideals of $B$ can be regarded as $R$-modules.  Finally, for any $j \geq 0$, we denote by $P^j_{R/k}$ (or $P^j$) the quotient $B/J^{j+1}$.
\end{definition}

In \cite{EGAIV}, differential operators are described in terms of $R$-linear maps via the following correspondence:

\begin{prop}\cite[Prop. 16.8.4]{EGAIV}\label{egadiff}
For any commutative ring $R$ and commutative subring $k \subset R$, there is an isomorphism
\[
\Hom_R(P^j, R) \simeq \D^j(R)
\] 
of $R$-modules, where the differential operators on the right-hand side are understood to be $k$-linear.
\end{prop}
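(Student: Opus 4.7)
The plan is to reduce the statement to two separate facts: a Hom-tensor adjunction identifying $\Hom_R(B, R)$ with $\Hom_k(R, R)$, and a characterization of order-$\leq j$ differential operators as those whose associated $R$-linear map on $B$ vanishes on $J^{j+1}$. To begin, I would establish a natural $R$-linear isomorphism $\Psi \colon \Hom_R(B, R) \xrightarrow{\sim} \Hom_k(R, R)$ given by $\Psi(\phi)(s) = \phi(1 \otimes s)$, with inverse sending a $k$-linear $d \colon R \to R$ to $\tilde{d}$, defined on pure tensors by $\tilde{d}(r \otimes s) = r \cdot d(s)$ and extended $R$-linearly in the left factor. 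Checking that this bijection respects the $R$-module structures is routine. Since $\Hom_R(P^j, R)$ consists precisely of those $\phi \in \Hom_R(B, R)$ vanishing on $J^{j+1}$, it then suffices to show that $d \in \D^j(R)$ if and only if $\tilde{d}|_{J^{j+1}} = 0$.

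The technical heart of the proof is a commutator identity. For each $r \in R$, let $\rho_r \colon B \to B$ denote multiplication by the element $1 \otimes r - r \otimes 1 \in J$. A direct computation on pure tensors, using the decomposition $\tilde{d}(a \otimes br) - \tilde{d}(ar \otimes b) = a\,d(br) - ar\,d(b)$, yields $\widetilde{[d,r]} = \tilde{d} \circ \rho_r$ for any $k$-linear $d \colon R \to R$, where $[d, r] = dr - rd$ is exactly the commutator appearing in the paper's recursive definition of $\D^j(R)$.

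With this identity in hand, the equivalence is proved by induction on $j$. For the base case $j = 0$, I would first observe that $J$ is generated as an ideal of $B$ by the set $\{1 \otimes r - r \otimes 1 : r \in R\}$: any element $\sum r_i \otimes s_i \in J$ equals $-\sum r_i (s_i \otimes 1 - 1 \otimes s_i)$, since $\sum r_i s_i = 0$. Thus $\tilde{d}(J) = 0$ forces $d(r) = r \cdot d(1)$ for every $r$, i.e.\ $d$ is multiplication by $d(1) \in R$, which is exactly $\D^0(R)$. For the inductive step, using that $B$ is commutative and $J^{j+1} = J \cdot J^j$, the ideal $J^{j+1}$ is generated by products of the form $(1 \otimes r - r \otimes 1)\, \xi$ with $r \in R$ and $\xi \in J^j$. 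By $R$-linearity of $\tilde{d}$ in the left factor (and commutativity of $B$), vanishing of $\tilde{d}$ on $J^{j+1}$ is then equivalent to the vanishing of each $\tilde{d} \circ \rho_r$ on $J^j$. By the commutator identity above this says $\widetilde{[d, r]}|_{J^j} = 0$ for every $r$, which by the induction hypothesis is equivalent to $[d, r] \in \D^{j-1}(R)$ for every $r$; that is exactly the definition of $d \in \D^j(R)$.

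The argument is essentially formal, so the main place where care is required is in the bookkeeping of the third paragraph: one must verify that generation of $J^{j+1}$ by products of generators of $J$ truly reduces the vanishing check to a condition on the $\rho_r(J^j)$, which uses genuine $R$-linearity of $\tilde{d}$ rather than mere $k$-linearity, and one must confirm that the adjunction $\Psi$ is compatible with the $R$-action throughout. I do not anticipate a deeper obstacle: no analytic or topological input is needed, and both the base case and inductive step turn on a single formal manipulation of tensors.
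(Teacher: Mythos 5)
Your proof is correct, and it is essentially the standard argument behind EGA IV 16.8.4, which the paper cites without reproducing: the adjunction $\Hom_R(R\otimes_k R,R)\cong\End_k(R)$, the commutator identity $\widetilde{[d,r]}=\tilde d\circ\rho_r$, and an induction on $j$ matching the recursive definition of $\D^j(R)$. One small remark: in the inductive step you need only additivity of $\tilde d$, since commutativity of $B$ and the fact that $J^j$ is an ideal already show that every element of $J^{j+1}$ is a finite sum $\sum_i(1\otimes r_i - r_i\otimes 1)\xi_i$ with $\xi_i\in J^j$ (absorb any $B$-coefficient into $\xi_i$), so the reduction to generators does not in fact depend on $R$-linearity in the left factor — though $R$-linearity is certainly available and there is no error in invoking it.
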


In our case, where $R$ is a complete local ring and $k$ a coefficient field, both sides of the isomorphism of Proposition \ref{egadiff} are finitely generated $R$-modules:

\begin{prop}\label{fgdiff}
Let $(R, \mathfrak{m})$ be a (Noetherian) complete local ring with coefficient field $k$.  For all $j$, the $R$-module $\D^j(R) \subset \D(R,k)$ is finitely generated.
\end{prop}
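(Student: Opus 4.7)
The plan is to induct on $j$. The base case $j = 0$ is immediate since $\D^0(R) = R$. For the inductive step, assume $\D^{j-1}(R)$ is finitely generated over $R$, and choose generators $\bar x_1,\ldots,\bar x_n$ of $\mathfrak m$. I define an $R$-linear map
\[
\Phi \colon \D^j(R) \longrightarrow \D^{j-1}(R)^n, \qquad D \mapsto \bigl([D,\bar x_1],\,\ldots,\,[D,\bar x_n]\bigr).
\]
This is well defined because each commutator $[D, \bar x_i]$ lies in $\D^{j-1}(R)$ by the very definition of $\D^j(R)$, and it is $R$-linear because $[rD, \bar x_i] = r[D,\bar x_i] + [r,\bar x_i]D = r[D,\bar x_i]$.

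The heart of the proof is to show that $\ker \Phi = R$, embedded in $\D^j(R)$ as multiplication operators. The containment $R \subset \ker \Phi$ is clear. Conversely, suppose $D \in \ker \Phi$. The Leibniz-type identity $[D, ab] = a[D, b] + [D, a]b$ and an induction on total degree give $[D, p] = 0$ for every polynomial $p \in k[\bar x_1,\ldots,\bar x_n]$. For arbitrary $r \in R$ and any $N$, write $r = p_N + q_N$ with $p_N$ a polynomial in the $\bar x_i$ of degree $< N$ and $q_N \in \mathfrak m^N$; such a decomposition exists because the $\bar x_i$ generate $\mathfrak m$ and $R/\mathfrak m^N$ is $k$-spanned by the images of monomials of degree $< N$ in the $\bar x_i$. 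Then $[D, r] = [D, q_N]$, and Lemma \ref{diffcont} applied to the operator $D$ of order $\le j$ gives $[D, q_N](y) \in \mathfrak m^{N-j}$ for every $y \in R$ and every $N \ge j$. Since $N$ is arbitrary and $\bigcap_N \mathfrak m^{N-j} = 0$ by Krull's intersection theorem, $[D, r](y) = 0$, so $[D, r] = 0$. Thus $D$ commutes with multiplication by every element of $R$, so $D$ is $R$-linear; any $R$-linear endomorphism of $R$ is multiplication by $D(1) \in R$, hence $D \in R$.

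From the short exact sequence $0 \to R \to \D^j(R) \to \im \Phi \to 0$, we see that $\D^j(R)$ is an extension of two finitely generated $R$-modules: the cyclic module $R$, and the submodule $\im \Phi$ of the (inductively) finitely generated module $\D^{j-1}(R)^n$—finitely generated because $R$ is Noetherian. Hence $\D^j(R)$ is finitely generated, completing the induction.

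The main subtlety will be the kernel computation: passing from ``$D$ commutes with the generators $\bar x_i$'' to ``$D$ commutes with every element of $R$''. This step uses both the $\mathfrak m$-adic completeness of $R$ (to approximate arbitrary elements by polynomials in the $\bar x_i$) and the $\mathfrak m$-adic continuity of $D$ supplied by Lemma \ref{diffcont}; without one of these, the argument would only give commutativity with polynomial combinations of the generators, which is not enough to force $D$ to be $R$-linear.
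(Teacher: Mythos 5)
Your proof is correct and takes a genuinely different route from the paper's. The paper passes through the module of principal parts: using the identification $\D^j(R) \simeq \Hom_R(P^j, R)$ of Proposition \ref{egadiff}, it reduces the claim to the finite generation of the maximal separated quotient $(P^j)^{sep}$, which it gets from a formal Nakayama lemma (here completeness is essential) together with a direct combinatorial estimate showing that $B/(\mathfrak{m}B + J^{j+1})$ is spanned by low-degree monomials in the elements $b_i = x_i\otimes 1 - 1\otimes x_i$. Your argument stays entirely inside $\End_k(R)$: you induct on $j$ through the commutator map $\Phi$, identify $\ker\Phi = R$ by showing that any operator commuting with the chosen generators of $\mathfrak{m}$ already commutes with every $r\in R$ (via the mod-$\mathfrak{m}^N$ approximation of $r$ by a polynomial in the generators, Lemma \ref{diffcont}, and Krull intersection), and then invoke Noetherianity for $\im\Phi$. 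Your route is more elementary and avoids the jet-theoretic machinery; the paper's buys the explicit generating set for $(P^j)^{sep}$ recorded in Remark \ref{psep}, which it needs later. One small correction to your closing remark: you do not actually use completeness. The decomposition $r = p_N + q_N$ needs only the coefficient-field and Noetherian local hypotheses (inductively replace the $R$-coefficients in a generating expression of $q_N \in \mathfrak{m}^N$ by their residues in $k$), and the final step only needs $\bigcap_N \mathfrak{m}^N = 0$, which is Krull's intersection theorem for a Noetherian local ring. So your argument in fact proves the proposition for any Noetherian local ring with coefficient field $k$, complete or not, whereas the paper's proof genuinely leans on completeness in its Nakayama step.
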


The following definition will be used in the proof of Proposition \ref{fgdiff}:

\begin{definition}\label{sepquotient}
Let $(R, \mathfrak{m})$ be a local ring.  For any $R$-module $L$, the \emph{maximal $\mathfrak{m}$-adically separated quotient} of $L$ is $L^{sep} = L/(\cap_s \mathfrak{m}^s L)$.  Note that $L^{sep}$ is $\mathfrak{m}$-adically separated, and $L/\mathfrak{m}L \simeq L^{sep}/\mathfrak{m}L^{sep}$.
\end{definition}

\begin{proof}[Proof of Proposition \ref{fgdiff}]
Let $j$ be given.  By Proposition \ref{egadiff}, it suffices to show that $\Hom_R(P^j, R)$ is a finitely generated $R$-module.  As $R$ is $\mathfrak{m}$-adically separated, every $f \in \Hom_R(P^j, R)$ factors uniquely through $(P^j)^{sep}$.  Therefore $\Hom_R((P^j)^{sep}, R) \simeq \Hom_R(P^j, R)$ as $R$-modules.  We claim that $(P^j)^{sep}$ is itself a finitely generated $R$-module, from which it will follow at once that 
\[
\Hom_R((P^j)^{sep}, R) \simeq \Hom_R(P^j, R) \simeq \D^j(R)
\]
is finitely generated as well.  We have
\[
(P^j)^{sep}/\mathfrak{m}(P^j)^{sep} \simeq P^j/\mathfrak{m}P^j = B/(\mathfrak{m}B + J^{j+1})
\]
as $k$-spaces.  Since $(P^j)^{sep}$ is $\mathfrak{m}$-adically separated and $R$ is $\mathfrak{m}$-adically complete, $(P^j)^{sep}$ is a finitely generated $R$-module if $(P^j)^{sep}/\mathfrak{m}(P^j)^{sep}$ is a finite-dimensional $k$-space, by a form of Nakayama's lemma \cite[Ex. 7.2]{eisenbud}.  Therefore we have reduced the proof of the proposition to the proof that for all $j$, the $k$-space $B/(\mathfrak{m}B + J^{j+1})$ is finite-dimensional, which we give now.

The elements $r \otimes 1 - 1 \otimes r$ with $r \in R$ generate $J$ as an $R$-module: given $b = \sum_i r'_i \otimes r_i \in B$, we have $b \in J$ if and only if $\sum_i r'_ir_i = 0$, and in this case we see that  
\[
b = \sum_i(r'_i \otimes r_i - r'_ir_i \otimes 1) = -\sum_i (r'_i \otimes 1)(r_i \otimes 1 - 1 \otimes r_i) = \sum_i (-r'_i) \cdot (r_i \otimes 1 - 1 \otimes r_i) 
\]
is an $R$-linear combination of elements of the form described.  The equation 
\[
r \otimes r' = rr' \otimes 1 - r \cdot (r' \otimes 1 - 1 \otimes r')
\]
for $r, r' \in R$ also shows that we have an $R$-module direct sum decomposition $B = (R \otimes_k 1) \oplus J$.  Since $R$ is Noetherian, we can fix a finite set of generators $x_1, \ldots, x_s$ for $\mathfrak{m}$.  Moreover, since $R$ contains its residue field $k$, we have a direct sum decomposition (as $k$-spaces) $R = k \oplus \mathfrak{m}$, so given any $r \in R$, we can write $r = c + x_1y_1 + \cdots + x_sy_s$ where $c \in k$ and $y_i \in R$.  We then have $r \otimes 1 - 1 \otimes r = \sum_i (x_iy_i \otimes 1 - 1 \otimes x_iy_i)$, since $c \otimes 1 = 1 \otimes c$ for $c \in k$.  For all $i$, we can express $x_iy_i \otimes 1 - 1 \otimes x_iy_i$ as
\[
(y_i \otimes 1)(x_i \otimes 1 - 1 \otimes x_i) - (x_i \otimes 1 - 1 \otimes x_i)(y_i \otimes 1 - 1 \otimes y_i) + (x_i \otimes 1)(y_i \otimes 1 - 1 \otimes y_i)
\]
where the second summand belongs to $J^2$ and the third summand belongs to $\mathfrak{m}J$.  We conclude that $r \otimes 1 - 1 \otimes r - (\sum y_i \cdot (x_i \otimes 1 - 1 \otimes x_i)) \in J^2 + \mathfrak{m}J$.  Therefore, if we write $b_i = x_i \otimes 1 - 1 \otimes x_i$, the classes of the $b_i$ generate $J/(J^2 + \mathfrak{m}J)$ as an $R$-module.  Since $\mathfrak{m}$ annihilates $J/(J^2 + \mathfrak{m}J)$, and $R = k \oplus \mathfrak{m}$ as $k$-spaces, we see that moreover the classes of the $b_i$ span $J/(J^2 + \mathfrak{m}J)$ as a $k$-space.  Let $L_j$ be the $k$-span of the monomials of degree at most $j$ in the $b_i$, and let $L'_j$ be the $k$-span of such monomials of degree precisely $j$, so that $L_j = \cup_{\l=0}^j L'_{\l}$: clearly all $L_j$ and $L'_j$ are finite-dimensional $k$-spaces.  With this notation, what we have just shown is that $J = L'_1 + J^2 + \mathfrak{m}J$.

Now let $b \in B$ be arbitrary.  Using the $R$-module direct sum decomposition $B = (R \otimes_k 1) \oplus J$, we write $b = (r \otimes 1) + x$ where $x \in J$.  Using the $k$-space direct sum decomposition $R = k \oplus \mathfrak{m}$, we write $r = c + y$ where $c \in k$ and $y \in \mathfrak{m}$, so that $y \otimes 1 \in \mathfrak{m}B$.  Our work above shows that there exist $\beta \in J^2$ and $\gamma \in \mathfrak{m}J$ such that $x - \beta - \gamma$ lies in $L'_1$.  We have $(y \otimes 1) + \gamma \in \mathfrak{m}B$.  We conclude from the decomposition
\[
b = (c \otimes 1) + (x - \beta - \gamma) + \beta + (\gamma + (y \otimes 1))
\]
that $B \subset L_1 + J^2 + \mathfrak{m}B$ (since $c \otimes 1 = c \cdot (1 \otimes 1) \in L_0 \subset L_1$), and hence that $B/(\mathfrak{m}B + J^2)$ is spanned as a $k$-space by $L_1$.  Moreover, it follows by induction that for all $j$, $B/(\mathfrak{m}B + J^{j+1})$ is spanned as a $k$-space by $L_j$.  Assume the conclusion for $j-1$, that is, that $B \subset L_{j-1} + \mathfrak{m}B + J^j$.  We have already shown $J = L'_1 + \mathfrak{m}J + J^2$.  Taking the $j$th power of both sides, we find $J^j \subset \mathfrak{m}J + \sum_{\l = 0}^j L'_{j-\l} J^{2\l} \subset \mathfrak{m}B + L'_j + J^{j+1}$, since $b_i \in J$ and for any $\l > 0$, $2\l + (j-\l) \geq j+1$.  Therefore
\[
B \subset (L_{j-1} + L'_j) + \mathfrak{m}B + J^{j+1}.
\]  
Since $L_{j-1} + L'_j = L_j$, we have shown $B/(\mathfrak{m}B + J^{j+1})$ is spanned over $k$ by $L_j$, completing the induction.  It follows that every $B/(\mathfrak{m}B + J^{j+1})$ is a finite-dimensional $k$-space, completing the proof.
\end{proof}

\begin{remark}\label{psep} 
Our proof of Proposition \ref{fgdiff} above, together with \cite[Ex. 7.2]{eisenbud}, has the following consequence which we record separately for reference: if we fix generators $x_1, \ldots, x_s$ for the maximal ideal $\mathfrak{m}$, then $(P^j)^{sep}$ is generated over $R$ by the classes of monomials in $b_0, b_1, \ldots, b_n$ of degree at most $j$, where $b_0 = 1 \otimes 1$ and $b_i = x_i \otimes 1 - 1 \otimes x_i$ for $i = 1, \ldots, n$.
\end{remark}

We have now assembled enough preliminaries to prove the $\Sigma$-continuity of differential operators over a complete local ring:

\begin{prop}\label{diffsigma}
Let $(R, \mathfrak{m})$ be a (Noetherian) complete local ring with coefficient field $k$, and let $\D = \D(R,k)$.  If $M$ is a left $\D$-module and $\delta \in \D(R,k)$, then $\delta: M \rightarrow M$ is $\Sigma$-continuous. (The analogous statement is also true if $M$ is a right $\D$-module.)
\end{prop}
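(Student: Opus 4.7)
The plan is to verify the two clauses of Definition \ref{arbdual} for the $k$-linear action of an arbitrary $\delta \in \D^j(R)$. Fix a finitely generated $R$-submodule $M_\lambda \subseteq M$ with $R$-generators $m_1, \ldots, m_s$. My first step is to introduce the auxiliary $R$-submodule
\[
M^\ast := \sum_{i=1}^s \D^j(R) \cdot m_i \subseteq M.
\]
By Proposition \ref{fgdiff}, $\D^j(R)$ is finitely generated as a left $R$-module, hence $M^\ast$ is finitely generated over $R$. To dispatch the first $\Sigma$-continuity clause, I would show that $\delta(M_\lambda) \subseteq M^\ast$: for any $m = \sum_i \rho_i m_i \in M_\lambda$ with $\rho_i \in R$, one has $\delta(m) = \sum_i (\delta \circ \rho_i)(m_i)$, and each $\delta \circ \rho_i$ is a composition of operators of orders $\leq j$ and $0$, hence lies in $\D^j(R)$; thus $(\delta \circ \rho_i)(m_i) \in \D^j(R) \cdot m_i \subseteq M^\ast$. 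This forces $\langle \delta(M_\lambda) \rangle \subseteq M^\ast$, and since $R$ is Noetherian and $M^\ast$ is finitely generated, $\langle \delta(M_\lambda) \rangle$ is finitely generated too.

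The continuity clause is the substantive part. The key inclusion I would establish is
\[
\delta(\mathfrak{m}^\ell M_\lambda) \subseteq \mathfrak{m}^{\ell-j} M^\ast \quad \text{for all } \ell \geq j.
\]
This comes from iterating the identity $\delta \cdot r = r \cdot \delta + [\delta,r]$ with $[\delta,r] \in \D^{j-1}(R)$. A typical element of $\mathfrak{m}^\ell M_\lambda$ is a sum of monomials of the form $r_1 \cdots r_\ell m_i$ with each $r_k \in \mathfrak{m}$, and by a straightforward induction on $j$ one expands
\[
\delta(r_1 \cdots r_\ell m_i) = \sum_{\substack{S \subseteq \{1,\ldots,\ell\} \\ |S| \geq \ell - j}} \Bigl(\prod_{k \in S} r_k\Bigr) \, \delta_S(m_i),
\]
where $\delta_S$ is the iterated commutator of $\delta$ with the $r_k$ for $k \notin S$, so $\delta_S \in \D^{j - \ell + |S|}(R) \subseteq \D^j(R)$. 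Each surviving term lies in $\mathfrak{m}^{|S|} \D^j(R) m_i \subseteq \mathfrak{m}^{\ell - j} M^\ast$, which proves the claim.

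To finish, write $N := \langle \delta(M_\lambda) \rangle$ and combine the trivial inclusion $\delta(\mathfrak{m}^\ell M_\lambda) \subseteq \delta(M_\lambda) \subseteq N$ with the inclusion above to obtain $\delta(\mathfrak{m}^\ell M_\lambda) \subseteq N \cap \mathfrak{m}^{\ell - j} M^\ast$. The Artin--Rees lemma, applied to the submodule $N$ of the finitely generated module $M^\ast$ over the Noetherian ring $R$, yields a constant $c \geq 0$ such that $N \cap \mathfrak{m}^n M^\ast \subseteq \mathfrak{m}^{n-c} N$ for all $n \geq c$. Given $t \geq 0$, taking $\ell = t + j + c$ gives $\delta(\mathfrak{m}^\ell M_\lambda) \subseteq \mathfrak{m}^t N$, which is exactly $\mathfrak{m}$-adic continuity of the restriction $\delta|_{M_\lambda} : M_\lambda \to N$ in the sense of Definition \ref{madic}. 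The right $\D$-module case is identical with the sides of composition reversed. The main obstacle will be bookkeeping in the iterated commutator expansion so that the $\mathfrak{m}$-power can be cleanly read off; once that inclusion $\delta(\mathfrak{m}^\ell M_\lambda) \subseteq \mathfrak{m}^{\ell - j} M^\ast$ is in hand, Proposition \ref{fgdiff} and Artin--Rees handle the rest formally.
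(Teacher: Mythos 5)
Your proof is correct, and it takes a somewhat different (and tidier) route than the paper's. For the finite-generation clause, the paper proceeds by induction on the order $j$ of $\delta$, using Proposition \ref{fgdiff} at each stage to produce an explicit finite generating set for $\langle\delta(M_\lambda)\rangle$; your one-step observation that $\langle\delta(M_\lambda)\rangle \subseteq \D^j(R)\cdot M_\lambda = M^\ast$, a finitely generated $R$-module by Proposition \ref{fgdiff}, gets there more directly via Noetherianness. The more substantive divergence is in the continuity clause, which the paper handles in a parenthetical appeal to Lemma \ref{diffcont}. That lemma gives $\delta(\mathfrak{m}^{s+j}M_\lambda) \subseteq \mathfrak{m}^s M$, but Definition \ref{arbdual} requires the image to land in $\mathfrak{m}^t N$ where $N = \langle\delta(M_\lambda)\rangle$ carries its \emph{intrinsic} $\mathfrak{m}$-adic topology; the passage from $N \cap \mathfrak{m}^s M$ to $\mathfrak{m}^t N$ is an Artin--Rees step, and Artin--Rees cannot be invoked for $N \subset M$ directly since $M$ need not be finitely generated. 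Your quantitative refinement $\delta(\mathfrak{m}^{\ell}M_\lambda) \subseteq \mathfrak{m}^{\ell - j}M^\ast$ — essentially the same iterated-commutator Leibniz expansion that underlies Lemma \ref{diffcont}, but tracking \emph{which finitely generated submodule} the image lands in — places the image inside $M^\ast$, where Artin--Rees for $N \subseteq M^\ast$ is legitimate and closes the argument. Thus the single module $M^\ast$ does double duty and makes both clauses of $\Sigma$-continuity transparent, whereas the paper's compressed appeal to Lemma \ref{diffcont} leaves exactly this point to the reader. One minor bookkeeping remark: a general element of $\mathfrak{m}^\ell M_\lambda$ is an $R$-linear combination of products $r_1 \cdots r_\ell n$ with $r_k \in \mathfrak{m}$ and $n \in M_\lambda$ (not only $n = m_i$), but your expansion applies verbatim with $n$ in place of $m_i$ since $\delta_S(n) \in \D^j(R)M_\lambda = M^\ast$ all the same.
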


\begin{proof}
We verify the conditions of Definition \ref{arbdual}.  Let $M_{\lambda}$ be a finitely generated $R$-submodule of $M$ and let $\delta \in \D(R,k)$.  We assert that the $R$-submodule $\langle \delta(M_{\lambda}) \rangle$ of $M$ generated by the image of $M_{\lambda}$ under $\delta$ is finitely generated over $R$.  (By Lemma \ref{diffcont}, we already know the restriction of $\delta$ to $M_{\lambda}$ will be $\mathfrak{m}$-adically continuous, so this is all that must be proved.)  With $\lambda$ fixed, we proceed by induction on the order $j$ of $\delta$.  Fix a finite set of generators $m_1, \ldots, m_n$ for $M_{\lambda}$.  If $j = 0$, then $\delta$ is $R$-linear, so $\delta(m_1), \ldots, \delta(m_n)$ generate $\langle \delta(M_{\lambda}) \rangle$.  Now suppose $j > 0$ and the statement proved for smaller values of $j$.  By Proposition \ref{fgdiff}, we can find a finite set of $R$-module generators $d_1, \ldots, d_s$ for $\D^{j-1}(R)$.  Then we claim
\[
\{\delta(m_i)\}_i \cup \{d_{\l}(m_i)\}_{\l, i}
\] 
is a finite set of generators for $\langle \delta(M_{\lambda}) \rangle$.  Indeed, given any element $m_{\lambda} \in M_{\lambda}$, we can write it as a linear combination $r_1m_1 + \cdots + r_nm_n$, and $\delta(r_im_i) = r_i\delta(m_i) + [\delta, r_i](m_i)$ for all $i$.  Since $[\delta,r_i] \in \D^{j-1}(R)$, we can write $[\delta,r_i] = \rho_{1,i}d_1 + \cdots + \rho_{s,i}d_s$ for some $\rho_{1,i}, \ldots, \rho_{s,i} \in R$.  Then
\[
\delta(r_im_i) = r_i\delta(m_i) + \rho_{1,i}d_1(m_i) + \cdots + \rho_{s,i}d_s(m_i)
\] 
for all $i$.  The sum $\sum \delta(r_im_i) = \delta(m_{\lambda})$ thus belongs to the $R$-submodule of $M$ generated by the specified finite set, completing the proof.
\end{proof}

\begin{remark}\label{twomodules}
More generally, if $M$ and $N$ are any two $R$-modules, there is a notion of $k$-linear differential operators $M \rightarrow N$ \cite[D\'{e}f. 16.8.1]{EGAIV}. We do not know under what conditions such differential operators are $\Sigma$-continuous.
\end{remark} 

\begin{cor}[Corollary-Definition]\label{dualright}
Let $(R, \mathfrak{m})$ be a complete local ring with coefficient field $k$, and let $\D = \D(R,k)$.  Let $M$ be a left $\D$-module.  Then the Matlis dual $D(M) = \Hom_R(M,E)$ has a natural structure of right $\D$-module. If $M$ is a right $\D$-module, $D(M)$ has a natural structure of left $\D$-module.
\end{cor}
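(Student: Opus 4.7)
The plan is to define the right action of $\delta \in \D$ on $\phi \in D(M)$ by the formula $\phi \cdot \delta := \delta^*(\phi)$, where $\delta^*$ is the Matlis dual of the $k$-linear endomorphism of $M$ given by the action of $\delta$. Essentially all the technical work has already been done, so the task reduces to assembling the pieces from Proposition \ref{diffsigma} and Proposition-Definition \ref{arbprecomp} and verifying the module axioms.

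First I would check well-definedness of the action. For each $\delta \in \D$, the map $m \mapsto \delta \cdot m$ is $k$-linear, and by Proposition \ref{diffsigma} it is $\Sigma$-continuous. Proposition-Definition \ref{arbprecomp} then produces a canonical $k$-linear map $\delta^*: D(M) \to D(M)$, so $\phi \cdot \delta$ is well-defined.

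Next I would verify that the assignment $(\phi, \delta) \mapsto \phi \cdot \delta$ satisfies the right $\D$-module axioms. Additivity in $\phi$ is exactly the $k$-linearity of $\delta^*$. Additivity in $\delta$ follows because the construction of $\delta^*$ in Proposition-Definition \ref{arbprecomp} factors as $\Phi_M^{-1} \circ \delta^{\vee} \circ \Phi_M$, and each of $\Phi_M$, $\Phi_M^{-1}$, and pre-composition by $\delta$ is additive in $\delta$. For the multiplicative axiom, the product $\delta_1 \delta_2$ acts on $M$ as the composite $m \mapsto \delta_1(\delta_2(m))$, so the reversal-of-composition formula in Proposition-Definition \ref{arbprecomp} gives $(\delta_1 \delta_2)^* = \delta_2^* \circ \delta_1^*$; translating, $\phi \cdot (\delta_1 \delta_2) = \delta_2^*(\delta_1^*(\phi)) = (\phi \cdot \delta_1) \cdot \delta_2$, the correct right-action identity. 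Compatibility with the ambient $R$-module structure on $D(M)$ is the observation that for $r \in R = \D^0(R)$, the action on $M$ is $R$-linear, and by the final assertion of Proposition-Definition \ref{arbprecomp}, $r^*$ agrees with the classical Matlis dual of multiplication by $r$, which is multiplication by $r$ on $D(M)$.

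For the parenthetical statement, I would proceed symmetrically. If $M$ is a right $\D$-module, the map $\delta_M: m \mapsto m \cdot \delta$ is $k$-linear and $\Sigma$-continuous by the parenthetical in Proposition \ref{diffsigma}, so its dual $\delta_M^*$ exists. Defining $\delta \cdot \phi := \delta_M^*(\phi)$ and using that right-multiplication reverses the order of composition in the opposite direction, i.e.\ $(\delta_1 \delta_2)_M = (\delta_2)_M \circ (\delta_1)_M$, Proposition-Definition \ref{arbprecomp} now yields $(\delta_1 \delta_2)_M^* = (\delta_1)_M^* \circ (\delta_2)_M^*$, which is the left-module identity $(\delta_1 \delta_2) \cdot \phi = \delta_1 \cdot (\delta_2 \cdot \phi)$. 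Since the genuinely difficult step, the $\Sigma$-continuity of differential operators, has already been proved in Proposition \ref{diffsigma}, no further obstacle remains; the present statement is a formal consequence.
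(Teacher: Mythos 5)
Your proof is correct and takes the same route as the paper: define $\phi \cdot \delta := \delta^*(\phi)$, use Proposition \ref{diffsigma} for $\Sigma$-continuity and Proposition \ref{arbprecomp} for the existence of $\delta^*$ and the order-reversal identity $(\delta' \circ \delta)^* = \delta^* \circ (\delta')^*$. The paper's proof is terser — it dispatches the right-module case by symmetry and leaves the additivity axioms implicit — but your more explicit verification of the axioms (and the careful handling of the reversal of composition in the parenthetical case) is exactly what the paper's argument relies on.
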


\begin{proof}
By symmetry, it suffices to prove the statement when $M$ is a left $\D$-module. Given any $\delta \in \D(R,k)$, Proposition \ref{diffsigma} and Proposition \ref{arbprecomp} imply that the Matlis dual $\delta^*: D(M) \rightarrow D(M)$ is defined.  We define a right $\D(R,k)$-action on $D(M)$ by $\phi \cdot \delta = \delta^*(\phi)$.  This definition satisfies the axioms for a right action since, given another differential operator $\delta' \in \D(R,k)$, we have $(\delta' \circ \delta)^* = \delta^* \circ (\delta')^*$, again by Proposition \ref{arbprecomp}. 
\end{proof}

\begin{prop}\label{dualofdlinear}
Let $\phi: M \rightarrow N$ be a homomorphism of left $\D$-modules. The Matlis dual $\phi^*: D(N) \rightarrow D(M)$, as defined in Proposition \ref{arbprecomp}, is a homomorphism of right $\D$-modules. The analogous statement for a homomorphism of right $\D$-modules is true as well. Therefore, Matlis duality defines contravariant functors from the category of left $\D$-modules to the category of right $\D$-modules and \emph{vice versa}.
\end{prop}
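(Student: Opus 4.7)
The plan is a direct verification using two previously-established ingredients: the contravariant functoriality of the Matlis dual on $\Sigma$-continuous maps established in Proposition \ref{arbprecomp}, and the translation of $\D$-linearity of $\phi$ into a commutation with the action maps $\delta_M,\delta_N$.

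First, since $\phi$ is $R$-linear it is automatically $\Sigma$-continuous by Lemma \ref{sigmacontfacts}(a), so $\phi^*: D(N) \to D(M)$ is defined by Proposition \ref{arbprecomp}; by Lemma \ref{agreement} this $\phi^*$ coincides with the classical Matlis dual, hence is at least $R$-linear. For an arbitrary $\delta \in \D(R,k)$, write $\delta_M: M \to M$ and $\delta_N: N \to N$ for the $k$-linear maps given by left multiplication by $\delta$. The hypothesis that $\phi$ is a morphism of left $\D$-modules is exactly the identity
\[
\phi \circ \delta_M \;=\; \delta_N \circ \phi
\]
of $k$-linear maps $M \to N$. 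Both sides are compositions of $\Sigma$-continuous maps (Proposition \ref{diffsigma} combined with Lemma \ref{sigmacontfacts}(a,b)), so dualizing and invoking the contravariant functoriality from Proposition \ref{arbprecomp} yields
\[
\delta_M^* \circ \phi^* \;=\; (\phi \circ \delta_M)^* \;=\; (\delta_N \circ \phi)^* \;=\; \phi^* \circ \delta_N^*.
\]
Unwinding the right $\D$-action defined in Corollary \ref{dualright} as $\psi \cdot \delta = \delta^*(\psi)$, this reads $\phi^*(\psi \cdot \delta) = \phi^*(\psi) \cdot \delta$ for every $\psi \in D(N)$, which is precisely right $\D$-linearity of $\phi^*$. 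The right-module case is entirely symmetric, with the roles of left and right reversed throughout; functoriality as contravariant functors between the two categories then follows from the identity $(\phi_2 \circ \phi_1)^* = \phi_1^* \circ \phi_2^*$ of Proposition \ref{arbprecomp}, together with the obvious $\mathrm{id}^* = \mathrm{id}$.

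No serious obstacle arises: the statement is essentially a repackaging of the contravariant functoriality of $(-)^*$ that was already proved in the previous section. The only point worth flagging is that the maps $\delta_M$ and $\delta_N$ are generally \emph{not} $R$-linear, so dualizing them requires the extended framework of $\Sigma$-continuous maps from Definition \ref{arbdual}; the hypothesis that we are working over a complete local ring enters exactly through Proposition \ref{diffsigma}, which supplies the $\Sigma$-continuity that allows Proposition \ref{arbprecomp} to apply.
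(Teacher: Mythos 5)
Your proof is correct and takes essentially the same approach as the paper's: both reduce to the commutation relation $\phi \circ \delta_M = \delta_N \circ \phi$ and the reversal of composition under the Matlis dual, with $\Sigma$-continuity of $\phi$ and of the $\delta$-actions supplied by Lemma \ref{sigmacontfacts}(a) and Proposition \ref{diffsigma} respectively. The only cosmetic difference is that the paper verifies the identity by direct computation at the level of $\phi^{\vee}$ on $D^{\Sigma}$, whereas you invoke the abstract compositional identity $(\delta' \circ \delta)^* = \delta^* \circ (\delta')^*$ from Proposition \ref{arbprecomp}; this is the same argument expressed one level of abstraction higher.
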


\begin{proof}
Again it suffices to prove the first statement. Note first that since $\phi$ is left $\D$-linear, it is $R$-linear and hence $\Sigma$-continuous by Lemma \ref{sigmacontfacts}(a). Let $\delta \in \D$ be given, and write $\delta_M: M \rightarrow M$ and $\delta_N: N \rightarrow N$ for the actions of $\delta$ on $M$ and $N$. As is clear from the proof of Corollary \ref{dualright}, the right $\D$-module structure on $D(M)$ is defined by transport of structure from that on $D^{\Sigma}(M)$ via the functorial isomorphism of Theorem \ref{arbsga2}. Therefore we need only check that $\phi^{\vee}: D^{\Sigma}(N) \rightarrow D^{\Sigma}(M)$ is right $\D$-linear. Since $\phi$ is left $\D$-linear, we have $\phi \circ \delta_M = \delta_N \circ \phi$. Therefore, if $\lambda \in D^{\Sigma}(N)$, we have
\[
\phi^{\vee}(\lambda \cdot \delta) = \phi^{\vee}(\lambda \circ \delta_N) = \lambda \circ \delta_N \circ \phi = \lambda \circ \phi \circ \delta_M = \phi^{\vee}(\lambda) \circ \delta_M = \phi^{\vee}(\lambda) \cdot \delta,
\]
so that $\phi^{\vee}$ is right $\D$-linear, completing the proof.
\end{proof}

Recall from section \ref{klinear} that if $M$ is an $R$-module, we have the natural \emph{evaluation} map $\iota: M \rightarrow D(D(M))$ which is injective for arbitrary $M$ and an isomorphism if $M$ is finitely generated or Artinian.  If $M$ is a left $\D$-module, then $D(D(M))$ is also a left $\D$-module (apply Corollary \ref{dualright} twice).  The following proposition justifies our use of the expression ``Matlis duality for $\D$-modules'':

\begin{prop}\label{doubleddual}
Let $M$ be a left $\D$-module.  The evaluation map $\iota: M \hookrightarrow D(D(M))$ is a morphism of left $\D$-modules.  In particular, if $M$ is finitely generated or Artinian as an $R$-module, then $\iota$ is an isomorphism of left $\D$-modules.
\end{prop}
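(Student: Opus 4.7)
The plan is to unpack the twice-applied construction of Corollary \ref{dualright} and verify the identity $\iota(\delta \cdot m) = \delta \cdot \iota(m)$ directly, using the defining property of the Matlis dual provided by Theorem \ref{arbsga2}. Writing $\delta_M : M \to M$ for the action of $\delta \in \D$ on $M$, Corollary \ref{dualright} makes $D(M)$ into a right $\D$-module via the formula $\phi \cdot \delta = \delta_M^*(\phi)$; in particular the right-action map $\rho_\delta : D(M) \to D(M)$ coincides with $\delta_M^*$, which is $\Sigma$-continuous by Proposition \ref{diffsigma}. Applying Corollary \ref{dualright} a second time, the left $\D$-module structure on $D(D(M))$ is given by $\delta \cdot \psi = \rho_\delta^*(\psi) = (\delta_M^*)^*(\psi)$.

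Next, I would reduce the $\D$-linearity statement to a calculation against the residue map $\sigma$. Since $\Phi_{D(M)} : D(D(M)) \to D^{\Sigma}(D(M))$ is an isomorphism (Theorem \ref{arbsga2}), two elements of $D(D(M))$ agree if and only if their images under post-composition with $\sigma$ do. The crucial identity, immediate from the formula $f^* = \Phi_X^{-1} \circ f^{\vee} \circ \Phi_Y$ in Proposition \ref{arbprecomp}, is that $\sigma \circ f^*(\lambda) = \sigma \circ \lambda \circ f$ for every $\Sigma$-continuous map $f : X \to Y$ and every $\lambda \in D(Y)$. Applying this twice---first with $f = \delta_M^*$ and $\lambda = \iota(m)$, then with $f = \delta_M$ and $\lambda = \phi$---gives, for every $\phi \in D(M)$,
\[
\sigma\bigl((\delta_M^*)^*(\iota(m))(\phi)\bigr) = \sigma\bigl(\iota(m)(\delta_M^*(\phi))\bigr) = \sigma\bigl(\delta_M^*(\phi)(m)\bigr) = \sigma\bigl(\phi(\delta_M(m))\bigr) = \sigma\bigl(\iota(\delta_M(m))(\phi)\bigr).
\]
This yields the desired equality $\delta \cdot \iota(m) = \iota(\delta \cdot m)$, so $\iota$ is left $\D$-linear. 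The ``in particular'' clause then follows at once: if $M$ is finitely generated or Artinian, classical Matlis duality tells us $\iota$ is already an $R$-module isomorphism, and we have just shown it is $\D$-linear.

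The only subtlety worth flagging---and the reason a residue-map computation is unavoidable---is that $\delta_M^* : D(M) \to D(M)$ is not $R$-linear in general, so by Lemma \ref{agreement} its Matlis dual $(\delta_M^*)^*$ is \emph{not} simply pre-composition with $\delta_M^*$. Pre-composition gives the correct description only after transporting along the isomorphisms $\Phi$ of Theorem \ref{arbsga2} to the level of $\Sigma$-continuous duals. Every unpacking of $(\delta_M^*)^*$ must therefore be routed through the identity $\Phi_X \circ f^* = f^{\vee} \circ \Phi_Y$ from Proposition \ref{arbprecomp}, which is precisely the telescoping performed in the chain of equalities above.
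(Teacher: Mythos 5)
Your proof is correct and rests on the same engine as the paper's: the identity $\Phi_X \circ f^* = f^{\vee}\circ\Phi_Y$ from Proposition \ref{arbprecomp}, which turns the Matlis dual into precomposition after passing through $\sigma$. The paper packages this slightly differently, factoring $\iota$ through the evaluation map $\iota'\colon M\to D^{\Sigma}(D^{\Sigma}(M))$ via the isomorphism $\Psi$ of Proposition \ref{sigmabij}(a) and then observing that $\iota'$ is manifestly $\D$-linear, whereas you carry out the equivalent unwinding as an explicit chain of $\sigma$-valued equalities; the substance is the same.
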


\begin{proof}
By the proof of Proposition \ref{sigmabij}(a), we have an isomorphism
\[
\Psi: D(D(M)) \xrightarrow{\sim} D^{\Sigma}(D^{\Sigma}(M))
\]
of $R$-modules, and the left $\D$-module structure on $D(D(M))$ is defined, by transport of structure, using the left $\D$-module structure on the right-hand side.  We also have an evaluation map
\[
\iota': M \rightarrow D^{\Sigma}(D^{\Sigma}(M))
\]
defined by $\iota'(m)(\delta) = \delta(m)$ for all $m \in M$ and $\delta \in D^{\Sigma}(M)$.  The proof of Proposition \ref{sigmabij}(a) shows that $\Psi \circ \iota = \iota'$.  It therefore suffices to check that $\iota'$ is left $\D$-linear.  We must show that for any $m \in M$ and $d \in \D$, we have $\iota'(d \cdot m) = d \cdot \iota'(m)$.  The left-hand side is the evaluation map at $d \cdot m \in M$.  The right-hand side is the same, because $d$ acts on the evaluation map $\iota'$ by \emph{precomposition} with $d: M \rightarrow M$.  This completes the proof.
\end{proof}

We give some examples of Matlis duals with right $\D(R,k)$-structures, mostly involving local cohomology:

\begin{example}\label{farrago}
Let $(R, \mathfrak{m})$ be a complete local ring with coefficient field $k$.  
\begin{enumerate}[(a)]
\item Since $R$ is a left $\D(R,k)$-module and $E = D(R)$, $E$ has a natural structure of right $\D(R,k)$-module. 

\item If $M$ is a left $\D(R,k)$-module, so is $M_S$ for any multiplicatively closed subset $S \subset R$ \cite[Example 5.1(a)]{lyubeznik}.  If $I \subset R$ is an ideal, the local cohomology modules $H^i_I(R)$ supported at $I$ have the structure of left $\D(R,k)$-modules, because $H^i_I(R)$ is the $i$th cohomology object of a complex whose objects are localizations of the left $\D(R,k)$-module $R$ and whose maps, sums of natural localization maps, are $\D(R,k)$-linear \cite[Example 5.1(c)]{lyubeznik}.  By the previous theorem, the Matlis duals $D(H^i_I(R))$ are right $\D(R,k)$-modules.

\item Now suppose $I = \mathfrak{m}$.  If $R$ is Cohen-Macaulay, then $H^i_{\mathfrak{m}}(R)$ is zero unless $i = \dim(R)$.  The Matlis dual $D(H^{\dim(R)}_{\mathfrak{m}}(R))$, which is the \emph{canonical module} of $R$, therefore has a structure of right $\D(R,k)$-module.  If $R$ is not Cohen-Macaulay, there exists some $i < \dim(R)$ such that $H^i_{\mathfrak{m}}(R)$ is nonzero.  For such an $i$, the Matlis dual $D(H^i_{\mathfrak{m}}(R)$) is a right $\D(R,k)$-module that is finitely generated (since $H^i_{\mathfrak{m}}(R)$ is Artinian) as an $R$-module and whose dimension is strictly less than the dimension of $R$ (in fact, its dimension is bounded above by $i$: \cite[Exp. V, Thm. 3.1(ii)]{SGA2}).
\end{enumerate}
\end{example}

We give some more details related to Example \ref{farrago}(a).  Since $R$ is a finitely generated $R$-module, we know by Theorem \ref{sga2} that $E = D(R) \simeq \Hom_{cont, k}(R,k)$, and the right $\D$-action on $\Hom_{cont, k}(R,k)$ is defined by $\delta \cdot d = \delta \circ d$ for $d \in D$ and $\delta \in \Hom_{cont, k}(R,k)$.  If $M$ is any $R$-module, we have an isomorphism
\[
D^{\Sigma}(M) \xrightarrow{\sim} \Hom_R(M, \Hom_{cont, k}(R,k))
\]
by combining Theorem \ref{arbsga2} and Theorem \ref{sga2}.  Concretely, this isomorphism carries a $\Sigma$-continuous map $\delta: M \rightarrow k$ to the $R$-linear map $M \rightarrow \Hom_{cont, k}(R,k)$ defined by $m \mapsto (r \mapsto \delta(rm))$, and its inverse carries an $R$-linear map $\psi: M \rightarrow \Hom_{cont, k}(R,k)$ to the $\Sigma$-continuous map $M \rightarrow k$ defined by $m \mapsto \psi(m)(1)$.

By identifying $E$ with $\Hom_{cont, k}(R,k)$ endowed with the right $\D$-structure above, we can give an alternate description of the right action of \emph{derivations} (differential operators of order precisely $1$) on the Matlis dual of any left $\D$-module, using the formulas of \cite[Prop. 1.2.9]{hotta}: if $\delta \in \D$ is a derivation and $M$ is a left $\D$-module, then for any $R$-linear map $\phi: M \rightarrow E$, we define an $R$-linear map $\phi \cdot \delta: M \rightarrow E$ by $(\phi \cdot \delta)(m) = \phi(\delta \cdot m) + \phi(m) \cdot \delta$.  This formula extends to define a right action of the $R$-subalgebra of $\D$ generated by $R$ together with the derivations (e.g., if $R$ is regular, this subalgebra is all of $\D$).  This right action coincides with the right action we have defined in Corollary \ref{dualright}:

\begin{prop}\label{hottaaction}
With the hypotheses of Corollary \ref{dualright}, let $M$ be a left $\D$-module, and let $D(M)$ be its Matlis dual.  If $\delta \in \D$ is a derivation and $\phi: M \rightarrow E$ is $R$-linear, we have $\delta^*(\phi) = \phi \cdot \delta$, where the right-hand side is defined as in \cite[Prop. 1.2.9]{hotta}.
\end{prop}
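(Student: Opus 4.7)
The plan is to exploit the universal property that $\delta^{*}(\phi)$, defined via the commutative square of Definition \ref{matlisdual}, is characterized as the unique element $\nu \in \Hom_R(M,E)$ satisfying $\sigma \circ \nu = \sigma \circ \phi \circ \delta$ (this is just the statement that $\Phi_M$ is an isomorphism and that $\Phi_M(\delta^{*}(\phi)) = \delta^{\vee}(\Phi_M(\phi)) = \Phi_M(\phi) \circ \delta$). Thus, writing $\psi(m) := \phi(\delta \cdot m) + \phi(m) \cdot \delta$, it suffices to verify (i) $\psi: M \to E$ is $R$-linear, and (ii) $\sigma \circ \psi = \sigma \circ \phi \circ \delta$.

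To carry this out I will first establish two auxiliary facts about the right action of $\delta$ on $E$ itself (the special case $M = R$ of Corollary \ref{dualright}). Under the standard identification $E \simeq \Hom_R(R,E)$, where $\mu \in E$ corresponds to the map $r \mapsto r\mu$, unpacking the definition of $\delta^{*}$ shows that $\mu \cdot \delta$ is the unique element $\nu_0 \in E$ with $\sigma(r \nu_0) = \sigma((\delta r)\mu)$ for every $r \in R$. From this characterization I will derive: \textbf{(a)} the Leibniz rule $(r\mu) \cdot \delta = r(\mu \cdot \delta) - \delta(r)\mu$ for $r \in R$ and $\mu \in E$, obtained by expanding $\sigma(s(r\nu_0))$ using the Leibniz rule for $\delta$ acting on $sr \in R$ and matching the resulting pairing against $\sigma$ (the injectivity of $\mu' \mapsto \sigma(-\cdot \mu')$ on $E$ forces the identity); and \textbf{(b)} $\sigma(\mu \cdot \delta) = 0$ for every $\mu \in E$, which is immediate on substituting $r = 1$ into the characterization, since $\delta(1) = 0$.

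With (a) and (b) in hand, verification of (i) and (ii) is a direct calculation. For (i), $R$-linearity of $\psi$:
\[
\psi(rm) = \phi\bigl(r \, \delta m + \delta(r) m\bigr) + (r\phi(m)) \cdot \delta = r\phi(\delta m) + \delta(r)\phi(m) + r(\phi(m)\cdot\delta) - \delta(r)\phi(m),
\]
where the first equality uses $\delta \cdot (rm) = r(\delta m) + \delta(r)m$ in $M$ together with $R$-linearity of $\phi$, and the second uses (a); the two $\delta(r)\phi(m)$ terms cancel, yielding $r\psi(m)$. For (ii), applying $\sigma$ termwise gives $(\sigma \circ \psi)(m) = \sigma(\phi(\delta m)) + \sigma(\phi(m) \cdot \delta) = \sigma(\phi(\delta m))$ by (b), which is exactly $(\sigma \circ \phi \circ \delta)(m)$. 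By the universal property recalled at the outset, $\psi = \delta^{*}(\phi)$, as required.

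I do not expect any real obstacle here: the argument is essentially formal once the two Leibniz-type facts about $E$ are established. The only subtlety worth flagging is that the symbol ``$\cdot \delta$'' appears in two logically distinct roles in the Hotta formula — the outer one is the right action on $D(M)$ that we are in the process of identifying, while the inner one $\phi(m) \cdot \delta$ is the already-defined right action on $E$ — and the two preliminary facts (a) and (b) are precisely what is needed to bridge them.
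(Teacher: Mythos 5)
Your proof is correct and takes essentially the same approach as the paper: both reduce the identity to matching post-compositions with $\sigma$ via $\Phi_M$ and finish with a single application of the Leibniz rule. The only difference is organizational — you first isolate the two facts (a) and (b) about the right action of $\delta$ on $E$ (in particular re-deriving the $R$-linearity of Hotta's formula from scratch), whereas the paper folds the whole verification into one computation after passing through the identification $D^{\Sigma}(M) \simeq \Hom_R(M, \Hom_{cont,k}(R,k))$ and simply cites Hotta for the $R$-linearity.
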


\begin{proof}
We use the various identifications between equivalent forms of $D(M)$.  The map $\phi$ corresponds, under the isomorphism $D(M) \xrightarrow{\Phi_M} D^{\Sigma}(M)$ of Theorem \ref{arbsga2}, to $\sigma \circ \phi$, and $\delta^*(\phi) = \sigma \circ \phi \circ \delta$.  Under the identification $D^{\Sigma}(M) \simeq \Hom_R(M, \Hom_{cont, k}(R,k))$ given earlier, $\sigma \circ \phi \circ \delta$ corresponds to the map 
\[
m \mapsto (r \mapsto \sigma(\phi(\delta(rm)))).
\]  
On the other hand, for any $m \in M$, we have $(\phi \cdot \delta)(m) = \phi(\delta \cdot m) + \phi(m) \cdot \delta \in E$, where we identify $E$ with the right $\D$-module $\Hom_{cont, k}(R,k)$ in order to define $\phi(m) \cdot \delta$.  Under this identification, $\phi(m) \cdot \delta$ is the map $r \mapsto \sigma(r \phi(\delta \cdot m))$, and $\phi(\delta \cdot m)$ is the map $r \mapsto \sigma(\delta(r) \phi(m))$.  It is therefore enough to show that
\[
\sigma(\phi(\delta(rm))) = \sigma(r \phi(\delta \cdot m)) + \sigma(\delta(r) \phi(m))
\]
for all $r \in R$ and $m \in M$, which follows immediately from the relations $\delta(rm) = \delta(r)m + r \delta(m)$ holding in any left $\D$-module $M$.
\end{proof}

We now specialize further to the case in which $R = k[[x_1, \ldots, x_n]]$ is a formal power series ring over a field of characteristic zero.  In this case, there is a transposition operation that converts left modules over $\D(R,k)$ to right modules and conversely.  We recall its definition (see \cite[p. 19 and Lemma 1.2.6]{hotta} for the case of a polynomial ring, or more generally, a smooth variety over $k$; the formal power series definition is essentially the same).  Since $k$ is of characteristic zero, the ring $\D(R,k)$ is a free left $R$-module generated by monomials in $\partial_1 = \frac{\partial}{\partial x_1}, \ldots, \partial_n = \frac{\partial}{\partial x_n}$.   

\begin{definition}\label{transpose}
Let $R = k[[x_1, \ldots, x_n]]$ where $k$ is a field of characteristic zero, and let $M$ be a left $\D(R,k)$-module.  Let $\rho \partial_1^{a_1} \cdots \partial_n^{a_n}$ be an element of $\D(R,k)$, where $\rho \in R$.  If $\cdot$ denotes the given left action of $\D(R,k)$, then for any $m \in M$, the formula 
\[
m \ast (\rho \partial_1^{a_1} \cdots \partial_n^{a_n}) = ((-1)^{a_1 + \cdots + a_n} \partial_n^{a_n} \cdots \partial_1^{a_1} \rho) \cdot m
\]
defines the \emph{transpose} action, a \emph{right} $\D(R,k)$-action $\ast$ on $M$.
\end{definition}

There is, of course, a symmetric notion of the transpose of a right $\D$-module, which is a left $\D$-module.  To see that the right action given above is well-defined, we view it in the following way.  Let $\D(R,k)^{\circ}$ be the opposite algebra of $\D(R,k)$.  There exists a unique isomorphism $\phi: \D(R,k)^{\circ} \rightarrow \D(R,k)$ such that $\phi(\rho) = \rho$ for all $\rho \in R$ and $\phi(\partial_i) = -\partial_i$ for all $i$, which when viewed as a map $\phi: \D(R,k) \rightarrow \D(R,k)$ is called the \emph{principal anti-automorphism} of $\D(R,k)$.  To see that this is an isomorphism, note that since all elements of $R$ commute with each other and all $\partial_i$ commute with each other, the only non-trivial relations among elements of $\D(R,k)$ are the relations $\partial_i \rho = \rho \partial_i + \partial_i(\rho)$.  The map $\phi$, which is clearly bijective, carries $\partial_i \rho$ to $-\rho \partial_i$ and $\rho \partial_i + \partial_i(\rho)$ to $-\partial_i \rho + \partial_i(\rho)$; since $-\rho \partial_i = -\partial_i \rho + \partial_i(\rho)$, the relations are respected.  The transposed action $\ast$ is then simply defined by $m \ast \delta = \phi(\delta) \cdot m$ for $\delta \in \D(R,k)$.

\begin{remark}\label{transposedependence}
As defined in the previous paragraph, the anti-automorphism $\phi$ (and therefore the transposition operation) depends on the choice of variables $x_1, \ldots, x_n$.
\end{remark} 

\begin{prop}\label{regleft}
Let $R = k[[x_1, \ldots, x_n]]$ with $k$ a field of characteristic zero, $\D = \D(R,k)$, and $M$ any left $\D$-module.  There is a natural structure of left $\D$-module on the Matlis dual $D(M) = \Hom_R(M,E)$.
\end{prop}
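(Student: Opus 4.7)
The proof should be essentially a one-line combination of two results already established. By Corollary \ref{dualright}, since $R$ is complete and local with coefficient field $k$, the Matlis dual $D(M)$ of any left $\D$-module $M$ carries a natural structure of right $\D$-module, given by $\phi \cdot \delta = \delta^*(\phi)$ for $\delta \in \D$ and $\phi \in D(M)$. The plan is simply to convert this right structure into a left structure using the transposition operation that is available precisely in our current setting (formal power series over a field of characteristic zero).

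More explicitly, I would invoke the symmetric analogue of Definition \ref{transpose}, namely that for a right $\D$-module $N$, the formula
\[
\delta \ast n = n \cdot \psi(\delta)
\]
defines a left $\D$-action on $N$, where $\psi$ is the principal anti-automorphism of $\D$ described in the paragraph following Definition \ref{transpose}. The key point is that $\psi$ exists as a genuine anti-automorphism of $\D$ because $\D$ is generated over $R$ by the commuting derivations $\partial_1, \ldots, \partial_n$ (which requires characteristic zero and regularity of $R$, \emph{cf.} \cite[Thm. 16.11.2]{EGAIV}), and the only non-trivial relations $\partial_i \rho = \rho \partial_i + \partial_i(\rho)$ are preserved under sending $\rho \mapsto \rho$ and $\partial_i \mapsto -\partial_i$. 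Applying this transposition to the right $\D$-module $D(M)$ from Corollary \ref{dualright} yields the desired left $\D$-module structure, with explicit action
\[
\delta \ast \phi = \psi(\delta)^*(\phi).
\]
For a monomial $\delta = \rho \partial_1^{a_1}\cdots \partial_n^{a_n}$, this becomes $\delta \ast \phi = (-1)^{a_1+\cdots+a_n}(\partial_n^{a_n} \cdots \partial_1^{a_1} \rho)^*(\phi)$, where on the right each $\partial_i$ and each element of $R$ acts via its Matlis dual on the right $\D$-module $D(M)$.

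There is essentially no obstacle: the verification that $\ast$ is a left action reduces to verifying that $\psi$ is an anti-automorphism (already indicated in the discussion following Definition \ref{transpose}) together with the contravariance of the Matlis dual operation established in Proposition \ref{arbprecomp}. The only point worth flagging, which the paper itself records in Remark \ref{transposedependence}, is that the resulting left structure depends on the chosen regular system of parameters $x_1, \ldots, x_n$ through the definition of $\psi$; however, Proposition \ref{dRind} ensures that the de Rham complex attached to this left $\D$-module structure will be independent of this choice, which is all that will be needed in the sequel.
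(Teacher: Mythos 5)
Your proposal is correct and takes exactly the same route as the paper's proof: apply Corollary \ref{dualright} to get the right $\D$-module structure, then convert it to a left structure via the (right-to-left) transposition operation. The paper's proof is literally this one line, and your additional remarks about the principal anti-automorphism and the coordinate dependence simply make explicit what the paper leaves implicit.
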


\begin{proof}
Apply the right-to-left version of the transposition operation described above to the right $\D$-module $D(M)$ with structure defined in Corollary \ref{dualright}.
\end{proof}

Therefore, in this case, Matlis duality provides a (contravariant) functor from left $\D$-modules to left $\D$-modules.  We recall here Remark \ref{hellus}: even if $M$ is holonomic, the Matlis dual $D(M)$ need not be holonomic.

\section{The de Rham complex of a Matlis dual}\label{mittleff}

Let $R = k[[x_1, \ldots, x_n]]$ where $k$ is a field of characteristic zero, and let $\D = \D(R,k)$. For any left $\D$-module $M$, we can define its de Rham complex $M \otimes \Omega_R^{\bullet}$ (see section \ref{homology} for definitions and notation concerning de Rham complexes).  The Matlis dual $D(M)$ is also a left $\D$-module by Proposition \ref{regleft}, so we can consider the de Rham complex $D(M) \otimes \Omega_R^{\bullet}$.  (Every $\D$-module we consider in this section will be a left $\D$-module, so we no longer say so explicitly.)

Our goal in this section is to compare the cohomology of these two complexes.  Specifically, we will show the following:

\begin{thm}\label{dualcoh}
Let $R$ and $\D$ be as above.  If $M$ is a \emph{holonomic} $\D$-module, then for all $i$, we have isomorphisms
\[
(H^i_{dR}(M))^{\vee} \simeq H^{n-i}_{dR}(D(M))
\]
where $\vee$ denotes $k$-linear dual, \emph{i.e.} $\Hom_k(-,k)$.
\end{thm}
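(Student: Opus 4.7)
The plan is to construct an explicit duality pairing between the two de Rham complexes and to show that it becomes perfect on cohomology when $M$ is holonomic. Combining the wedge product $\Omega_R^{n-p} \otimes_R \Omega_R^p \to \Omega_R^n$, the natural evaluation pairing $M \otimes_R D(M) \to E$, the identification $\Omega_R^n \simeq R \cdot (dx_1 \wedge \cdots \wedge dx_n)$, and the residue map $\sigma: E \to k$ introduced at the end of section \ref{klinear}, I would define a $k$-bilinear pairing
\[
\langle -, - \rangle : (M \otimes \Omega_R^{n-p}) \otimes_k (D(M) \otimes \Omega_R^p) \to k,
\]
given on basis elements by $\langle m \, dx_I, \phi \, dx_J\rangle = \epsilon_{I,J}\,\sigma(\phi(m))$ whenever $J$ is the complement of $I$ in $\{1,\ldots,n\}$ (with $\epsilon_{I,J}$ the shuffle sign) and zero otherwise. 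Passing to adjoints yields $k$-linear maps $\psi^p: D(M) \otimes \Omega_R^p \to \Hom_k(M \otimes \Omega_R^{n-p}, k)$.

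The first verification is that $\psi^\bullet$ is compatible with the de Rham differentials (up to sign), equivalently $\langle d\omega, \eta\rangle = \pm \langle \omega, d\eta\rangle$. Using the description of $\partial_i$ acting on $D(M)$ coming from Proposition \ref{regleft} together with Definition \ref{transpose} (namely $\partial_i \cdot \phi = -\partial_i^{*}(\phi)$, where $\partial_i^{*}$ is the Matlis dual of the action of $\partial_i$ on $M$), this reduces to the Leibniz identity $\partial_i^{*}(\phi)(m) = \phi(\partial_i \cdot m) + \phi(m) \cdot \partial_i$ in $E$, which holds by Proposition \ref{hottaaction}, combined with the vanishing of $\sigma$ on every element of the form $\eta \cdot \partial_i$: this vanishing is immediate from the coordinate description of $\sigma$ given at the end of section \ref{klinear}, since $\partial_i^{*}$ strictly lowers the $x_i$-exponent and so cannot produce a nonzero $(-1,\ldots,-1)$-coefficient. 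Consequently the pairing descends to a natural map
\[
\Psi^p : H^p_{dR}(D(M)) \longrightarrow (H^{n-p}_{dR}(M))^\vee,
\]
whose bijectivity (after setting $p = n-i$) is the content of the theorem.

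The main task, and the principal obstacle, is showing that $\Psi^p$ is an isomorphism. By Theorem \ref{findimdR}, the target is finite-dimensional. To handle the source I would exploit cyclicity of holonomic modules to write $M = \D \cdot m_0$ and form the good filtration $M_\nu := \D(\nu) \cdot m_0$, so each $M_\nu$ is a finitely generated $R$-submodule by Proposition \ref{fgdiff}, $M = \varinjlim M_\nu$, and $\partial_i(M_\nu) \subset M_{\nu+1}$ by Lemma \ref{diffcont}. This yields a directed system of ``partial'' cochain complexes $K^\bullet_\nu$ with $K^q_\nu = M_{\nu+q} \otimes \Omega_R^q$ whose colimit is the full de Rham complex of $M$. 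Matlis-dualizing termwise through the $\Sigma$-continuous framework of section \ref{klinear} (Proposition \ref{klindual}(c) keeps the dualized differentials $\mathfrak{m}$-adically continuous on Artinian targets) produces an inverse system $D(K^\bullet_\nu)$ of cochain complexes with surjective transition maps, since injections of finitely generated $R$-modules Matlis-dualize to surjections of Artinian modules. The Mittag-Leffler condition then forces $\varprojlim{}^1 = 0$, so cohomology commutes with the inverse limit.

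To conclude, I would identify $\varprojlim D(K^{n-\bullet}_\nu)$ (after the Hodge-star reindexing supplied by set-complementation of form indices) with $D(M) \otimes \Omega_R^\bullet$ as a cochain complex, and argue that each partial cohomology $H^q(K^\bullet_\nu)$ stabilizes in $\nu$ to a finite-dimensional subquotient of $H^q_{dR}(M)$; on those finite stages Matlis duality coincides with $k$-linear duality, yielding the isomorphism $\Psi^p$ after passing to the limit. The principal technical difficulty is exactly this identification and stabilization argument: because the de Rham differentials are only $k$-linear (not $R$-linear), the standard exactness statements for Matlis duality over $R$ do not apply directly to the complexes $K^\bullet_\nu$, and the entire Mittag-Leffler bookkeeping must be executed within the extended $\Sigma$-continuous framework developed in section \ref{klinear}.
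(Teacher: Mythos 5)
Your residue-pairing construction is a clean alternative to the paper's Koszul-duality step (Proposition~\ref{dualcoh2}), and the compatibility with differentials via Proposition~\ref{hottaaction} plus the vanishing of $\sigma$ on the image of $\partial_i^*$ is correct. Your macro-strategy for showing $\Psi^p$ is an isomorphism---filter $M$ by $M_\nu = \D(\nu)\cdot m_0$, build the approximation complexes $K^\bullet_\nu$ (these are exactly the paper's $\mathcal{M}^\bullet_\nu$ from Definition~\ref{partialcplxs}), Matlis-dualize term by term, and invoke Mittag-Leffler to commute cohomology with the inverse limit---is also the paper's strategy. So the overall shape is right.

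The gap is that you verify the Mittag-Leffler condition only for the \emph{objects} $D(K^p_\nu)$ (surjective transition maps), but Proposition~\ref{mlcohcomm} requires ML for the \emph{cohomology} inverse system $\{h^p(D(K^\bullet_\nu))\}_\nu$ as well, and this is not a formal consequence of the objects being Artinian with $R$-linear surjective transitions. The sentence ``argue that each partial cohomology $H^q(K^\bullet_\nu)$ stabilizes in $\nu$ to a finite-dimensional subquotient of $H^q_{dR}(M)$'' is precisely Proposition~\ref{mitlef}, and that is the technical heart of the entire proof: it is established by an induction on the number of variables using van den Essen's theorems on kernels (Theorem~\ref{kernel}) and cokernels (Theorem~\ref{cokernel}) of $\partial_n$ on holonomic modules, the ``strong-sense stability'' machinery of Lemma~\ref{techlemma}, and a non-obvious change of coordinates (Lemma~\ref{coordchange}, Proposition~\ref{coords}) to arrange $x_{n-j}$-regularity at each stage. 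None of that appears in your outline; you identify the $\Sigma$-continuity bookkeeping as the main difficulty, but that only guarantees the dualized complexes are well-defined---the genuinely hard part is the finite stabilization of the cohomology of the approximation complexes, and it requires the holonomicity of $M$ in an essential way (through van den Essen's results), not merely formal limit manipulations.
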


By Proposition \ref{diffsigma}, the differentials in the complex $M \otimes \Omega_R^{\bullet}$ are $\Sigma$-continuous, so the entire complex can be Matlis dualized.  Since the functor $D$ is contravariant, the $i$th object in this dualized complex $D(M \otimes \Omega_R^{\bullet})$ is $D(M \otimes \Omega_R^{n-i})$. Theorem \ref{dualcoh} is a trivial consequence of the following pair of propositions.

\begin{prop}\label{dualcoh1}
Let $R$ and $\D$ be as above.  If $M$ is a \emph{holonomic} $\D$-module, then for all $i$, we have isomorphisms
\[
(h^i(M \otimes \Omega_R^{\bullet}))^{\vee} \simeq h^{n-i}(D(M \otimes \Omega_R^{\bullet})).
\]
\end{prop}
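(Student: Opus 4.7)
The plan is to show that Matlis duality interchanges with cohomology on the de Rham complex $L^\bullet := M \otimes \Omega_R^\bullet$ (with the index reversal $i \mapsto n-i$) and then to identify $D(H^i_{dR}(M))$ with $H^i_{dR}(M)^\vee$. Concretely, I aim to establish
\[
h^{n-i}(D(L^\bullet)) \simeq D(h^i(L^\bullet)),
\]
after which Theorem \ref{findimdR} guarantees that $h^i(L^\bullet) = H^i_{dR}(M)$ is a finite-dimensional $k$-space, and Corollary \ref{artinian} identifies its Matlis dual with its full $k$-linear dual.

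I would begin by exhibiting $L^\bullet$ as a filtered colimit $L^\bullet = \varinjlim L_\lambda^\bullet$ of subcomplexes whose objects $L_\lambda^j$ are finitely generated $R$-submodules of $L^j = M \otimes \Omega_R^j$. The $\Sigma$-continuity of the de Rham differentials (Proposition \ref{diffsigma}) lets us enlarge any finite collection of finitely generated submodules so that $d^j(L_\lambda^j) \subset L_\lambda^{j+1}$; such subcomplexes form a cofinal system. Applying $D$ converts this direct system into an inverse system $\{D(L_\lambda^\bullet)\}$ of complexes of Artinian $R$-modules whose differentials are $\mathfrak{m}$-adically continuous $k$-linear maps (Proposition \ref{klindual}(c)), and $D(L^\bullet) = \varprojlim D(L_\lambda^\bullet)$.

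The key technical step is to show, at each stage $\lambda$, that $h^{n-i}(D(L_\lambda^\bullet)) \cong D(h^i(L_\lambda^\bullet))$. Via the isomorphism $\Phi$ of Theorem \ref{sga2}, $D(L_\lambda^\bullet)$ coincides with $\Hom_{cont,k}(L_\lambda^\bullet, k) = \varinjlim_t \Hom_k(L_\lambda^\bullet / \mathfrak{m}^t L_\lambda^\bullet, k)$. The $\mathfrak{m}$-adic continuity of each differential yields, for every target level $t$, a source level $s$ so that $d^j$ induces compatible maps on the finite-length quotients; the exactness of $\Hom_k(-, k)$ on finite-length $R$-modules, combined with the fact that filtered colimits commute with cohomology, then yields the desired commutativity. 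Finally, passing to the inverse limit over $\lambda$, the Mittag-Leffler exact sequence gives
\[
0 \to \varprojlim{}^1 D(h^{i-1}(L_\lambda^\bullet)) \to h^{n-i}(D(L^\bullet)) \to \varprojlim D(h^i(L_\lambda^\bullet)) \to 0.
\]
Since $\varinjlim h^i(L_\lambda^\bullet) = H^i_{dR}(M)$ is finite-dimensional, the direct system $\{h^i(L_\lambda^\bullet)\}$ eventually stabilizes in image, so the dualized inverse system satisfies Mittag-Leffler and the $\varprojlim{}^1$ term vanishes; the surviving term is $D(\varinjlim h^i(L_\lambda^\bullet)) = D(H^i_{dR}(M)) \cong (H^i_{dR}(M))^\vee$.

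The principal obstacle is the middle step: $D$ commuting with cohomology on a complex whose differentials are merely $k$-linear rather than $R$-linear. The argument must carefully interchange three separate limits (the $\mathfrak{m}$-adic colimit used to compute the continuous dual, the direct limit over $\lambda$ recovering $L^\bullet$, and the cohomology functor), and the Mittag-Leffler verification in the final step depends crucially on the finite-dimensionality of de Rham cohomology supplied by holonomicity — without it, the dualized direct system need not yield a Mittag-Leffler inverse system and the $\varprojlim{}^1$ obstruction would not vanish.
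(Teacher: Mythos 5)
Your overall scaffolding (approximate $M \otimes \Omega_R^\bullet$ by finitely generated subcomplexes, dualize, pass to the inverse limit, verify a Mittag--Leffler condition) is structurally the same as the paper's proof, but there is a genuine gap at the one step you wave off, and it is precisely the step the paper has to work hardest to justify. You write that since $\varinjlim h^i(L_\lambda^\bullet) = H^i_{dR}(M)$ is finite-dimensional, ``the direct system $\{h^i(L_\lambda^\bullet)\}$ eventually stabilizes in image, so the dualized inverse system satisfies Mittag--Leffler.'' Finite-dimensionality of the direct \emph{limit} only controls the images of $h^i(L_\lambda^\bullet)$ \emph{inside $H^i_{dR}(M)$}; what Lemma \ref{dualml} actually requires is that for each fixed $\lambda$, the images of $h^i(L_\lambda^\bullet)$ in the finite stages $h^i(L_{\lambda+s}^\bullet)$ stabilize in the strong sense of Definition \ref{strongstable} (i.e., the transition map becomes injective on the image for $s$ large). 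These are not the same condition, and the implication fails in general: for the constant direct system $V_j = k^{\oplus\mathbb{N}}$ with transition maps given by the shift $(a_1, a_2, \ldots) \mapsto (a_2, a_3, \ldots)$, the direct limit is $0$, hence finite-dimensional, yet $\ker(V_1 \to V_j)$ grows strictly with $j$ and the images of $V_1$ in the $V_j$ never stabilize, so the dual inverse system $\{V_j^\vee\}$ fails Mittag--Leffler. Moreover, you cannot appeal to Lemma \ref{fgstable} to rescue this, since $h^i(L_\lambda^\bullet)$ is computed from $k$-linear (not $R$-linear) differentials and so is merely a $k$-space, not a finitely generated $R$-module with $R$-linear transitions.

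What your argument is implicitly assuming is exactly the content of Proposition \ref{mitlef}, which is the technical heart of the whole section. That proposition establishes the strong-sense stabilization of the $h^i(\mathcal{M}_\ell^\bullet)$ at finite stages, and its proof is not soft: it relies on holonomicity in an essential and iterative way, via van den Essen's theorems on the kernel and cokernel of $\partial_n$ acting on a holonomic module (Theorems \ref{kernel} and \ref{cokernel}), the change-of-coordinates reduction to $x_n$-regularity (Proposition \ref{coords}), the technical stabilization Lemma \ref{techlemma}, and an induction stripping off one variable at a time through the partial de Rham complexes $M \otimes \Omega_{R^j}^\bullet$. Holonomicity enters far more deeply than just supplying finite-dimensional $H^i_{dR}(M)$ at the end. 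A secondary concern: your $\varprojlim^1$ sequence is stated for a filtered colimit over the (uncountable) poset of all finitely generated subcomplexes, whereas the Mittag--Leffler $\Rightarrow$ $\varprojlim^1 = 0$ implication is a statement about $\mathbb{N}$-indexed towers; the paper avoids this by building an explicit $\mathbb{N}$-indexed family from the order filtration $\D^\ell(R) \cdot m$. In short, your reduction to the approximating complexes is sound, but the claim that finite-dimensionality of the de Rham cohomology hands you the needed Mittag--Leffler condition for free is false, and closing that gap requires the full strength of Proposition \ref{mitlef}.
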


\begin{prop}\label{dualcoh2}
Let $R$ and $\D$ be as above.  If $M$ is \emph{any} $\D$-module, then for all $i$, we have isomorphisms
\[
h^i(D(M \otimes \Omega_R^{\bullet})) \simeq h^i(D(M) \otimes \Omega_R^{\bullet}).
\]
\end{prop}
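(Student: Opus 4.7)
The plan is to construct an explicit isomorphism of complexes of $k$-spaces
\[
\Psi^{\bullet} \colon D(M \otimes \Omega_R^{n-\bullet}) \xrightarrow{\sim} D(M) \otimes \Omega_R^{\bullet},
\]
where the complex on the left has $D(M \otimes \Omega_R^{n-i})$ in degree $i$ (so its differential $d^i$ is the Matlis dual of the de Rham differential $M \otimes \Omega_R^{n-i-1} \to M \otimes \Omega_R^{n-i}$). Taking cohomology in degree $i$ then yields the proposition. The argument is purely formal and does not use holonomicity, consistent with the statement being for arbitrary $M$.

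First I identify the underlying graded $R$-modules. Since $M \otimes \Omega_R^{n-i} = \bigoplus_{|K|=n-i} M\cdot dx_K$ is a finite direct sum and $\Hom_R(-,E)$ commutes with finite direct sums, there is a canonical decomposition
\[
D(M \otimes \Omega_R^{n-i}) = \bigoplus_{|K|=n-i} D(M)\cdot e_K,
\]
where $e_K$ is dual to $dx_K$. In the same degree, $D(M) \otimes \Omega_R^i = \bigoplus_{|J|=i} D(M)\cdot dx_J$, and the two index sets are in bijection via $K \leftrightarrow K^c := \{1,\ldots,n\}\setminus K$. I define $\Psi^i$ summand-wise by
\[
\Psi^i(\phi \cdot e_K) = (-1)^{\sigma(K)}\,\phi \cdot dx_{K^c}, \qquad \sigma(K) := \sum_{s \in K} s,
\]
where the sign is chosen to make the chain map condition hold in Step 3.

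Next I compute both differentials explicitly. By Proposition \ref{diffsigma} each $\partial_s \colon M \to M$ is $\Sigma$-continuous, and by Proposition \ref{arbprecomp} and Corollary \ref{dualright} its Matlis dual $\partial_s^* \colon D(M) \to D(M)$ is defined. Writing $\varepsilon(s, L) = (-1)^{|\{l \in L : l < s\}|}$ for the usual insertion sign, the dual differential becomes
\[
\phi \cdot e_K \;\longmapsto\; \sum_{s \in K} \varepsilon(s,\, K \setminus \{s\})\, \partial_s^*(\phi)\cdot e_{K \setminus \{s\}}.
\]
On the other hand, the left $\D$-structure on $D(M)$ from Proposition \ref{regleft} is the transpose (Definition \ref{transpose}) of the right action $\phi \cdot \partial_s = \partial_s^*(\phi)$; specializing with $\rho = 1$ gives $\partial_s \cdot \phi = -\partial_s^*(\phi)$, so
\[
\phi \cdot dx_J \;\longmapsto\; -\sum_{s \notin J} \varepsilon(s,\, J)\, \partial_s^*(\phi)\cdot dx_{J \cup \{s\}}.
\]

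Finally, requiring that $\Psi$ commute with these differentials reduces, for each $K$ and each $s \in K$, to the single-summand identity
\[
(-1)^{\sigma(K \setminus \{s\})}\,\varepsilon(s,\,K\setminus\{s\}) \;=\; -(-1)^{\sigma(K)}\,\varepsilon(s,\,K^c)
\]
in the component $D(M)\cdot dx_{(K\setminus\{s\})^c}$. Since $\sigma(K) - \sigma(K\setminus\{s\}) = s$, and the elementary count $|\{l \in K^c : l < s\}| + |\{l \in K\setminus\{s\} : l < s\}| = s-1$ yields $\varepsilon(s, K^c)\,\varepsilon(s, K\setminus\{s\}) = (-1)^{s-1}$, both sides simplify to $(-1)^s\,\varepsilon(s, K^c)$. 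The main (and essentially only) obstacle is this sign bookkeeping; once it is in place, $\Psi$ is an isomorphism of complexes and the proposition follows upon passing to cohomology.
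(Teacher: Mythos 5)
Your proof is correct and is in essence the same argument as the paper's: the paper identifies $D(M \otimes \Omega_R^{\bullet})$ and $D(M) \otimes \Omega_R^{\bullet}$ as the homological and cohomological Koszul complexes $K_{\bullet}(D(M);\partial)$ and $K^{\bullet}(D(M);\partial)$ respectively and invokes the standard self-duality $h_i(K_{\bullet}) \simeq h^{n-i}(K^{\bullet})$ (Proposition~\ref{koszulhc}), while you write out that self-duality isomorphism explicitly at the chain level. Your explicit sign $(-1)^{\sigma(K)}$ also absorbs the minus coming from the transpose $\partial_s \cdot \phi = -\partial_s^{*}(\phi)$, which the paper disposes of with the remark ``up to a sign, which does not affect cohomology''; so your version is, if anything, slightly more careful on that point.
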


Proposition \ref{dualcoh2} is relatively straightforward, and we prove it first.  The proof of Proposition \ref{dualcoh1}, which is the longest and most involved proof in this paper, will take up the remainder of this section.

Before giving the proof of Proposition \ref{dualcoh2}, we recall the definition of the Koszul complex, for which a reference is \cite[\S 4.5]{weibel}:

\begin{definition}\label{koszul}
Let $R$ be a commutative ring, let $\mathbf{x} = (x_1, \ldots, x_n)$ be a sequence of elements of $R$, and let $M$ be an $R$-module.  The \emph{Koszul complex} $K_{\bullet}(\mathbf{x})$ of $R$ with respect to $\mathbf{x}$ is the homologically indexed complex of length $n$ whose $i$th object $K_i(\mathbf{x})$ is a direct sum of $n \choose i$ copies of $R$ (indexed by $i$-tuples $e_{j_1} \wedge \cdots \wedge e_{j_i}$ where $1 \leq j_1 < \cdots < j_i \leq n$) and where the differential $d_i: K_i(\mathbf{x}) \rightarrow K_{i-1}(\mathbf{x})$ carries the basis element $e_{j_1} \wedge \cdots \wedge e_{j_i}$ to 
\[
\sum_{s=1}^i (-1)^{s-1} x_{j_s} \, e_{j_1} \wedge \cdots \wedge \widehat{e_{j_s}} \wedge \cdots \wedge e_{j_i},
\]
where the $\widehat{e_{j_s}}$ means that symbol is omitted.  The \emph{homological Koszul complex} $K_{\bullet}(M;\mathbf{x})$ of $M$ with respect to $\mathbf{x}$ is the complex $K_{\bullet}(\mathbf{x}) \otimes_R M$, and the \emph{cohomological Koszul complex} $K^{\bullet}(M; \mathbf{x})$ of $M$ with respect to $\mathbf{x}$ is the complex $\Hom_R(K_{\bullet}(\mathbf{x}), M)$.
\end{definition}

\begin{prop}\cite[Ex. 4.5.2]{weibel}\label{koszulhc}
Let $R$ be a commutative ring, let $\mathbf{x} = (x_1, \ldots, x_n)$ be a sequence of elements of $R$, and let $M$ be an $R$-module.  For all $i$, we have $h_i(K_{\bullet}(M;\mathbf{x})) \simeq h^{n-i}(K^{\bullet}(M; \mathbf{x}))$ as $R$-modules.
\end{prop}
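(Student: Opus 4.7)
The plan is to exhibit an isomorphism of complexes between $K^{\bullet}(M; \mathbf{x})$ and the reversed homological complex $K_{n-\bullet}(M; \mathbf{x})$; taking cohomology will then give the desired identification. This is an instance of the classical self-duality (``Hodge star'') of the Koszul complex, which holds because each $K_i(\mathbf{x})$ is a finite free $R$-module of rank $\binom{n}{i} = \binom{n}{n-i}$ and the differentials can be matched up to sign.

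First I would handle the case $M = R$. Using the distinguished $R$-basis $\{e_{j_1} \wedge \cdots \wedge e_{j_i}\}_{1 \le j_1 < \cdots < j_i \le n}$ of $K_i(\mathbf{x})$, I would define an $R$-linear map $\phi_i : K_i(\mathbf{x}) \to \Hom_R(K_{n-i}(\mathbf{x}), R) = K^{n-i}(\mathbf{x})$ by sending $e_J = e_{j_1}\wedge\cdots\wedge e_{j_i}$ (with $J = \{j_1 < \cdots < j_i\}$) to the $R$-linear functional which takes value $\epsilon(J, J^c)$ on the complementary basis element $e_{J^c}$ and $0$ on all other basis elements, where $\epsilon(J, J^c) \in \{\pm 1\}$ is the sign of the permutation that rearranges $(j_1, \ldots, j_i, k_1, \ldots, k_{n-i})$ into $(1, 2, \ldots, n)$. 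Each $\phi_i$ is obviously an isomorphism of $R$-modules since it sends a basis to (up to signs) the dual basis.

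Next I would verify that the $\phi_i$ form a morphism of complexes, that is, that $d^{n-i} \circ \phi_i = \pm \phi_{i-1} \circ d_i$ for an appropriate sign (independent of the basis element) that can be absorbed by rescaling each $\phi_i$. This reduces to a bookkeeping calculation on basis elements: the horizontal differential $d_i$ on the left removes one index from $J$ with a sign, while the cohomological differential $d^{n-i}$ on the right, viewed via the dual basis, adds one index to $J^c$ with a complementary sign. The two signs differ only by a factor depending on $i$ (and on the position of the removed/added index), and the standard sign-matching for exterior algebras (contraction versus wedge multiplication) yields the required equality up to an overall $\pm 1$ in each degree, which I would track carefully and then rescale away.

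Finally I would pass to arbitrary $M$. Since every $K_i(\mathbf{x})$ is finite free over $R$, there is a canonical isomorphism $\Hom_R(K_i(\mathbf{x}), M) \cong \Hom_R(K_i(\mathbf{x}), R) \otimes_R M$, so applying $- \otimes_R M$ to the morphism of complexes from the previous step produces an isomorphism of complexes $K_\bullet(\mathbf{x}) \otimes_R M \xrightarrow{\sim} \Hom_R(K_{n-\bullet}(\mathbf{x}), M)$, i.e.\ $K_i(M; \mathbf{x}) \xrightarrow{\sim} K^{n-i}(M; \mathbf{x})$ for each $i$. Taking homology on the left and cohomology on the right then yields the isomorphisms $h_i(K_\bullet(M;\mathbf{x})) \simeq h^{n-i}(K^\bullet(M;\mathbf{x}))$ asserted in the proposition. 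The main (and really only) obstacle is sign-tracking in the middle step; everything else is formal.
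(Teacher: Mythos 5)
Your proposal is correct, and it is the standard argument. Since the paper simply cites Weibel's Exercise~4.5.2 rather than supplying its own proof, you have filled in exactly the proof that citation points to: the self-duality of the free Koszul complex $K_\bullet(\mathbf{x})$ via the Hodge-star isomorphism $e_J \mapsto \epsilon(J,J^c)\,e_{J^c}^{*}$, followed by tensoring the isomorphism of finite free complexes over $R$ with $M$ and using the natural isomorphism $\Hom_R(F,R)\otimes_R M \cong \Hom_R(F,M)$ for $F$ finite free.

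One point worth recording explicitly, since you flag it as ``the main obstacle'': the sign discrepancy you mention really is uniform in the basis element. A direct computation gives $d^{n-i}\circ\phi_i = (-1)^{i+1}\,\phi_{i-1}\circ d_i$; the sign depends only on $i$, so you can either rescale by $\lambda_i$ satisfying $\lambda_{i-1} = (-1)^{i+1}\lambda_i$ to get a strict morphism of complexes, or simply observe that multiplying each differential of a complex by a degree-dependent unit does not change kernels or images and hence leaves the (co)homology unchanged. Either way the conclusion $h_i(K_\bullet(M;\mathbf{x}))\simeq h^{n-i}(K^\bullet(M;\mathbf{x}))$ as $R$-modules follows as you state.
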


We can now prove Proposition \ref{dualcoh2}:

\begin{proof}[Proof of Proposition \ref{dualcoh2}]
We first compute the differentials in the complex $D(M \otimes \Omega_R^{\bullet})$.  Let $i$ be given, and consider the differential $d^i: M \otimes \Omega_R^i \rightarrow M \otimes \Omega_R^{i+1}$.  An element of $M \otimes \Omega_R^i$ is a sum of terms of the form $m_{j_1 \cdots j_i} \, dx_{j_1} \wedge \cdots \wedge dx_{j_i}$ where $1 \leq j_1 < \cdots < j_i \leq n$, and the formula for $d^i$ is 
\[
d^i(m\, dx_{j_1} \wedge \cdots \wedge dx_{j_i}) = \sum_{s=1}^n \partial_s(m)\, dx_s \wedge dx_{j_1} \wedge \cdots \wedge dx_{j_i}.
\]
Now consider the Matlis dual of this differential.  Since the Matlis dual functor commutes with finite direct sums, we can identify $D(M \otimes \Omega_R^i)$ with a direct sum of $n \choose i$ copies of $D(M)$, again indexed by the $dx_{j_1} \wedge \cdots \wedge dx_{j_i}$.  If $\phi \in D(M)$, we have the formula
\[
(d^i)^*(\phi \, dx_{j_1} \wedge \cdots \wedge dx_{j_{i+1}}) = \sum_{s=1}^{i+1} (-1)^{s-1} \partial_{j_s}^*(\phi)\, dx_{j_1} \wedge \cdots \wedge \widehat{dx_{j_s}} \wedge \cdots \wedge dx_{j_{i+1}}.
\]
Consider the commutative subring $\Delta = k[\partial_1, \ldots, \partial_n] \subset \D$.  The $\D$-module $D(M)$ is \emph{a fortiori} a $\Delta$-module, and the de Rham complex $D(M) \otimes \Omega_R^{\bullet}$ is the \emph{cohomological} Koszul complex $K^{\bullet}(D(M); \mathbf{\partial})$ of the $\Delta$-module $D(M)$ with respect to $\mathbf{\partial} = (\partial_1, \ldots, \partial_n)$, where the $\partial_i$ act on $D(M)$ via the maps $-\partial_i^*$ (according to Definition \ref{transpose}).  On the other hand, by the formula above, it is clear that $D(M \otimes \Omega_R^{\bullet})$ is the \emph{homological} Koszul complex $K_{\bullet}(D(M); \mathbf{\partial})$ (up to a sign, which does not affect cohomology).  We have
\[
h^i(K^{\bullet}(D(M); \mathbf{\partial})) \simeq h_{n-i}(K_{\bullet}(D(M); \mathbf{\partial}))
\] 
by Proposition \ref{koszulhc}; regarding the complex on the right as being \emph{cohomologically} indexed (as we do when considering it as the Matlis dual of $M \otimes \Omega_R^{\bullet}$), we see that the right-hand side is its $i$th cohomology object, completing the proof.  
\end{proof}

The remainder of this section is long and contains a great deal of preliminary material necessary for the proof of Proposition \ref{dualcoh1}.  Before giving this preliminary material, we first outline it for the reader's benefit, then introduce some notation that we will use repeatedly.

\begin{itemize}
\item First, we prove some lemmas concerning direct and inverse systems of modules.  The key definition here, Definition \ref{strongstable} (\emph{strong-sense stability}), is a dual version of the Mittag-Leffler condition.
\item We then introduce some definitions and results due to van den Essen, who in a series of papers studied the kernels and cokernels of differential operators.  Not only his results, but also some of the ideas in his proofs, will be of necessary use to us.  We discuss changes of variable and prove a technical lemma, Lemma \ref{techlemma}, that relies on van den Essen's work.
\item We next describe how to ``stratify'' the de Rham complex $M \otimes \Omega_R^{\bullet}$, writing it as a direct limit of ``de Rham-like'' complexes whose objects are finitely generated $R$-modules.  The crucial result concerning this direct system is Proposition \ref{mitlef}, which asserts that the cohomology objects of these complexes satisfy strong-sense stability with finite-dimensional stable images.  We also give a more general version of this result, Corollary \ref{fgstabdR}, which will be of no use to us in this section but to which we will need to appeal in section \ref{cohom}.
\item Finally, we give the proof of Proposition \ref{dualcoh1}, using our work in section \ref{klinear} on Matlis duality.
\end{itemize}

We need to work not only with the rings $R$ and $\D$, but also with subrings defined using proper subsets of $\{x_1, \ldots, x_n\}$:

\begin{definition}\label{partialR}
Let $j \geq 0$ be given.  We denote by $R_j$ the subring $k[[x_1, \ldots, x_j]] \subset R$, and by $R^j$ the subring $k[[x_{n-j+1}, \ldots, x_n]] \subset R$.  (Thus, $R = R_n = R^n$ and $k = R_0 = R^0$.)  We denote by $\D_j = \D(R_j, k)$ and $\D^j = \D(R^j, k)$ the corresponding rings of differential operators, which are subrings of $\D$.
\end{definition}

This notation will be in force throughout the section, and will not be repeated in the statements of results.

\begin{remark}\label{partialdRj}
If $M$ is any $\D$-module, it is also a module over $\D_j$ and $\D^j$ for all $j$, and we have short exact sequences relating its ``partial'' and ``full'' de Rham complexes analogous to the short exact sequence of Definition \ref{partialdR}.  These sequences take the form
\[
0 \rightarrow M \otimes \Omega_{R^j}^{\bullet}[-1] \rightarrow M \otimes \Omega_{R^{j+1}}^{\bullet} \rightarrow M \otimes \Omega^{\bullet}_{R^j} \rightarrow 0
\]
where the first map is given by $\wedge \, dx_{n-j}$.  The maps in the complex $M \otimes \Omega_{R^j}^{\bullet}$ are $\D_{n-j}$-linear, and hence its cohomology objects are $\D_{n-j}$-modules; by the same argument as in Lemma \ref{connhom}, if we consider the associated long exact cohomology sequence, its connecting homomorphisms (up to a sign) are simply $\partial_{n-j}$.  Of course, there is also a version of this sequence involving de Rham complexes over $R_j$ and $R_{j+1}$ instead of $R^j$ and $R^{j+1}$.
\end{remark}

In the proof of Proposition \ref{dualcoh1}, we will consider various direct and inverse systems of complexes of $R$-modules and $k$-spaces.  (All our direct and inverse systems will be indexed by the natural numbers, but the following discussion applies to any filtered index set.) The interaction of cohomology with \emph{direct} limits in these categories is not complicated: the direct limit is an exact functor \cite[Thm. 2.6.15]{weibel}, and so it commutes with cohomology.  For \emph{inverse} limits, more caution is required, as the inverse limit is, in general, only left exact.  In order to ensure that cohomology commutes with inverse limits, we will need to verify the Mittag-Leffler condition for the inverse systems we consider (see Proposition \ref{mlcohcomm} below).

\begin{definition}\cite[13.1.2]{ega31}\label{mittagleffler}
Let $\{M_i\}$ be an inverse system (indexed by $\mathbb{N}$) of modules over a commutative ring $R$, with inverse limit $M = \varprojlim M_i$ and transition maps $f_{ji}: M_j \rightarrow M_i$ for $i \leq j$.  We say that the system $\{M_i\}$ satisfies the \emph{Mittag-Leffler condition} if for all $\l$, the descending chain $\{f_{\l+s, \l}(M_{\l+s})\}_{s \geq 0}$ of submodules of $M_{\l}$ becomes stationary: there exists some $s$ such that for all $t \geq s$, $f_{\l+s, \l}(M_{\l+s}) = f_{\l+t, \l}(M_{\l+t})$.
\end{definition}

The utility of the Mittag-Leffler condition for us is contained in the following proposition, which is an immediate consequence of \cite[Prop. 13.2.3]{ega31} (the result is stated in \cite{ega31} only for complexes of Abelian groups, but the proof given there is valid more generally):

\begin{prop}\cite[Prop. 13.2.3]{ega31}\label{mlcohcomm}
Let $\{M_i^{\bullet}\}$ be an inverse system (indexed by $\mathbb{N}$) of complexes of modules over a commutative ring $R$.  For every $j$, there is a canonical homomorphism
\[
\eta_j: h^j(\varprojlim_i M_i^{\bullet}) \rightarrow \varprojlim_i h^j(M_i^{\bullet})
\]
of $R$-modules.  Suppose that for every $j$, the inverse systems $\{M_i^j\}_i$ and $\{h^j(M_i^{\bullet})\}_i$ of $R$-modules both satisfy the Mittag-Leffler condition.  Then for every $j$, $\eta_j$ is an isomorphism of $R$-modules.
\end{prop}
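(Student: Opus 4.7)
The plan is to deduce the isomorphism $\eta_j\colon h^j(\varprojlim M_i^\bullet) \xrightarrow{\sim} \varprojlim h^j(M_i^\bullet)$ from the standard six-term exact sequence attached to a short exact sequence of inverse systems, using the Mittag--Leffler condition in the guise of the vanishing of the derived functor $\varprojlim^1$. I would introduce the auxiliary inverse systems of cocycles $Z_i^j = \ker d_i^j$, coboundaries $B_i^j = \operatorname{im} d_i^{j-1}$, and cohomology $h_i^j = Z_i^j/B_i^j$, and organize the argument around the two natural short exact sequences of inverse systems
\[
(\mathrm{A})\quad 0 \to B_i^j \to Z_i^j \to h_i^j \to 0, \qquad (\mathrm{B})\quad 0 \to Z_i^j \to M_i^j \to B_i^{j+1} \to 0,
\]
so that (A) recovers cohomology from $Z$ and $B$, while (B) recovers coboundaries from $Z$ and $M$.

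The canonical map $\eta_j$ itself comes from left exactness of $\varprojlim$: the kernel of the limit differential is canonically $\varprojlim Z_i^j$, the image of $\varprojlim d^{j-1}$ factors through $\varprojlim B_i^j$, and taking the induced map on quotients produces a natural morphism to $\varprojlim h_i^j$. Showing $\eta_j$ is an isomorphism then reduces to showing that both (A) and (B) remain short exact after applying $\varprojlim$.

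First I would observe that $\{B_i^{j+1}\}$ is a surjective quotient of the Mittag--Leffler system $\{M_i^j\}$ in a manner compatible with the transition maps, hence is itself Mittag--Leffler, so $\varprojlim^1 B_i^{j+1}=0$. Next, applying the six-term sequence to (A) at index $j-1$ yields the exact piece
\[
\varprojlim{}^1 B_i^{j-1} \to \varprojlim{}^1 Z_i^{j-1} \to \varprojlim{}^1 h_i^{j-1};
\]
the outer terms vanish (the left by the previous observation, the right by hypothesis), so $\varprojlim^1 Z_i^{j-1} = 0$. Feeding this into the six-term sequence for (B) at index $j-1$ shows that $\varprojlim M_i^{j-1} \twoheadrightarrow \varprojlim B_i^j$ is surjective, so $\varprojlim B_i^j$ coincides with the image of $\varprojlim d^{j-1}$ inside $\varprojlim M_i^j$. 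Finally, the six-term sequence for (A) at index $j$, combined with $\varprojlim^1 B_i^j = 0$, produces a short exact sequence $0 \to \varprojlim B_i^j \to \varprojlim Z_i^j \to \varprojlim h_i^j \to 0$, which upon substituting the identifications above is exactly the assertion that $\eta_j$ is bijective.

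The only mildly delicate point is the extension-type step establishing $\varprojlim^1 Z_i^{j-1} = 0$: both hypotheses (on $\{M_i^j\}$ and on $\{h_i^j\}$) enter here in a coupled way, and weakening either would break the conclusion. Everything else is bookkeeping with indices and routine appeals to the six-term exact sequence and to the standard fact that Mittag--Leffler systems have vanishing $\varprojlim^1$; I do not anticipate any further conceptual obstacle.
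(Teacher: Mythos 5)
Your proof is correct, and it is essentially the argument found in the cited reference EGA~III, 0.13.2.3 (the paper does not reproduce the proof itself, only cites it and notes that it works for modules, not just abelian groups). The decomposition into the cocycle/coboundary/cohomology short exact sequences (A) and (B), the observation that $\{B_i^{j}\}$ inherits Mittag--Leffler from $\{M_i^{j-1}\}$ as a compatible quotient, the coupled use of both hypotheses to obtain $\varprojlim^1 Z_i^{j-1}=0$, and the final identification of $\operatorname{im}(\varprojlim d^{j-1})$ with $\varprojlim B_i^j$ so that the six-term sequence for (A) closes the argument --- these are precisely the steps in the standard EGA proof, phrased in the language of $\varprojlim^1$-vanishing rather than in terms of ``exactness of $\varprojlim$ on Mittag--Leffler short exact sequences,'' which are equivalent formulations over $\mathbb{N}$-indexed systems. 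No gaps.
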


The following condition on \emph{direct} systems can be thought of as a sort of dual to the Mittag-Leffler condition:

\begin{definition}\label{strongstable}
Let $\{M_i\}$ be a direct system (indexed by $\mathbb{N}$) of modules over a commutative ring $R$, with direct limit $M = \varinjlim M_i$, $R$-linear transition maps $f_{ij}: M_i \rightarrow M_j$ for $i \leq j$, and $R$-linear insertion maps $f_i: M_i \rightarrow M$.  Fix $j$ and let $N_j \subset M_j$ be an $R$-submodule.  We say that the images of $N_j$ under the transition maps \emph{stabilize in the strong sense} if there exists $\l$ such that $f_{j+\l}: M_{j+\l} \rightarrow M$ induces an isomorphism 
\[
f_{j,j+\l}(N_j) \xrightarrow{\sim} f_j(N_j);
\] 
equivalently, $f_{j, j+\l}(N_j) \cap \ker f_{j+\l} = 0$.  (If we say that the images of some object stabilize in the strong sense, it will be clear from context with respect to which transition maps and direct system we mean.)
\end{definition}

Note that since $\ker f_{j+\l,j+\l'} \subset \ker f_{j+\l}$ for any $\l' \geq \l$, it follows at once from Definition \ref{strongstable} that there are also isomorphisms $f_{j,j+\l}(N_j) \xrightarrow{\sim} f_{j,j+\l'}(N_j)$ induced by the transition maps. 

This condition is automatic if $R$ is Noetherian and $N_j$ is finitely generated:

\begin{lem}\label{fgstable}
Let $R$ be a commutative Noetherian ring. If $\{M_i\}$ is a direct system of $R$-modules as in Definition \ref{strongstable}, and $N_j \subset M_j$ is a finitely generated $R$-submodule, then the images of $N_j$ stabilize in the strong sense.
\end{lem}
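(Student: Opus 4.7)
The plan is to reduce strong-sense stability to the ascending-chain condition on the finitely generated submodule $N_j$ of $M_j$, using the Noetherian hypothesis on $R$.

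First, I would recall that for a direct system indexed by $\mathbb{N}$ with insertion maps $f_i : M_i \to M = \varinjlim M_i$, the kernel of $f_j$ is the union
\[
\ker f_j \;=\; \bigcup_{\l \geq 0} \ker f_{j, j+\l},
\]
an ascending union of $R$-submodules of $M_j$, since an element of $M_j$ is killed by the insertion map precisely when it is killed by some transition map. Intersecting with $N_j$ yields an ascending chain $\{N_j \cap \ker f_{j, j+\l}\}_{\l \geq 0}$ of $R$-submodules of $N_j$. Because $R$ is Noetherian and $N_j$ is finitely generated, $N_j$ is a Noetherian $R$-module, so this chain stabilizes: there exists $\l$ such that $N_j \cap \ker f_{j, j+\l} = N_j \cap \ker f_j$.

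Next, I would verify that this $\l$ witnesses strong-sense stability. Let $y \in f_{j, j+\l}(N_j) \cap \ker f_{j+\l}$, and write $y = f_{j, j+\l}(x)$ with $x \in N_j$. Then $f_j(x) = f_{j+\l}(f_{j, j+\l}(x)) = f_{j+\l}(y) = 0$, so $x \in N_j \cap \ker f_j = N_j \cap \ker f_{j, j+\l}$ by the previous step, which forces $y = f_{j, j+\l}(x) = 0$. Hence $f_{j, j+\l}(N_j) \cap \ker f_{j+\l} = 0$, which is one of the equivalent formulations of the strong-sense stability condition in Definition \ref{strongstable}.

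There is no real obstacle here: the entire argument rests on the standard identification of the kernel of an insertion map with the filtered union of kernels of transition maps, together with the Noetherian property applied to the finitely generated submodule $N_j$. The Noetherian hypothesis on $R$ is precisely what is needed to make the finite generation of $N_j$ force the chain $\{N_j \cap \ker f_{j, j+\l}\}$ to stabilize; without it, even a finitely generated submodule could fail to be Noetherian and the argument would break down.
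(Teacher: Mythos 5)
Your proof is correct and takes essentially the same approach as the paper: both produce an index $\l$ such that $f_{j,j+\l}$ annihilates $\ker f_j \cap N_j$ and then run the identical final verification that $f_{j,j+\l}(N_j) \cap \ker f_{j+\l} = 0$. The only cosmetic difference is that you invoke the ascending chain condition on $\{N_j \cap \ker f_{j,j+\l}\}_{\l}$ directly, whereas the paper picks a finite set of generators for $\ker f_j \cap N_j$ and takes the maximum of the stages at which they die; these are two equivalent ways of exploiting the Noetherian hypothesis.
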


\begin{proof}
Since $R$ is Noetherian, $\ker f_j \cap N_j$ is also a finitely generated $R$-module.  Fix $R$-generators $x_1, \ldots, x_s$ for $\ker f_j \cap N_j$.  For all $i \in \{1, \ldots, s\}$, we have $f_j(x_i) = 0$, and so there exists $\l_i \geq 0$ such that $f_{j, j+\l_i}(x_i) = 0 \in M_{j+\l_i}$.  If we put $\l = \max{\{\l_i\}}$, then $f_{j,j+\l}$ annihilates $\ker f_j \cap N_j$.  We claim that 
\[
f_{j, j+\l}(N_j) \cap \ker f_{j+\l} = 0.
\]  
Suppose that $x \in M_{j + \l}$ belongs to this intersection.  Then we have $f_{j+\l}(x) = 0$ and $x = f_{j,j+\l}(y)$ for some $y \in N_j$.  From $f_j(y) = f_{j+\l}(f_{j,j+\l}(y)) = f_{j+\l}(x) = 0$, we conclude $y \in \ker f_j$; but $y \in N_j$ as well, and since $f_{j,j+\l}$ annihilates $\ker f_j \cap N_j$, it follows that $x = f_{j,j+\l}(y) = 0$, completing the proof.
\end{proof}

In the case of vector spaces over a field, the connection between the Mittag-Leffler condition and strong-sense stability can be made more precise.  First, we need a basic lemma concerning the interaction of images and dual spaces.  (Recall that $\vee$ denotes $k$-linear dual.)

\begin{lem}\label{ddualml}
Let $k$ be a field.
\begin{enumerate}[(a)]
\item If $V$ and $W$ are vector spaces over $k$, and $f: V \rightarrow W$ is a $k$-linear map, we can identify $(\im f)^{\vee}$ with a subspace of $V^{\vee}$ in such a way that $(\im f)^{\vee} = \im(f^{\vee})$.
\item If $\{U_i\}_{i \in \mathbb{N}}$ is an inverse system of $k$-spaces (with transition maps $f_{ji}: U_j \rightarrow U_i$ for $i \leq j$) which satisfies the Mittag-Leffler condition, then the inverse system $\{U_i^{\vee \vee}\}$ also satisfies the Mittag-Leffler condition. 
\end{enumerate} 
\end{lem}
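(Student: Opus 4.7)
The plan is to prove part (a) by factoring $f$ through its image and dualizing, and then to deduce part (b) from part (a) applied twice, reducing stabilization of images in the double dual system to the Mittag-Leffler condition in the original system.

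For part (a), I would factor $f \colon V \to W$ as the composition $V \twoheadrightarrow \im f \hookrightarrow W$ of a surjection followed by an inclusion. Dualizing and using that every $k$-space is injective as a $k$-module (so that $\Hom_k(-,k)$ is exact and in particular converts injections to surjections), one obtains the factorization $f^{\vee} \colon W^{\vee} \twoheadrightarrow (\im f)^{\vee} \hookrightarrow V^{\vee}$, where the injection is the dual of $V \twoheadrightarrow \im f$ and the surjection is the dual of $\im f \hookrightarrow W$. This simultaneously realizes $(\im f)^{\vee}$ as a subspace of $V^{\vee}$ and identifies it with $\im(f^{\vee})$.

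For part (b), the strategy is to apply part (a) twice to each transition map. Fix $\ell$ and set $g_s = f_{\ell+s,\ell} \colon U_{\ell+s} \to U_\ell$. Applying part (a) to $g_s$ gives $\im(g_s^{\vee}) = (\im g_s)^{\vee}$ inside $U_{\ell+s}^{\vee}$; applying part (a) again to $g_s^{\vee}$ gives $\im(g_s^{\vee\vee}) = ((\im g_s)^{\vee})^{\vee}$ inside $U_\ell^{\vee\vee}$. The central observation is that the subspace $((\im g_s)^{\vee})^{\vee} \subset U_\ell^{\vee\vee}$ depends functorially (and order-preservingly) only on the subspace $\im g_s \subset U_\ell$: if $\im g_s \supset \im g_t$ in $U_\ell$, then dualizing the inclusion chain $\im g_t \hookrightarrow \im g_s \hookrightarrow U_\ell$ twice yields $((\im g_t)^{\vee})^{\vee} \subset ((\im g_s)^{\vee})^{\vee}$ inside $U_\ell^{\vee\vee}$, with equality whenever $\im g_s = \im g_t$.

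Since the Mittag-Leffler condition on $\{U_i\}$ guarantees that for every $\ell$ the descending chain $\{\im g_s\}_{s \geq 0}$ of subspaces of $U_\ell$ becomes stationary, the preceding observation shows that the corresponding chain $\{\im(g_s^{\vee\vee})\}_{s\geq 0}$ of subspaces of $U_\ell^{\vee\vee}$ also stabilizes, which is exactly the Mittag-Leffler condition for $\{U_i^{\vee\vee}\}$. There is no real obstacle here beyond carefully tracking the identifications supplied by part (a); the main thing to verify is that the embeddings $((\im g_s)^{\vee})^{\vee} \hookrightarrow U_\ell^{\vee\vee}$ produced for different $s$ are compatible, which follows from the naturality of the double-dual factorization with respect to the inclusions $\im g_t \subset \im g_s$.
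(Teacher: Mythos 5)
Your proof is correct and follows essentially the same route as the paper: factor $f$ through its image and dualize for part (a), then apply part (a) twice to each transition map and use that equal images in $U_\ell$ give equal subspaces of $U_\ell^{\vee\vee}$ after double dualization. The extra discussion of order-preservation and compatibility of the embeddings is a slight elaboration of what the paper leaves implicit, but the argument is the same.
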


\begin{proof}
Factor $f$ as $V \twoheadrightarrow \im f \subset W$, and dualize this factorization to obtain $W^{\vee} \twoheadrightarrow (\im f)^{\vee} \hookrightarrow V^{\vee}$.  The image of the last map is $\im(f^{\vee})$, and the injectivity allows us to identify this image with $(\im f)^{\vee}$.  This proves (a).

We now prove (b).  Fix $i$.  By the Mittag-Leffler condition, the descending chain $\{f_{ji}(U_j)\}_{j \geq i}$ of subspaces of $U_i$ stabilizes, say at $j = \l$. Therefore $f_{\l i}(U_{\l}) = f_{si}(U_s)$ for all $s \geq \l$, and consequently $(f_{\l i}(U_{\l}))^{\vee \vee} = (f_{si}(U_s))^{\vee \vee}$ as subspaces of $U_i^{\vee \vee}$ for all $s \geq \l$.  After applying part (a) twice, it follows that $f_{\l i}^{\vee \vee}(U_{\l}^{\vee \vee}) = f_{si}^{\vee \vee}(U_s^{\vee \vee})$ as subspaces of $U_i^{\vee \vee}$ for all $s \geq \l$, that is, that the descending chain $\{f_{ji}^{\vee \vee}(U_j^{\vee \vee})\}$ of subspaces of $U_i^{\vee \vee}$ stabilizes at $j = \l$.  We conclude that $\{U_i^{\vee \vee}\}$ also satisfies the Mittag-Leffler condition, completing the proof.
\end{proof} 

\begin{lem}\label{dualml}
Let $\{U_i\}$ be an inverse system (indexed by $\mathbb{N}$) of vector spaces over a field $k$, with transition maps $f_{ji}$.  For all $i$, let $V_i = U_i^{\vee}$ be the $k$-space dual of $U_i$, and regard $\{V_i\}$ as a direct system with transition maps $\lambda_{ij} = f_{ji}^{\vee}$ for $i \leq j$.  Suppose that for all $\l$, the images of $V_{\l}$ under the transition maps $\lambda_{\l,\l+s}$ stabilize in the strong sense.  Then $\{U_i\}$ satisfies the Mittag-Leffler condition.
\end{lem}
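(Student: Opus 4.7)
My plan is to translate the Mittag-Leffler question for $\{U_i\}$ into a stabilization question in the dual direct system via annihilators, and then feed it the strong-sense stability hypothesis. For any $k$-vector space $U$ and any subspace $W \subset U$, write $W^{\perp} \subset U^{\vee}$ for the annihilator. The assignment $W \mapsto W^{\perp}$ is inclusion-reversing, and $(W^{\perp})^{\perp} = W$ back inside $U$ (any vector outside $W$ can be separated from $W$ by a linear functional, which needs no finite-dimensionality). Consequently, for each fixed $i$, a descending chain of subspaces of $U_i$ is stationary iff the corresponding ascending chain of annihilators in $V_i = U_i^{\vee}$ is stationary.

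I would then identify the annihilator of $\im f_{ji}$ in $V_i$ with $\ker \lambda_{ij}$: a functional $\phi \in V_i$ vanishes on $\im f_{ji}$ iff $\phi \circ f_{ji} = 0$, i.e.\ iff $\lambda_{ij}(\phi) = 0$. So the task becomes: for each fixed $i$, show the ascending chain $\{\ker \lambda_{ij}\}_{j \geq i}$ of subspaces of $V_i$ stabilizes.

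Finally I would apply strong-sense stability to $N_i = V_i$: it provides some $s \geq 0$ with $\lambda_{i,i+s}(V_i) \cap \ker \lambda_{i+s} = 0$, where $\lambda_{i+s}: V_{i+s} \to V$ is the insertion. I claim $\ker \lambda_{ij} = \ker \lambda_{i,i+s}$ for every $j \geq i+s$. The containment $\ker \lambda_{i,i+s} \subset \ker \lambda_{ij}$ is immediate from the factorization $\lambda_{ij} = \lambda_{i+s,j} \circ \lambda_{i,i+s}$. For the reverse containment, given $\phi \in \ker \lambda_{ij}$, one computes $\lambda_{i+s}(\lambda_{i,i+s}(\phi)) = \lambda_i(\phi) = \lambda_j(\lambda_{ij}(\phi)) = 0$, so $\lambda_{i,i+s}(\phi)$ lies in $\lambda_{i,i+s}(V_i) \cap \ker \lambda_{i+s} = 0$, forcing $\phi \in \ker \lambda_{i,i+s}$. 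The main (really, the only) point needing care is the identity $(W^{\perp})^{\perp} = W$ for arbitrary, possibly infinite-dimensional subspaces; this is where an analogous statement could fail in a more general abelian category, but for $k$-vector spaces it is automatic and the argument goes through with no further obstacle.
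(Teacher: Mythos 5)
Your argument is correct, and it is a genuinely different route from the paper's. The paper proves the contrapositive: assuming Mittag-Leffler fails, it picks out a proper inclusion of images in $U_{\ell}$, passes to $k$-duals to obtain a non-injective surjection, and then uses Lemma \ref{ddualml}(a) (identifying $(\im f)^{\vee}$ with $\im(f^{\vee})$) to exhibit an element of $\lambda_{\ell,\ell+s}(V_{\ell})$ killed by a later transition map, contradicting strong-sense stability. You instead argue directly: you translate the Mittag-Leffler condition into stabilization of the ascending chain $\{\ker \lambda_{\ell j}\}_{j \geq \ell}$ via the annihilator bijection $W \mapsto W^{\perp}$ (noting, correctly, that $(W^{\perp})^{\perp} = W$ holds for arbitrary subspaces of arbitrary $k$-spaces), and then show this chain stabilizes by a short computation with the strong-sense-stability hypothesis applied to $N_{\ell} = V_{\ell}$. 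The identity $\lambda_{\ell} = \lambda_{\ell+s} \circ \lambda_{\ell,\ell+s} = \lambda_j \circ \lambda_{\ell j}$ is exactly what makes the kernel computation go through, and you have it right. The trade-off is mostly aesthetic: the paper's contrapositive makes the ``failure produces a witness'' logic visible, and reuses a lemma already in place for other purposes, whereas your version is self-contained, avoids Lemma \ref{ddualml}, and gives an explicit stabilization index ($j = \ell + s$) rather than a non-constructive contradiction.
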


\begin{proof}
We prove the contrapositive.  Suppose that the system $\{U_i\}$ does not satisfy Mittag-Leffler.  Then there is an $\l$ such that for all $s$, there exists $t \geq s$ such that $f_{\l+t,\l}(U_{\l+t}) \subsetneq f_{\l+s,\l}(U_{\l+s})$.  Since $k$-space dual is an exact functor, proper injections dualize to surjections with nontrivial kernels, so the surjection $(f_{\l+s,\l}(U_{\l+s}))^{\vee} \twoheadrightarrow (f_{\l+t,\l}(U_{\l+t}))^{\vee}$ is not an isomorphism.  By Lemma \ref{ddualml}(a), we can identify $(f_{\l+s,\l}(U_{\l+s}))^{\vee}$ with $\lambda_{\l,\l+s}(V_{\l})$ (resp. $(f_{\l+t,\l}(U_{\l+t}))^{\vee}$ with $\lambda_{\l,\l+t}(V_{\l})$) as a subspace of $V_{\l+s}$ (resp. $V_{\l+t}$), and under these identifications, the surjection $(f_{\l+s,\l}(U_{\l+s}))^{\vee} \twoheadrightarrow (f_{\l+t,\l}(U_{\l+t}))^{\vee}$ is nothing but $\lambda_{\l+s,\l+t}$ restricted to $\lambda_{\l,\l+s}(V_{\l})$.  We conclude that the images of $V_{\l}$ cannot stabilize in the strong sense.
\end{proof}

We will also need a technical lemma concerning double duals:

\begin{lem}\label{finddual}
Let $k$ be a field, and let $\{U_i\}_{i \in \mathbb{N}}$ be an inverse system of $k$-spaces, indexed by $\mathbb{N}$, with transition maps $f_{ji}: U_j \rightarrow U_i$ for $i \leq j$, and inverse limit $U = \varprojlim U_i$.  Suppose that the inverse system $\{U_i\}$ satisfies the Mittag-Leffler condition and that the inverse limit $\varprojlim(U_i^{\vee \vee})$ is a finite-dimensional $k$-space.  Then $\varprojlim U_i \simeq \varprojlim(U_i^{\vee \vee})$.
\end{lem}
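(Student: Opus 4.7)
The plan is to exploit the Mittag-Leffler hypothesis by passing to stable images, where the canonical map to the double dual behaves well. For each $i$, let $U_i' = \bigcap_{j \geq i} f_{ji}(U_j) \subseteq U_i$; by Mittag-Leffler this equals $f_{ji}(U_j)$ for all sufficiently large $j$, so the $U_i'$ form an inverse sub-system with surjective transition maps. A standard verification (any compatible family $(u_i) \in \varprojlim U_i$ automatically satisfies $u_i \in U_i'$ for all $i$, since $u_i = f_{ji}(u_j)$) shows that the inclusions induce an isomorphism $\varprojlim U_i' \xrightarrow{\sim} \varprojlim U_i$.

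Next, by Lemma \ref{ddualml}(b) the double-dual system $\{U_i^{\vee\vee}\}$ also satisfies Mittag-Leffler. Applying Lemma \ref{ddualml}(a) twice in succession (once to $f_{ji}: U_j \to U_i$ to identify $\im(f_{ji}^{\vee}) = (U_i')^{\vee}$ inside $U_j^{\vee}$, and once more to the dual map to identify $\im(f_{ji}^{\vee\vee}) = (U_i')^{\vee\vee}$ inside $U_i^{\vee\vee}$) identifies the stable image of $U_i^{\vee\vee}$ with $(U_i')^{\vee\vee}$. The same passage to stable images then gives $\varprojlim U_i^{\vee\vee} \simeq \varprojlim (U_i')^{\vee\vee}$.

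The key step is to force each $U_i'$ to be finite-dimensional. Since double-dualizing a surjection of $k$-spaces yields a surjection, the transition maps of $\{(U_i')^{\vee\vee}\}$ are surjective; by the standard inductive lifting argument for inverse systems with surjective transition maps, every projection $\varprojlim(U_i')^{\vee\vee} \twoheadrightarrow (U_i')^{\vee\vee}$ is then surjective. The hypothesis that $\varprojlim(U_i^{\vee\vee}) \simeq \varprojlim(U_i')^{\vee\vee}$ is finite-dimensional therefore forces each $(U_i')^{\vee\vee}$ to be finite-dimensional, and via the canonical injection $U_i' \hookrightarrow (U_i')^{\vee\vee}$ each $U_i'$ itself is finite-dimensional. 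For a finite-dimensional $k$-space the canonical map $V \to V^{\vee\vee}$ is an isomorphism, so $U_i' \xrightarrow{\sim} (U_i')^{\vee\vee}$ for every $i$.

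Chaining the resulting isomorphisms yields $\varprojlim U_i \simeq \varprojlim U_i' \simeq \varprojlim (U_i')^{\vee\vee} \simeq \varprojlim U_i^{\vee\vee}$, which is the claim. The only step requiring genuine care is the identification of the stable images of $\{U_i^{\vee\vee}\}$ with the double duals of the stable images of $\{U_i\}$; everything else is either Mittag-Leffler bookkeeping, the surjective-transition-map lifting lemma, or the elementary finite-dimensional double-dual identity, so I do not expect any serious obstacle.
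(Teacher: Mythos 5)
Your proof is correct and takes essentially the same approach as the paper's: pass to the stable images $U_i'$, use the Mittag-Leffler hypothesis together with Lemma~\ref{ddualml} to identify $(U_i')^{\vee\vee}$ with the stable image of $\{U_i^{\vee\vee}\}$, show that each $U_i'$ is finite-dimensional so that $U_i' \xrightarrow{\sim} (U_i')^{\vee\vee}$, and conclude by comparing inverse limits. The only (inessential) difference is the order in which finite-dimensionality is propagated: you deduce that each $(U_i')^{\vee\vee}$ is finite-dimensional from the surjectivity of the projections out of $\varprojlim (U_i')^{\vee\vee} \simeq \varprojlim U_i^{\vee\vee}$ and then pass to $U_i'$ via the canonical injection, whereas the paper first observes that $U = \varprojlim U_i$ itself is finite-dimensional (from the natural injection $U \hookrightarrow \varprojlim U_i^{\vee\vee}$ furnished by left-exactness of $\varprojlim$) and then uses the surjection $U \twoheadrightarrow U_i'$.
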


\begin{proof}
The canonical map from a $k$-space $U_i$ to its double dual $U_i^{\vee \vee}$ is always injective, and the inverse limit is left exact \cite[Prop. 10.2]{atiyah}, so there is a natural injection $U = \varprojlim U_i \hookrightarrow \varprojlim U_i^{\vee \vee}$.  Therefore $U$ is also finite-dimensional.  For all $i$, let $U'_i = \cap_{i \leq j} f_{ji}(U_j)$ be the stable image of the transition maps inside $U_i$ (this descending chain stabilizes by the Mittag-Leffler hypothesis).  Then \cite[p. 191]{hartshorneAG} $\{U_i'\}$ is also an inverse system, now with surjective transition maps, such that $\varprojlim U'_i = U$ and $U$ maps surjectively to all $U'_i$.  In particular, all $U'_i$ are finite-dimensional $k$-spaces and hence isomorphic to their double duals $(U'_i)^{\vee \vee}$.  By Lemma \ref{ddualml}(b), the inverse system $\{U_i^{\vee \vee}\}$ also satisfies the Mittag-Leffler condition, and moreover, we can identify the double dual of a stable image, $(U'_i)^{\vee \vee}$, with the stable image of the double dual, $(U_i^{\vee \vee})' \subset U_i^{\vee \vee}$.  Since for all $i$, $U'_i$ is canonically isomorphic to its double dual, the corresponding inverse limits are also isomorphic.  Thus we have isomorphisms
\[
\varprojlim U_i \simeq \varprojlim U'_i \simeq \varprojlim (U'_i)^{\vee \vee} \simeq \varprojlim (U_i^{\vee \vee})' \simeq \varprojlim U_i^{\vee \vee},
\]
completing the proof.
\end{proof}

\begin{remark}\label{garrett}
The Mittag-Leffler hypothesis in Lemma \ref{finddual} is actually superfluous: Paul Garrett (private communication) has given a proof assuming only that $\varprojlim(U_i^{\vee \vee})$ is finite-dimensional.  However, we will only need the statement in the Mittag-Leffler case, and the proof in general is more difficult.
\end{remark}

In a series of papers \cite{essthesis, kernel, example, cokernel, essen}, van den Essen examined the effect of the operator $\partial_n$ acting on a holonomic $\D$-module, culminating in a proof of Theorem \ref{findimdR}.  (See also \cite{nsexpos} for an expository account of this proof.)  Van den Essen's first result was that the kernel of $\partial_n$ behaves as well as we might hope:

\begin{thm} \label{kernel} \cite[Thm.]{kernel}
If $M$ is a holonomic $\D$-module and $M^{*}$ is the kernel of $\partial_n: M \rightarrow M$, then $M^{*}$ is a holonomic $\D_{n-1}$-module.
\end{thm}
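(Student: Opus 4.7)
First, observe that $M^*$ inherits a natural $\D_{n-1}$-module structure: each of the generators $x_1, \ldots, x_{n-1}, \partial_1, \ldots, \partial_{n-1}$ of $\D_{n-1}$ commutes with $\partial_n$, and hence preserves $\ker(\partial_n) = M^*$. The case $M^* = 0$ is trivial, so assume $M^* \neq 0$; by Bernstein's inequality applied over $\D_{n-1}$ we already have $d_{\D_{n-1}}(M^*) \geq n-1$, and the strategy is to produce a good filtration of $M^*$ over $\D_{n-1}$ whose associated graded module is supported on a subvariety of dimension at most $n-1$, yielding the reverse inequality.

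Fix a good filtration $\{F_\nu\}$ of $M$ over $\D$ and set $K_\nu := M^* \cap F_\nu$. Since any element of $\D_{n-1}$ of order $\leq j$ commutes with $\partial_n$, it preserves $M^*$ and raises the $F$-degree by at most $j$, so $\{K_\nu\}$ is a filtration of $M^*$ compatible with the order filtration of $\D_{n-1}$. The identity $K_\nu \cap F_{\nu-1} = K_{\nu-1}$ provides a natural injection $\gr^K(M^*) \hookrightarrow \gr^F(M)$ of graded modules, compatible with the inclusion $\gr(\D_{n-1}) \hookrightarrow \gr(\D)$. Because $\partial_n$ annihilates each $K_\nu$ entirely, the image lies in $\Ann_{\gr^F(M)}(\zeta_n)$, where $\zeta_n$ is the principal symbol of $\partial_n$. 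This ambient submodule is finitely generated over $\gr(\D)/(\zeta_n) \cong R[\zeta_1, \ldots, \zeta_{n-1}]$, since $\gr^F(M)$ is Noetherian over $\gr(\D)$.

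The main obstacle is to show that $\{K_\nu\}$ is a \emph{good} filtration over $\D_{n-1}$, i.e., that $\gr^K(M^*)$ is finitely generated over $\gr(\D_{n-1}) \cong R_{n-1}[\zeta_1, \ldots, \zeta_{n-1}]$. Finite generation over the larger ring $R[\zeta_1, \ldots, \zeta_{n-1}]$ does not automatically descend: $R = R_{n-1}[[x_n]]$ is not finite over $R_{n-1}$, and submodules of a finitely generated module over $R[\zeta_1, \ldots, \zeta_{n-1}]$ need not be finitely generated over the subring obtained by deleting $x_n$. Holonomicity of $M$ must be brought to bear here. I would attempt to leverage the involutivity of the characteristic variety $\mathrm{Ch}(M) \subseteq \Spec(\gr(\D))$ in its natural symplectic structure (valid in characteristic zero), to conclude that the projection of $\mathrm{Ch}(M) \cap \{\zeta_n = 0\}$ to $\Spec(\gr(\D_{n-1}))$ is a variety of dimension at most $n-1$ and that along this projection the $R_{n-1}$-module structure on the image of $\gr^K(M^*)$ remains finite, via a Noether-normalization-style argument tailored to this geometric picture.

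Once this finite generation step is completed, the annihilator of $\gr^K(M^*)$ in $\gr(\D_{n-1})$ cuts out a subvariety of dimension at most $n-1$, so $d_{\D_{n-1}}(M^*) \leq n-1$. Combined with Bernstein's inequality, this gives $d_{\D_{n-1}}(M^*) = n-1$, i.e., $M^*$ is a holonomic $\D_{n-1}$-module.
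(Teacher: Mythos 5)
The paper does not reprove this theorem; it quotes it from van den Essen and records the key step of his argument as Lemma \ref{kernellem}: if $M = R \cdot M^{*}$ then $M = M^{*} \oplus x_n M$ as $\D_{n-1}$-modules. Van den Essen's route is essentially algebraic: one first replaces $M$ by the $\D$-submodule $R \cdot M^{*}$ (which is again holonomic and has the same kernel), then uses the decomposition to identify $M^{*}$ with $M/x_n M$ as a $\D_{n-1}$-module and deduce holonomicity from that explicit control. Your filtration-theoretic approach is genuinely different, and it is correct as far as it goes --- the filtration $K_\nu = M^{*} \cap F_\nu$ is compatible with the order filtration on $\D_{n-1}$, the inclusion $\gr^K(M^{*}) \hookrightarrow \Ann_{\gr^F(M)}(\zeta_n)$ holds, and the latter is Noetherian over $R[\zeta_1,\dots,\zeta_{n-1}]$.

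However, the gap you flag is real and the proposed remedy does not close it. Involutivity of $\mathrm{Ch}(M)$ can at best bound the \emph{dimension of the support} of $\gr^K(M^{*})$ over $\gr(\D_{n-1}) = R_{n-1}[\zeta_1,\dots,\zeta_{n-1}]$, but a dimension bound does not yield \emph{finite generation}: the extension $R_{n-1}[\zeta] \hookrightarrow R[\zeta]$ is not integral, and there are modules (e.g.\ $R = R_{n-1}[[x_n]]$ itself) whose support over $R_{n-1}$ is a single point yet which are not finitely generated over $R_{n-1}$. Noether normalization produces a finite polynomial subalgebra only once one already knows the module is finitely generated, so it cannot be used to manufacture finite generation here. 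Without knowing that $\{K_\nu\}$ is a good filtration, the inequality $d_{\D_{n-1}}(M^{*}) \leq n-1$ is not established. What is missing is precisely the uniform control in the $x_n$-direction that the decomposition $M = M^{*} \oplus x_n M$ provides; to complete your argument you would need a substitute for Lemma \ref{kernellem}, or some other device showing the graded pieces of $K_\nu$ are eventually generated over $R_{n-1}$ by finitely many elements.
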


The key step in the proof of Theorem \ref{kernel} is the following lemma, which we will use below in the proof of Lemma \ref{techlemma}:

\begin{lem} \label{kernellem} \cite[Cor. 2]{kernel}
Let $M$ be a $\D$-module and let $M^{*}$ be the kernel of $\partial_n: M \rightarrow M$.  If $M = R \cdot M^{*}$, then $M = M^{*} \oplus x_n M$ as $\D_{n-1}$-modules.
\end{lem}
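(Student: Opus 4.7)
The plan is to establish the finer $\D_{n-1}$-module decomposition $M = \bigoplus_{i \geq 0} x_n^i M^{*}$; isolating the $i = 0$ summand and observing that $\sum_{i \geq 1} x_n^i M^{*} = x_n M$ will then immediately yield the desired identity $M = M^{*} \oplus x_n M$.

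First I would verify that $M^{*}$ is stable under $\D_{n-1}$. Since $\D_{n-1}$ is generated as a $k$-algebra by $R_{n-1} = k[[x_1, \ldots, x_{n-1}]]$ together with $\partial_1, \ldots, \partial_{n-1}$, this reduces to checking that each generator commutes with $\partial_n$ as an operator on $M$: the partial derivatives pairwise commute, and for $r \in R_{n-1}$ one has $\partial_n(r m) - r \partial_n(m) = \partial_n(r) m = 0$. In particular $R_{n-1} \cdot M^{*} \subseteq M^{*}$, so using the factorization $R = R_{n-1}[[x_n]]$ and the hypothesis $M = R \cdot M^{*}$, every element of $M$ is a finite sum $\sum_{i \geq 0} x_n^i m_i$ with $m_i \in M^{*}$, giving $M = \sum_{i \geq 0} x_n^i M^{*}$.

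The heart of the argument is verifying that this sum is direct. Suppose $\sum_{i=0}^{N} x_n^i m_i = 0$ with $m_i \in M^{*}$. Using $[\partial_n, x_n] = 1$ together with $\partial_n m_i = 0$, a short induction on $k$ shows that $(x_n \partial_n)^k(x_n^i m_i) = i^k \cdot x_n^i m_i$. Applying $(x_n \partial_n)^k$ to the relation for $k = 0, 1, \ldots, N$ therefore produces a linear system in the quantities $x_n^i m_i$ whose coefficient matrix $[i^k]_{0 \leq i, k \leq N}$ is a Vandermonde matrix, invertible over any characteristic-zero field. Hence $x_n^i m_i = 0$ for each $i$, after which an analogous induction yields the identity $\partial_n^i(x_n^i m) = i! \cdot m$ for every $m \in M^{*}$, forcing $i! \cdot m_i = 0$ and hence $m_i = 0$.

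The main obstacle is precisely this directness step, which is where the hypothesis $\ch(k) = 0$ enters essentially at two points: the Vandermonde determinant $\prod_{i < j}(j-i)$ must be nonzero, and the factorials $i!$ must be units in $k$. Once directness is in hand, $x_n M$ coincides with $\bigoplus_{i \geq 1} x_n^i M^{*}$, and since every generator of $\D_{n-1}$ commutes with multiplication by $x_n$, the submodule $x_n M$ is stable under $\D_{n-1}$. The resulting decomposition $M = M^{*} \oplus x_n M$ is therefore one of $\D_{n-1}$-modules, as required.
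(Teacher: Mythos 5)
The paper itself gives no proof of this lemma; it is cited directly from van den Essen's paper. So your argument has to stand on its own, and it does not: there is a genuine gap at the very first step.

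You assert that since $R = R_{n-1}[[x_n]]$ and $M = R\cdot M^*$, every element of $M$ is a \emph{finite} sum $\sum_{i\geq 0} x_n^i m_i$ with $m_i \in M^*$, i.e.\ that $M = R_{n-1}[x_n]\cdot M^*$. This is strictly weaker than the hypothesis $M = R\cdot M^*$ and is false in general, because $R$ contains infinite power series in $x_n$ and the $R$-span of a set cannot be unpacked into a finite $R_{n-1}[x_n]$-span. Already for $M = R$ itself one has $M^* = R_{n-1}$ and $R\cdot M^* = R$, so the hypothesis holds, but $\sum_{i\geq 0} x_n^i M^* = R_{n-1}[x_n]$ is a proper $R_{n-1}$-submodule of $R = R_{n-1}[[x_n]]$ (for example $\sum_i x_n^i/i!$ lies in $M$ but in no finite sum of the $x_n^i M^*$). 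Your Vandermonde computation (or, more simply, hitting a relation $\sum_{i=0}^N x_n^i m_i = 0$ with $\partial_n^N$, then $\partial_n^{N-1}$, and so on, using $\ch k = 0$) does correctly show that the finite sums $\sum_i x_n^i M^*$ form a direct sum in $M$, but since that direct sum is a proper submodule, you cannot read off $M^*\cap x_n M = 0$ from it: an element of $M^*\cap x_n M$ need not lie in $\bigoplus_i x_n^i M^*$ at all.

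What does survive from your setup is the easy half $M = M^* + x_n M$: for $r\in R$ write $r = r|_{x_n=0} + x_n s$ and use that $R_{n-1}$ preserves $M^*$, so $r m^* = (r|_{x_n=0})m^* + x_n(s m^*)$. The hard half $M^*\cap x_n M = 0$ is the actual content of the lemma and requires a separate argument, which is what van den Essen supplies. A first step in that direction is to note that $m^* = x_n m \in M^*$ forces $\partial_n^{k-1}(m) = -\tfrac{1}{k}x_n\partial_n^k(m)$, hence $m = \tfrac{(-1)^k}{k!}x_n^k\partial_n^k(m) \in \bigcap_k x_n^k M$; but one still has to use $M = R\cdot M^*$ to show such $m^*$ must vanish, and that step is missing from your proposal.
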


Without some conditions, there is no analogous result for cokernels; van den Essen gives a concrete example in \cite[Thm.]{example} in which $M$ is a holonomic $\D$-module but $M/\partial_n(M)$ is not holonomic.

\begin{prop}\cite[Prop. 1]{example}\label{fingenloc}
Let $M$ be a holonomic $\D$-module.  There exists an element $g \in R$ such that the localization $M_g$ is a finitely generated $R_g$-module.
\end{prop}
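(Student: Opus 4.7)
The plan is to pass to the associated graded module via a good filtration and then exploit holonomicity to obtain generic finiteness over $R$, with the finite generation of $M_g$ itself following from a grading argument.

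First I would choose a good filtration $\{M(\nu)\}$ on $M$, so that $\gr(M) = \oplus_\nu M(\nu)/M(\nu-1)$ is a finitely generated graded module over $\gr(\D) = R[\zeta_1, \ldots, \zeta_n]$. A standard bookkeeping shows each $M(\nu)$ is then itself a finitely generated $R$-module. Let $J = \sqrt{\Ann_{\gr(\D)} \gr(M)}$; by holonomicity $\dim \gr(\D)/J = n$, which matches $\dim R$. Since $\gr(M)$ is finitely generated over $\gr(\D)$, some power $J^N$ annihilates $\gr(M)$.

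The core step is to produce a single $g \in R \setminus \{0\}$ such that $\gr(\D)_g/J_g$ is a finitely generated $R_g$-module. Geometrically, the projection of the characteristic variety $V(J)$ to $\Spec R$ has source and target of equal Krull dimension, so it must be generically finite. Concretely, for each $i$ the class of $\zeta_i$ in $\gr(\D)/J$ should satisfy an integral relation over $R$ after inverting some nonzero $g_i \in R$; putting $g = g_1 \cdots g_n$ then makes every $\zeta_i$ integral over $R_g$ modulo $J_g$, so $\gr(\D)_g/J_g$ becomes module-finite over $R_g$. Consequently $\gr(\D)_g/J_g^N$ is module-finite over $R_g$ as well, and since $\gr(M)_g$ is a finitely generated module over this quotient ring, $\gr(M)_g$ is itself finitely generated over $R_g$.

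Finally I would deduce the statement for $M$ from this graded statement. Choose finitely many homogeneous generators of $\gr(M)_g$ over $R_g$; they live in degrees bounded by some $\nu_0$. Because $R_g$ acts in degree zero, the graded pieces in degrees greater than $\nu_0$ must vanish, so $M(\nu)_g = M(\nu_0)_g$ for all $\nu \geq \nu_0$. Since $M = \bigcup_\nu M(\nu)$ and localization commutes with direct limits, $M_g = M(\nu_0)_g$; as $M(\nu_0)$ is finitely generated over $R$, we conclude that $M_g$ is finitely generated over $R_g$.

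The main obstacle is the generic-finiteness step: extracting a single nonzero $g \in R$ from the dimension equality $\dim \gr(\D)/J = \dim R$. This requires genuine commutative-algebra input on dimension behavior for finitely generated algebras over the complete regular local ring $R$ (via excellence and the dimension formula), rather than the softer graded and filtration bookkeeping that surrounds it.
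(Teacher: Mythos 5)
The paper does not prove this proposition; it cites van den Essen \cite{example} as a black box, so there is no internal proof to measure your argument against. Your characteristic-variety argument is the natural one and it is correct. The one step you flag as requiring genuine input does go through, and here is the precise reason. Since $R$ is regular it is universally catenary, so the dimension formula applies over $R$. Let $\mathfrak{p}_0$ be a minimal prime of $J$; if $\mathfrak{p}_0 \cap R \neq 0$, inverting a nonzero element of $\mathfrak{p}_0 \cap R$ kills that component, so assume $R \hookrightarrow B := \gr(\D)/\mathfrak{p}_0$. Because $\gr(M)$ is a graded $\gr(\D)$-module, $J$ and hence $\mathfrak{p}_0$ are homogeneous in the $\zeta$-grading, so $\mathfrak{p}_0$ is contained in the graded maximal ideal $\mathfrak{m} + (\zeta_1, \ldots, \zeta_n)$ of $\gr(\D)$; let $\mathfrak{p}$ denote its image in $B$, a maximal ideal with $\mathfrak{p} \cap R = \mathfrak{m}$ and residue field $k$. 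The dimension formula gives $\hgt_B(\mathfrak{p}) = \dim R + \mathrm{trdeg}_R B - \mathrm{trdeg}_k \kappa(\mathfrak{p}) = n + \mathrm{trdeg}_R B$, while $\hgt_B(\mathfrak{p}) \leq \dim B \leq \dim(\gr(\D)/J) = n$, forcing $\mathrm{trdeg}_R B = 0$. Thus each $\zeta_i$ is algebraic over $R$ modulo $\mathfrak{p}_0$, and inverting denominators and leading coefficients of these relations (taking products over $i$ and over the finitely many minimal primes of $J$) produces a $g$ making $\gr(\D)_g/J_g$ module-finite over $R_g$. The remaining steps of your proof---the d\'evissage from $J$ to $J^N$, the degree bound on the homogeneous generators of $\gr(M)_g$, and the identification $M_g = M(\nu_0)_g$---are correct as written.
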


\begin{definition}\label{esubtau}
Let $M$ be a $\D$-module, let $m \in M$, and let $\tau \in \D$ be a derivation.  We write $E_{\tau}(m)$ for the $R$-submodule of $M$ generated by $\{\tau^i(m)\}_{i \geq 0}$.
\end{definition}

\begin{lem}\cite[Ch. II, Prop. 1.16]{essthesis}\label{vdethesis}
Let $M$ be a $\D$-module.  Suppose that there exists $g \in R$ which is \emph{$x_n$-regular} (that is, such that $g(0, 0, \ldots, 0, x_n) \neq 0$) and such that $M_g$ is a finitely generated $R_g$-module.  Then for all $m \in M$, there exists an $x_n$-regular $f \in R$ such that the $R$-submodule $E_{f \partial_n}(m)$ of $M$ is finitely generated.  
\end{lem}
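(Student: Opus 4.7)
The plan is to exploit the finite generation of $M_g$ over $R_g$ to extract a polynomial relation among the iterated derivatives $\partial_n^i(m)$, clear denominators in $g$, and then use the cleared denominator itself as the multiplier $f$.

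First I would form the $R_g$-submodule $N \subseteq M_g$ generated by $\{\partial_n^i(m)/1 : i \geq 0\}$. Since $M_g$ is finitely generated over the Noetherian ring $R_g$ (localization of a Noetherian ring is Noetherian), the submodule $N$ is itself finitely generated. Consequently the ascending chain of $R_g$-submodules $R_g \cdot \{\partial_n^i(m)/1 : 0 \leq i \leq \ell\}$ stabilizes at some $\ell = N$, producing a relation of the form $\partial_n^{N+1}(m)/1 = \sum_{i=0}^{N} (s_i/g^A)\, \partial_n^i(m)/1$ in $M_g$. Clearing denominators (and absorbing an additional power of $g$ to kill possible $g$-torsion coming from the localization map $M \to M_g$) yields an identity
\[
g^b \,\partial_n^{N+1}(m) \;=\; \sum_{i=0}^{N} t_i\, \partial_n^i(m)
\]
in $M$, for some $b \geq 0$ and some $t_i \in R$.

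I would then set $f = g^b$, which is $x_n$-regular because the nonvanishing of $g(0,\ldots,0,x_n)$ in $k[[x_n]]$ is preserved under products. Let $T_N \subseteq M$ denote the finitely generated $R$-submodule spanned by $\{m, \partial_n(m), \ldots, \partial_n^N(m)\}$. I would prove by induction on $k$ that $(f\partial_n)^k(m) \in T_N$ for all $k \geq 0$. Writing $(f\partial_n)^k(m) = \sum_{i=0}^{N} r_i\, \partial_n^i(m)$ with $r_i \in R$, the Leibniz rule gives
\[
(f\partial_n)^{k+1}(m) \;=\; \sum_{i=0}^{N} f\partial_n(r_i)\, \partial_n^i(m) \;+\; \sum_{i=0}^{N} f r_i\, \partial_n^{i+1}(m).
\]
The first sum lies in $T_N$ termwise, as does each term in the second sum with $i < N$; the $i = N$ term equals $r_N \cdot g^b\, \partial_n^{N+1}(m)$, which belongs to $T_N$ by the displayed relation above. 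Hence $E_{f\partial_n}(m) \subseteq T_N$, and since $R$ is Noetherian, $E_{f\partial_n}(m)$ is finitely generated over $R$.

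The main obstacle is choosing $f$ so that a single element serves two purposes simultaneously: it must clear the denominator in the $M_g$-relation while also being the multiplier in $f \partial_n$, so that the ``excess'' term $f\, \partial_n^{N+1}(m)$ generated at each step of the induction is exactly the term whose position inside $T_N$ is controlled by the relation from step one. The choice $f = g^b$ is tailored precisely for this coincidence; it is crucial that no higher power of $g$ needs to be introduced as the induction proceeds, so that the invariance of the \emph{fixed} submodule $T_N$ under $f\partial_n$ remains intact at every stage.
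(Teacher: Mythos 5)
Your proof is correct and follows the standard argument for this lemma (the paper does not supply its own proof, deferring to van den Essen's unpublished thesis and to Lemma 4.2 of Switala's expository paper, both of which argue essentially as you do): derive a recurrence $g^b\partial_n^{N+1}(m)=\sum_{i\le N}t_i\,\partial_n^i(m)$ in $M$ from the finite generation of $M_g$ over the Noetherian ring $R_g$ after clearing denominators and killing $g$-torsion, set $f=g^b$, and verify by induction that every iterate $(f\partial_n)^k(m)$ stays inside the $R$-span of $m,\partial_n m,\ldots,\partial_n^N m$. One cosmetic point: you use the symbol $N$ both for the $R_g$-submodule of $M_g$ and for the index at which the ascending chain stabilizes, so one of these should be renamed.
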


The source \cite{essthesis} remains unpublished; see also \cite[Lemma 4.2]{nsexpos} for a proof of Lemma \ref{vdethesis}.  The conclusion of this lemma leads us to make the following definition:

\begin{definition}\label{xnreg}
If $M$ is a $\D$-module, an element $m \in M$ is \emph{$x_n$-regular} if there exists an $x_n$-regular element $f \in R$ such that $E_{f \partial_n}(m)$ is a finitely generated $R$-module.  A holonomic $\D$-module $M$ is \emph{$x_n$-regular} if there exists an $x_n$-regular $m \in M$ such that $M = \D \cdot m$.
\end{definition}

From the following proposition, it follows immediately that if a holonomic $\D$-module $M$ is $x_n$-regular, then \emph{every} $m \in M$ such that $M = \D \cdot m$ is $x_n$-regular:

\begin{prop}\cite[Prop. II.1.3(2)]{essthesis}\label{deltaregular}
Let $M$ be a $\D$-module, let $m \in M$, and let $\tau \in \D$ be a derivation.  If the $R$-module $E_{\tau}(m)$ is finitely generated, so is the $R$-module $E_{\tau}(\delta(m))$ for any $\delta \in \D$.
\end{prop}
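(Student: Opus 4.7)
The plan is to proceed by induction on the order $j$ of $\delta$. The base case $j=0$ (so $\delta \in R$) is immediate from the Leibniz rule for the derivation $\tau$: we have $\tau^i(\delta \cdot m) = \sum_{k=0}^i \binom{i}{k}\, \tau^{i-k}(\delta)\, \tau^k(m)$, which lies in the $R$-span of $\{\tau^k(m) : k \leq i\}$, so $E_{\tau}(\delta(m)) \subseteq E_{\tau}(m)$. For the inductive step, since $\D^j(R)$ is generated as a left $R$-module by monomials in $\partial_1, \ldots, \partial_n$ of total degree at most $j$, and since the $E_\tau$ operation is subadditive in its argument and contracts under left multiplication by elements of $R$ (both consequences of the base case), writing such a monomial as $\partial_s \circ \delta'$ with $\delta'$ of order strictly less than $j$ reduces the problem to the following core claim: if $E_{\tau}(m')$ is finitely generated, then so is $E_{\tau}(\partial_s(m'))$, for any $m' \in M$ and any $s \in \{1,\ldots,n\}$.

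For the core claim, the strategy is to exhibit a finitely generated, $\tau$-stable $R$-submodule $\bar W \subseteq M$ containing $\partial_s(m')$; then $E_{\tau}(\partial_s(m')) \subseteq \bar W$ is automatically finitely generated. Define
\[
\bar W = E_{\tau}(m') + \sum_{k=1}^{n} R \cdot \partial_k(E_{\tau}(m')),
\]
enlarging $E_{\tau}(m')$ by the images under \emph{all} coordinate derivations simultaneously, not just $\partial_s$. If $u_1, \ldots, u_p$ generate $E_{\tau}(m')$ over $R$, the Leibniz rule for each $\partial_k$ shows that $\bar W$ is already generated as an $R$-module by the finite set $\{u_i\}_{1 \le i \le p} \cup \{\partial_k(u_i)\}_{1 \le k \le n,\, 1 \le i \le p}$, so $\bar W$ is finitely generated.

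To verify $\tau$-stability, note that $\tau(E_\tau(m')) \subseteq E_\tau(m') \subseteq \bar W$ by definition of $E_\tau$, so the only case to check is $\tau(\partial_k(w))$ for $w \in E_{\tau}(m')$. Using $\tau\partial_k = \partial_k\tau + [\tau,\partial_k]$, we have $\tau(\partial_k(w)) = \partial_k(\tau(w)) + [\tau,\partial_k](w)$; the first summand lies in $\partial_k(E_{\tau}(m')) \subseteq \bar W$. For the second, since $\Der(R,k)$ is free over $R$ with basis $\partial_1, \ldots, \partial_n$, we can write $\tau = \sum_i r_i \partial_i$, giving the explicit identity $[\tau,\partial_k] = -\sum_i \partial_k(r_i)\, \partial_i$; hence $[\tau,\partial_k](w) \in \sum_i R \cdot \partial_i(E_{\tau}(m')) \subseteq \bar W$. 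Since $\partial_s(m') \in \partial_s(E_{\tau}(m')) \subseteq \bar W$, the core claim is proved.

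The main insight, and what turns this from an apparent infinite regress into a single-step argument, is recognizing that one must close $E_\tau(m')$ under all $n$ coordinate derivations simultaneously. A naive enlargement $E_\tau(m') + R \cdot \partial_s(E_\tau(m'))$ fails to be $\tau$-stable because $[\tau,\partial_s]$ in general involves every $\partial_i$ with nonzero coefficient, and each successive application of $\tau$ appears to drag in new iterated commutators $[\tau,[\tau,\partial_s]], \ldots$, an ostensibly unbounded process. Closing under the full set $\partial_1, \ldots, \partial_n$ at once is precisely what forces every commutator correction term to land back inside $\bar W$.
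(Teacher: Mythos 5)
Your proof is correct. It uses the same two core ingredients as the paper's argument: the reduction to the case $\delta = \partial_s$ (by decomposing $\D$ into $R$-multiples of monomials in the $\partial_i$), and the commutator identity $[\tau,\partial_k] = -\sum_i \partial_k(r_i)\partial_i$, which shows that $\tau$ applied to $\partial_k$-images stays in the $R$-span of $\partial_j$-images. Where you genuinely diverge is in the packaging. The paper runs an explicit induction on $\l$ to show that $\tau^{\l}(\partial_i(m))$ lies in the fixed finitely generated span $\sum_{q<p,\,j} R\cdot\partial_j(\tau^q(m))$, leaving the details of that induction to the reader; you instead build an explicit $\tau$-stable finitely generated $R$-submodule $\bar W = E_{\tau}(m') + \sum_k R\cdot\partial_k(E_{\tau}(m'))$ containing $\partial_s(m')$, and then quote Noetherianity of $R$ to conclude $E_{\tau}(\partial_s(m')) \subseteq \bar W$ is finitely generated. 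Your version isolates the reusable principle that $E_{\tau}(v)$ is the smallest $\tau$-stable $R$-module containing $v$, and replaces an unwritten induction over powers of $\tau$ by a one-step closure check, which makes it easier to audit. Both proofs ultimately invoke Noetherianity of $R$ at the last step (the paper also only shows $E_{\tau}(\partial_i(m))$ is \emph{contained in} a finitely generated module), so neither is more general on that score. Two cosmetic notes: (i) the phrase ``$\tau(E_\tau(m')) \subseteq E_\tau(m')$ by definition of $E_\tau$'' is slightly loose --- it follows from the Leibniz rule applied to a general element $\sum_i r_i\tau^i(m')$, not literally from the definition; (ii) the step ``$E_\tau(\partial_s(m')) \subseteq \bar W$ is automatically finitely generated'' should explicitly invoke Noetherianity of $R$.
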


Since \cite{essthesis} remains unpublished, we reproduce for the reader the proof of Proposition \ref{deltaregular} given in \cite{essthesis}:

\begin{proof}
Since $\D$ is generated over $R$ by finite products of the derivations $\partial_i$, it clearly suffices to check that if $E_{\tau}(m)$ is finitely generated, so is $E_{\tau}(\partial_i(m))$ for all $i$.  Fix such an $i$.  The hypothesis that $E_{\tau}(m)$ is finitely generated implies that there exists $p$ such that 
\[
\tau^p(m) \in R \cdot m + R \cdot \tau(m) + \cdots + R \cdot \tau^{p-1}(m).
\]  
For every $j$, the commutator $[\tau, \partial_j] = \tau \partial_j - \partial_j \tau \in \D$ is again a derivation, hence is an $R$-linear combination of $\partial_1, \ldots, \partial_n$.  That is, we have
\[
\tau \partial_j - \partial_j \tau \in R \cdot \partial_1 + \cdots + R \cdot \partial_n.
\]
An induction argument using the two displayed statements shows that for every $\l$, we have
\[
\tau^{\l}(\partial_i(m)) \in \sum_{0 \leq q \leq p-1, 1 \leq j \leq n} R \cdot \partial_j(\tau^q(m)).
\]
(We induce on $\l$: the second displayed statement is used to move $\tau$s to the right past $\partial$s, and the first displayed statement is used to limit the number of distinct powers of $\tau$ that appear.)  It follows that $E_{\tau}(\partial_i(m))$ is contained in the $R$-submodule of $M$ generated by the finite set $\{\partial_j(\tau^q(m))\}_{0 \leq q \leq p-1, 1 \leq j \leq n}$, completing the proof.
\end{proof}

More generally, a $\D$-module $M$ is said to be \emph{$x_n$-regular} if every $m \in M$ is $x_n$-regular.

\begin{lem}\cite[Cor. 1.8]{essen}\label{coordchange}
Let $M$ be a holonomic $\D$-module.  There exists a change of variables (that is, a replacement of $x_1, \ldots, x_n$ with another regular system of parameters for $R$) after which $M$ is $x_n$-regular.
\end{lem}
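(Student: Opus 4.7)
The plan is to combine Proposition \ref{fingenloc} with Lemma \ref{vdethesis} via a Weierstrass-style change of variables. Since $M$ is holonomic, Proposition \ref{fingenloc} produces a nonzero $g \in R$ such that $M_g$ is finitely generated over $R_g$. If this $g$ were already $x_n$-regular, Lemma \ref{vdethesis} would deliver, for every $m \in M$, an $x_n$-regular $f \in R$ with $E_{f\partial_n}(m)$ finitely generated; in particular a cyclic generator $m_0$ of $M$ (which exists by holonomicity) would be $x_n$-regular, making $M$ itself $x_n$-regular in the sense of Definition \ref{xnreg}. The content of the lemma therefore reduces to arranging that $g$ becomes regular in the last variable after a suitable change of regular system of parameters.

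The key intermediate claim I would establish is the Weierstrass-type fact that for any nonzero $g \in R$ there exists a $k$-algebra automorphism of $R$ preserving the maximal ideal after which $g$ is $x_n$-regular. I would prove this via the linear substitution $y_i = x_i - \lambda_i x_n$ for $i < n$ and $y_n = x_n$, with $\lambda_1, \ldots, \lambda_{n-1} \in k$ to be chosen. This is visibly a $k$-algebra automorphism of $R$ sending $\mathfrak{m}$ to $\mathfrak{m}$, hence a new regular system of parameters. The $y_n$-regularity of $g$ amounts to the non-vanishing of the univariate series $g(\lambda_1 y_n, \ldots, \lambda_{n-1} y_n, y_n) \in k[[y_n]]$. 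Writing $g = \sum_{d \geq 0} g_d$ with $g_d \in k[x_1, \ldots, x_n]$ homogeneous of degree $d$, the coefficient of $y_n^d$ in this substituted series is $g_d(\lambda_1, \ldots, \lambda_{n-1}, 1)$. Since $g \neq 0$, some $g_d$ is a nonzero polynomial; because $\ch k = 0$ implies $k$ is infinite, we may choose $\lambda_i \in k$ so that this coefficient is nonzero, which forces $g$ to be $y_n$-regular.

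Finally, I would apply Lemma \ref{vdethesis} in the new coordinates $y_1, \ldots, y_n$, noting that finite generation of $M_g$ over $R_g$ is an intrinsic property of $M$ and does not depend on the chosen regular system of parameters. The lemma then produces, for every $m \in M$, a $y_n$-regular element $f \in R$ such that $E_{f\partial_{y_n}}(m)$ is finitely generated over $R$; equivalently, every element of $M$ is $y_n$-regular. Taking a cyclic generator $m_0$ of $M$ (available by holonomicity) shows that $M$ is $y_n$-regular in the sense of Definition \ref{xnreg}. The only real obstacle is the change-of-variables step, and it is mild: the characteristic-zero hypothesis ensures $k$ is infinite so that a linear substitution suffices (in positive characteristic one would instead use nonlinear substitutions of the form $y_i = x_i + x_n^{N_i}$, which are also $k$-algebra automorphisms of $R$ preserving $\mathfrak{m}$).
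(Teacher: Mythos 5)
Your proposal is correct and takes essentially the same route as the paper's proof: combine Proposition \ref{fingenloc} (existence of $g$ with $M_g$ finitely generated over $R_g$) with Lemma \ref{vdethesis} to reduce to making $g$ regular in the last variable, then use that $k$ is infinite to achieve this by a linear substitution. The paper's proof simply asserts the linear change of variables exists without writing out the homogeneous-components calculation, so your proposal is a slightly more detailed version of the same argument.
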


\begin{proof}
We claim that, in fact, there is such a change of variables which is \emph{linear} in the $x_i$.  By Proposition \ref{fingenloc}, there exists $g \in R$ such that $M_g$ is finitely generated over $R_g$.  By Lemma \ref{vdethesis}, if $g$ is $x_n$-regular, then $M$ is $x_n$-regular.  Since $k$ is of characteristic zero and hence infinite, there is a linear change of variables after which $g$ is $x_n$-regular, completing the proof.
\end{proof}

By Proposition \ref{dRind}, the de Rham cohomology $H^{*}_{dR}(M)$ is independent of this coordinate change.  Our next observation is that the $x_n$-regularity condition is precisely what is required for the holonomy of the cokernel of $\partial_n$:

\begin{thm} \label{cokernel} \cite[Thm.]{cokernel}  
If $M$ is a holonomic $\D$-module that is $x_n$-regular, then $\overline{M} = M/\partial_n(M)$ is a holonomic $\D_{n-1}$-module.
\end{thm}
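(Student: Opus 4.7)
The plan is to show $\overline{M} = M/\partial_n M$ is a finitely generated $\D_{n-1}$-module with Bernstein dimension $d(\overline{M}) = n-1$. That $\overline{M}$ carries a $\D_{n-1}$-structure is immediate, since $\partial_n$ commutes with $x_1,\ldots,x_{n-1},\partial_1,\ldots,\partial_{n-1}$, so $\partial_n M$ is a $\D_{n-1}$-submodule of $M$. The entire content of the theorem lies in (a) producing a good $\D_{n-1}$-filtration on $\overline{M}$ and (b) showing its associated graded has Krull dimension at most $n-1$; both steps require the $x_n$-regularity hypothesis, as is foreshadowed by van den Essen's counterexample in \cite{example}.

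I would first reduce to the cyclic case $M = \D \cdot m$ with $m$ itself $x_n$-regular, using that holonomic modules are cyclic together with Proposition \ref{deltaregular}. Lemma \ref{vdethesis} (applied to the $x_n$-regular $f$ from Definition \ref{xnreg}, after Weierstrass preparation to arrange $f = x_n^s + \sum_{j<s} f_j(x_1,\ldots,x_{n-1}) x_n^j$ with $f_j \in \mathfrak{m}_{n-1}$) supplies generators $m, (f\partial_n)m, \ldots, (f\partial_n)^{p-1}m$ for a finitely generated $R$-submodule of $M$ stable under $f\partial_n$. Two observations then drive the image in $\overline{M}$: first, $\overline{\partial_n^k m} = 0$ for $k \geq 1$, so $\overline{M}$ is generated by $\overline{m}$ over the (a priori too large) ring $R\langle \partial_1, \ldots, \partial_{n-1}\rangle$; second, the identity $\overline{f\partial_n \xi} = -\overline{\partial_n(f)\xi}$ pushes the finiteness relation $(f\partial_n)^p m \in \sum_{j<p} R \cdot (f\partial_n)^j m$ down to a polynomial identity $(-\partial_n(f))^p \overline{m} \equiv \sum_{j<p} r_j (-\partial_n(f))^j \overline{m}$ in $\overline{M}$. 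Since $\partial_n(f)$ is distinguished of degree $s-1$ in $x_n$, this identity should bound $x_n^N\overline{m}$ as a $\D_{n-1}$-combination of finitely many generators, giving finite generation of $\overline{M}$ over $\D_{n-1}$.

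The main obstacle is upgrading finite generation to the dimension bound $d(\overline{M}) \leq n-1$. The natural approach is to start from a good $\D$-filtration $\{M(\nu)\}$ of $M$ (whose graded pieces, by Bernstein's theorem for the holonomic $M$, grow as a polynomial of degree $n-1$ in $\nu$), define $\overline{M}(\nu)$ as the image of $M(\nu)$ in $\overline{M}$, verify that this inherits goodness over the order filtration of $\D_{n-1}$, and show that its growth drops by one degree. The crucial ``drop by one'' step is the technical heart: one must exploit the polynomial relation derived from $x_n$-regularity to show that a fraction of $M(\nu)$ tending to $1$ as $\nu \to \infty$ already lies in $\partial_n M(\nu-1) + M(\nu-1)$, so that $\dim_k \overline{M}(\nu)/\overline{M}(\nu-1)$ grows only as $\nu^{n-2}$. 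Making this precise is the delicate part of the argument, and without $x_n$-regularity it fails outright, as $\overline{M}$ need not even be finitely generated over $\D_{n-1}$.
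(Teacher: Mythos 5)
This theorem is cited from van den Essen \cite{cokernel}; the paper gives no proof, but it does single out Lemma \ref{vdElemma} (\cite[Cor.~2]{cokernel}) as ``the key step in the proof of Theorem \ref{cokernel}.'' Your finite-generation half is aimed at re-deriving essentially that lemma, and there it has two genuine errors. First, $\overline{M}=M/\partial_n M$ is a $\D_{n-1}$-module and hence an $R_{n-1}$-module, but it is \emph{not} an $R$-module: $\partial_n M$ is not stable under multiplication by $x_n$ (indeed $x_n\partial_n\xi=\partial_n(x_n\xi)-\xi$, so $x_n\partial_n\xi\in\partial_n M$ only when $\xi$ is). Consequently the ring $R\langle\partial_1,\ldots,\partial_{n-1}\rangle$ does not act on $\overline{M}$, and expressions like $(-\partial_n(f))^j\overline{m}$ and $x_n^N\overline{m}$ are not well-formed; the correct object to control is the $R_{n-1}$-submodule image $\overline{Rm}=(Rm+\partial_n M)/\partial_n M$. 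Second, even reading $(-\partial_n(f))^j\overline{m}$ as $\overline{(-\partial_n(f))^j m}$, the iteration identity $\overline{(f\partial_n)^j m}=\overline{(-\partial_n(f))^j m}$ is false for $j\geq 2$. Iterating $\overline{f\partial_n\xi}=-\overline{\partial_n(f)\xi}$ produces Leibniz correction terms: a direct check gives $\overline{(f\partial_n)^2m}=\overline{\bigl(\partial_n(f)^2+f\,\partial_n^2(f)\bigr)m}$, and the extra $f\,\partial_n^2(f)$ term does not vanish in general. The correct assertion is precisely Lemma \ref{vdElemma}, whose proof is more involved than the bare iteration you sketch.

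The dimension-bound half uses the wrong framework. You invoke ``graded pieces\ldots grow as a polynomial of degree $n-1$ in $\nu$'' and propose to show $\dim_k\overline{M}(\nu)/\overline{M}(\nu-1)$ grows like $\nu^{n-2}$. That is Weyl-algebra (polynomial-ring, Bernstein-filtration) intuition. Over $R=k[[x_1,\ldots,x_n]]$ with the order filtration on $\D$, the graded pieces $M(\nu)/M(\nu-1)$ are finitely generated $R$-modules, hence typically \emph{infinite}-dimensional over $k$; there is no Hilbert function in $\nu$, no degree-$(n-1)$ growth, and nothing for the ``drop by one'' mechanism to act on. In this setting $d(M)$ is by definition the Krull dimension of $\gr(\D)/\sqrt{\Ann\,\gr(M)}$ (as recalled in subsection \ref{dmodprelim}), and what must actually be shown is that $\gr(\overline M)$ has Krull dimension at most $n-1$ over $\gr(\D_{n-1})=R_{n-1}[\zeta_1,\ldots,\zeta_{n-1}]$; van den Essen's argument is organized around that goal and around Lemma \ref{vdElemma}, not around a $\dim_k$-growth comparison.
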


The key step in the proof of Theorem \ref{cokernel} is the following lemma, which we will also use below in the proof of Lemma \ref{techlemma}:

\begin{lem} \label{vdElemma} \cite[Cor. 2]{cokernel}
Let $M$ be a $\D$-module.  Suppose that $m \in M$ is $x_n$-regular.  Then there exists a finitely generated $R_{n-1}$-submodule $L$ of $R \cdot m$ and a natural number $p$ such that $R \cdot m \subset L + \sum_{i=1}^p \partial_n^i(R \cdot m) \subset L + \partial_n(\sum_{i=0}^{p-1} R \cdot \partial_n^i(m))$.
\end{lem}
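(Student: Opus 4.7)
The plan is to convert $x_n$-regularity into a concrete Weierstrass-type relation for $\partial_n^p(m)$, and then exploit Weierstrass division by the resulting distinguished polynomial to decompose arbitrary elements of $R \cdot m$ into a ``low $x_n$-degree'' piece lying in $L$ and a ``$\partial_n$-image'' piece. By the hypothesis of $x_n$-regularity there exist an $x_n$-regular $f \in R$ and $p \in \mathbb{N}$ with $(f \partial_n)^p(m) \in \sum_{i<p} R \cdot (f \partial_n)^i(m)$. Expanding each operator $(f\partial_n)^i$ in the $R$-basis $\{\partial_n^j\}_{j \leq i}$, whose leading term is $f^i \partial_n^i$, converts this into
\[
f^p \partial_n^p(m) \in N_{p-1} := \sum_{j<p} R \cdot \partial_n^j(m).
\]
Because $f$ (and hence $f^p$) is $x_n$-regular, Weierstrass preparation writes $f^p = u g$ with $u \in R^{\times}$ and $g \in R_{n-1}[x_n]$ a distinguished polynomial of some degree $e$; absorbing $u^{-1}$ yields the key relation $g \cdot \partial_n^p(m) \in N_{p-1}$.

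Take $L := \sum_{j=0}^{e-1} R_{n-1} \cdot x_n^j g m$, which is a finitely generated $R_{n-1}$-submodule of $R \cdot m$. The second asserted containment is immediate: for $1 \leq i \leq p$ and $r \in R$, the Leibniz rule gives $\partial_n^{i-1}(rm) \in \sum_{\ell \leq i-1} R \partial_n^{\ell}(m) \subset \sum_{\ell<p} R\partial_n^{\ell}(m)$, so that $\partial_n^i(R \cdot m) = \partial_n(\partial_n^{i-1}(R \cdot m)) \subset \partial_n(\sum_{\ell<p} R\partial_n^{\ell}(m))$.

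For the first, substantive containment, given $r \in R$, Weierstrass division by $g$ writes $r = qg + s$ with $q \in R$ and $s \in R_{n-1}[x_n]$ of $x_n$-degree less than $e$, so $rm = qgm + sm$. Applying $\partial_n^p$ to $qgm$ via the Leibniz rule and invoking $g \partial_n^p(m) \in N_{p-1}$ shows that $\partial_n^p(qgm) \in N_{p-1}$; the commutator identity $r \partial_n^j(m) = \partial_n^j(rm) - [\partial_n^j, r](m)$, iterated, then places $N_{p-1}$ inside $R \cdot m + \sum_{j=1}^{p-1} \partial_n^j(R \cdot m)$, and combining the two extracts, for each $q$, a specific element of $R \cdot m$ that is provably in $\sum_{i=1}^p \partial_n^i(R \cdot m)$. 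The principal obstacle is the bookkeeping that follows: one must verify that, as $q$ varies over $R$ and as $s$ varies over $R_{n-1}[x_n]_{<e}$, the elements of $R \cdot m$ so produced --- together with the polynomial-remainder pieces $sm$ --- suffice to cover all of $R \cdot m$ modulo $L$. Equivalently, one must check that the classes of $gm, x_n gm, \ldots, x_n^{e-1} gm$ generate the $R_{n-1}$-module $R \cdot m/(R \cdot m \cap \sum_{i=1}^p \partial_n^i(R \cdot m))$; this is the technical heart of \cite[Cor.\ 2]{cokernel}.
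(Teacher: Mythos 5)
Your proposal correctly reproduces the easy second containment (via the Leibniz rule), which is all the paper itself actually proves: for the first containment the paper simply cites \cite[Cor.~2]{cokernel}. Your attempt to reconstruct van den Essen's argument for the first containment sets up plausible machinery but has a genuine gap.

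The setup is sound: from $x_n$-regularity you get $(f\partial_n)^p(m) \in \sum_{i<p} R(f\partial_n)^i(m)$, rewriting in the basis $\{\partial_n^j\}$ gives $f^p\partial_n^p(m) \in N_{p-1}$, and Weierstrass preparation yields $g\partial_n^p(m) \in N_{p-1}$ with $g$ distinguished of degree $e$. But two problems appear afterward. First, your $L = \sum_{j<e} R_{n-1}\, x_n^j g m$ does not contain the Weierstrass remainders $sm$ (those lie in $\sum_{j<e} R_{n-1}\, x_n^j m$, not in $gL_0$), so the division $rm = qgm + sm$ doesn't land in $L + \sum_i \partial_n^i(Rm)$ even for the ``easy'' remainder piece. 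Second, and more seriously, the step you label ``bookkeeping'' is the entire substance of the lemma. Knowing $\partial_n^p(qgm) \in N_{p-1}$ and then commutator-shuffling produces \emph{some} element $b_0 m \in \sum_{i\geq 1}\partial_n^i(Rm)$, but there is no visible relation between $b_0$ and $qg$, so this does not show $qgm \in L + \sum_i \partial_n^i(Rm)$. The difficulty is structural: the set $\{r \in R : rm \in \sum_{i\geq 1}\partial_n^i(Rm)\}$ is an $R_{n-1}$-submodule of $R$ but not obviously an ideal (multiplication by $R$ does not commute with $\partial_n^i$), so one cannot conclude from ``$g$ is distinguished'' that its multiples are absorbed. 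Your closing reformulation --- that $gm, x_n gm, \ldots, x_n^{e-1}gm$ should generate the quotient --- is also not what Weierstrass division delivers (division shows $Rm = L_0 + gRm$ with $L_0 = \sum_{j<e} R_{n-1} x_n^j m$, and it is the classes of $L_0$, not of $gL_0$, that appear naturally). So as written the proposal defers the first containment to \cite[Cor.~2]{cokernel} just as the paper does, without actually supplying the argument.
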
 

\begin{proof}
The first containment is \cite[Cor. 2]{cokernel}, and the second follows from the fact (easy to check using the Leibniz rule) that for all $i \geq 0$ we have $\partial_n^i(R \cdot m) \subset \sum_{j=0}^i R \cdot \partial_n^j(m)$.
\end{proof}

Our next two preliminary results (\ref{coords} and \ref{techlemma}) deal with the kernels and cokernels of the operators $\partial_i$.  We are going to study the de Rham complex of a $\D$-module $M$ by isolating one $\partial_i$ at a time, and these results guarantee that this process is well-behaved.

\begin{prop}\label{coords}
Let $M$ be a holonomic $\D$-module.  There exists a change of variables (which, by Proposition \ref{dRind}, does not alter the de Rham cohomology of $M$) after which, for all $i$ and $j$, $h^i(M \otimes \Omega^{\bullet}_{R^j})$ is a holonomic $\D_{n-j}$-module which is $x_{n-j}$-regular.
\end{prop}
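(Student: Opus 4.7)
The plan is to construct the required coordinates by induction on $j$, performing at each stage a ``block-triangular'' linear change of variables that modifies only the bottom coordinates $x_1, \ldots, x_{n-j-1}$ and fixes the top coordinates $x_{n-j}, \ldots, x_n$. This triangular form will guarantee that the arrangements made at earlier stages of the induction are not disturbed.

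For the base case $j = 0$, note that $R^0 = k$, so $M \otimes \Omega^{\bullet}_{R^0}$ is simply $M$ concentrated in degree zero; $M$ is holonomic by hypothesis, and Lemma \ref{coordchange} produces a linear change of variables in $x_1, \ldots, x_n$ after which $M$ becomes $x_n$-regular. For the inductive step from $j$ to $j+1$, suppose the conclusion of the proposition already holds for every $j' \leq j$. The short exact sequence of complexes of Remark \ref{partialdRj} at level $j$ produces a long exact cohomology sequence whose connecting maps are, up to sign, multiplication by $\partial_{n-j}$; consequently each $h^i(M \otimes \Omega^{\bullet}_{R^{j+1}})$ sits in a short exact sequence of $\D_{n-j-1}$-modules with $\mathrm{coker}\bigl(\partial_{n-j} \colon h^{i-1}(M \otimes \Omega^{\bullet}_{R^j}) \to h^{i-1}(M \otimes \Omega^{\bullet}_{R^j})\bigr)$ at one end and $\ker\bigl(\partial_{n-j} \colon h^i(M \otimes \Omega^{\bullet}_{R^j}) \to h^i(M \otimes \Omega^{\bullet}_{R^j})\bigr)$ at the other. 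Since by the inductive hypothesis $\partial_{n-j}$ is acting on holonomic, $x_{n-j}$-regular $\D_{n-j}$-modules, Theorems \ref{kernel} and \ref{cokernel} imply that both flanking terms are holonomic over $\D_{n-j-1}$, and therefore $h^i(M \otimes \Omega^{\bullet}_{R^{j+1}})$ is as well (being an extension of two holonomic modules).

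To upgrade the finitely many nonzero $h^i(M \otimes \Omega^{\bullet}_{R^{j+1}})$ to $x_{n-j-1}$-regular $\D_{n-j-1}$-modules, I would apply Proposition \ref{fingenloc} to each in turn to extract an element $g_i \in R_{n-j-1}$ at which the corresponding localization is finitely generated, and set $g = \prod_i g_i \in R_{n-j-1}$. Because $k$ is infinite, a linear change of variables in $x_1, \ldots, x_{n-j-1}$ alone (keeping $x_{n-j}, \ldots, x_n$ fixed) can be chosen so that $g$ becomes $x_{n-j-1}$-regular, in the sense that its image under $x_1 = \cdots = x_{n-j-2} = 0$ is nonzero. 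Lemma \ref{vdethesis} applied inside $R_{n-j-1}$ then ensures that every element of each $h^i(M \otimes \Omega^{\bullet}_{R^{j+1}})$ is $x_{n-j-1}$-regular; combined with the cyclicity of holonomic modules (which furnishes a single generator to which the definition of $x_{n-j-1}$-regular module applies), each $h^i(M \otimes \Omega^{\bullet}_{R^{j+1}})$ is therefore $x_{n-j-1}$-regular, as required.

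The main technical point, and essentially the only obstacle, is to verify that this last change of variables does not disturb the properties already established at earlier stages $j' \le j$. The triangular shape makes this transparent: the new bottom coordinates are $k$-linear combinations of $x_1, \ldots, x_{n-j-1}$ only and the top coordinates $x_{n-j}, \ldots, x_n$ are preserved, so the chain rule gives $\partial_{y_m} = \partial_{x_m}$ for every $m \geq n-j$. Consequently every earlier de Rham complex $M \otimes \Omega^{\bullet}_{R^{j'}}$ with $j' \leq j$ is literally unchanged as a complex, and therefore so are its cohomology modules and their $x_{n-j'}$-regularity (since $x_{n-j'} = y_{n-j'}$ has not moved). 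The induction thus proceeds uninterrupted from $j = 0$ to $j = n$, producing a single regular system of parameters with the required properties.
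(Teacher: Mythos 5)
Your proof is correct and follows essentially the same inductive strategy as the paper, using the same ingredients: Lemma \ref{coordchange} for the base case, the short exact sequence of Remark \ref{partialdRj} together with Theorems \ref{kernel} and \ref{cokernel} to establish holonomy at the next level, Proposition \ref{fingenloc} and Lemma \ref{vdethesis} to obtain $x_{n-j-1}$-regularity, and a block-triangular linear change of variables at each step that fixes the top coordinates. The one local difference is how you verify that earlier $x_{n-j'}$-regularity survives each new coordinate change: you argue it directly from the triangular structure, whereas the paper routes through the Weierstrass preparation theorem; your route is fine and arguably simpler, but the parenthetical ``since $x_{n-j'} = y_{n-j'}$ has not moved'' is a bit terse --- $x_{n-j'}$-regularity of an element $g \in R_{n-j'}$ means $g$ is nonzero modulo the ideal $(x_1, \ldots, x_{n-j'-1})$, so what you really need (and what the block-triangular form does deliver, since the new bottom coordinates are $k$-linear combinations of the old ones) is that this whole ideal is unchanged, not merely the top coordinate $x_{n-j'}$.
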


\begin{proof}
Let $j$ be given, and consider the short exact sequence
\[
0 \rightarrow M \otimes \Omega^{\bullet}_{R^j}[-1] \rightarrow M \otimes \Omega^{\bullet}_{R^{j+1}} \rightarrow M \otimes \Omega^{\bullet}_{R^j} \rightarrow 0
\]
of Remark \ref{partialdRj}.  The corresponding long exact sequence in cohomology takes the form
\begin{align*}
\cdots \rightarrow h^i(M \otimes \Omega^{\bullet}_{R^{j+1}}) \rightarrow h^i(M \otimes \Omega^{\bullet}_{R^j}) &\xrightarrow{\partial} h^{i+1}(M \otimes \Omega^{\bullet}_{R^j}[-1]) \, (= h^i(M \otimes \Omega^{\bullet}_{R^j})) \rightarrow \\ h^{i+1}(M \otimes \Omega^{\bullet}_{R^{j+1}}) &\rightarrow h^{i+1}(M \otimes \Omega^{\bullet}_{R^j})  \xrightarrow{\partial} \cdots,
\end{align*}
where the connecting homomorphisms $\partial$ are given, up to a sign, by $\partial_{n-j}$.  From this long sequence, we obtain short exact sequences
\[
0 \rightarrow C^{i-1}_{n-j} \rightarrow h^i(M \otimes \Omega^{\bullet}_{R^{j+1}}) \rightarrow K^i_{n-j} \rightarrow 0
\]
of $\D_{n-j-1}$-modules, where $C^{i-1}_{n-j}$ is the cokernel of $\partial_{n-j}$ acting on $h^{i-1}(M \otimes \Omega^{\bullet}_{R^j})$ and $K^i_{n-j}$ is the kernel of $\partial_{n-j}$ acting on $h^i(M \otimes \Omega^{\bullet}_{R^j})$.  If the $\D_{n-j}$-module $h^i(M \otimes \Omega^{\bullet}_{R^j})$ is holonomic, then the $\D_{n-j-1}$-module $K^i_{n-j}$ is holonomic by Theorem \ref{kernel}.  If, moreover, the $\D_{n-j}$-module $h^{i-1}(M \otimes \Omega^{\bullet}_{R^j})$ is holonomic and $x_{n-j}$-regular, then the $\D_{n-j-1}$-module $C^{i-1}_{n-j}$ is holonomic by Theorem \ref{cokernel}, from which it follows from the short exact sequence displayed above that the $\D_{n-j-1}$-module $h^i(M \otimes \Omega^{\bullet}_{R^{j+1}})$ is holonomic as well.  Therefore, if $h^i(M \otimes \Omega^{\bullet}_{R^j})$ is holonomic and $x_{n-j}$-regular for all $i$, then $h^i(M \otimes \Omega^{\bullet}_{R^{j+1}})$ is holonomic for all $i$.  This fact will enable us to prove the proposition by induction on $j$.

By hypothesis, $M = h^0(M \otimes \Omega^{\bullet}_{R^0})$ is a holonomic $\D$-module, and by Lemma \ref{coordchange}, there exists a change of variables after which $M$ is $x_n$-regular.  It follows from the previous paragraph that $h^i(M \otimes \Omega^{\bullet}_{R^1})$ is a holonomic $\D_{n-1}$-module for $i = 0,1$.  Now let $j \in \{0, \ldots, n-1\}$ be given, and assume that we have found a change of variables after which $h^i(M \otimes \Omega^{\bullet}_{R^{\l}})$ is a holonomic and $x_{n-\l}$-regular $\D_{n-\l}$-module for all $i$ and all $\l \leq j$ (consequently, $h^i(M \otimes \Omega^{\bullet}_{R^{j+1}})$ is a holonomic $\D_{n-j-1}$-module for all $i$).  For every $i$, there exists (by Proposition \ref{fingenloc}) an element $g_i$ of $R_{n-j-1}$ such that the localization $h^i(M \otimes \Omega^{\bullet}_{R^{j+1}})[(g_i)^{-1}]$ is a finitely generated $R_{n-j-1}[(g_i)^{-1}]$-module.  Since there are only finitely many $g_i$, and the field $k$ is infinite, there is a single change of variables after which every $g_i$ is $x_{n-j-1}$-regular.  Moreover, since $g_i$ only involves the variables $x_1, \ldots, x_{n-j-1}$, this change of variables leaves $x_{n-j}, \ldots, x_n$ fixed.  

The complexes $M \otimes \Omega^{\bullet}_{R^{\l}}$ for $\l \leq j$ are defined using only derivations from the set $\{\partial_{n-j}, \ldots, \partial_n\}$, and these derivations are unaffected by the change of variables; hence the isomorphism classes of the cohomology objects $h^i(M \otimes \Omega^{\bullet}_{R^{\l}})$ do not change.  Furthermore, for every such $h^i(M \otimes \Omega^{\bullet}_{R^{\l}})$, there exists an $x_{n-\l}$-regular element $g$ of $R_{n-\l}$ such that $h^i(M \otimes \Omega^{\bullet}_{R^{\l}})[g^{-1}]$ is a finitely generated $R_{n-\l}[g^{-1}]$-module.  After the given change of variables, this localization is still finitely generated, and $g$ is still $x_{n-\l}$-regular: by the Weierstrass preparation theorem \cite[Thm. IV.9.2]{lang}, the $x_{n-\l}$-regularity of $g$ is equivalent to the existence of an expression of $g$ as a monic polynomial in $x_{n-\l}$ with coefficients in $R_{n-\l-1}$, and after a change of variables that fixes $x_{n-j}, \ldots, x_n$ and in which the new $x_1, \ldots, x_{n-j-1}$ are linear combinations of the old $x_1, \ldots, x_{n-j-1}$ only, such a polynomial is still monic in $x_{n-\l}$.  We conclude that the chosen change of variables does not invalidate the inductive hypothesis.  After this change of variables, $h^i(M \otimes \Omega^{\bullet}_{R^{j+1}})$ is $x_{n-j-1}$-regular for all $i$ by Lemma \ref{vdethesis}, and therefore $h^i(M \otimes \Omega^{\bullet}_{R^{j+2}})$ is a holonomic $\D_{n-j-2}$-module for all $i$.  This induction completes the proof: beginning with $M$ and repeating the inductive step $n$ times, we end up with a single change of variables after which, for all $i$ and $j$, $h^i(M \otimes \Omega^{\bullet}_{R^j})$ is a holonomic $\D_{n-j}$-module which is $x_{n-j}$-regular.
\end{proof}

Our next technical lemma makes crucial use of van den Essen's results.  The situation we examine is that of a $\D$-module expressible as a direct limit of $R$-modules, and a family of $R_{n-1}$-linear maps between these modules whose direct limit is the $R_{n-1}$-linear map $\partial_n$.  We now describe the situation more precisely.  Let $M$ be a $\D$-module, and suppose that $M = \varinjlim M_i$ as $R$-modules, where $\{M_i\}$ is a direct system (indexed by $\mathbb{N}$) of $R$-submodules of $M$.  Let $f_{ij}: M_i \rightarrow M_j$ (resp. $f_i: M_i \rightarrow M$) be the transition (resp. insertion) maps for this direct system.  Suppose furthermore that there exist $R_{n-1}$-linear maps $\delta_i: M_i \rightarrow M_{i+1}$ for all $i$, compatible with the transition maps, such that $\varinjlim \delta_i = \partial_n$.  (We will write $\partial$ for $\partial_n$.)  Given an element $m \in M$, there exists some $j$ and $m_j \in M_j$ such that $m = f_j(m_j)$, and we have $\partial(m) =  f_{j+1}(\delta_j(m_j))$, independently of the choice of $j$ and $m_j$.

\begin{example}\label{holex}
One example of the preceding situation, which we will consider in our proof of Proposition \ref{dualcoh1}, is the following: let $M = \D \cdot m$ be a holonomic $\D$-module, and for each $i \geq 0$, let $M_i = \D^i(R) \cdot m \subset M$, where $\D^i(R)$ denotes the $R$-submodule of $\D$ consisting of differential operators of order at most $i$ (so, for instance, $M_0 = R \cdot m$).  Then $M = \varinjlim M_i$ (the transition maps here being inclusions), and for every $i$, the restriction of $\partial$ to $M_i$ is an $R_{n-1}$-linear map $\delta_i: M_i \rightarrow M_{i+1}$.  Clearly $\partial = \varinjlim \delta_i$.
\end{example}

We return now to the general case.  The transition maps $f_{ij}$ induce, by the compatibility, maps on the kernels and cokernels of the $\delta_i$: for all $\l$ and $s$, we see that $f_{\l, \l+s}(\ker \delta_{\l}) \subset \ker \delta_{\l + s}$ and that $f_{\l}(\ker \delta_{\l}) \subset \ker \partial$.  From this, it follows that $f_{\l, \l+s}$ (resp. $f_{\l}$) induces $R_{n-1}$-linear maps $f^{*}_{\l, \l+s}: \ker \delta_{\l} \rightarrow \ker \delta_{\l+s}$ and $\overline{f}_{\l, \l+s}: \coker \delta_{\l-1} \rightarrow \coker \delta_{\l+s-1}$ (resp. $f^{*}_{\l}: \ker \delta_{\l} \rightarrow \ker \partial$ and $\overline{f}_{\l}: \coker \delta_{\l-1} \rightarrow \coker \partial$).  We have, for all $\l$ and $s$, $f^{*}_{\l+s} \circ f^{*}_{\l, \l+s} = f^{*}_{\l}$, and a similar compatibility for the $\overline{f}$.  We will use an overline to denote the class of an element modulo $\delta_{\l-1}$ (if the element belongs to $M_{\l}$) or modulo $\partial$ (if the element belongs to $M$).  Since filtered direct limits are exact functors \cite[Thm. 2.6.15]{weibel}, the direct limit of the $\ker \delta_{\l}$ with respect to the restricted transition maps $f^*_{\l,\l+s}$ is $M^* = \ker \partial$, and the direct limit of the $\coker \delta_{\l-1}$ with respect to the induced transition maps $\overline{f}_{\l,\l+s}$ is $\overline{M} = M/\partial(M)$.

If $M$ is holonomic and $x_n$-regular, we have a stability property for the images of these induced maps on kernels and cokernels (see Definition \ref{strongstable} for the notion of stability in the strong sense):

\begin{lem}\label{techlemma}
Let $M$ be a holonomic, $x_n$-regular $\D$-module.  Let $\partial = \partial_n \in \D$.  Suppose that $\{M_i\}$ is a direct system of $R$-submodules of $M$ with $M = \varinjlim M_i$, and that $\{\delta_i: M_i \rightarrow M_{i+1}\}$ is a family of $R_{n-1}$-linear maps, compatible with the transitions, such that $\partial = \varinjlim \delta_i$.  Let $f_{ij}: M_i \rightarrow M_j$ (resp. $f_i: M_i \rightarrow M$) be the transition (resp. insertion) maps for this direct system, and define the induced maps $f^{*}$ and $\overline{f}$ as in the previous paragraph.  Fix $\l$ and let $N_{\l}$ be a finitely generated $R$-submodule of $M_{\l}$.
\begin{enumerate}[(a)]
\item Let $N^{*}_{\l} = N_{\l} \cap \ker \delta_{\l}$.  The images of $N^{*}_{\l}$ under the $f^{*}_{\l,\l+s}$ stabilize in the strong sense, and the stable image is a finitely generated $R_{n-1}$-submodule of $M^{*} = \ker \partial$.
\item Let $\overline{N}_{\l}$ be the image $N_{\l}/(N_{\l} \cap \delta_{\l-1}(M_{\l-1}))$ of $N_{\l}$ in $\coker \delta_{\l-1}$.  The images of $\overline{N}_{\l}$ under the $\overline{f}_{\l,\l+s}$ stabilize in the strong sense, and the stable image is a finitely generated $R_{n-1}$-submodule of $\overline{M} = \coker \partial$.
\end{enumerate}
\end{lem}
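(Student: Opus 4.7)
The proof treats parts (a) and (b) separately; (b) is considerably more involved. In both cases the main ingredients are a qualitative ``kernel'' or ``cokernel'' lemma (Lemma \ref{kernellem} for (a), Lemma \ref{vdElemma} for (b)) producing a finitely generated $R_{n-1}$-target for the image, combined with repeated applications of Lemma \ref{fgstable} to deduce strong-sense stability.

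For (a), strong-sense stability is essentially free: since $N_j$ is finitely generated over the Noetherian ring $R$, Lemma \ref{fgstable} applied to the ambient $R$-direct system $\{M_i\}$ yields $s$ with $N_j \cap \ker f_{j,j+s} = N_j \cap \ker f_j$, and intersecting with $N^*_j \subset N_j$ is exactly the condition $f^*_{j,j+s}(N^*_j) \cap \ker f^*_{j+s} = 0$. For finite generation of $f_j(N^*_j) \subset M^*$, my plan is to introduce $M_0 = R \cdot M^*$. Using $[\partial_n, \partial_i] = 0$ and the Leibniz rule one checks that $M_0$ is a $\D$-submodule of $M$ with $(M_0)^* = M^*$, so Lemma \ref{kernellem} furnishes a $\D_{n-1}$-decomposition $M_0 = M^* \oplus x_n M_0$, and the associated $R_{n-1}$-linear retraction $\pi_* : M_0 \twoheadrightarrow M^*$ annihilates $x_n M_0$. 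For an $R$-generator $g$ of the finitely generated $R$-module $M_0 \cap f_j(N_j)$ and any $r \in R$ decomposed as $r = r_0 + x_n r_1$ with $r_0 \in R_{n-1}$, one has $\pi_*(rg) = r_0 \pi_*(g)$, whence $\pi_*(M_0 \cap f_j(N_j))$ is finitely generated over $R_{n-1}$. Since $\pi_*$ restricts to the identity on $M^*$ and $f_j(N^*_j) \subset M^* \cap f_j(N_j)$, Noetherianity of $R_{n-1}$ concludes.

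For (b), finite generation of $\overline{f}_j(\overline{N}_j)$ follows immediately from Lemma \ref{vdElemma}. Each $f_j(m_i)$ is $x_n$-regular by Proposition \ref{deltaregular} (since $M$ is $x_n$-regular), so Lemma \ref{vdElemma} produces finitely generated $R_{n-1}$-submodules $L_i \subset R \cdot f_j(m_i)$ with $R \cdot f_j(m_i) \subset L_i + \partial_n(W_i)$; hence $\overline{f}_j(\overline{N}_j) \subset \sum_i \overline{L}_i$, which is finitely generated over $R_{n-1}$.

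The main obstacle is strong-sense stability in (b), because $\overline{N}_j$ need not be finitely generated over $R_{n-1}$ and Lemma \ref{fgstable} is not directly available. My plan is to lift each $R_{n-1}$-generator $\ell_{i,k} = \rho_{i,k} f_j(m_i)$ of $L_i$ to $\widetilde\ell_{i,k} = \rho_{i,k} m_i \in N_j$, and let $F \subset \overline{N}_j$ be the finitely generated $R_{n-1}$-submodule spanned by the classes of the $\widetilde\ell_{i,k}$; by construction $\overline{f}_j(F) = \overline{f}_j(\overline{N}_j)$, so $\overline{N}_j = F + K$ with $K = \ker(\overline{f}_j|_{\overline{N}_j})$, and Lemma \ref{fgstable} applied to $F$ in the $R_{n-1}$-direct system $\{\coker \delta_{i-1}\}$ handles the $F$-component at some uniform level. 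The crux is uniformly killing $K$: for any $\overline{n} \in K$, combining the Lemma \ref{vdElemma} decomposition $f_j(n) = \ell + \partial_n(w)$ with $f_j(n) \in \partial_n(M)$ forces $\ell \in L \cap \partial_n(M)$, a submodule of the finitely generated $R_{n-1}$-module $L = \sum L_i$ and therefore itself finitely generated over $R_{n-1}$. Choosing $R_{n-1}$-generators $v_i = \partial_n(u_i)$ of $L \cap \partial_n(M)$, lifting the $u_i$ to a common level, and using the intermediate compatibility of the $\delta_i$ with the transitions, one finds a uniform level $s_1$ at which each $v_i$ admits a lift in $\delta_{s_1-1}(M_{s_1-1})$; by $R_{n-1}$-linearity this places $f_{j,s_1}(\widetilde\ell)$ inside $\delta_{s_1-1}(M_{s_1-1})$ uniformly in the choice of lift $\widetilde\ell$ of $\ell$. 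The residual discrepancy $f_{j,j+p}(n - \widetilde\ell) - \delta_{j+p-1}(\widetilde w)$ sits in $\ker f_{j+p}$ and in a \emph{fixed} finitely generated $R$-submodule $Z \subset M_{j+p}$ determined by $N_j$, the lift $\widetilde L$, and the $\widetilde W_i$ independently of $\overline{n}$; a final application of Lemma \ref{fgstable} to $Z \cap \ker f_{j+p}$ supplies a uniform level beyond which the discrepancy vanishes. Assembling these bounds with the stability of $F$ yields a single $s_*$ realizing strong-sense stability for $\overline{N}_j$.
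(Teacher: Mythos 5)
Your proposal follows essentially the same route as the paper: part (a) rests on Lemma \ref{kernellem} applied to the $\D$-submodule $R\cdot M^*$ together with the $R_{n-1}$-linear retraction onto $M^*$, and part (b) rests on Lemma \ref{vdElemma} to contain $f_j(N_j)$ inside $L + \partial(\Gamma)$, followed by applications of Lemma \ref{fgstable}. Part (a) is correct; you avoid the paper's explicit replacement of $M$ and the $M_i$ by $R\cdot M^*$ and $f_i^{-1}(R\cdot M^*)$ and instead argue inside $M_0 \cap f_j(N_j)$, but the retraction computation and the stabilization via Lemma \ref{fgstable} both match.

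In part (b), one intermediate claim is false as stated: ``by $R_{n-1}$-linearity this places $f_{j,s_1}(\widetilde\ell)$ inside $\delta_{s_1-1}(M_{s_1-1})$.'' From $v_i = f_{s_1}(\delta_{s_1-1}(u_i'))$ and $\ell = \sum c_i v_i$ you only get that $f_{j,s_1}(\widetilde\ell)$ and $\delta_{s_1-1}(\sum c_i u_i')$ have the \emph{same image in $M$}, i.e.\ differ by an element of $\ker f_{s_1}$; since $f_{s_1}$ need not be injective, you cannot conclude equality. The paper does not split $\overline{N}_\ell$ into $F + K$ (the split is in any case superfluous: once $K$ is uniformly annihilated, strong-sense stability follows directly, because any $\overline{n}$ whose image under $\overline{f}_{\ell,\ell+s}$ lies in $\ker\overline{f}_{\ell+s}$ automatically lies in $K$). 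Instead it puts $f_\ell(n_\ell)$ in the form $f_{\ell+t}(\delta_{\ell+t-1}(m))$ all at once by expressing $f_\ell(n_\ell)$ inside $(L'_\ell \cap \partial(M)) + \partial(\Gamma_\ell)$, records the \emph{single} discrepancy $f_{\ell,\ell+t}(n_\ell) - \delta_{\ell+t-1}(m) \in \ker f_{\ell+t}$, and invokes Lemma \ref{fgstable} once. Your sketch can be repaired by folding the $\widetilde\ell$-discrepancy into the same ``fixed $Z$'' treatment you propose for $n - \widetilde\ell$; but to make $Z$ a genuine finitely generated $R$-module you must take care that the possible values of $\delta_{s_1-1}(\sum c_i u_i')$ (and of $\delta_{j+p-1}(\widetilde w)$) are confined to a finitely generated $R$-module, which requires controlling the $\delta$-images of the relevant lift modules rather than just citing ``determined by the lifts''; the subtlety is that $\delta$ is only $R_{n-1}$-linear while $\Gamma_\ell$ and $N_j$ are $R$-modules. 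This is a delicate point in the paper's proof as well, but your overclaim about $f_{j,s_1}(\widetilde\ell)$ hides it entirely.
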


\begin{proof}
Let $M^* = \ker \partial$.  The Leibniz rule implies that $R \cdot M^*$ is a $\D$-submodule of $M$, and it is clear that $M^* = (R \cdot M^*)^*$ as $\D_{n-1}$-submodules of $M$, where $(R \cdot M^*)^* = \ker(\partial: R \cdot M^* \rightarrow R \cdot M^*)$.  Moreover, for all $i$, $\ker \delta_i \subset f_i^{-1}(M^*)$ (since $\partial = \varinjlim \delta_i$), so the kernels of $\partial$ and of the $\delta_i$, as well as the maps between them, do not change if we replace $M$ with $M' = R \cdot M^*$ and all $M_i$ by $M'_i = f_i^{-1}(M')$.  Since $M' = \varinjlim M'_i$ (the transition maps being the restrictions of $f_{ij}$ to $M'_i$) and $\partial|_{M'} = \varinjlim \delta_i|_{M'_i}$, we reduce the proof of part (a) to the case $M = M'$.

By Lemma \ref{kernellem}, the hypothesis that $M = M'$ implies that there is an $R_{n-1}$-module direct sum decomposition $M = M^* \oplus x_n M$.  Let $\pi$ be the $R_{n-1}$-linear projection $M \rightarrow M^*$.  Since $f_{\l}(N^*_{\l}) \subset M^*$, $\pi$ restricts to an isomorphism $f_{\l}(N^*_{\l}) \xrightarrow{\sim} \pi(f_{\l}(N^*_{\l}))$, and the composite
\[
N^*_{\l} \hookrightarrow N_{\l} \xrightarrow{f_{\l}} M \xrightarrow{\pi} M^*
\] 
factors through the natural surjection $N_{\l} \rightarrow N_{\l}/x_n N_{\l}$ (since $\ker(\pi \circ f_{\l})$ contains $x_nN_{\l}$).  Since $N_{\l}$ is a finitely generated $R$-module, $N_{\l}/x_nN_{\l}$ is a finitely generated $R_{n-1}$-module.  This implies that $\pi(f_{\l}(N^*_{\l}))$ is contained in the $R_{n-1}$-linear image of the finitely generated $R_{n-1}$-module $N_{\l}/x_nN_{\l}$, so it is a finitely generated $R_{n-1}$-module itself, and so is its isomorphic copy $f_{\l}(N^*_{\l}) = f^*_{\l}(N^*_{\l})$.  It follows that \emph{if} the images of $N^*_{\l}$ stabilize in the strong sense, the stable image is finitely generated.

As $N_{\l}$ is a finitely generated $R$-module, its images under the $R$-linear maps $f_{\l,\l+s}$ stabilize in the strong sense by Lemma \ref{fgstable}; since $N^*_{\l} \subset N_{\l}$ and $f^*_{\l,\l+s}$ is simply a restriction of $f_{\l,\l+s}$ for all $s$, the fact that the images of $N^*_{\l}$ under the $f^*_{\l,\l+s}$ stabilize in the strong sense as well is automatic.  This proves (a).

In our proof of (b) we cannot assume (but do not need) that $M = M'$, so we drop this assumption now.  We now fix a set $\{n_1, \ldots, n_{\alpha_{\l}}\}$ of $R$-generators for $N_{\l}$.  By assumption, $M$ is $x_n$-regular, so by definition all $f_{\l}(n_i) \in M$ are $x_n$-regular.  We can therefore apply Lemma \ref{vdElemma} to every $f_{\l}(n_i)$ in turn. Let $i \in \{1, \ldots, \alpha_{\l}\}$ be fixed, and consider the $R$-submodule $R \cdot f_{\l}(n_i) \subset M$ generated by $f_{\l}(n_i)$.  By Lemma \ref{vdElemma}, there exist a positive natural number $p_i$ and a finitely generated $R_{n-1}$-submodule $L_i$ of $R \cdot f_{\l}(n_i)$ such that
\[
R \cdot f_{\l}(n_i) \subset L_i + \partial(\sum_{j=0}^{p_i - 1} R \cdot \partial^j(f_{\l}(n_i))).
\]  
If we let $L'_{\l}$ be the finitely generated $R_{n-1}$-module $L_1 + \cdots + L_{\alpha_{\l}}$ and $\Gamma_{\l}$ the $R$-submodule of $M$ generated by 
\[
\{\partial^j(f_{\l}(n_i))\}_{1 \leq i \leq \alpha_{\l}, 0 \leq j \leq p_i - 1},
\] 
we see that 
\[
f_{\l}(N_{\l}) = \sum_{i=1}^{\alpha_{\l}} R \cdot f_{\l}(n_i) \subset L'_{\l} + \partial(\Gamma_{\l}).
\]  
Let $\{f_{\l}(y_1), \ldots, f_{\l}(y_{\eta_{\l}})\}$ be a set of $R_{n-1}$-generators for $L'_{\l} \subset f_{\l}(N_{\l})$, and write $L''_{\l}$ for the $R_{n-1}$-submodule of $N_{\l}$ generated by $y_1, \ldots, y_{\eta_{\l}}$, so that $f_{\l}(L''_{\l}) = L'_{\l}$.  Then the containment $f_{\l}(N_{\l}) \subset f_{\l}(L''_{\l}) + \partial(\Gamma_{\l})$ implies that
\[
\overline{f}_{\l}(\overline{N}_{\l}) \subset \overline{f}_{\l}(L''_{\l}/(L''_{\l} \cap \delta_{\l-1}(M_{\l-1})))
\]
inside $M/\partial(M)$: given any class $\overline{n}_{\l} \in \overline{N}_{\l}$ of an element $n_{\l} \in N_{\l}$, we can write $f_{\l}(n_{\l}) = f_{\l}(\lambda_{\l}) + \partial(\gamma_{\l})$ for some $\lambda_{\l} \in L''_{\l}$ and $\gamma_{\l} \in \Gamma_{\l}$, so we have
\[
\overline{f}_{\l}(\overline{n}_{\l}) = \overline{f_{\l}(n_{\l})} = \overline{f_{\l}(\lambda_{\l}) + \partial(\gamma_{\l})} = \overline{f_{\l}(\lambda_{\l})} = \overline{f}_{\l}(\overline{\lambda}_{\l})
\]
since the class of $\partial(\gamma_{\l})$ in $\overline{M} = M/\partial(M)$ is zero (the first and last equalities are simply the definition of $\overline{f}_{\l}$).  We know that $\overline{f}_{\l}$ is $R_{n-1}$-linear and that $L''_{\l}$, and hence its quotient $L''_{\l}/(L''_{\l} \cap \delta_{\l-1}(M_{\l-1}))$, is a finitely generated $R_{n-1}$-module, so we can conclude that $\overline{f}_{\l}(\overline{N}_{\l})$ is a finitely generated $R_{n-1}$-module.  It follows that \emph{if} the images of $\overline{N}_{\l}$ stabilize in the strong sense, the stable image is finitely generated.

In order to conclude that the images of $\overline{N}_{\l}$ stabilize in the strong sense, we must prove that there exists $t$ with the property that whenever $f_{\l}(n_{\l}) \in \partial(M)$ for some $n_{\l} \in N_{\l}$, we have $f_{\l, \l+t}(n_{\l}) \in \delta_{\l+t-1}(M_{\l+t-1})$; in other words, if the class of $n_{\l}$ is carried to zero in $\coker \partial$, it is already zero in $\coker \delta_{\l+t-1}$.  (This implies that the images of $\overline{N}_{\l}$ stabilize at the $(\l+t)$th stage.) Choose $t$ so large that every $\partial^j(f_{\l}(n_i))$ has a representative in $M_{\l+t-1}$, as follows: for all $i$ and $j$, there exists a natural number $t_{ij}$ and an element $m_{\l + t_{ij}} \in M_{\l + t_{ij}}$ such that $f_{\l + t_{ij}}(m_{\l + t_{ij}}) = \partial^j(f_{\l}(n_i))$.  Put $t = \max{\{t_{ij}\}} + 1$.  Then since the $\partial^j(f_{\l}(n_i))$ are $R$-generators for $\Gamma_{\l}$, any element of the $R_{n-1}$-module $\partial(\Gamma_{\l})$ can be expressed as $f_{\l+t}(\delta_{\l+t-1}(m))$ for some $m \in M_{\l+t-1}$.  Consider also the $R_{n-1}$-submodule $L'_{\l} \cap \partial(M) \subset M$, which is finitely generated since $L'_{\l}$ is.  Enlarging $t$ if necessary, we may assume (by the finite generation) that every element of $L'_{\l} \cap \partial(M)$ can also be expressed as $f_{\l+t}(\delta_{\l+t-1}(m'))$ for some $m' \in M_{\l+t-1}$.

Now suppose that $n_{\l} \in N_{\l}$ is such that $f_{\l}(n_{\l}) \in \partial(M)$.  Since $f_{\l}(N_{\l}) \subset L'_{\l} + \partial(\Gamma_{\l})$, we have $f_{\l}(n_{\l}) \in (L'_{\l} \cap \partial(M)) + \partial(\Gamma_{\l})$, so $f_{\l}(n_{\l}) = f_{\l+t}(\delta_{\l+t-1}(m))$ for some $m \in M_{\l+t-1}$.  Since $f_{\l} = f_{\l+t} \circ f_{\l, \l+t}$, we see that 
\[
f_{\l, \l+t}(n_{\l}) - \delta_{\l+t-1}(m) \in \ker f_{\l+t}.
\]  
The kernel of the restriction of $f_{\l+t}$ to $f_{\l, \l+t}(N_{\l})$ is a finitely generated $R$-module, so by Lemma \ref{fgstable}, its images stabilize in the strong sense, and the stable image is zero because the image of this kernel in $M$ is clearly zero.  Therefore, enlarging $t$ again if necessary (so that the transition map from the old $t$ to the new $t$ annihilates this kernel; this new $t$ depends only on $\l$, not on $n_{\l}$), we may assume that the difference $f_{\l, \l+t}(n_{\l}) - \delta_{\l+t-1}(m)$ is zero; that is, that $f_{\l, \l+t}(n_{\l})$ belongs to $\delta_{\l+t-1}(M_{\l+t-1})$.  We conclude that the images of $\overline{N}_{\l}$ stabilize in the strong sense, completing the proof of (b) and the lemma.
\end{proof}

The strategy of the proof of Proposition \ref{dualcoh1} is to write the de Rham complex $M \otimes \Omega_R^{\bullet}$ of a holonomic $\D$-module $M$ as a direct limit of complexes whose objects are finitely generated $R$-modules (using the degree, or order, filtration on the ring $\D$).  Fix, once and for all, both a holonomic $\D$-module $M$ and an element $m \in M$ such that $M = \D \cdot m$.  As in Example \ref{holex}, let $M_{\l} = \D^{\l}(R) \cdot m$ for $\l \geq 0$.  Note that $M = \cup_{\l} M_{\l}$; for any $i$, $\partial_i: M \rightarrow M$ induces $k$-linear maps $\partial_i: M_{\l} \rightarrow M_{\l + 1}$ for all $\l$; and every $M_{\l}$ is a finitely generated $R$-module (a set of $R$-generators is given by $\{\delta m\}$ where $\delta$ runs through the monomials in $\partial_1, \ldots, \partial_n$ of total degree at most $\l$).

\begin{definition}\label{partialcplxs}
Let $M$ and $m$ be as above.  For all $j \in \{0, \ldots, n\}$ and $\l \in \mathbb{N}$, let $\mathcal{M}^{j, \bullet}_{\l}$ be the subcomplex
\[
0 \rightarrow \mathcal{M}^{j,0}_{\l} \rightarrow \mathcal{M}^{j,1}_{\l} \rightarrow \cdots \rightarrow \mathcal{M}^{j,j}_{\l} \rightarrow 0
\]
of $M \otimes \Omega^{\bullet}_{R^j}$ whose $i$th object $\mathcal{M}_{\l}^{j,i}$ is a direct sum of $j \choose i$ copies of the $R$-submodule $M_{\l+i}$ of $M$, indexed by $dx_{k_1} \wedge \cdots \wedge dx_{k_i}$ for $i$-tuples $n-j+1 \leq k_1 < \cdots < k_i \leq n$, and whose differentials are the restrictions of those in the complex $M \otimes \Omega^{\bullet}_{R^j}$.  (We simply write $\mathcal{M}^{\bullet}_{\l}$ for $\mathcal{M}^{n, \bullet}_{\l}$.)
\end{definition}

If we suppress the indexing wedge products, the complex $\mathcal{M}^{j, \bullet}_{\l}$ takes the form
\[
0 \rightarrow M_{\l} \rightarrow \oplus_{1 \leq i \leq n} M_{\l+1} \rightarrow \cdots \rightarrow M_{\l+j} \rightarrow 0;
\]
its objects are finitely generated $R$-modules and its differentials are $k$-linear.  We have short exact sequences of complexes
\[
0 \rightarrow \mathcal{M}^{j, \bullet}_{\l+1}[-1] \rightarrow \mathcal{M}^{j+1, \bullet}_{\l} \rightarrow \mathcal{M}^{j, \bullet}_{\l} \rightarrow 0
\]
for all $\l$, analogous to the sequence of Remark \ref{partialdRj}; the first nonzero morphism is given by $\wedge \, dx_{n-j}$ and, in the induced long exact cohomology sequence, the connecting homomorphisms are given by $\partial_{n-j}$, up to a sign.

The complexes $\mathcal{M}^{j, \bullet}_{\l}$ naturally form a filtered direct system as $\l$ varies.  We have $M \otimes \Omega^{\bullet}_{R^j} = \varinjlim \mathcal{M}^{j, \bullet}_{\l}$, and as filtered direct limits commute with cohomology (they are exact functors, by \cite[Thm. 2.6.15]{weibel}), this implies 
\[
h^i(M \otimes \Omega^{\bullet}_{R^j}) \simeq \varinjlim h^i(\mathcal{M}^{j, \bullet}_{\l})
\]
for all $i$.  In particular, taking $j = n$, we have $H^i_{dR}(M) = \varinjlim h^i(\mathcal{M}^{\bullet}_{\l})$ as $k$-spaces.  By Theorem \ref{findimdR}, the left-hand side is a finite-dimensional $k$-space for all $i$.  Thus, for all $\l$, the image of $h^i(\mathcal{M}^{\bullet}_{\l})$ in $H^i_{dR}(M)$ is a finite-dimensional $k$-space.  The key technical result in the proof of Proposition \ref{dualcoh1} is the following:

\begin{prop}\label{mitlef}
Let $M = \D \cdot m$ be a holonomic $\D$-module, and define the approximations $\mathcal{M}^{\bullet}_{\l}$ to its de Rham complex as above.  For all $i$ and $\l$, the images of $h^i(\mathcal{M}^{\bullet}_{\l})$ stabilize in the strong sense, with finite-dimensional stable image.
\end{prop}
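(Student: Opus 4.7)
The plan is to induct on $j \in \{0, 1, \ldots, n\}$, proving the stronger statement that for every $i$ and $\l$, the images of $h^i(\mathcal{M}^{j,\bullet}_\l)$ in $h^i(M \otimes \Omega^\bullet_{R^j})$ stabilize in the strong sense with finitely generated $R_{n-j}$-stable image; taking $j = n$ (so $R_{n-j} = k$) yields the proposition. The base case $j = 0$ is immediate: $\mathcal{M}^{0,\bullet}_\l$ is $M_\l$ in degree zero and the insertion $M_\l \hookrightarrow M$ is the inclusion of a finitely generated $R$-submodule, so no stabilization is needed.

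For the inductive step, I will first invoke Proposition \ref{coords} (together with Proposition \ref{dRind}) to change variables so that every $h^{i'}(M \otimes \Omega^\bullet_{R^{j'}})$ becomes a holonomic, $x_{n-j'}$-regular $\D_{n-j'}$-module. Applying $h^i$ to the short exact sequence of complexes $0 \to \mathcal{M}^{j,\bullet}_{\l+1}[-1] \to \mathcal{M}^{j+1,\bullet}_\l \to \mathcal{M}^{j,\bullet}_\l \to 0$, and using Lemma \ref{connhom} to identify the connecting homomorphism with $\pm \partial_{n-j}$, I obtain short exact sequences
\[
0 \to C_\l \to h^i(\mathcal{M}^{j+1,\bullet}_\l) \to K_\l \to 0,
\]
with $C_\l = \coker\bigl(\partial_{n-j}\colon h^{i-1}(\mathcal{M}^{j,\bullet}_\l) \to h^{i-1}(\mathcal{M}^{j,\bullet}_{\l+1})\bigr)$ and $K_\l = \ker\bigl(\partial_{n-j}\colon h^i(\mathcal{M}^{j,\bullet}_\l) \to h^i(\mathcal{M}^{j,\bullet}_{\l+1})\bigr)$. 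Passing to the direct limit in $\l$ yields the analogous short exact sequence for $h^i(M \otimes \Omega^\bullet_{R^{j+1}})$ with outer terms $C = \coker \partial_{n-j}$ and $K = \ker \partial_{n-j}$.

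Write $H^{j,i'}_\l$ for the image of $h^{i'}(\mathcal{M}^{j,\bullet}_\l)$ in $h^{i'}(M \otimes \Omega^\bullet_{R^j})$; by the induction hypothesis $\{H^{j,i'}_\l\}_\l$ is an exhaustive filtration of the $\D_{n-j}$-module $h^{i'}(M \otimes \Omega^\bullet_{R^j})$ by finitely generated $R_{n-j}$-submodules, with $\partial_{n-j}(H^{j,i'}_\l) \subseteq H^{j,i'}_{\l+1}$. Lemma \ref{techlemma} then applies: part (a) gives that $H^{j,i}_\l \cap \ker\partial_{n-j}$ is finitely generated over $R_{n-j-1}$, and part (b) gives the same for the image of $H^{j,i-1}_\l$ in $\coker\partial_{n-j}$. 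A short diagram chase (using the induction hypothesis to promote limit-level representations to finite-level ones) identifies these groups with the stable images of $K_\l \to K$ and of $C_\l \to C$ respectively, so the stable image of $h^i(\mathcal{M}^{j+1,\bullet}_\l)$, sandwiched between them in the limit short exact sequence, is itself finitely generated over $R_{n-j-1}$.

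The chief obstacle will be establishing strong-sense stability for $\{h^i(\mathcal{M}^{j+1,\bullet}_\l)\}$. A chase along the short exact sequence reduces this to strong-sense stability for $\{K_\l\}$ and $\{C_\l\}$ separately. Stability for $\{K_\l\}$ is immediate from the induction hypothesis applied to $\{h^i(\mathcal{M}^{j,\bullet}_\l)\}$, since $K$ injects into $h^i(M \otimes \Omega^\bullet_{R^j})$. Stability for $\{C_\l\}$ is the technical heart of the proof and requires a careful interleaving: given a class in $C_\l$ whose image in $C$ is zero, Lemma \ref{techlemma}(b) first supplies a uniform bound $s$ (depending only on $\l$) such that its lift, viewed in $H^{j,i-1}_{\l+1}$, can be written as $\partial_{n-j}$ of an element of $H^{j,i-1}_{\l+s}$; the induction hypothesis for $\{h^{i-1}(\mathcal{M}^{j,\bullet}_\l)\}$ then upgrades this limit-level equality to a finite-level equality that kills the class in $C_{\l+\l'}$ for some $\l'$ depending only on $\l$. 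Packaging this two-step uniform argument is where the real difficulty lies.
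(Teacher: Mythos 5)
Your proposal correctly identifies the inductive framework (induction on $j$, with the change of variables from Proposition~\ref{coords}, the short exact sequences from Remark~\ref{partialdRj}, and the decomposition of $h^i(\mathcal{M}^{j+1,\bullet}_\l)$ via kernels and cokernels of $\partial_{n-j}$), and your use of Lemma~\ref{techlemma} for finite generation is the right tool. However, two genuine gaps remain, one of which you yourself flag as unresolved. First, the ``short diagram chase'' identifying the stable image of $h^i(\mathcal{M}^{j+1,\bullet}_\l)$ as an extension of the stable images of $K_\l$ and $C_\l$ is not correct as stated: an element $a \in A_\l := h^i(\mathcal{M}^{j+1,\bullet}_\l)$ whose image lands in $C = \coker\partial_{n-j}$ need not lie in $C_\l$ at the finite level, since its image in $K_\l$ can be nonzero and only die after some transitions. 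So the intersection of the stable image of $A_\l$ with $C$ is not the stable image of $C_\l$, and the ``sandwich'' is not a short exact sequence of the three claimed stable images. Second, you explicitly leave the strong-sense stability of the cokernel system (and hence of $\{A_\l\}$) as an unfinished ``two-step uniform argument.''

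The key observation you are missing, and the one that makes the proof go through cleanly, is Lemma~\ref{fgstable}: in a direct system of $R_{n-j-1}$-modules with $R_{n-j-1}$-linear transition maps, any \emph{finitely generated} submodule automatically has images that stabilize in the strong sense, simply because $R_{n-j-1}$ is Noetherian. So you should not try to prove stability of $\{A_\l\}$ directly from stability of $\{K_\l\}$ and $\{C_\l\}$ via a chase. Instead, the correct route (which the paper takes) is: using the induction hypothesis and Lemma~\ref{techlemma}, show that at a sufficiently large finite level $\l+s+w$ (with $s,w$ depending only on $\l$), the transported image $\iota_{s+w}^{j+1}(A_\l) \subset A_{\l+s+w}$ is a finitely generated $R_{n-j-1}$-module. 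This is done by introducing the submodule $K_{\l+t} := \ker\bigl(\iota_t^{j+1}(A_\l) \twoheadrightarrow \iota_t^j(\ker\partial^\l_{n-j})\bigr)$ (this is exactly the intersection you need, computed at a finite level, rather than the stable image of $C_\l$), showing via the snake lemma that $K_{\l+s}\to K_{\l+t}$ is surjective for $t\geq s$, bounding $K_{\l+s+w}$ inside the stable image of a cokernel that is finitely generated by Lemma~\ref{techlemma}(b), and then using the resulting short exact sequence at level $\l+s+w$. Once finite generation at a finite level is in hand, Lemma~\ref{fgstable} hands you strong-sense stability for free, and the ``real difficulty'' you describe evaporates.
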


\begin{proof}[Proof of Proposition \ref{mitlef}]
We may assume, after possibly making a change of variables as in Proposition \ref{coords}, that for all $i$ and $j$, $h^i(M \otimes \Omega^{\bullet}_{R^j})$ is a holonomic $\D_{n-j}$-module which is $x_{n-j}$-regular.  Let $(\ast_j)$ be the following statement: for all $i \in \{0, \ldots, j\}$ and all $\l \geq 0$, the images of $h^i(\mathcal{M}^{j, \bullet}_{\l})$ in the direct system
\[
\cdots \rightarrow h^i(\mathcal{M}^{j, \bullet}_{\l-1}) \rightarrow h^i(\mathcal{M}^{j, \bullet}_{\l}) \rightarrow h^i(\mathcal{M}^{j, \bullet}_{\l+1}) \rightarrow \cdots \label{cohods} \tag{*}
\] 
stabilize in the strong sense, and the stable image is a finitely generated $R_{n-j}$-submodule of $h^i(M \otimes \Omega^{\bullet}_{R^j})$.  The statement of our proposition is $(\ast_n)$, and we will prove $(\ast_j)$ for $j=1, \ldots, n$ by induction on $j$.

In the base case, $j=1$, the complex $M \otimes \Omega^{\bullet}_{R^1}$ takes the form $0 \rightarrow M \xrightarrow{\partial_n} M \rightarrow 0$, and its $\l$-th approximation $\mathcal{M}^{1, \bullet}_{\l}$ takes the form $0 \rightarrow M_{\l} \xrightarrow{\partial_n} M_{\l + 1} \rightarrow 0$.  By assumption, $M$ is $x_n$-regular, so we may apply Lemma \ref{techlemma} to the direct system $\{M_{\l}\}$: every $M_{\l}$ is already a finitely generated $R$-module, so we simply take $N_{\l} = M_{\l}$ in the statement of that lemma.  The statement $(\ast_1)$ follows at once.

Now suppose $j \geq 1$ and $(\ast_j)$ established.  By assumption, the $\D_{n-j}$-modules $h^i(M \otimes \Omega^{\bullet}_{R^j})$ are holonomic and $x_{n-j}$-regular.  By the inductive hypothesis $(\ast_j)$, the images of $h^i(\mathcal{M}^{j,\bullet}_{\l})$ (for any $i$ and $\l$) in the direct system (\ref{cohods}) stabilize in the strong sense, and the stable image is a finitely generated $R_{n-j}$-module; applying this reasoning to all $i$ at once, we see that given any $\l$, there exists $s$ such that, for all $i$, $h^i(\mathcal{M}^{j,\bullet}_{\l+s})$ contains the stable image of $h^i(\mathcal{M}^{j,\bullet}_{\l})$.  (To keep the notation as simple as possible, we will not always record the dependence on $i$ of various objects and maps.  At each stage, we assume that constructions are being carried out for all $i$ at once, and that indices large enough to work for all $i$ have been chosen.)  Fix $\l$ and let $N \subset h^i(\mathcal{M}^{j,\bullet}_{\l+s})$ be this stable image, which we can identify with an $R_{n-j}$-submodule of $h^i(M \otimes \Omega^{\bullet}_{R^j})$.

For all $t$, we have $R_{n-j-1}$-linear maps $\partial_{n-j}^t: h^i(\mathcal{M}^{j,\bullet}_t) \rightarrow h^i(\mathcal{M}^{j,\bullet}_{t+1})$, which are (up to a sign) the connecting homomorphisms in the long exact cohomology sequence associated with the short exact sequence of complexes
\[
0 \rightarrow \mathcal{M}^{j, \bullet}_{t+1}[-1] \rightarrow \mathcal{M}^{j+1, \bullet}_t \rightarrow \mathcal{M}^{j, \bullet}_t \rightarrow 0
\]
described earlier.  The direct limit $\varinjlim \partial_{n-j}^t$ is $\partial_{n-j}: h^i(M \otimes \Omega^{\bullet}_{R^j}) \rightarrow h^i(M \otimes \Omega^{\bullet}_{R^j})$.  The direct system (\ref{cohods}) induces a direct system
\[
\cdots \rightarrow \ker \partial_{n-j}^{\l+s-1} \rightarrow \ker \partial_{n-j}^{\l+s} \rightarrow \ker \partial_{n-j}^{\l+s+1} \rightarrow \cdots; \label{kerds} \tag{**}
\]
by Lemma \ref{techlemma}, the images of $N \cap \ker \partial_{n-j}^{\l+s}$ in the direct system (\ref{kerds}) stabilize in the strong sense, and the stable image is a finitely generated $R_{n-j-1}$-submodule of $\ker \partial_{n-j}$.  The image of $\ker \partial^{\l}_{n-j}$ in $\ker \partial^{\l+s}_{n-j}$ is $N \cap \ker \partial_{n-j}^{\l+s}$; consequently, the images of $\ker \partial^{\l}_{n-j}$ in the direct system (\ref{kerds}) also stabilize in the strong sense, and the stable image is a finitely generated $R_{n-j-1}$-module.  Enlarging $s$ if necessary, we may assume the images of $\ker \partial^{\l}_{n-j}$ stabilize at the $(\l+s)$th stage.  We fix this $s$ for the rest of the proof.

We now extract more information from the long exact cohomology sequences, which take the form
\[
\cdots \rightarrow h^{i-1}(\mathcal{M}^{j,\bullet}_t) \xrightarrow{\partial^t_{n-j}} h^{i-1}(\mathcal{M}^{j,\bullet}_{t+1}) \rightarrow h^i(\mathcal{M}^{j+1,\bullet}_t) \xrightarrow{\psi_t} h^i(\mathcal{M}^{j,\bullet}_t) \xrightarrow{\partial^t_{n-j}} h^i(\mathcal{M}^{j,\bullet}_{t+1}) \rightarrow \cdots;
\]
by the exactness, we have surjections
\[
h^i(\mathcal{M}^{j+1,\bullet}_t) \twoheadrightarrow \ker \partial^t_{n-j}
\]
for all $t$.  We also have commutative diagrams
\[
\begin{CD}
h^i(\mathcal{M}_{\l}^{j+1, \bullet}) @>> \psi_{\l} > h^i(\mathcal{M}_{\l}^{j,\bullet})\\
@VV \iota_t^{j+1} V                           @VV \iota_t^j V\\
h^i(\mathcal{M}_{\l+t}^{j+1, \bullet})  @>> \psi_{\l+t} > h^i(\mathcal{M}_{\l+t}^{j,\bullet})
\end{CD}
\]
for all $t$ (where the vertical arrow $\iota_t^j$ is the transition map from the $\l$th to the $(\l+t)$th stage in the direct system (\ref{cohods})), so the previous surjections induce surjections
\[
\iota_t^{j+1}(h^i(\mathcal{M}^{j+1,\bullet}_{\l})) \twoheadrightarrow \iota_t^j(\ker \partial^{\l}_{n-j})
\]
whose kernels we denote $K_{\l+t}$.  Consider, for any $t \geq s$, the commutative diagram
\[
\begin{CD}
0 @>>> K_{\l+s} @>>> \iota_s^{j+1}(h^i(\mathcal{M}^{j+1,\bullet}_{\l})) @>>> \iota_s^j(\ker \partial^{\l}_{n-j}) @>>> 0\\
@.          @VVV       @VVV          @VVV  @.\\
0 @>>> K_{\l+t} @>>> \iota_t^{j+1}(h^i(\mathcal{M}^{j+1,\bullet}_{\l})) @>>> \iota_t^j(\ker \partial^{\l}_{n-j}) @>>> 0
\end{CD}
\]
with exact rows.  The right vertical arrow is an isomorphism, since the images of $\ker \partial^{\l}_{n-j}$ in the direct system (\ref{kerds}) have already stabilized at the $(\l+s)$th stage, and the middle vertical arrow is a surjection, since $\iota_t^{j+1}$ factors through $\iota_s^{j+1}$ by the compatibility of transition and insertion maps in (\ref{cohods}).  It follows by the snake lemma that the left vertical arrow is also a surjection, so for $t \geq s$, the image of $K_{\l+s}$ in $K_{\l+t}$ is all of $K_{\l+t}$.

The direct system (\ref{cohods}) also induces a direct system
\[
\cdots \rightarrow \coker \partial_{n-j}^{\l+s-1} \rightarrow \coker \partial_{n-j}^{\l+s} \rightarrow \coker \partial_{n-j}^{\l+s+1} \rightarrow \cdots, \label{cokerds} \tag{***}
\]
and we claim next that the images of
\[
\coker(\partial_{n-j}^{\l+s}: h^{i-1}(\mathcal{M}^{j,\bullet}_{\l+s}) \rightarrow h^{i-1}(\mathcal{M}^{j,\bullet}_{\l+s+1}))
\] 
in the direct system (\ref{cokerds}) stabilize in the strong sense.  To this end, we repeat the reasoning we gave for the kernels of $\partial_{n-j}$ at the beginning of the proof, this time for cokernels.  We write $\overline{h^{i-1}(\mathcal{M}^{j,\bullet}_{\l+s+1})}$ for the cokernel displayed above, and more generally, if $S$ is an $R_{n-j}$-submodule of $h^{i-1}(\mathcal{M}^{j,\bullet}_{\l+t})$ for some $t$ (resp. $h^{i-1}(M \otimes \Omega_{R^j}^{\bullet})$), we write $\overline{S}$ for the image of $S$ in the cokernel of $\partial_{n-j}^{\l+t}$ (resp. $\partial_{n-j}$).  First, by the inductive hypothesis $(\ast_j)$, the images of $h^{i-1}(\mathcal{M}^{j,\bullet}_{\l+s+1})$ in the direct system (\ref{cohods}) stabilize in the strong sense, and the stable image is a finitely generated $R_{n-j}$-submodule $L$ of $h^{i-1}(M \otimes \Omega_{R^j}^{\bullet})$, realized as a submodule of $h^{i-1}(\mathcal{M}^{j,\bullet}_{\l+s+u+1})$ for sufficiently large $u$.  Consider the image $\overline{L}$ of $L$ in the quotient $\overline{h^{i-1}(\mathcal{M}^{j,\bullet}_{\l+s+u+1})}$.  Lemma \ref{techlemma} implies that the images of $\overline{L}$ in the direct system (\ref{cokerds}) stabilize in the strong sense, and the stable image is a finitely generated $R_{n-j-1}$-module.  The image of $\overline{h^{i-1}(\mathcal{M}^{j,\bullet}_{\l+s+1})}$ in $\overline{h^{i-1}(\mathcal{M}^{j,\bullet}_{\l+s+u+1})}$ is $\overline{L}$, so the images of $\overline{h^{i-1}(\mathcal{M}^{j,\bullet}_{\l+s+1})}$ in the direct system (\ref{cokerds}) also stabilize in the strong sense, and the stable image is a finitely generated $R_{n-j-1}$-submodule of $\overline{h^{i-1}(M \otimes \Omega_{R^j}^{\bullet})}$.  This stable image occurs at the $(\l+s+u+1+v)$th stage for some $v$: for simplicity, let $w = u+1+v$.  

We have $K_{\l+s} \subset \ker \psi_{\l+s}$, and by the exactness of the long cohomology sequence,
\[
\ker \psi_{\l+s} \simeq \coker \partial_{n-j}^{\l+s} \, (=\overline{h^{i-1}(\mathcal{M}^{j,\bullet}_{\l+s+1})})
\]
as $R_{n-j-1}$-modules.  These isomorphisms are functorial in $s$, so since the images of $\overline{h^{i-1}(\mathcal{M}^{j,\bullet}_{\l+s+1})}$ in the direct system (\ref{cokerds}) stabilize in the strong sense (with stable image a finitely generated $R_{n-j-1}$-submodule of $\overline{h^{i-1}(\mathcal{M}^{j,\bullet}_{\l+s+w})}$), we can conclude that the image of $\ker \psi_{\l+s}$ in $\ker \psi_{\l+s+w}$ is also a finitely generated $R_{n-j-1}$-module.  This module contains the image of $K_{\l+s}$ in $K_{\l+s+w}$; we have already seen that this image is all of $K_{\l+s+w}$, which implies that $K_{\l+s+w}$ is itself a finitely generated $R_{n-j-1}$-module.  We have a short exact sequence
\[
0 \rightarrow K_{\l+s+w} \rightarrow \iota_{s+w}^{j+1}(h^i(\mathcal{M}^{j+1,\bullet}_{\l})) \rightarrow \iota_{s+w}^j(\ker \partial^{\l}_{n-j}) \rightarrow 0
\]
where the left and right terms are finitely generated $R_{n-j-1}$-modules, so the middle term is also a finitely generated $R_{n-j-1}$-module.  Hence, the images of $h^i(\mathcal{M}^{j+1,\bullet}_{\l})$ in the direct system (\ref{cohods}) are eventually finitely generated $R_{n-j-1}$-modules, and it therefore follows by Lemma \ref{fgstable} that these images also stabilize in the strong sense.  This completes the proof of $(\ast_{j+1})$, and by induction, Proposition \ref{mitlef} follows.
\end{proof}

By reducing to the case of Proposition \ref{mitlef}, it is possible to draw the same conclusion about more general direct systems of complexes with the same limit.  We record this conclusion now; the reader is warned that the following statement will not be used until section \ref{cohom}.

\begin{cor}\label{fgstabdR}
Let $M$ be a holonomic $\D$-module.  Suppose that $\{N^{\bullet}_{\l}\}$ is a direct system of complexes with the following properties: the objects of the complexes are finitely generated $R$-modules, the differentials are $k$-linear, the transition maps $\lambda^{\bullet}_{\l,\l+s}$ are $R$-linear in each degree, $\varinjlim N_{\l}^{\bullet} \simeq M \otimes \Omega_R^{\bullet}$ in the category of complexes of $k$-spaces, and for all $i$, the isomorphism $\varinjlim N_{\l}^i \simeq M \otimes \Omega_R^i$ is an isomorphism of $R$-modules.  Then for all $\l$ and $i$, the images of $h^i(N_{\l}^{\bullet})$ stabilize in the strong sense, with finite-dimensional stable image.
\end{cor}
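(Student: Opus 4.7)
The strategy is to transfer the conclusion of Proposition \ref{mitlef} from the specific direct system $\{\mathcal{M}^{\bullet}_{\l}\}$ (which exists because holonomic $\D$-modules are cyclic, and is built from the order filtration on $\D$ acting on a chosen cyclic generator $m$ of $M$) to an arbitrary system $\{N^{\bullet}_{\l}\}$ satisfying the hypotheses. Finite-dimensionality of the stable image will be automatic from Theorem \ref{findimdR} once strong-sense stability is established, because strong-sense stability guarantees that the stable image injects into $H^i_{dR}(M)$. Thus the heart of the matter is to show that for every $\l$ there exists $s$, depending only on $\l$, such that any cocycle class in $h^i(N^{\bullet}_{\l})$ which dies in $H^i_{dR}(M)$ already dies in $h^i(N^{\bullet}_{\l+s})$.

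The first step is to compare the two systems directly. Because each $N^i_{\l}$ is finitely generated over $R$ and the insertion $\lambda^i_{\l}: N^i_{\l} \to M \otimes \Omega^i_R$ is $R$-linear, the image $\lambda^i_{\l}(N^i_{\l})$ is a finitely generated $R$-submodule of $M \otimes \Omega^i_R$. Since $\bigcup_{\l_2} \mathcal{M}^i_{\l_2} = M \otimes \Omega^i_R$, such an image must be contained in some $\mathcal{M}^i_{\l_2}$; doing this uniformly in the finitely many degrees $i$, one obtains a genuine chain map $\phi_{\l,\l_2}: N^{\bullet}_{\l} \to \mathcal{M}^{\bullet}_{\l_2}$ whose composition with the inclusion $\mathcal{M}^{\bullet}_{\l_2} \hookrightarrow M \otimes \Omega_R^{\bullet}$ equals $\lambda^{\bullet}_{\l}$, with $\l_2$ depending only on $\l$.

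Given a cocycle $z \in N^i_{\l}$ whose class $\eta$ maps to zero in $H^i_{dR}(M)$, the image $\lambda^i_{\l}(z) \in \mathcal{M}^i_{\l_2}$ defines a cohomology class there that still maps to zero in $H^i_{dR}(M)$. By Proposition \ref{mitlef}, it vanishes in $h^i(\mathcal{M}^{\bullet}_{\l_2 + s_1})$ for some $s_1$ depending only on $\l_2$, producing a witness $y \in \mathcal{M}^{i-1}_{\l_2 + s_1}$ with $\lambda^i_{\l}(z) = d_{dR}(y)$. Since $\mathcal{M}^{i-1}_{\l_2 + s_1}$ is itself finitely generated over $R$, the same argument used to construct $\phi_{\l,\l_2}$ yields $\mu$ (depending only on $\l$) with $\mathcal{M}^{i-1}_{\l_2 + s_1} \subset \lambda^{i-1}_{\mu}(N^{i-1}_{\mu})$, and a lift $y' \in N^{i-1}_{\mu}$ of $y$ may be chosen.

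The main technical obstacle, and the one place where Noetherianness of $R$ is critical, is that the element $d_N(y') - \lambda^i_{\l,\mu}(z) \in N^i_{\mu}$ lies in $\ker(\lambda^i_{\mu}: N^i_{\mu} \to M \otimes \Omega^i_R)$ but depends on the particular choices of $y'$ and $z$, not just on $\l$. However, $N^i_{\mu}$ is finitely generated over the Noetherian ring $R$, so $\ker(\lambda^i_{\mu})$ is itself a finitely generated $R$-submodule of $N^i_{\mu}$, each of whose generators is individually killed at some finite stage. Consequently a single $\nu$ depending only on $\mu$ kills all of $\ker(\lambda^i_{\mu})$ via the ($R$-linear, hence chain) transition map $\lambda^i_{\mu,\nu}$. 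Setting $s = \nu - \l$, one then obtains $\lambda^i_{\l,\nu}(z) = \lambda^i_{\mu,\nu}(d_N(y')) = d_N(\lambda^{i-1}_{\mu,\nu}(y'))$ in $N^i_{\nu}$, exhibiting $\eta$ as zero in $h^i(N^{\bullet}_{\nu})$. Since $s$ depends only on $\l$, this yields the required strong-sense stability, and finite-dimensionality of the stable image follows immediately as noted at the outset.
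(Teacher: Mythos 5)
Your proof is correct, and it follows a route that differs in useful ways from the paper's own argument. Both proofs ultimately sandwich the given system $\{N^\bullet_\ell\}$ between terms of a ``canonical'' system $\{\mathcal M^\bullet_j\}$ coming from an order filtration on $\D$, and both rely on the fact that a finitely generated $R$-submodule of a direct limit must be absorbed into some finite stage (Lemma~\ref{fgstable}). But the details diverge. The paper first passes to the stable images of the $N^i_\ell$ so as to view $\lambda_\ell(N^\bullet_\ell)$ as a subcomplex of $M\otimes\Omega_R^\bullet$ with finitely generated objects, then builds an \emph{auxiliary} system $\{\mathcal M^\bullet_j\}$ keyed to $M_0=\lambda^0_\ell(N^0_\ell)$ and interleaves it with the stable-image complexes; this requires invoking Proposition~\ref{mitlef} in a strengthened form (with a non-cyclic finitely generated $M_0$) and also requires the (not entirely obvious) observation that $\lambda_\ell(N^\bullet_\ell)\subset\mathcal M^\bullet_0$. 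You instead take the $\{\mathcal M^\bullet_\ell\}$ arising from a cyclic generator, apply Proposition~\ref{mitlef} exactly as stated, and then control the ambiguity in the lift $y'$ by noting that $\ker\lambda^i_\mu\subset N^i_\mu$ is finitely generated over the Noetherian ring $R$ and hence killed at a single finite stage depending only on $\mu$. This is a genuine simplification: it avoids both the modified Proposition~\ref{mitlef} and the subcomplex-containment step, at the modest cost of carrying out the diagram chase with explicit cocycle representatives. Your remark that finite-dimensionality of the stable image is free (injection into $H^i_{dR}(M)$ plus Theorem~\ref{findimdR}) is also correct and matches how the paper extracts that conclusion.
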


\begin{proof}
We note first that it is harmless to assume the isomorphism $\varinjlim N_{\l}^{\bullet} \simeq M \otimes \Omega_R^{\bullet}$ is an equality, and we do so.  Fix $\l$.  For all $i$, the images of $N_{\l}^i$ stabilize in the strong sense by Lemma \ref{fgstable}, since by assumption the $N_{\l}^i$ are finitely generated and the transition maps are $R$-linear.  Choose $s$ large enough that the transition maps $\lambda_{\l,\l+s}^i$ realize the stable image for all $i$ at once.  The stable images form a subcomplex $\lambda_{\l,\l+s}(N_{\l}^{\bullet})$ of $N_{\l+s}^{\bullet}$ which we can identify (via the insertion map, which we denote $\lambda_{\l+s}^{\bullet}$) with the subcomplex $\lambda_{\l}(N_{\l}^{\bullet})$ of $M \otimes \Omega_R^{\bullet}$.  Since every $N_{\l}^i$ is a finitely generated $R$-module and the insertion maps are $R$-linear, this is a subcomplex of $M \otimes \Omega_R^{\bullet}$ whose objects are finitely generated $R$-modules.

Now define $M_0 = \lambda_{\l}^0(N_{\l}^0)$, a finitely generated $R$-submodule of the holonomic $\D$-module $M$, and for all $j \geq 0$, let $M_j = \D^j(R) \cdot M_0$ where $\D^j(R)$ is the $R$-submodule of $\D$ consisting of differential operators of order at most $j$.  Define the complex $\mathcal{M}^{\bullet}_j$ in exactly the same way as in Definition \ref{partialcplxs}.  Since $\lambda_{\l}(N_{\l}^{\bullet})$ is a subcomplex of $M \otimes \Omega_R^{\bullet}$, we see that in fact $\lambda_{\l}(N_{\l}^{\bullet})$ is a subcomplex of $\mathcal{M}^{\bullet}_0$ (and hence of $\mathcal{M}^{\bullet}_j$ for all $j \geq 0$).  The proof of Proposition \ref{mitlef} still goes through assuming only that $M_0$ is a finitely generated $R$-submodule of the holonomic $\D$-module $M$ and that $M = \cup_j M_j$ (we do not need $M_0$ to be cyclic).  In our case, since $\varinjlim N_{\l}^{\bullet} = M \otimes \Omega_R^{\bullet}$, we also have $\varinjlim \mathcal{M}_j^{\bullet} = M \otimes \Omega_R^{\bullet}$, where this last direct limit is an ascending union of complexes.  (In particular, looking at the $0$th term of this complex, we see that the ascending union $\cup_j M_j$ is indeed the whole of $M$.)  Therefore we can invoke Proposition \ref{mitlef} (with this weaker hypothesis) to conclude that the images of $h^i(\mathcal{M}^{\bullet}_j)$ stabilize in the strong sense.  In particular, there exists $t$ large enough that for all $i$, the images of $h^i(\mathcal{M}_0^{\bullet})$ stabilize for $j \geq t$ with a finite-dimensional stable image.

The complex $\mathcal{M}^{\bullet}_t$ is still a complex of finitely generated $R$-modules, since $\D^t(R)$ is a finitely generated $R$-submodule of $\D$.  We now return to the stable image of the complex $N_{\l}^{\bullet}$, which is $\lambda_{\l,\l+s}^{\bullet}(N_{\l}^{\bullet})$.  Write $s_{\l}$ for $s$ to display its dependence on $\l$.  As this argument can be carried out for any $\l$, we obtain a direct system $\lambda_{\l,\l+s_{\l}}^{\bullet}(N_{\l}^{\bullet})$ of the stable images of $N_{\l}^{\bullet}$ as $\l$ varies.  We may assume the $s_{\l}$ have been chosen so that $\{\l+s_{\l}\}$ is strictly increasing; then the transition maps in this direct system are the restrictions, for all pairs $\l \leq \l'$, of $\lambda_{\l+s_{\l},\l'+s_{\l'}}^{\bullet}$.  Because the source and target complexes are complexes of stable images under the $\lambda$s, these restricted transition maps are injective, and the direct system so constructed also has $M \otimes \Omega_R^{\bullet}$ for its direct limit.  Thus $M \otimes \Omega_R^{\bullet}$ can be regarded as the ascending union of the complexes $\lambda_{\l',\l'+s_{\l'}}^{\bullet}(N_{\l'}^{\bullet})$, all of which have finitely generated $R$-modules as objects.  Any subcomplex of $M \otimes \Omega_R^{\bullet}$ whose objects are all finitely generated $R$-modules is thus a subcomplex of some $\lambda_{\l',\l'+s_{\l'}}^{\bullet}(N_{\l'}^{\bullet})$.  Let $\l'$ be so large that $\mathcal{M}^{\bullet}_t$ is a subcomplex of $\lambda_{\l',\l'+s_{\l'}}^{\bullet}(N_{\l'}^{\bullet})$, and consider the composite morphism of complexes
\[
N_{\l}^{\bullet} \xrightarrow{\lambda_{\l,\l+s_{\l}}^{\bullet}} N_{\l,\l+s_{\l}}^{\bullet} \xrightarrow{\lambda_{\l+s_{\l}}^{\bullet}} \mathcal{M}_0^{\bullet} \hookrightarrow \mathcal{M}_t^{\bullet} \hookrightarrow \lambda_{\l',\l'+s_{\l'}}^{\bullet}(N_{\l'}^{\bullet}) \subset N_{\l'+s_{\l'}}^{\bullet},
\]
in which all morphisms but possibly the first are injections.  Every step in this composite is a morphism of complexes, hence induces a morphism on cohomology.  If we choose $\l'' \leq \l'$ such that $\mathcal{M}^{\bullet}_0$ is a subcomplex of $\lambda_{\l'',\l''+s_{\l''}}^{\bullet}(N_{\l''}^{\bullet})$, the diagram
\[
\begin{CD}
\mathcal{M}^{\bullet}_0 @>>> \lambda_{\l'',\l''+s_{\l''}}^{\bullet}(N_{\l''}^{\bullet})\\
@VVV          @VVV\\
\mathcal{M}^{\bullet}_t @>>> \lambda_{\l',\l'+s_{\l'}}^{\bullet}(N_{\l'}^{\bullet})\\
\end{CD}
\]
is commutative and all arrows are injections, so we can regard the vertical inclusion $\mathcal{M}^{\bullet}_0 \hookrightarrow \mathcal{M}^{\bullet}_t$ as a restriction of $\lambda_{\l''+s_{\l''}, \l'+s_{\l'}}$.  This compatibility implies that the composite morphism above induces morphisms on cohomology through which the morphisms induced by $\lambda_{\l,\l'+s_{\l'}}^i$ factor for all $i$.  But we have seen already that partway through this composite morphism (at the $\mathcal{M}_0^{\bullet} \hookrightarrow \mathcal{M}_t^{\bullet}$ stage) the cohomology has attained a finite-dimensional stable image.  Therefore, for all $i$, the images of $h^i(N_{\l}^{\bullet})$ stabilize, with finite-dimensional stable image, at the $(\l' + s_{\l'})$th stage.
\end{proof}

We now return to our original case, and prove Proposition \ref{dualcoh1} (and hence Theorem \ref{dualcoh}). 

\begin{proof}[Proof of Proposition \ref{dualcoh1}]
For all $\l$, the differentials in the complex $\mathcal{M}^{\bullet}_{\l}$ are $\Sigma$-continuous, and therefore we can consider the Matlis dual $D(\mathcal{M}^{\bullet}_{\l})$, a complex whose $i$th object is the $R$-module $D(\mathcal{M}^{n-i}_{\l})$ (which is Artinian, since $\mathcal{M}^{n-i}_{\l}$ is finitely generated \cite[Thm. 18.6(v)]{matsumura}) and whose differentials are $k$-linear.  By Propositions \ref{artinian} and \ref{klindual}(a), the Matlis dual of this complex coincides with its $k$-linear dual.  Together with Proposition \ref{sigmabij}, this implies that for all $\l$, the complexes $\mathcal{M}^{\bullet}_{\l}$ and $(D(\mathcal{M}^{\bullet}_{\l}))^{\vee}$ are naturally isomorphic as complexes of $k$-spaces, where $\vee$ denotes $k$-linear dual.  Note that $\varprojlim D(\mathcal{M}_{\l}^{\bullet}) \simeq D(M \otimes \Omega_R^{\bullet})$ as complexes of $k$-spaces, by Remark \ref{cofinal}.

For every $i$, the inverse system $\{D(\mathcal{M}^i_{\l})\}_{\l}$ of $R$-modules (\emph{a fortiori}, of $k$-spaces) satisfies the Mittag-Leffler condition, since the transition maps are surjective (they are the Matlis duals of the $R$-linear inclusions $\mathcal{M}^i_{\l} \hookrightarrow \mathcal{M}^i_{\l+1}$).  Furthermore, the inverse system $\{h^i(D(\mathcal{M}^{\bullet}_{\l}))\}_{\l}$ of $k$-spaces also satisfies the Mittag-Leffler condition.  To see this, note that for all $i$, we have 
\[
h^{n-i}(\mathcal{M}^{\bullet}_{\l}) \simeq h^{n-i}(D(\mathcal{M}^{\bullet}_{\l})^{\vee}) \simeq (h^i(D(\mathcal{M}^{\bullet}_{\l})))^{\vee};
\]
the first isomorphism holds because $\mathcal{M}^{\bullet}_{\l} \simeq (D(\mathcal{M}^{\bullet}_{\l}))^{\vee}$ as complexes of $k$-spaces, and the second holds because $k$-linear dual is an exact contravariant functor.  By Proposition \ref{mitlef}, the images of $h^{n-i}(\mathcal{M}^{\bullet}_{\l}) \simeq (h^i(D(\mathcal{M}^{\bullet}_{\l})))^{\vee}$ stabilize in the strong sense, so by Lemma \ref{dualml}, the original system $\{h^i(D(\mathcal{M}^{\bullet}_{\l}))\}$ satisfies Mittag-Leffler.  The Mittag-Leffler conditions for $\{D(\mathcal{M}^i_{\l})\}$ and $\{h^i(D(\mathcal{M}^{\bullet}_{\l}))\}$ allow us to apply Proposition \ref{mlcohcomm}, which implies that there are isomorphisms
\[
h^i(\varprojlim D(\mathcal{M}^{\bullet}_{\l})) \xrightarrow{\sim} \varprojlim h^i(D(\mathcal{M}^{\bullet}_{\l})).
\]
of $k$-spaces.  Therefore, we have a chain of isomorphisms
\begin{align}
h^i(M \otimes \Omega_R^{\bullet}) &\simeq (h^i(M \otimes \Omega_R^{\bullet}))^{\vee \vee}\\
&\simeq (h^i(\varinjlim \mathcal{M}^{\bullet}_{\l}))^{\vee \vee}\\
&\simeq (\varinjlim h^i(\mathcal{M}_{\l}^{\bullet}))^{\vee \vee}\\
&\simeq (\varprojlim (h^i(\mathcal{M}_{\l}^{\bullet}))^{\vee})^{\vee}\\
&\simeq (\varprojlim (h^i((D(\mathcal{M}_{\l}^{\bullet}))^{\vee}))^{\vee})^{\vee}\\
&\simeq (\varprojlim (h^{n-i}(D(\mathcal{M}_{\l}^{\bullet})))^{\vee \vee})^{\vee}\\
&\simeq (\varprojlim h^{n-i}(D(\mathcal{M}_{\l}^{\bullet})))^{\vee}\\
&\simeq (h^{n-i}(\varprojlim D(\mathcal{M}_{\l}^{\bullet})))^{\vee}\\
&\simeq (h^{n-i}(D(M \otimes \Omega_R^{\bullet})))^{\vee},
\end{align}
where (1) holds because $h^i(M \otimes \Omega_R^{\bullet})$ is a finite-dimensional $k$-space, (3) since $\varinjlim$ is exact and thus commutes with cohomology, (4) because taking $k$-dual converts direct limits into inverse limits \cite[Prop. 5.26]{rotman}, (5) since $\mathcal{M}^{\bullet}_{\l}$ and $(D(\mathcal{M}_{\l}^{\bullet}))^{\vee}$ are isomorphic complexes of $k$-spaces, (6) since $k$-dual is an exact contravariant functor, (7) by Lemma \ref{finddual} applied to the inverse system $\{h^i(D(\mathcal{M}^{\bullet}_{\l}))\}$, (8) by Proposition \ref{mlcohcomm}, and (9) by Remark \ref{cofinal}.  Since $h^i(M \otimes \Omega_R^{\bullet})$ is a finite-dimensional $k$-space, all of the $k$-spaces appearing in the chain of isomorphisms are finite-dimensional as well.  Therefore $h^{n-i}(D(M \otimes \Omega_R^{\bullet}))$ is canonically isomorphic to its own double dual, so we can dualize the isomorphism obtained above to find $h^{n-i}(D(M \otimes \Omega_R^{\bullet})) \simeq (h^i(M \otimes \Omega_R^{\bullet}))^{\vee}$, as desired.
\end{proof}

\section{Local cohomology of formal schemes}\label{formal}

In this section, we recall the description of the Matlis duals of local cohomology modules (over any local Gorenstein ring) in terms of local cohomology on a formal scheme.  (A reference for this description is Ogus's thesis \cite{ogus}.)  Specializing to the case of a complete local ring with a coefficient field, we obtain a \emph{right} $\D$-module structure on the local cohomology of a formal scheme by applying the results of section \ref{dmod} to dualize the left $\D$-module structure on ordinary local cohomology.  We then recall a natural \emph{left} $\D$-module structure on the local cohomology of the formal scheme: the structure implicitly used to define its de Rham complex in \cite{derham}.  The main result of this section is that in the case where the complete local ring is regular and of characteristic zero, these left and right $\D$-module structures are \emph{transposes} of each other, so our theory and that of \cite{derham} are compatible.

Let $(R, \mathfrak{m})$ be a Gorenstein local ring.  The key ingredient in the identification of the Matlis duals of local cohomology modules over $R$ with local cohomology of a formal scheme is Grothendieck's \emph{local duality theorem}:

\begin{thm}\cite[Thm. 6.3]{hartshorneLC}\label{groth}
Let $(R, \mathfrak{m})$ be a Gorenstein local ring of dimension $n$, $E$ an injective hull of its residue field, and $D(-) = \Hom_R(-,E)$ the Matlis dual functor for $R$-modules.  If $M$ is a finitely generated $R$-module, there are isomorphisms $H^{n-i}_{\mathfrak{m}}(M) \simeq D(\Ext^i_R(M,R))$, for all $i$, that are functorial in $M$.
\end{thm}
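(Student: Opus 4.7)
The plan is to identify both sides as covariant homological $\delta$-functors on finitely generated $R$-modules, construct a natural isomorphism in degree zero, and extend inductively by dimension shifting.

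First I would set $T^i(M) := H^{n-i}_{\mathfrak{m}}(M)$ and $S^i(M) := D(\Ext^i_R(M,R))$. A short exact sequence $0 \to M' \to M \to M'' \to 0$ yields a long exact sequence on local cohomology (giving $T$ its $\delta$-functor structure), and, by applying the exact contravariant functor $D$ to the $\Ext(-,R)$ long exact sequence, a long exact sequence for $S$; in both cases the connecting homomorphism descends from index $i+1$ at $M''$ to index $i$ at $M'$. The Gorenstein hypothesis gives $H^i_{\mathfrak{m}}(R) = 0$ for $i \neq n$ and $H^n_{\mathfrak{m}}(R) \simeq E$ (cf.\ \cite[Ch.~11]{brodmann}), so by additivity $T^i(F) = 0$ for all $i \geq 1$ and any finitely generated free $F$; likewise, $\Ext^i_R(F,R) = 0$ for $i \geq 1$ gives $S^i(F) = 0$.

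Next I would construct a natural isomorphism $\phi^0 : T^0 \xrightarrow{\sim} S^0$. The functor $T^0 = H^n_{\mathfrak{m}}$ is right exact because $H^{n+1}_{\mathfrak{m}}$ vanishes (as $\dim M \leq n$); the functor $S^0 = D \circ \Hom_R(-,R)$ is right exact because $\Hom_R(-,R)$ is left exact and contravariant while $D$ is exact and contravariant. Both send $R$ to $E$, and any right-exact additive covariant functor on finitely generated $R$-modules is determined up to unique natural isomorphism by its value on $R$ (apply it to a free presentation $R^m \to R^n \to M \to 0$ and compute cokernels); this yields $\phi^0$. For $i \geq 1$ I would proceed by induction: given $M$ and a short exact sequence $0 \to K \to F \to M \to 0$ with $F$ finitely generated free, the long exact sequences reduce, using $T^i(F) = S^i(F) = 0$, to
\[
0 \to T^i(M) \to T^{i-1}(K) \to T^{i-1}(F),
\]
and the analog for $S$; the inductively available iso $\phi^{i-1}$ applied to the right two terms then restricts to an iso $\phi^i_M : T^i(M) \xrightarrow{\sim} S^i(M)$.

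The hard part will be showing that $\phi^i_M$ is independent of the chosen surjection $F \twoheadrightarrow M$ and is natural in $M$. This is the Buchsbaum--Grothendieck universality lemma for $\delta$-functors: any two resolutions $F_j \twoheadrightarrow M$ by finitely generated free modules admit a chain-map comparison lifting $\mathrm{id}_M$, and the naturality of the connecting homomorphisms together with the inductive naturality of $\phi^{i-1}$ and the five-lemma force the two candidate definitions of $\phi^i$ to agree. The same machinery, applied to a morphism of short exact sequences covering a map $M \to M'$, delivers functoriality in $M$.
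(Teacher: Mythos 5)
The paper states this as a known result, citing Hartshorne's \emph{Local Cohomology} notes, and does not reproduce a proof, so there is no in-paper argument to compare against. Your proof by dimension shifting with coeffaceable homological $\delta$-functors is correct and is the standard commutative-algebra argument for local duality over a Gorenstein ring (it is essentially the proof in Bruns--Herzog, \emph{Cohen--Macaulay Rings}, Thm.~3.5.8). One small point to sharpen: the statement that a right-exact additive covariant functor on finitely generated $R$-modules is determined by its value on $R$ requires also recording the $R$-module structure on $F(R)$ induced by $r \mapsto F(\cdot\, r)$; equivalently, you should note that both $T^0 = H^n_{\mathfrak{m}}$ and $S^0 = D \circ \Hom_R(-,R)$ are $R$-linear functors, so each is naturally isomorphic to $F(R) \otimes_R -$ by the Watts-type argument on free presentations, and both yield $E$ with its standard $R$-module structure. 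With that clarification, the rest of your outline (vanishing on free modules, the two long exact sequences, and the $\delta$-functor comparison forcing well-definedness and naturality of $\phi^i$) is sound.
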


\begin{remark}
If moreover $R$ is complete, we also have $\Ext^{n-i}_R(M,R) \simeq D(H^i_{\mathfrak{m}}(M))$, since the double Matlis dual and the identity functor are naturally isomorphic when restricted to the full subcategories of finitely generated or Artinian $R$-modules \cite[Thm. 18.6(v)]{matsumura}.
\end{remark}

Now recall the definition of local cohomology as a direct limit of Ext modules: for any module $M$ over any commutative Noetherian ring $R$, and any ideal $I \subset R$, we have isomorphisms $H^i_I(M) \simeq \varinjlim \Ext^i_R(R/I^t, M)$ \cite[Thm. 1.3.8]{brodmann}.  Now put $M = R$, a Gorenstein local ring of dimension $n$ with maximal ideal $\mathfrak{m}$.  Taking Matlis duals (and using the fact that any contravariant Hom functor converts direct limits into inverse limits) we have isomorphisms
\[
D(H^i_I(R)) \simeq D(\varinjlim \Ext^i_R(R/I^t, R)) \simeq \varprojlim D(\Ext^i_R(R/I^t, R)) \simeq \varprojlim H^{n-i}_{\mathfrak{m}}(R/I^t),
\]
where in the last step we have used local duality for every $t$ and passed to the inverse limit.  If we let $X = \Spec(R)$ and $Y$ the closed subscheme $V(I) \subset X$ defined by $I$, and if we write $\widehat{X}$ for the formal completion of $Y$ in $X$ (see sections \ref{intro} and \ref{cohom} for the definition of formal completion), $P$ for its closed point, and $\mathcal{O}_{\widehat{X}}$ for its structure sheaf, then the last object in the sequence of isomorphisms above is precisely the local cohomology $H^{n-i}_P(\widehat{X}, \mathcal{O}_{\widehat{X}})$ supported at the closed point \cite[Prop. 2.2]{ogus}.  (We may also write $X_{/Y}$ and $\mathcal{O}_{X_{/Y}}$ for $\widehat{X}$ and $\mathcal{O}_{\widehat{X}}$ when we need to record explicitly the dependence on $Y$.) It follows that for all $i$, the $R$-modules $D(H^i_I(R))$ and $H^{n-i}_P(\widehat{X}, \mathcal{O}_{\widehat{X}})$ are isomorphic.  We record this conclusion separately for future reference:

\begin{prop}\cite[Prop. 2.2]{ogus}\label{ogusiso}
Let $R$ be a Gorenstein local ring of dimension $n$ with maximal ideal $\mathfrak{m}$.  For all $i$, we have isomorphisms
\[
D(H^i_I(R)) \simeq \varprojlim H^{n-i}_{\mathfrak{m}}(R/I^t) \simeq H^{n-i}_P(\widehat{X}, \mathcal{O}_{\widehat{X}})
\]
of $R$-modules, where the rightmost object is the formal local cohomology defined in the previous paragraph.
\end{prop}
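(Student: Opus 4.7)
The plan is to build the two displayed isomorphisms separately, and then concatenate them. The first isomorphism, $D(H^i_I(R)) \simeq \varprojlim_t H^{n-i}_{\mathfrak{m}}(R/I^t)$, I would obtain by combining three ingredients: the standard direct-limit presentation $H^i_I(R) \simeq \varinjlim_t \Ext^i_R(R/I^t, R)$; the fact that the contravariant Hom functor $D = \Hom_R(-,E)$ converts direct limits into inverse limits (as $D$ is left exact and sends colimits in its argument to limits); and Grothendieck local duality (Theorem \ref{groth}), which is available term-by-term since each $R/I^t$ is a finitely generated $R$-module. Applying $D$ to the direct limit presentation and then invoking local duality at each $t$ gives
\[
D(H^i_I(R)) \simeq \varprojlim_t D(\Ext^i_R(R/I^t, R)) \simeq \varprojlim_t H^{n-i}_{\mathfrak{m}}(R/I^t),
\]
where the transition maps on the right are those induced by the natural surjections $R/I^{t+1} \twoheadrightarrow R/I^t$ via functoriality of local duality. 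The functoriality statement in Theorem \ref{groth} is crucial here: without it, one would only know the modules are abstractly isomorphic, not that the isomorphisms respect the inverse systems.

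The second isomorphism, $\varprojlim_t H^{n-i}_{\mathfrak{m}}(R/I^t) \simeq H^{n-i}_P(\widehat{X}, \mathcal{O}_{\widehat{X}})$, I would deduce from the definition of the structure sheaf $\mathcal{O}_{\widehat{X}} = \varprojlim_t \mathcal{O}_X/\mathcal{I}^t$. The strategy is to choose, for each $t$, an injective resolution of $\mathcal{O}_X/\mathcal{I}^t$ in the category of sheaves on $Y$ (each $\mathcal{O}_X/\mathcal{I}^t$ being supported on $Y$), arranged compatibly so that passing to $\varprojlim_t$ produces a flasque resolution (or at least a $\Gamma_P$-acyclic one) of $\mathcal{O}_{\widehat{X}}$; then applying $\Gamma_P$ and taking cohomology, and comparing with $\varprojlim_t$ applied term-by-term, gives the desired identification. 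The technical point is that the inverse system $\{H^j_{\mathfrak{m}}(R/I^t)\}_t$ satisfies the Mittag-Leffler condition (in fact each term is Artinian), so the $\varprojlim$ commutes with cohomology up to the requisite degree and no $\varprojlim^1$ terms appear.

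I expect the main obstacle to be this second step: making rigorous the interchange of inverse limit and local cohomology in the formal setting. The first isomorphism is essentially a formal consequence of standard duality and is mechanical once one records the functoriality. By contrast, comparing $H^{n-i}_P$ on the formal scheme with $\varprojlim_t H^{n-i}_{\mathfrak{m}}$ of the truncations requires choosing compatible resolutions and controlling derived inverse limits; here one relies on the Artinian (hence Mittag-Leffler) nature of the local cohomology modules $H^{n-i}_{\mathfrak{m}}(R/I^t)$ to vanish the obstructions. Since this is precisely the content of \cite[Prop.~2.2]{ogus}, I would simply cite that result for the second isomorphism rather than reproduce its proof, and devote the exposition to verifying that the system of local-duality isomorphisms produced in the first step is genuinely a morphism of pro-objects.
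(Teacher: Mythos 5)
Your proposal is correct and follows essentially the same route as the paper: take Matlis duals of the direct-limit presentation $H^i_I(R)\simeq\varinjlim_t\Ext^i_R(R/I^t,R)$, invoke the functorial local duality isomorphisms term by term for the first identification, and cite \cite[Prop.~2.2]{ogus} for the second. The paper's exposition is terser (it does not spell out the Mittag-Leffler considerations you raise, since it simply delegates that step to Ogus as you ultimately do), but the structure of the argument is identical.
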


For the remainder of the section, we identify the $R$-modules $D(H^i_I(R))$ and $H^{n-i}_P(\widehat{X}, \mathcal{O}_{\widehat{X}})$ using the proposition above, suppressing any explicit mention of this isomorphism.

Now suppose that $(R, \mathfrak{m})$ is a \emph{complete} local ring with coefficient field $k$.  By Example \ref{farrago}, we know that if $I \subset R$ is an ideal, every $H^i_I(R)$ is a left $\D(R,k)$-module, and since $R$ is complete, $D(H^i_I(R))$ is a right $\D(R,k)$-module by Corollary \ref{dualright}.  If moreover $R$ is Gorenstein, we have $D(H^i_I(R)) = H^{n-i}_P(\widehat{X}, \mathcal{O}_{\widehat{X}})$ by Proposition \ref{ogusiso}, defining by transport of structure a right $\D(R,k)$-module structure on $H^{n-i}_P(\widehat{X}, \mathcal{O}_{\widehat{X}})$.

There is also a natural \emph{left} $\D(R,k)$-module structure on $H^{n-i}_P(\widehat{X}, \mathcal{O}_{\widehat{X}})$, defined without Matlis duality: see \cite[\S I.7 and \S III.2]{derham} and \cite[\S 7.2]{hellus}.  Let $d \in \D(R,k)$ be a differential operator of order $j$.  For all $t \geq 0$, $d(I^{t+j}) \subset I^t$, and so $d$ induces a $k$-linear map $d_t: R/I^{t+j} \rightarrow R/I^t$.  If $\mathcal{I} \subset \mathcal{O}_X$ is the sheaf of ideals on $X = \Spec(R)$ corresponding to $I$, the map $d_t$ induces a sheafified map $\tilde{d}_t: \mathcal{O}_X/\mathcal{I}^{t+j} \rightarrow \mathcal{O}_X/\mathcal{I}^t$ of sheaves of $k$-spaces on $X$; taking local cohomology, we obtain $k$-linear maps $\overline{d}_t: H^{n-i}_{\mathfrak{m}}(R/I^{t+j}) \rightarrow H^{n-i}_{\mathfrak{m}}(R/I^t)$, where we have identified $H^{n-i}_{\mathfrak{m}}(R/I^{\l})$ with $H^{n-i}_P(X, \mathcal{O}_X/\mathcal{I}^{\l})$ as $R$-modules for all $\l$ \cite[Exp. II, Cor. 4]{SGA2}.  The reason that sheaves were introduced here is to make clear that merely $k$-linear (or even additive) maps between $R$-modules still induce maps on local cohomology, because local cohomology, in its most general form, is a functor on sheaves of Abelian groups on a topological space \cite[Exp. I, D\'{e}f. 2.1]{SGA2}.

These maps $\overline{d}_t$ are compatible with the natural surjections $R/I^{t+\l} \rightarrow R/I^t$, in the sense that we have commutative diagrams
\[
\begin{CD}
H^{n-i}_{\mathfrak{m}}(R/I^{t+j+\l}) @>>\overline{d}_{t+\l}> H^{n-i}_{\mathfrak{m}}(R/I^{t+\l})\\
@VVV   @VVV \\
H^{n-i}_{\mathfrak{m}}(R/I^{t+j}) @>>\overline{d}_t> H^{n-i}_{\mathfrak{m}}(R/I^{t})\\
\end{CD}
\]
for all $t$, $j$, and $\l$, where the vertical arrows are the $R$-linear maps induced by the natural surjections.  Therefore, upon passing to inverse limits, the $\overline{d}_t$ define a $k$-linear map $\overline{d}: \varprojlim H^{n-i}_{\mathfrak{m}}(R/I^t) \rightarrow \varprojlim H^{n-i}_{\mathfrak{m}}(R/I^t)$.  If $\gamma \in \varprojlim H^{n-i}_{\mathfrak{m}}(R/I^t) = H^{n-i}_P(\widehat{X}, \mathcal{O}_{\widehat{X}})$, we set $d \cdot \gamma = \overline{d}(\gamma)$, and in this way define a left action of $\D(R,k)$ on $H^{n-i}_P(\widehat{X}, \mathcal{O}_{\widehat{X}})$.

\begin{definition}\label{twoduals}
If $(R, \mathfrak{m})$ is a complete Gorenstein local ring of dimension $n$ with coefficient field $k$ and $I \subset R$ is an ideal, the \emph{Matlis dual action} of $\D = \D(R,k)$ on $D(H^i_I(R))$ for any $i$ is the \emph{right} action defined by dualizing the natural structure of left $\D$-module on $H^i_I(R)$, and the \emph{inverse limit action} of $\D$ on $D(H^i_I(R)) = H^{n-i}_P(\widehat{X}, \mathcal{O}_{\widehat{X}})$ is the \emph{left} action defined in the previous paragraph.
\end{definition}

In the case of a characteristic-zero complete \emph{regular} local ring $(R, \mathfrak{m})$ containing its residue field $k$, we can state precisely how these two $\D$-module structures are related: they are \emph{transposes} of each other, as in Definition \ref{transpose}.  This is the main result of this section.

\begin{thm}\label{loccohtranspose}
Let $k$ be a field of characteristic zero, let $R = k[[x_1, \cdots, x_n]]$, and let $I \subset R$ be an ideal.  Denote by $\mathfrak{m}$ the maximal ideal of $R$, and by $\D = \D(R,k)$ the ring of $k$-linear differential operators on $R$.  Then for all $i$, the Matlis dual action of $\D$ on $D(H^i_I(R)) = H^{n-i}_P(\widehat{X}, \mathcal{O}_{\widehat{X}})$ is the \emph{transpose} of the inverse limit action.
\end{thm}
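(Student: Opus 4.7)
The plan is to reduce to the case of derivations and then carry out a level-by-level comparison using Grothendieck local duality. Both the Matlis dual right action and the transposed inverse limit action extend the natural $R$-module structure on $D(H^i_I(R)) \simeq H^{n-i}_P(\widehat{X}, \mathcal{O}_{\widehat{X}})$, and since $\D$ is generated over $k$ by $R$ and $\partial_1, \ldots, \partial_n$ (subsection \ref{dmodprelim}), it suffices to check the transpose identity on these generators. For $r \in R$, the principal anti-automorphism fixes $r$ and both sides reduce to $r$-multiplication, which agrees via the $R$-linear isomorphism of Proposition \ref{ogusiso}. The substantive content is therefore: for each $s$, the Matlis dual right action of $\partial_s$ on $D(H^i_I(R))$ equals the inverse limit left action of $-\partial_s$ on $H^{n-i}_P(\widehat{X}, \mathcal{O}_{\widehat{X}})$.

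The comparison of these two actions proceeds level-by-level. The isomorphism of Proposition \ref{ogusiso} is realized at each level $t$ by the Grothendieck residue pairing
\[
\Ext^i_R(R/I^t, R) \times H^{n-i}_{\mathfrak{m}}(R/I^t) \to H^n_{\mathfrak{m}}(R) = E,
\]
post-composed with $\sigma: E \to k$. The $k$-linear map $\partial_s: R/I^{t+1} \to R/I^t$ induces compatible maps on both factors of the pairing --- covariantly on $H^{n-i}_{\mathfrak{m}}$ (yielding the inverse limit action) and contravariantly on $\Ext^i_R(-, R)$, whose direct limit over $t$ gives a left $\partial_s$-action on $H^i_I(R)$ that I would identify, via an auxiliary lemma, with the Lyubeznik \v{C}ech-theoretic $\D$-action. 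The desired equality then becomes an ``integration by parts'' identity for the residue pairing,
\[
\sigma\bigl( \langle \Ext^i_R(\partial_s, R)(\xi), \eta \rangle_{t+1} \bigr) = -\sigma\bigl( \langle \xi, H^{n-i}_{\mathfrak{m}}(\partial_s)(\eta) \rangle_t \bigr).
\]
The sign is exactly the one isolated in Example \ref{singlevar}, which exhibits the right $\D$-action of a derivation on $E$ as the negative of the standard quotient-rule action. Combining this identity with Proposition \ref{hottaaction}, applied in the inverse limit, yields the theorem. As a sanity check, the case $I = \mathfrak{m}$ reduces to a short direct calculation via Proposition \ref{hottaaction} and Example \ref{singlevar}: the Matlis dual action of $\partial_s$ on $D(E) \simeq R$ sends the identity of $\End_R(E)$ to $-\partial_s$, matching the transpose of the inverse limit action.

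The principal obstacle I anticipate is the auxiliary lemma mentioned above: matching the Lyubeznik \v{C}ech left $\D$-action on $H^i_I(R)$ with the action induced by contravariant Ext functoriality applied to the $k$-linear maps $\partial_s: R/I^{t+1} \to R/I^t$. Grothendieck local duality is naturally phrased for $R$-linear maps, and extending its functoriality to cover the $k$-linear maps coming from differential operators --- together with the verification of the residue-pairing identity above --- is where the bookkeeping is delicate. If the direct residue-pairing approach proves unwieldy, a plausible fallback is a cocycle-level \v{C}ech comparison: fix generators $f_1, \ldots, f_c$ of $\sqrt{I}$, describe both $H^i_I(R)$ and $H^{n-i}_P(\widehat{X}, \mathcal{O}_{\widehat{X}})$ via \v{C}ech-type complexes on $X$ and on $\widehat{X}$ respectively, and match the two $\D$-actions directly; the Mayer-Vietoris sequence (Proposition \ref{mv}) would additionally permit reduction to the regular-sequence case.
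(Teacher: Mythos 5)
Your reduction to derivations and the use of Example \ref{singlevar} to locate the sign are correct and match the paper's opening move, but the core of your argument rests on an unverified step that is, in fact, the whole content of the theorem. You propose to produce, for each $t$, a map $\Ext^i_R(\partial_s, R): \Ext^i_R(R/I^t, R) \rightarrow \Ext^i_R(R/I^{t+1}, R)$ by contravariant functoriality applied to the $k$-linear map $\partial_s: R/I^{t+1} \rightarrow R/I^t$, and then to identify the direct limit over $t$ with Lyubeznik's \v{C}ech-theoretic $\D$-action. But $\Ext^i_R(-,R)$ is a functor on $R$-modules and \emph{$R$-linear} maps; there is no canonical ``$\Ext^i_R(\partial_s, R)$'' for a merely $k$-linear $\partial_s$, since such a map does not lift to a chain map between free $R$-resolutions of $R/I^{t+1}$ and $R/I^t$. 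The residue-pairing identity you want --- your ``integration by parts'' formula --- therefore involves an object that has not been constructed, and your auxiliary lemma is not a bookkeeping detail to defer but the place where the proof either works or breaks.

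The paper's argument avoids this issue entirely: local duality (Theorem \ref{groth}) is only ever applied to $R$-linear maps, and the $\partial$-actions are tracked through explicit commutative squares of $R$-modules with $k$-linear vertical arrows (which do induce maps on local cohomology, since that functor is defined on sheaves of Abelian groups). The overall structure is also different from yours: the paper first proves the statement for a complete-intersection ideal $I = (f_1, \ldots, f_s)$ by induction on $s$, writing $H^s_I(R) \simeq H^1_{(f_s)}(M)$ with $M = H^{s-1}_J(R)$ via the degenerate composite local cohomology spectral sequence (Example \ref{lccomposite}) and carrying the $\partial$-actions through the direct system $\{M/f_s^t M\}$ and its Matlis dual; it then handles an arbitrary $I$ by primary decomposition, Mayer--Vietoris (Proposition \ref{mv}), and Lemma \ref{inclformal}, with a second induction on $i - \hgt(I)$. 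Your fallback sketch (\v{C}ech plus Mayer--Vietoris) is closer in spirit to the paper's reduction, but you would still need the complete-intersection induction and the explicit diagram chase to replace the undefined $\Ext$-functoriality; as written, the proposal does not supply them.
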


\begin{remark}\label{E1duals}
It follows from Theorem \ref{loccohtranspose} that the identification of Proposition \ref{ogusiso} can be extended to an identification of left $\D$-modules, regarding $D(H^i_I(R))$ as a \emph{left} $\D$-module by transposing the Matlis dual action as in Proposition \ref{regleft}.  We can therefore identify the de Rham complexes of both sides as well.
\end{remark}

\begin{proof}
Every element of $\D$ is a finite sum of terms of the form $\rho \partial_1^{a_1} \cdots \partial_n^{a_n}$ where $\rho \in R$, and Matlis duality respects composition of operators (reversing the order), so we need only check the statement of the theorem for the action of a single $\rho$ or $\partial_i$.  There is nothing to prove in the case of an element $\rho \in R$, since the Matlis dual of the $R$-linear multiplication by such an element $\rho$ is again multiplication by $\rho$.  Therefore, to prove Theorem \ref{loccohtranspose}, we need only to show that if $i$ is fixed ($1 \leq i \leq n$), the Matlis dual and inverse limit actions of the partial derivative $\partial_i \in \D$ on $D(H^i_I(R)) = H^{n-i}_P(\widehat{X}, \mathcal{O}_{\widehat{X}})$ differ by a sign (transposing the action of $\partial_i$ introduces a sign change, by Definition \ref{transpose}).  Without loss of generality, we may assume $i = 1$ and write $x$ for $x_1$, so that $R = k[[x, x_2, \ldots, x_n]]$ and $\partial = \partial_1$.  We denote the inverse limit action of $\partial$ on $D(H^i_I(R)) = H^{n-i}_P(\widehat{X}, \mathcal{O}_{\widehat{X}})$ by $\overline{\delta}$ and the Matlis dual action by $\partial^*$.  In order to prove the theorem, it is enough to prove the equality
\[
\overline{\delta} = -\partial^*.
\]

We will prove this equality first for a complete intersection ideal $I$ and then in general.  We prove the complete intersection case by induction on the number of generators of $I$.  Suppose that $f_1, \ldots, f_s$ is a regular sequence in $R$, and assume the theorem has been proved for complete intersection ideals with $s-1$ generators: the base case, $s = 0$, has already been established as Example \ref{singlevar}, since $H^0_{(0)}(R) = R$ and $H^i_{(0)}(R) = 0$ for $i > 0$.  We write $f$ for $f_s$. Let $I$ be the ideal $(f_1, \ldots, f_s)$ and $J \subset I$ the ideal $(f_1, \ldots, f_{s-1})$, so that $I = J + (f)$.  Let $Y$ (resp. $Z$) be the closed subscheme of $X = \Spec(R)$ defined by $I$ (resp. $J$), and let $X_t = \Spec(R/(f^t))$ for all $t \geq 1$.  We claim that the Matlis dual and inverse limit actions of $\partial$ on $D(H^i_I(R)) = H^{n-i}_P(X_{/Y}, \mathcal{O}_{X_{/Y}})$ differ by a sign for all $i$.  Since $I$ is a complete intersection ideal, $H^i_I(R) = 0$ unless $i = s$, so this is the only case we need to consider.  

Let $M = H^{s-1}_J(R)$.  Since $f_1, \ldots, f_s$ is a regular sequence, the composite local cohomology spectral sequence of Example \ref{lccomposite} (with respect to the ideals $J$ and $(f)$) degenerates at $E_2$, and consequently we have $H^s_I(R) \simeq H^1_{(f)}(M)$ as $R$-modules (indeed, as left $\D$-modules).  We write $\partial_M$ for the action of $\partial$ on the left $\D$-module $M$, $\partial_M^*$ for the corresponding Matlis dual action on $D(M)$, and $\overline{\delta}_Z$ for the inverse limit action of $\partial$ on $D(M) = H^{n-s+1}_P(X_{/Z}, \mathcal{O}_{X_{/Z}})$ (note that our induction hypothesis is that $\overline{\delta}_Z = -\partial_M^*$).

By the  \v{C}ech complex definition of local cohomology, $H^1_{(f)}(M)$ is the cokernel of the localization map $M \rightarrow M_f$.  Since $f_1, \ldots, f_s$ is a regular sequence, this localization map is injective: identifying $M$ with its image, we view $M$ as a submodule of $M_f$ and write $M_f/M$ for the cokernel, which we can express as the direct limit $\varinjlim M/f^tM$. Here, the transition map $M/f^tM \rightarrow M/f^{t+1}M$ carries the class of $m \in M$ to the class of $fm$, and the $R$-linear isomorphism $M_f/M \rightarrow \varinjlim M/f^tM$ carries the class of $\frac{m}{f^t} \in M_f$ to the class of $m \in M$ modulo $f^tM$.  The action of $\partial$ on the left $\D$-module $M$ induces an action of $\partial$ on $M_f/M$, defined by the quotient rule: we have a $k$-linear map $(\partial_M)_f: M_f \rightarrow M_f$ given by $(\partial_M)_f(\frac{m}{f^t}) = \frac{f \partial_M(m) - t \partial(f) m}{f^{t+1}}$ that carries $M \subset M_f$ into itself and therefore descends to the quotient $M_f/M$.  In terms of the description of $M_f/M$ as the direct limit $\varinjlim M/f^tM$, the map $(\partial_M)_f$ is defined by the direct limit of the $k$-linear maps $\partial_t: M/f^tM \rightarrow M/f^{t+1}M$ where 
\[
\partial_t(\mu) = (f \partial_M(m) - t \partial(f) m) + f^{t+1}M \in M/f^{t+1}M
\]
if $\mu$ is the class of $m \in M$ in $M/f^tM$.  It is straightforward to check that $\partial_t(\mu)$ is well-defined (that is, depends only on $\mu$ and not on $m$) and that $\partial_t$ satisfies the Leibniz rule: $\partial_t(r\mu)=\partial(r)\overline{\mu} + r\partial_t(\mu)$ for $r \in R$, where $\overline{\mu}$ is the image of $\mu$ under the transition map $M/f^tM \rightarrow M/f^{t+1}M$.

The maps $\partial_t$ defined in the previous paragraph fit into commutative diagrams
\[
\begin{CD}
0 @>>> M @>>f^t> M @>>> M/f^tM @>>> 0\\
@.  @VV \partial_M V @VV f\partial_M - t\partial(f)V @VV \partial_t V @. \\
0 @>>> M @>>f^{t+1}> M @>>> M/f^{t+1}M @>>> 0\\
\end{CD}
\]
for all $t$, where the rows are exact sequences of $R$-modules and the vertical arrows are $k$-linear.  The commutativity of the right square is clear, while the calculation
\[
(f\partial_M - t\partial(f))(f^t m) = f\partial_M(f^t m) - t\partial(f)f^t m = f(tf^{t-1}\partial(f) m + f^t \partial_M(m)) - t\partial(f)f^t m = f^{t+1}\partial_M(m)
\]
for $m \in M$ shows that the left square is commutative.  Fix $t$ and consider the Matlis dual of the above diagram, which takes the form
\begin{equation}\label{onestar}
\begin{CD}
0 @>>> D(M/f^{t+1}M) @>>> D(M) @>>f^{t+1}> D(M) @>>> 0\\
@.  @VV \partial_t^* V   @VV (f\partial_M-t\partial(f))^*V   @VV \partial_M^*V   @. \\
0 @>>> D(M/f^tM) @>>> D(M) @>>f^t> D(M) @>>> 0\\
\end{CD}\tag{*}
\end{equation}
Since $H^1_{(f)}(M)$ is the direct limit $\varinjlim M/f^tM$, the Matlis dual $D(H^1_{(f)}(M))$ is the \emph{inverse} limit $\varprojlim D(M/f^tM)$, and by Remark \ref{cofinal}, the Matlis dual action of $\partial$ on $D(H^1_{(f)}(M))$ is defined by $\varprojlim \partial_t^*$ (the Leibniz rule implies that all $\partial_t$ are $\Sigma$-continuous).   

We turn next to the inverse limit action of $\partial$ on $D(H^s_I(R)) = H^{n-s}_P(X_{/Y}, \mathcal{O}_{X_{/Y}})$.  Again we let $M = H^{s-1}_J(R)$.  From the long exact sequence of local cohomology supported at $J$ applied to the short exact sequence
\[
0 \rightarrow R \xrightarrow{f^t} R \rightarrow R/(f^t) \rightarrow 0
\]
of $R$-modules, it follows at once that $M/f^tM \simeq H^{s-1}_J(R/(f^t))$ as $R$-modules. By the change of ring principle \cite[Thm. 4.2.1]{brodmann}, we have $H^{s-1}_J(R/(f^t)) \simeq H^{s-1}_{\overline{J}}(R/(f^t))$ as $R$-modules, where $\overline{J} = (J + (f^t))/(f^t) \subset R/(f^t)$, the local cohomology module on the right-hand side is computed in the category of $R/(f^t)$-modules, and its $R$-module structure is defined using the natural surjection $R \rightarrow R/(f^t)$.  Therefore $D(M/f^tM) \simeq D(H^{s-1}_{\overline{J}}(R/(f^t)))$ as $R$-modules.  Since $M/f^tM$ (and hence $H^{s-1}_{\overline{J}}(R/(f^t))$) is annihilated by $f^t$, it does not matter whether the Matlis dual on the right-hand side is computed over the ring $R$ or $R/(f^t)$.  Taking the second point of view, we can apply Proposition \ref{ogusiso} to the $(n-1)$-dimensional Gorenstein local ring $R/(f^t)$, obtaining an isomorphism
\[
D(M/f^tM) \simeq D(H^{s-1}_{\overline{J}}(R/(f^t))) \simeq H^{n-s}_P((X_t)_{/Z}, \mathcal{O}_{(X_t)_{/Z}})
\]
of $R$-modules (the cohomological degree is $n-s = (n-1) - (s-1) = \dim(R/(f^t)) - (s-1)$). Here, we have abusively written $Z$ for the closed subscheme of $X_t$ defined by $\overline{J} = (J + (f^t))/(f^t)$. We already know the inverse limit $\varprojlim D(M/f^tM)$ is isomorphic as an $R$-module to 
\[
D(H^1_{(f)}(M)) \simeq D(H^s_I(R)) = H^{n-s}_P(X_{/Y}, \mathcal{O}_{X_{/Y}}),
\]
from which it follows (by passing to the inverse limit in $t$) that we must have
\[
\varprojlim H^{n-s}_P((X_t)_{/Z}, \mathcal{O}_{(X_t)_{/Z}}) \simeq H^{n-s}_P(X_{/Y}, \mathcal{O}_{X_{/Y}})
\]
as $R$-modules. In order to see how this last isomorphism interacts with the inverse limit action of $\partial$, we describe it more explicitly.  For all $t$, we have
\[
H^{n-s}_P((X_t)_{/Z}, \mathcal{O}_{(X_t)_{/Z}}) \simeq \varprojlim_{\l} H^{n-s}_{\mathfrak{m}}((R/(f^t))/\overline{J}^{\l}) \simeq \varprojlim_{\l} H^{n-s}_{\mathfrak{m}}(R/(J^{\l} + (f^t)))
\]
as $R$-modules. Passing to the inverse limit in $t$, we obtain
\[
\varprojlim_t H^{n-s}_P((X_t)_{/Z}, \mathcal{O}_{(X_t)_{/Z}}) \simeq \varprojlim_t \varprojlim_{\l} H^{n-s}_{\mathfrak{m}}(R/(J^{\l} + (f^t))).
\]
As the family $\{I^{\l}\} = \{(J + (f))^{\l}\}$ of ideals of $R$ is cofinal with the family $\{J^{\l} + (f^t)\}_{(\l, t)}$, this last inverse limit is isomorphic to
\[
\varprojlim_{\l} H^{n-s}_{\mathfrak{m}}(R/(J + (f))^{\l}) = H^{n-s}_P(X_{/Y}, \mathcal{O}_{X_{/Y}}),
\]
as claimed.  For all $t$ and $\l$, there are $k$-linear maps $\delta_{\l, t}: R/(J^{\l+1} + (f^{t+1})) \rightarrow R/(J^{\l} + (f^t))$ and $\delta_{\l}: R/J^{\l+1} \rightarrow R/J^{\l}$, induced on the quotients by $\partial: R \rightarrow R$, that induce $k$-linear maps
\[
\overline{\delta}_{\l,t}: H^{n-s}_{\mathfrak{m}}(R/(J^{\l+1} + (f^{t+1}))) \rightarrow H^{n-s}_{\mathfrak{m}}(R/(J^{\l} + (f^t)))
\]
and
\[
\overline{\delta}_{\l}: H^{n-s}_{\mathfrak{m}}(R/J^{\l+1}) \rightarrow H^{n-s}_{\mathfrak{m}}(R/J^{\l})
\]
on local cohomology, by viewing local cohomology as a functor on sheaves of $k$-spaces on the topological space $\Spec(R)$.  (Here we write, for instance, $\delta_{\l}$ rather than $\partial_{\l}$ to avoid confusion with the maps $\partial_{\l}$ defined earlier in the proof.) The inverse limit action of $\partial$ on $H^{n-s}_P(X_{/Y}, \mathcal{O}_{X_{/Y}})$, which we have denoted $\overline{\delta}$, is defined by $\varprojlim_{(\l, t)} \overline{\delta}_{\l, t}$.  If we pass to the inverse limit in $\l$ first, we find that $\overline{\delta} = \varprojlim_t \overline{\delta}_{Z,t}$, where for all $t$, the map
\[
\overline{\delta}_{Z,t}: H^{n-s}_P((X_{t+1})_{/Z}, \mathcal{O}_{(X_{t+1})_{/Z}}) \rightarrow H^{n-s}_P((X_t)_{/Z}, \mathcal{O}_{(X_t)_{/Z}})
\]
is simply $\varprojlim_{\l} \overline{\delta}_{\l, t}$.   

The maps $\delta_{\l, t}$ and $\delta_{\l}$ defined in the previous paragraph fit into commutative diagrams
\[
\begin{CD}
0 @>>> R/J^{\l+1} @>>f^{t+1}> R/J^{\l+1} @>>> R/(J^{\l+1} + (f^{t+1})) @>>> 0\\
@.  @VV \overline{\partial f + t\partial(f)}V @VV \delta_{\l} V  @VV \delta_{\l, t} V @.\\
0 @>>> R/J^{\l} @>>f^t> R/J^{\l} @>>> R/(J^{\l} + (f^t)) @>>> 0\\
\end{CD}
\]
for all $\l$ and $t$, where the rows are exact sequences of $R$-modules and the vertical arrows are $k$-linear.  It is clear that the map $\partial f + t\partial(f): R \rightarrow R$ carries $J^{\l+1}$ into $J^{\l}$ (the left vertical arrow is the $k$-linear map that this map induces on the quotients), and the calculation (at the level of elements of $R$)
\begin{align*}
f^t((\partial f + t\partial(f))(r)) = f^t(\partial(fr) + t\partial(f)(r)) &= f^t(\partial(f)r + f\partial(r) + t\partial(f)(r))\\ &= f^{t+1}\partial(r) + (t+1)f^t\partial(f)r\\ &= \partial(f^{t+1}r)
\end{align*}
for $r \in R$ shows that the left square is commutative (the commutativity of the right square is clear).  Fix $\l$ and $t$.  Since the rows of the above diagram are short exact sequences of $R$-modules, both induce long exact sequences of local cohomology supported at $\mathfrak{m}$, and the vertical arrows, which are $k$-linear, induce $k$-linear maps on local cohomology as defined in the previous paragraph.  Since $J$ is a complete intersection ideal, $R/J^{\l}$ is a Cohen-Macaulay ring (of dimension $n-s+1$) for all $\l$ by \cite[Ex. 17.4]{matsumura}, so $H^i_{\mathfrak{m}}(R/J^{\l}) = 0$ unless $i = n-s+1$.  It follows that $H^i_{\mathfrak{m}}(R/(J^{\l} + (f^t))) = 0$ unless $i = n-s$, since $f^t$ is a non-zerodivisor on $R/J^{\l}$ for all $\l$ and $t$.  Therefore, the only nonzero portion of the resulting diagram is
\[
\begin{CD}
0 @>>> H^{n-s}_{\mathfrak{m}}(R/(J^{\l+1} + (f^{t+1}))) @>>> H^{n-s+1}_{\mathfrak{m}}(R/J^{\l+1}) @>> f^{t+1} > H^{n-s+1}_{\mathfrak{m}}(R/J^{\l+1}) @>>> 0\\ 
@.  @VV \overline{\delta}_{\l, t}V @VVV  @VV\overline{\delta}_{\l}V @.\\
0 @>>> H^{n-s}_{\mathfrak{m}}(R/(J^{\l} + (f^t))) @>>> H^{n-s+1}_{\mathfrak{m}}(R/J^{\l}) @>> f^t > H^{n-s+1}_{\mathfrak{m}}(R/J^{\l}) @>>> 0\\
\end{CD}
\]
where the middle arrow is the map induced by $\overline{\partial f + t\partial(f)}$ on local cohomology.  We now pass to the inverse limit in $\l$ while keeping $t$ fixed.  By \cite[Thm. 7.1.3]{brodmann}, the local cohomology modules appearing in the above diagram are all Artinian, and if $t$ is fixed, the transition maps in the inverse systems $\{H^{n-s+1}_{\mathfrak{m}}(R/J^{\l})\}_{\l}$ and $\{H^{n-s}_{\mathfrak{m}}(R/(J^{\l} + (f^t)))\}_{\l}$ are $R$-linear. Consequently, the Mittag-Leffler hypotheses of Proposition \ref{mlcohcomm} are satisfied, and so the rows in the above diagram remain exact after passing to the inverse limit in $\l$.  The result is a commutative diagram
\begin{gather}\label{twostar}\raisetag{-0.5 in}
\begin{CD}
0 @>>> H^{n-s}_P((X_{t+1})_{/Z}, \mathcal{O}_{(X_{t+1})_{/Z}}) @>>> H^{n-s+1}_P(X_{/Z}, \mathcal{O}_{X_{/Z}}) @>>f^{t+1}> H^{n-s+1}_P(X_{/Z}, \mathcal{O}_{X_{/Z}}) @>>> 0\\
@. @VV \overline{\delta}_{Z,t} V  @VV \overline{\delta}_Z f + t\partial(f)V @VV \overline{\delta}_Z V @. \\
0 @>>> H^{n-s}_P((X_t)_{/Z}, \mathcal{O}_{(X_t)_{/Z}}) @>>> H^{n-s+1}_P(X_{/Z}, \mathcal{O}_{X_{/Z}}) @>>f^t> H^{n-s+1}_P(X_{/Z}, \mathcal{O}_{X_{/Z}}) @>>> 0\\
\end{CD}\tag{**}
\end{gather}
where the rows are short exact sequences of $R$-modules. Recall that $\overline{\delta}_Z$ defines the inverse limit action of $\partial$ on $H^{n-s+1}_P(X_{/Z}, \mathcal{O}_{X_{/Z}})$.  We have already established that the inverse limit action of $\partial$ on 
\[
H^{n-s}_P(X_{/Y}, \mathcal{O}_{X_{/Y}}) \simeq \varprojlim H^{n-s}_P((X_t)_{/Z}, \mathcal{O}_{(X_t)_{/Z}})
\]
is given by $\varprojlim \overline{\delta}_{Z,t}$.  We now have a diagram
\[
\begin{tikzcd}
{}& H^{n-s}_P((X_{t+1})_{/Z}, \mathcal{O}_{(X_{t+1})_{/Z}}) \arrow{dd}[near start]{\overline{\delta}_{Z,t}} \arrow{rr} & & H^{n-s+1}_P(X_{/Z}, \mathcal{O}_{X_{/Z}}) \arrow{dd}[near start]{\overline{\delta}_Zf + t\partial(f)} \\
D(M/f^{t+1}M) \arrow{dd}[near start]{\partial_t^*} \arrow{ur} \arrow[crossing over]{rr} & & D(M) \arrow{ur} \\
& H^{n-s}_P((X_t)_{/Z}, \mathcal{O}_{(X_t)_{/Z}}) \arrow{rr} & & H^{n-s+1}_P(X_{/Z}, \mathcal{O}_{X_{/Z}}) \\
D(M/f^tM) \arrow{ur} \arrow{rr} & & D(M) \arrow[leftarrow, crossing over]{uu}[near end]{(f\partial_M-t\partial(f))^*} \arrow{ur} \\
\end{tikzcd}
\]
in which the front face is the left square of diagram \eqref{onestar}, and the back face is the left square of diagram \eqref{twostar}, so these two faces commute.  Consider next the arrows from the front face to the back face.  The arrows on the right are the isomorphisms of Proposition \ref{ogusiso}.  These are inverse limits of the local duality isomorphisms of Theorem \ref{groth}, which are functorial.  The arrows on the left are composites of the isomorphisms of Proposition \ref{ogusiso} with inverse limits of the change-of-ring isomorphisms $H^{s-1}_J(R/(f^t)) \simeq H^{s-1}_{\overline{J}}(R/(f^t))$ \cite[Thm. 4.2.1]{brodmann}, which are also functorial.  Therefore the top and bottom faces also commute.

By the induction hypothesis, the Matlis dual and inverse limit actions of $\partial$ on 
\[
D(M) = H^{n-s+1}_P(X_{/Z}, \mathcal{O}_{X_{/Z}})
\] 
differ by a sign, that is, $\overline{\delta}_Z = -\partial_M^*$.  Therefore, the right vertical arrow in the front face is
\[
(f\partial_M-t\partial(f))^* = (f\partial_M)^* - t\partial(f) = \partial_M^*f - t\partial(f) = -\overline{\delta}_Zf - t\partial(f)
\]
under the identification $D(M) = H^{n-s+1}_P(X_{/Z}, \mathcal{O}_{X_{/Z}})$ (here we have used the fact that the Matlis dual of multiplication by an element of $R$ is again multiplication by that element).  We conclude that the right face of the diagram anticommutes.  Since the horizontal arrows of the front and back faces are injective, the left face must also anticommute.  Consequently, under the identification $D(H^s_I(R)) = H^{n-s}_P(X_{/Y}, \mathcal{O}_{X_{/Y}})$ and isomorphisms $D(H^s_I(R)) \simeq \varprojlim D(M/f^tM)$ and $H^{n-s}_P(X_{/Y}, \mathcal{O}_{X_{/Y}}) \simeq \varprojlim H^{n-s}_P((X_t)_{/Z}, \mathcal{O}_{(X_t)_{/Z}})$, we have
\[
\overline{\delta} = \varprojlim \overline{\delta}_{Z,t} = \varprojlim (-\partial_t^*)  = -\varprojlim \partial_t^* = -\partial^*;
\]
that is, the Matlis dual and inverse limit actions on $D(H^s_I(R)) = H^{n-s}_P(X_{/Y}, \mathcal{O}_{X_{/Y}})$ also differ by a sign, completing the proof by induction for complete intersection ideals $I$.

Before beginning the proof for arbitrary ideals $I$, we need a lemma:

\begin{lem}\label{inclformal}
Let $J \subset I$ be ideals of $R$, and let $Y$ (resp. $Z$) be the closed subscheme of $X = \Spec(R)$ defined by $I$ (resp. $J$).  
\begin{enumerate}[(a)]
\item If the natural map $H^j_I(R) \rightarrow H^j_J(R)$ is surjective, and the Matlis dual and inverse limit actions of $\partial$ on $D(H^j_I(R)) = H^{n-j}_P(X_{/Y}, \mathcal{O}_{X_{/Y}})$ differ by a sign, then the Matlis dual and inverse limit actions of $\partial$ on $D(H^j_J(R)) = H^{n-j}_P(X_{/Z}, \mathcal{O}_{X_{/Z}})$ also differ by a sign.
\item If the natural map $H^j_I(R) \rightarrow H^j_J(R)$ is injective, and the Matlis dual and inverse limit actions of $\partial$ on $D(H^j_J(R)) = H^{n-j}_P(X_{/Z}, \mathcal{O}_{X_{/Z}})$ differ by a sign, then the Matlis dual and inverse limit actions of $\partial$ on $D(H^j_I(R)) = H^{n-j}_P(X_{/Y}, \mathcal{O}_{X_{/Y}})$ also differ by a sign.
\end{enumerate}
\end{lem}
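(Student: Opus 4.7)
Write $\psi : H^j_I(R) \to H^j_J(R)$ for the natural map induced by the containment $\Gamma_I \subset \Gamma_J$ of subfunctors (which exists because $J \subset I$ forces $J^t m \subset I^t m = 0$ whenever $m$ is annihilated by $I^t$). This map is $\D$-linear, so by Proposition \ref{dualofdlinear} its Matlis dual $\psi^{*} : D(H^j_J(R)) \to D(H^j_I(R))$ is right $\D$-linear; equivalently, it intertwines the Matlis dual actions $\partial^{*}_I$ and $\partial^{*}_J$: we have $\psi^{*} \circ \partial^{*}_J = \partial^{*}_I \circ \psi^{*}$. My first task is to identify $\psi^{*}$, via Proposition \ref{ogusiso}, with the map $\varphi : H^{n-j}_P(X_{/Z}, \mathcal{O}_{X_{/Z}}) \to H^{n-j}_P(X_{/Y}, \mathcal{O}_{X_{/Y}})$ induced on formal local cohomology by the surjections $R/J^t \twoheadrightarrow R/I^t$. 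This identification is an immediate consequence of the functoriality of Grothendieck's local duality theorem (Theorem \ref{groth}) applied termwise to the surjections $R/J^t \twoheadrightarrow R/I^t$, followed by passage to the inverse limit.

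Next, I would show that $\varphi$ intertwines the inverse limit actions: $\varphi \circ \overline{\delta}_J = \overline{\delta}_I \circ \varphi$. This comes down to verifying that, for every $t$, the diagram
\[
\begin{CD}
R/J^{t+1} @>\partial>> R/J^t\\
@VVV @VVV\\
R/I^{t+1} @>\partial>> R/I^t
\end{CD}
\]
of $k$-linear maps commutes (the vertical arrows being the surjections induced by $J^{\bullet} \subset I^{\bullet}$), which is obvious since the \emph{same} operator $\partial$ on $R$ induces each horizontal arrow. Applying $H^{n-j}_{\mathfrak{m}}(-)$ termwise and passing to the inverse limit over $t$ yields the desired intertwining for $\varphi$.

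Once both compatibilities are in hand, the proof is purely formal. For part (a), surjectivity of $\psi$ implies (by exactness of the Matlis dual) injectivity of $\psi^{*} = \varphi$. The hypothesis reads $\overline{\delta}_I = -\partial^{*}_I$, so
\[
\varphi \circ (\overline{\delta}_J + \partial^{*}_J) = \overline{\delta}_I \circ \varphi + \partial^{*}_I \circ \varphi = (\overline{\delta}_I + \partial^{*}_I) \circ \varphi = 0;
\]
injectivity of $\varphi$ then gives $\overline{\delta}_J = -\partial^{*}_J$. For part (b), injectivity of $\psi$ yields surjectivity of $\varphi$, and the symmetric computation
\[
(\overline{\delta}_I + \partial^{*}_I) \circ \varphi = \varphi \circ (\overline{\delta}_J + \partial^{*}_J) = 0
\]
together with surjectivity of $\varphi$ gives $\overline{\delta}_I = -\partial^{*}_I$.

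The only genuine content in the argument is the identification $\psi^{*} = \varphi$ together with the intertwining of $\varphi$ with the inverse limit action; both hinge on functoriality statements (for local duality and for the action of $\partial$ on the tower $\{R/I^t\}$). Everything else is diagram chasing.
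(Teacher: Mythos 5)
Your proof is correct and takes essentially the same route as the paper: both identify $\psi^{*}$ with the inverse-limit map $\varphi$ via the functoriality of local duality (Theorem \ref{groth}), establish that $\varphi$ intertwines the Matlis dual action (because $\psi$ is $\D$-linear) and the inverse limit action (via the commuting square of quotients $R/J^{t+1}\to R/I^{t+1}$, $R/J^{t}\to R/I^{t}$ and $\partial$), and then transfer the sign identity across $\varphi$ using the injectivity/surjectivity supplied by exactness of Matlis duality. The only point worth emphasizing explicitly, which the paper states, is that applying $H^{n-j}_{\mathfrak{m}}$ to the merely $k$-linear horizontal arrows of your square is legitimate because local cohomology is a functor on sheaves of $k$-spaces on $\Spec(R)$.
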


\begin{proof}[Proof of Lemma \ref{inclformal}]
There are canonical surjections $R/J^{\l} \rightarrow R/I^{\l}$ for all $\l$ which induce $R$-linear maps $H^{n-j}_{\mathfrak{m}}(R/J^{\l}) \rightarrow H^{n-j}_{\mathfrak{m}}(R/I^{\l})$ and $\Ext^j_R(R/I^{\l}, R) \rightarrow \Ext^j_R(R/J^{\l}, R)$.  The first of these maps and the Matlis dual of the second are the horizontal arrows in the diagram
\[
\begin{CD}
D(\Ext^j_R(R/J^{\l}, R)) @>>> D(\Ext^j_R(R/I^{\l}, R))\\
@VVV    @VVV\\
H^{n-j}_{\mathfrak{m}}(R/J^{\l}) @>>> H^{n-j}_{\mathfrak{m}}(R/I^{\l})
\end{CD}
\]
whose vertical arrows are the isomorphisms of Theorem \ref{groth}: the diagram commutes because the isomorphisms of Theorem \ref{groth} are functorial.  By Proposition \ref{ogusiso}, the inverse limit in $\l$ of these commutative diagrams is the commutative diagram
\[
\begin{CD}
D(H^j_J(R)) @>>> D(H^j_I(R))\\
@VVV     @VVV\\
H^{n-j}_P(X_{/Z}, \mathcal{O}_{X_{/Z}}) @>>> H^{n-j}_P(X_{/Y}, \mathcal{O}_{X_{/Y}})\\
\end{CD}
\]
of $R$-modules.  If the natural map $H^j_I(R) \rightarrow H^j_J(R)$ is surjective (resp. injective), then the top horizontal arrow of this diagram is injective (resp. surjective), as is the bottom horizontal arrow (because the vertical maps are isomorphisms).  

Both parts of the lemma will follow if we can show that the top (resp. bottom) horizontal arrow commutes with the Matlis dual (resp. inverse limit) action on both sides; for then in the case that the map $H^j_I(R) \rightarrow H^j_J(R)$ is surjective (so both horizontal maps in the preceding commutative diagram are injective), the Matlis dual (resp. inverse limit) action on $D(H^j_J(R)) = H^{n-j}_P(X_{/Z}, \mathcal{O}_{X_{/Z}})$ is induced by the Matlis dual (resp. inverse limit) action on $D(H^j_I(R)) = H^{n-j}_P(X_{/Y}, \mathcal{O}_{X_{/Y}})$, and in the case that the map $H^j_I(R) \rightarrow H^j_J(R)$ is injective (so both horizontal maps in the preceding commutative diagram are surjective), the Matlis dual (resp. inverse limit) action on $D(H^j_I(R)) = H^{n-j}_P(X_{/Y}, \mathcal{O}_{X_{/Y}})$ is induced by the Matlis dual (resp. inverse limit) action on $D(H^j_J(R)) = H^{n-j}_P(X_{/Z}, \mathcal{O}_{X_{/Z}})$.

The top arrow commutes with the Matlis dual action of $\D$ on both sides because it is the Matlis dual of an $\D$-linear map.  Finally, the bottom arrow 
\[
H^{n-j}_P(X_{/Z}, \mathcal{O}_{X_{/Z}}) \rightarrow H^{n-j}_P(X_{/Y}, \mathcal{O}_{X_{/Y}})
\] 
commutes with the inverse limit action of $\D$ on both sides, because if $d \in \D$ is a differential operator of order $j$, we have commutative diagrams
\[
\begin{CD}
R/J^{t+j} @>>> R/I^{t+j}\\
@VVV        @VVV\\
R/J^t @>>> R/I^t\\
\end{CD}
\]
for all $t$ where the vertical arrows are induced by $d$, and these diagrams remain commutative after applying the functor $H^j_{\mathfrak{m}}$ to all of their objects and maps (viewing local cohomology as a functor on sheaves of $k$-spaces on the topological space $\Spec(R)$).  
\end{proof}

Now let $I \subset R$ be an arbitrary ideal, and let $h = \hgt(I)$.  Choose a regular sequence $f_1, \ldots, f_h \in I$ and denote by $J$ the ideal generated by $f_1, \ldots, f_h$, so that $J \subset I$ and $J$ is a complete intersection ideal.  Let $J = \mathfrak{q}_1 \cap \cdots \cap \mathfrak{q}_r$ be a primary decomposition of $J$.  Reindexing if necessary, we may assume $I \subset \sqrt{\mathfrak{q}_1}, \cdots, I \subset \sqrt{\mathfrak{q}_s}, I \not\subset \sqrt{\mathfrak{q}_{s+1}}, \cdots, I \not\subset \sqrt{\mathfrak{q}_r}$.  We may assume $s < r$, as otherwise $\sqrt{I} = \sqrt{J}$ and there is nothing left to prove.  Put $I' = \mathfrak{q}_1 \cap \cdots \cap \mathfrak{q}_s$ and $I'' = \mathfrak{q}_{s+1} \cap \cdots \cap \mathfrak{q}_r$; then we have $\hgt(I') = h, \hgt(I' + I'') = h+1$, $J = I' \cap I''$, and $\sqrt{I} = \sqrt{I'}$.

The ideals $I'$ and $I''$ give rise to a Mayer-Vietoris sequence of local cohomology (Proposition \ref{mv})
\[
\cdots \rightarrow H^{h-1}_J(R) \rightarrow H^h_{I' + I''}(R) \rightarrow H^h_{I}(R) \oplus H^h_{I''}(R) \rightarrow H^h_J(R) \rightarrow H^{h+1}_{I' + I''}(R) \rightarrow \cdots
\]
where we identify $H^i_I(R)$ with $H^i_{I'}(R)$ for all $i$ because the ideals $I$ and $I'$ have the same radical \cite[Remark 1.2.3]{brodmann}.  Since $\hgt(I' + I'') > h$, we have $H^h_{I' + I''}(R) = 0$, so there is an injection $H^h_I(R) \oplus H^h_{I''}(R) \hookrightarrow H^h_J(R)$; composing with the inclusion of the first summand, we obtain an injection $H^h_I(R) \hookrightarrow H^h_J(R)$, which is just the natural map induced by the inclusion $J \subset I$ of ideals.  By the previous step of the proof, since $J$ is a complete intersection ideal, the Matlis dual and inverse limit actions of $\partial$ on $D(H^h_J(R)) = H^{n-h}_P(X_{/Z}, \mathcal{O}_{X_{/Z}})$ differ by a sign.  We conclude from Lemma \ref{inclformal}(b) that the Matlis dual and inverse limit actions of $\partial$ on $D(H^h_I(R)) = H^{n-h}_P(X_{/Y}, \mathcal{O}_{X_{/Y}})$ also differ by a sign.  This proves the theorem for an arbitrary ideal $I$ in cohomological degree equal to the height of $I$.

Finally, we prove the theorem in arbitrary cohomological degree $i$ by induction on $i - \hgt(I)$; the base case, where $i = \hgt(I)$, was proved in the previous paragraph.  Let $i > h = \hgt(I)$ be given.  Since $J$ is a complete intersection ideal of height $h$, $H^i_J(R) = 0$ unless $i = h$, so from the Mayer-Vietoris sequence we also conclude that $H^i_{I' + I''}(R)$ maps surjectively to $H^i_I(R) \oplus H^i_{I''}(R)$; composing with the projection onto the first summand, we obtain a surjection $H^i_{I' + I''}(R) \rightarrow H^i_I(R)$, which is just the natural map induced by the inclusion $I \subset I' + I''$ of ideals.  We have $\hgt(I' + I'') > \hgt(I)$, so $i - \hgt(I' + I'') < i - \hgt(I)$.  By the induction hypothesis, the Matlis dual and inverse limit actions of $\partial$ on $D(H^i_{I' + I''}(R)) = H^{n-i}_P(X_{/Z'}, \mathcal{O}_{X_{/Z'}})$ differ by a sign, where $Z'$ is the closed subscheme of $X = \Spec(R)$ defined by $I' + I''$.  We conclude from Lemma \ref{inclformal}(a) that the Matlis dual and inverse limit actions of $\partial$ on $D(H^i_I(R)) = H^{n-i}_P(X_{/Y}, \mathcal{O}_{X_{/Y}})$ also differ by a sign.
\end{proof}

\section{The de Rham cohomology of a complete local ring}\label{cohom}

In this section, we turn our attention to de Rham cohomology, and prove Theorems \ref{mainthmB} and \ref{dualE2}.  We prove the latter theorem (and Theorem \ref{mainthmB}(b) as an immediate corollary) first, by synthesizing our results from the previous sections on Matlis duality and local cohomology of formal schemes.  The proof of Theorem \ref{mainthmB}(a), which we give next, proceeds similarly to that of Theorem \ref{mainthmA}(a).  As in section \ref{homology}, we will first give the proof of Theorem \ref{mainthmB}(a) for the $E_2$-terms of the spectral sequences, to bring out the main ideas of the proof.  There is a technical complication that will make the $E_2$ case more difficult than the $E_2$ case of Theorem \ref{mainthmA}(a).  (We will need to verify various Mittag-Leffler hypotheses for inverse systems, for which Corollary \ref{fgstabdR} will be necessary.)

We begin by recalling notation used in section \ref{homology} and giving Hartshorne's definition of de Rham cohomology.  Thus $A$ is again a complete local ring with a coefficient field $k$ of characteristic zero and $\pi: R \rightarrow A$ is a surjection of $k$-algebras with $R = k[[x_1, \ldots, x_n]]$ for some $n$.  The surjection $\pi$ induces a closed immersion of spectra $\Spec(A) = Y \hookrightarrow X = \Spec(R)$, defined by the coherent sheaf of ideals $\mathcal{I} \subset \mathcal{O}_X$ associated with the ideal $I = \ker \pi \subset R$.  Let $\widehat{X}$ be the formal completion of $Y$ in $X$, that is, the topological space $Y$ equipped with the structure of a locally ringed space via the sheaf of rings $\mathcal{O}_{\widehat{X}} = \varprojlim \mathcal{O}_X/\mathcal{I}^{\l}$, an inverse limit of sheaves of rings supported at $Y$.  For any coherent sheaf $\mathcal{F}$ of $\mathcal{O}_X$-modules, we can define its $\mathcal{I}$-adic completion $\widehat{\mathcal{F}} = \varprojlim \mathcal{F}/\mathcal{I}^{\l}\mathcal{F}$, which is a sheaf of $\mathcal{O}_{\widehat{X}}$-modules.

Now consider again the (continuous) de Rham complex $\Omega_X^{\bullet}$.  By the Leibniz rule, the $k$-linear differentials in this complex of sheaves on $X$ are $\mathcal{I}$-adically continuous, and therefore pass to the $\mathcal{I}$-adic completions of the coherent $\mathcal{O}_X$-modules $\Omega^i_X$.  We obtain, therefore, a complex $\widehat{\Omega}_X^{\bullet}$, whose objects are sheaves of $\mathcal{O}_{\widehat{X}}$-modules but whose differentials are merely $k$-linear.  Since $X$ is the spectrum of a complete regular local ring and so the sheaves $\Omega_X^i$ are finite free $\mathcal{O}_X$-modules, there is a simpler description of the complex $\widehat{\Omega}_X^{\bullet}$.  As formal completion commutes with \emph{finite} direct sums, the sheaf $\widehat{\Omega}^i_X$ is a direct sum of copies of $\mathcal{O}_{\widehat{X}}$.  All of the derivations $\partial_{x_j}: R \rightarrow R$ induce $\mathcal{I}$-adically continuous maps $\mathcal{O}_{\widehat{X}} \rightarrow \mathcal{O}_{\widehat{X}}$, and if we form the de Rham complex of $\mathcal{O}_{\widehat{X}}$ with respect to these derivations, we recover precisely the complex $\widehat{\Omega}_X^{\bullet}$.

In \cite{derham}, the \emph{(local) de Rham cohomology} of the local scheme $Y$ is defined as $H^i_{P, dR}(Y) = \mathbf{H}^i_P(\widehat{X}, \widehat{\Omega}_X^{\bullet})$, the hypercohomology (supported at the closed point $P$ of $Y$) of the completed de Rham complex.  These $k$-spaces are known to be independent of the choice of $R$ and $\pi$ \cite[Prop. III.1.1]{derham} and finite-dimensional \cite[Thm. III.2.1]{derham}.  Since the local de Rham cohomology is defined as the hypercohomology of a complex, we have, as in the case of homology, a Hodge-de Rham spectral sequence that begins $\tilde{E}_1^{p,q} = H^q_P(\widehat{X}, \widehat{\Omega}^p_X)$ and abuts to $H^{p+q}_{P, dR}(Y)$.  

We also recall from section \ref{homology} that the Hodge-de Rham spectral sequence for homology has $E_1$-term given by $E_1^{n-p, n-q} = H^{n-q}_Y(X, \Omega_X^{n-p})$ and abuts to $H_{p+q}^{dR}(Y)$.  The assertion of Theorem \ref{dualE2} is that, for all $p$ and $q$, the $k$-spaces $E_2^{n-p, n-q}$ and $\tilde{E}_2^{p,q}$ are dual to each other.  Since we know by Theorem \ref{mainthmA}(b) that the former $k$-space is finite-dimensional, this duality implies that the latter is as well, which will prove Theorem \ref{mainthmB}(b).  Our work in sections \ref{mittleff} and \ref{formal} is nearly enough to establish this duality; all that remains is to identify the rows of the $E_1$-term of the \emph{cohomology} spectral sequence with de Rham complexes of $\D$-modules.

We recall again Convention \ref{tensorsubscript}, which remains in force in this section: if $M$ is a left $\D(R,k)$-module, we write $M \otimes \Omega^{\bullet}_R$ for its de Rham complex, where the subscript $R$ indicates the ring over which the complex is being computed: the objects of this complex are tensor products of $R$-modules taken over $R$, but the maps are not $R$-linear.

\begin{lem}\label{E2cohshape}
Let the surjection $\pi: R = k[[x_1, \ldots, x_n]] \rightarrow A$ (and the associated objects $I, X, \widehat{X}, Y$) be as above, and $\{\tilde{E}_{\bullet}^{\bullet,\bullet}\}$ the corresponding Hodge-de Rham spectral sequence for cohomology.  For all $q$, the $q$th row $\{\tilde{E}_1^{\bullet, q}\}$ of the $E_1$-term $\{\tilde{E}_1^{\bullet,\bullet}\}$ is isomorphic, as a complex of $k$-spaces, to the de Rham complex $H^q_P(\widehat{X}, \mathcal{O}_{\widehat{X}}) \otimes \Omega_R^{\bullet}$, where the left $\D(R,k)$-structure on $H^q_P(\widehat{X}, \mathcal{O}_{\widehat{X}})$ is given by the \emph{inverse limit action} of Definition \ref{twoduals}.
\end{lem}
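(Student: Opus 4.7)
The plan is to unravel both sides and check that the identification holds at the level of objects first and of differentials second. For the objects, I would use that $X = \Spec R$ is the spectrum of the complete regular local ring $R = k[[x_1,\ldots,x_n]]$, so each $\Omega^p_X$ is a finite free $\mathcal{O}_X$-module of rank $\binom{n}{p}$ with basis $\{dx_{j_1}\wedge\cdots\wedge dx_{j_p}\}_{1\le j_1<\cdots<j_p\le n}$. Since formal completion commutes with finite direct sums, $\widehat{\Omega}^p_X$ is a direct sum of $\binom{n}{p}$ copies of $\mathcal{O}_{\widehat X}$ indexed by the same symbols, and since the local cohomology functor $H^q_P$ commutes with finite direct sums, this gives a canonical isomorphism
\[
\tilde{E}_1^{p,q} = H^q_P(\widehat{X},\widehat{\Omega}^p_X) \simeq H^q_P(\widehat{X},\mathcal{O}_{\widehat{X}})\otimes_R \Omega^p_R,
\]
which matches the $p$-th object in the de Rham complex of $H^q_P(\widehat X,\mathcal{O}_{\widehat X})$ as a left $R$-module.

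For the differentials, I would observe that the $E_1$-differential $\tilde{E}_1^{p,q}\to \tilde{E}_1^{p+1,q}$ is induced on local cohomology by the $k$-linear sheaf map $d\colon \widehat{\Omega}^p_X\to \widehat{\Omega}^{p+1}_X$, which on local sections has the standard form $d(\rho\,dx_{j_1}\wedge\cdots\wedge dx_{j_p}) = \sum_s \partial_s(\rho)\,dx_s\wedge dx_{j_1}\wedge\cdots\wedge dx_{j_p}$ with $\rho\in\mathcal{O}_{\widehat X}$. Under the direct sum decomposition above, this decomposes (up to the usual exterior algebra signs) into $\binom{n}{p}\cdot n$ copies of the $k$-linear map $\partial_s\colon \mathcal{O}_{\widehat X}\to\mathcal{O}_{\widehat X}$, and therefore on $\tilde{E}_1$ it decomposes into the induced $k$-linear maps $H^q_P(\widehat X,\mathcal{O}_{\widehat X})\to H^q_P(\widehat X,\mathcal{O}_{\widehat X})$. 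To finish, I need to match these induced maps with the inverse limit action of $\partial_s$ in Definition \ref{twoduals}. But both are defined in exactly the same way: each $\partial_s\in \D(R,k)$ has order one, so induces $k$-linear maps $\partial_{s,t}\colon \mathcal{O}_X/\mathcal{I}^{t+1}\to\mathcal{O}_X/\mathcal{I}^t$, and the map $\partial_s$ on $\mathcal{O}_{\widehat X}$ (resp.\ on $H^q_P(\widehat X,\mathcal{O}_{\widehat X})$) is by construction the inverse limit of the $\partial_{s,t}$ (resp.\ of the $\overline{\partial_{s,t}}\colon H^q_{\mathfrak{m}}(R/I^{t+1})\to H^q_{\mathfrak{m}}(R/I^t)$). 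Hence the $E_1$-differential in the $q$-th row coincides with the de Rham differential of the left $\D(R,k)$-module $H^q_P(\widehat X,\mathcal{O}_{\widehat X})$ equipped with the inverse limit action.

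The main obstacle, such as it is, is purely bookkeeping: I must justify that $H^q_P(\widehat X,\widehat{\mathcal{F}})\simeq \varprojlim_t H^q_P(X,\mathcal{F}/\mathcal{I}^t\mathcal{F})$ for the relevant coherent sheaves $\mathcal{F}$, so that the sheaf-theoretic maps induced by the $\partial_s$ really do match the inverse limit maps of Definition \ref{twoduals}. This holds because each $\mathcal{F}/\mathcal{I}^t\mathcal{F}$ is supported at $Y$ (so $H^q_P$ reduces to ordinary local cohomology of the $R/I^t$-module of sections at $\mathfrak{m}$), and because the inverse system is Mittag-Leffler (the transition maps $\mathcal{O}_X/\mathcal{I}^{t+1}\twoheadrightarrow \mathcal{O}_X/\mathcal{I}^t$ are surjective). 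Given these compatibilities, the proof reduces to an unravelling of definitions, and no further input from the $\D$-module theory of earlier sections is required.
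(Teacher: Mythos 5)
Your proposal matches the paper's approach: decompose $\widehat{\Omega}^p_X$ as a finite direct sum of copies of $\mathcal{O}_{\widehat{X}}$ (using that formal completion and $H^q_P$ commute with finite direct sums), then observe that the $E_1$-differentials are induced by $\mathcal{I}$-adic completion followed by $H^q_P$, which is precisely the recipe defining the inverse limit action. The paper's proof is correspondingly short and concludes by citing Proposition \ref{ogusiso} for the identification $H^q_P(\widehat{X}, \mathcal{O}_{\widehat{X}}) \simeq \varprojlim H^q_{\mathfrak{m}}(R/I^{\l})$, which is exactly the ``main obstacle'' you isolate at the end. You should simply cite Proposition \ref{ogusiso} here rather than sketch a fresh argument. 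Your sketch has a minor imprecision worth flagging: surjectivity of the transition maps $\mathcal{O}_X/\mathcal{I}^{t+1}\twoheadrightarrow \mathcal{O}_X/\mathcal{I}^t$ gives Mittag-Leffler for the inverse system of \emph{sheaves}, but what is actually needed to conclude that $H^q_P$ commutes with the inverse limit is Mittag-Leffler for the inverse system of cohomology modules $\{H^{q-1}_{\mathfrak{m}}(R/I^t)\}_t$ (so that the relevant $\varprojlim^1$ vanishes), and that follows instead from the fact that these modules are Artinian with $R$-linear transition maps. The two conditions are not the same, and the first does not by itself yield the second; since Proposition \ref{ogusiso} already encapsulates the correct argument, this is best handled by citation rather than re-derivation.
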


\begin{proof}
Let $q$ be fixed.  Both formal completion along $Y$ and local cohomology $H^q_P$ commute with finite direct sums, so for all $p$, $\tilde{E}_1^{p,q} = H^q_P(\widehat{X}, \widehat{\Omega}^p_X)$ is a direct sum of copies of the $R$-module $H^q_P(\widehat{X}, \mathcal{O}_{\widehat{X}})$, and the complex $\{\tilde{E}_1^{\bullet, q}\}$ has differentials induced by the differentials in the complex $\Omega_X^{\bullet}$ by first passing to $\mathcal{I}$-adic completions and then applying the functor $H^q_P$.  This is exactly the de Rham complex of $H^q_P(\widehat{X}, \mathcal{O}_{\widehat{X}})$ with respect to the inverse limit action, since by Proposition \ref{ogusiso} we have an isomorphism $H^q_P(\widehat{X}, \mathcal{O}_{\widehat{X}}) \simeq \varprojlim H^q_P(\widehat{X}, \mathcal{O}_X/\mathcal{I}^{\l})$.
\end{proof} 

\begin{proof}[Proof of Theorems \ref{dualE2} and \ref{mainthmB}(b)]
Fix $q$.  By Lemma \ref{E2cohshape}, the $q$th row of the $E_1$-term of the cohomology spectral sequence, $\tilde{E}_1^{\bullet, q}$, is the de Rham complex of the left $\D(R,k)$-module $H^q_P(\widehat{X}, \mathcal{O}_{\widehat{X}})$.  By Theorem \ref{loccohtranspose} and Remark \ref{E1duals}, this complex is isomorphic (as a complex of $k$-spaces) to the de Rham complex of the left $\D(R,k)$-module $D(H^{n-q}_I(R))$.  We obtain the $E_2$-term by taking cohomology, so for all $p$, $\tilde{E}_2^{p,q} = H^p_{dR}(D(H^{n-q}_I(R)))$.  Since $H^{n-q}_I(R)$ is a holonomic $\D(R,k)$-module, Theorem \ref{dualcoh} applies: we have an isomorphism 
\[
H^p_{dR}(D(H^{n-q}_I(R))) \simeq (H^{n-p}_{dR}(H^{n-q}_I(R)))^{\vee}.
\]  
The right-hand side of this isomorphism is nothing but $(E_2^{n-p,n-q})^{\vee}$ (by Lemma \ref{E2shape}), completing the proof.
\end{proof}

We will now begin working toward the proof of Theorem \ref{mainthmB}(a).  The reductions immediately preceding Lemma \ref{specialcase} are equally valid here, so for the remainder of the section, we assume that $R=k[[x_1, \ldots, x_n]]$ and $R' = R[[z]]$, and it suffices to compare the Hodge-de Rham spectral sequences for cohomology corresponding to an arbitrary ideal $I \subset R$ (defining a closed immersion $Y = \Spec(R/I) \hookrightarrow X = \Spec(R)$) and the ideal $I' = IR' + (z) \subset R'$ (defining a closed immersion $Y \hookrightarrow X' = \Spec(R')$).

In the proof of Theorem \ref{mainthmA}(a), we made use of an operation (Definition \ref{plusop}) on $k$-spaces.  Its replacement in this section is the following operation:

\begin{definition}\label{upperplus}
Let $M$ be any $k$-space.  We define $M^+ = M[[z]]$, the $\D(k[[z]],k)$-module of formal power series with coefficients in $M$.  If $M$ is an $R$-module (resp. a $\D(R,k)$-module), then $M^+$ defined in this way is an $R'$-module (resp. a $\D(R',k)$-module), with $\partial_z$-action defined by $\partial_z(\sum m_{\l}z^{\l}) = \sum (\l+1)m_{\l+1}z^{\l}$.
\end{definition}

If $M$ is an $R$-module, the $R'$-module $M^+$ is always $z$-adically complete.  What is more, $k$-linear maps between $k$-spaces and $R$-linear maps between $R$-modules extend to the corresponding formal power series objects: if $f: M \rightarrow N$ is a $k$-linear map between $k$-spaces (or an $R$-linear map between $R$-modules), $f^+: M^+ \rightarrow N^+$ is defined by $f^+(\sum m_{\l}z^{\l}) = \sum f(m_{\l})z^{\l}$.

\begin{remark}
If $M$ is an $R$-module, the $R'$-module $M^+ = M[[z]]$ is usually not isomorphic to $M \otimes_R R'$.  This is an example of the failure of inverse limits to commute with tensor products.  We do have 
\[
M^+ \simeq \varprojlim M[[z]]/z^{\l} \simeq \varprojlim (M \otimes_R R'/z^{\l}),
\]
where every $M \otimes_R R'/z^{\l}$ is regarded as an $R'$-module via the natural surjection $R' \rightarrow R'/z^{\l}$, but this inverse limit need not be isomorphic to $M \otimes_R (\varprojlim R'/z^{\l})$.
\end{remark}

In the proofs below, we will often take two inverse limits simultaneously in the process of forming the module $M^+$.  The general principle is the following:

\begin{lem}\label{doublelim}
Let $\{M_{\l}\}$ be an inverse system of $R$-modules, indexed by $\mathbb{N}$, and let $M = \varprojlim M_{\l}$.  Then $\varprojlim (M_{\l} \otimes_R R'/z^{\l}) \simeq M^+$ as $R'$-modules.
\end{lem}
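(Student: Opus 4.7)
The plan is to exhibit both sides as iterated inverse limits and then interchange the order of the limits. The key observation is that since $R'/z^n$ is the free $R$-module with basis $1, z, \ldots, z^{n-1}$, for any $R$-module $N$ we have $N \otimes_R R'/z^n \simeq N[z]/(z^n)$ as $R'$-modules, and hence $\varprojlim_n (N \otimes_R R'/z^n) \simeq N[[z]] = N^+$.

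Now consider the doubly indexed inverse system $\{M_m \otimes_R R'/z^n\}_{(m,n) \in \mathbb{N}^2}$, whose transition maps combine those of $\{M_\ell\}$ with the surjections $R'/z^{n+1} \twoheadrightarrow R'/z^n$. The diagonal $\{(\ell,\ell)\}_{\ell \in \mathbb{N}}$ is cofinal in $\mathbb{N}^2$ (ordered componentwise), so the restriction map gives an $R'$-linear isomorphism
\[
\varprojlim_\ell (M_\ell \otimes_R R'/z^\ell) \;\simeq\; \varprojlim_{(m,n)} (M_m \otimes_R R'/z^n).
\]
I would then compute the right-hand double limit by taking the inverse limit in $m$ first. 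Since $- \otimes_R R'/z^n$ is, for fixed $n$, a finite direct sum functor on $R$-modules, it commutes with the inverse limit in $m$, giving $\varprojlim_m (M_m \otimes_R R'/z^n) \simeq M \otimes_R R'/z^n$. Taking the inverse limit in $n$ then produces $M^+$ by the key observation above, yielding the desired isomorphism.

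There is no genuine mathematical obstacle here; the proof is entirely formal. The only thing requiring care is the bookkeeping of the $R'$-module structure: the $R$-action is preserved because every step is $R$-linear, while the $z$-action, which originates on the tensor factor $R'/z^n$, corresponds in the limit to the usual $z$-action on $M[[z]]$, i.e., to the shift on $\prod_{i \geq 0} M \cdot z^i$. Equivalently, one can prove the lemma by the direct elementwise construction: a compatible system $(\xi_\ell)$ with $\xi_\ell = \sum_{i=0}^{\ell-1} m_{i,\ell} z^i$ satisfies $m_{i,\ell+1} \mapsto m_{i,\ell}$ in $M_\ell$ for each $i < \ell$, so the coefficients assemble into elements $m_i = (m_{i,\ell})_\ell \in M$, and $(\xi_\ell) \mapsto \sum_i m_i z^i$ is the required $R'$-linear bijection.
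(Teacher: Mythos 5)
Your proof is correct and follows essentially the same route as the paper's: both pass to the doubly indexed inverse system $\{M_s \otimes_R R'/z^{\l}\}$, use that tensoring with the finite free $R$-module $R'/z^{\l}$ commutes with $\varprojlim_s$, and conclude by cofinality of the diagonal in $\mathbb{N}\times\mathbb{N}$. The elementwise sanity check at the end is a reasonable addition but does not change the underlying argument.
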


\begin{proof}
As an $R'$-module, $M^+ \simeq \varprojlim_{\l} ((\varprojlim_s M_s) \otimes_R R'/z^{\l})$.  Consider the inverse system $\{M_s \otimes_R R'/z^{\l}\}$, indexed by $\mathbb{N} \times \mathbb{N}$ where $(s, \l) \leq (s', \l')$ if and only if $s \leq s'$ and $\l \leq \l'$.  Then
\[ 
\varprojlim_{\l} ((\varprojlim_s M_s) \otimes_R R'/z^{\l}) \simeq \varprojlim_{(s, \l)} M_s \otimes_R R'/z^{\l}.
\]  
As the ``diagonal'' inverse system $\{M_{\l} \otimes_R R'/z^{\l}\}_{\l}$ is cofinal with $\{M_s \otimes_R R'/z^{\l}\}_{(s,\l)}$, their inverse limits are isomorphic, as desired.
\end{proof}

We have defined the formal power series operation both for $k$-spaces and for $R$-modules.  In what follows, we will frequently apply the operation to an entire complex whose objects are $R$-modules but whose differentials are merely $k$-linear.  We will still (abusively) use the notation $\otimes \, R'/z^{\l}$ for the $\l$th truncation of the formal power series operation, a convention which we record here:

\begin{definition}\label{otimescplx}
Let $C^{\bullet}$ be a complex whose objects are $R$-modules and whose differentials are merely $k$-linear.  The complex $C^{\bullet} \otimes R'/z^{\l}$ is the direct sum of $\l$ copies of $C^{\bullet}$, indexed by $z^i$ for $i = 0, \ldots, \l - 1$.
\end{definition}

Note that the complex $(C^{\bullet})^+$, obtained by applying the formal power series operation to all objects and differentials of $C^{\bullet}$, is the inverse limit (in the category of complexes of $k$-spaces) of $C^{\bullet} \otimes R'/z^{\l}$.

\begin{remark}\label{ab4}
We note one important difference between the formal power series operation defined above and the operation of Definition \ref{plusop}.  The latter operation is defined using an infinite direct sum.  Therefore, the question of whether it commutes with cohomology reduces to the question of whether the underlying category satisfies Grothendieck's axiom $AB4$ \cite{tohoku}, that is, whether direct sums are exact.  This is true for the categories of $R$-modules, of $k$-spaces, and of sheaves of Abelian groups on a topological space \cite[p. 80]{weibel}.  By contrast, the formal power series operation is defined using an infinite direct product, which commutes with cohomology if the underlying category satisfies axiom $AB4^*$.  This is true for the categories of $R$-modules and $k$-spaces but \emph{not} sheaves \cite[p. 80]{weibel}.
\end{remark}

The main technical preliminary result we need concerns the interaction of the formal power series operation with the de Rham complexes of local cohomology modules on formal schemes.  For reference, we repeat in the statement of this proposition the notation we will use for the remainder of this section.

\begin{lem}\label{dRlcplus}
Let $R = k[[x_1, \ldots, x_n]]$ and let $I$ be an ideal of $R$.  Let $R' = R[[z]]$ and $I' = IR' + (z)$.  Let $\mathcal{I}$ (resp. $\mathcal{I}'$) be the associated sheaf of ideals of $\mathcal{O}_X$ (resp. $\mathcal{O}_{X'}$) where $X = \Spec(R)$ and $X' = \Spec(R')$.  Let $Y$ be the closed subscheme of $X$ defined by $\mathcal{I}$; via the natural closed immersion $X \hookrightarrow X'$, we identify $Y$ with the closed subscheme of $X'$ defined by $\mathcal{I}'$, which we also denote $Y$.  Let $P \in Y$ be the closed point.  Then for all $q$, we have 
\[
H^q_P(\widehat{X'}, \mathcal{O}_{\widehat{X'}}) \otimes \Omega_R^{\bullet} \simeq (H^q_P(\widehat{X}, \mathcal{O}_{\widehat{X}}) \otimes \Omega_R^{\bullet})^+
\] 
as complexes of $k$-spaces, where the $\D(R,k)$-structure on both local cohomology modules is given by the \emph{inverse limit action} of Definition \ref{twoduals}.
\end{lem}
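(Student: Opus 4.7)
The strategy is to compute both sides via Proposition \ref{ogusiso}, which gives
\[
H^q_P(\widehat{X'}, \mathcal{O}_{\widehat{X'}}) \simeq \varprojlim_{\l} H^q_{\mathfrak{m}'}(R'/I'^{\l}), \qquad H^q_P(\widehat{X}, \mathcal{O}_{\widehat{X}}) \simeq \varprojlim_{\l} H^q_{\mathfrak{m}}(R/I^{\l}),
\]
where $\mathfrak{m}$ and $\mathfrak{m}'$ are the maximal ideals of $R$ and $R'$. The central algebraic identity is $I'^{\l} = \sum_{i+j=\l} I^i z^j R'$, which produces an $R$-module direct sum decomposition $R'/I'^{\l} \simeq \bigoplus_{j=0}^{\l-1} (R/I^{\l-j}) z^j$. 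Note that this splits only as $R$-modules: multiplication by $z$ shifts the summand index and invokes the natural surjections $R/I^{\l-j} \twoheadrightarrow R/I^{\l-j-1}$, so it is not an $R'$-module splitting.

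To compute $H^q_{\mathfrak{m}'}(R'/I'^{\l})$, I will apply the composite-functor spectral sequence of Example \ref{lccomposite} to the ideals $\mathfrak{m} R'$ and $(z)$ in $R'$ (whose sum has the same radical as $\mathfrak{m}'$). Since $R'/I'^{\l}$ is annihilated by $z^{\l}$, it is $(z)$-torsion and $H^s_{(z)}(R'/I'^{\l}) = 0$ for $s > 0$; the spectral sequence thus degenerates and yields $H^q_{\mathfrak{m}'}(R'/I'^{\l}) \simeq H^q_{\mathfrak{m} R'}(R'/I'^{\l})$. The right-hand side can be computed by the \v{C}ech complex on $x_1,\ldots,x_n$, which only sees the underlying $R$-module structure; combined with the decomposition above and the additivity of $H^q_{\mathfrak{m}}$ on direct sums, this gives
\[
H^q_{\mathfrak{m}'}(R'/I'^{\l}) \simeq \bigoplus_{j=0}^{\l-1} H^q_{\mathfrak{m}}(R/I^{\l-j}) \cdot z^j.
\]
This isomorphism is natural in $\l$: the transition map $R'/I'^{\l+1} \to R'/I'^{\l}$ carries the $j$-th summand to the $j$-th summand via $H^q_{\mathfrak{m}}(R/I^{\l+1-j}) \to H^q_{\mathfrak{m}}(R/I^{\l-j})$ for $j \le \l - 1$, and annihilates the summand at $j = \l$ (since $z^{\l} \in I'^{\l}$). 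The inverse limit of a direct sum system of this shape is the direct product $\prod_{j \ge 0} \varprojlim_{s} H^q_{\mathfrak{m}}(R/I^{s}) \cdot z^j = \bigl(H^q_P(\widehat{X}, \mathcal{O}_{\widehat{X}})\bigr)^+$ by Definition \ref{upperplus}.

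It remains to verify compatibility with the action of the derivations $\partial_1, \ldots, \partial_n$ used to build the de Rham complex over $R$. Since each $\partial_i$ commutes with multiplication by $z$ and satisfies $\partial_i(z) = 0$, the induced map $\partial_i \colon R'/I'^{\l+1} \to R'/I'^{\l}$ preserves the $R$-module decomposition and acts as $\partial_i$ on each summand $(R/I^{\l+1-j})z^j \to (R/I^{\l-j})z^j$. By the functoriality of the \v{C}ech-complex identification, the inverse limit action of $\partial_i$ on the right-hand side is precisely coefficient-wise application of $\partial_i$, which is how $\partial_i$ acts on $(H^q_P(\widehat{X}, \mathcal{O}_{\widehat{X}}))^+$ per Definition \ref{upperplus}. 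The de Rham differentials being $k$-linear combinations $\sum_i \partial_i \otimes (dx_i \wedge)$, the two complexes of $k$-spaces are then isomorphic term by term and differential by differential. The main technical subtlety lies in ensuring that the splitting of $H^q_{\mathfrak{m}'}(R'/I'^{\l})$ into a direct sum is genuinely induced by an $R'$-module construction (via the degenerating spectral sequence or the \v{C}ech computation), rather than just reflecting the $R$-module splitting of $R'/I'^{\l}$, which by itself would not respect the functor $H^q_{\mathfrak{m}'}$; using the $(z)$-torsion hypothesis to collapse the composite-functor spectral sequence is the manoeuvre that legitimizes the whole argument.
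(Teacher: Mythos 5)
Your proof is correct, but it takes a genuinely different route from the paper's. The paper first replaces the powers $(I')^{\l}$ by the cofinal family of ``rectangular'' ideals $J_{\l} = I^{\l}R' + (z^{\l})$, which yields an honest $R'$-module isomorphism $R'/J_{\l} \simeq R/I^{\l} \otimes_R R'/z^{\l}$; after collapsing the same composite-functor spectral sequence, it then passes through a change-of-ring step and a flat base change to pull $\otimes_R R'/z^{\l}$ outside the local cohomology, and finally invokes Lemma \ref{doublelim} to untangle the double inverse limit. You instead work directly with $(I')^{\l}$ and the ``triangular'' decomposition $R'/(I')^{\l} \simeq \bigoplus_{j=0}^{\l-1}(R/I^{\l-j})z^j$, which is only an $R$-module splitting (as you correctly note), and you replace the change-of-ring/flat-base-change machinery with the observation that the \v{C}ech complex on $x_1,\ldots,x_n$ sees only the underlying $R$-module structure together with the additivity of $H^q_{\mathfrak{m}}$. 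Your diagonalization of the resulting inverse limit then encodes what the paper's Lemma \ref{doublelim} does. Each version has advantages: the paper's decomposition is $R'$-linear at every stage and keeps all isomorphisms inside a clean change-of-ring formalism; yours avoids both the cofinality replacement and the flat base change theorem, at the modest cost of handling a decomposition that is not $R'$-linear. Your closing remark about the necessity of the $(z)$-torsion collapse to make the direct-sum computation legitimate under $H^q_{\mathfrak{m}'}$ (rather than under $H^q_{\mathfrak{m}R'}$) is exactly the right point to emphasize, and it mirrors the role played by the same spectral-sequence degeneration in the paper.
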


The proof of Lemma \ref{dRlcplus} involves several ideas, so we begin with a lemma focusing on a single local cohomology module, with no reference to its de Rham complex.  We will appeal below not only to this lemma but also to its proof.

\begin{lem}\label{lcplus}
All notation is the same as in Lemma \ref{dRlcplus}.  For all $q$, we have 
\[
H^q_P(\widehat{X'}, \mathcal{O}_{\widehat{X'}}) \simeq (H^q_P(\widehat{X}, \mathcal{O}_{\widehat{X}}))^+
\] 
as $R'$-modules.
\end{lem}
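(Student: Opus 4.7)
The plan is to identify both sides of the claimed isomorphism as inverse limits of ordinary local cohomology modules over $R$, then match them via a cofinality argument. Applying Proposition \ref{ogusiso} to the Gorenstein local ring $R'$ of dimension $n+1$ and its maximal ideal $\mathfrak{m}' = (x_1, \ldots, x_n, z)$, we obtain $H^q_P(\widehat{X'}, \mathcal{O}_{\widehat{X'}}) \simeq \varprojlim_{\l} H^q_{\mathfrak{m}'}(R'/(I')^{\l})$, and the same proposition applied to $R$ expresses $H^q_P(\widehat{X}, \mathcal{O}_{\widehat{X}}) \simeq \varprojlim_t H^q_{\mathfrak{m}}(R/I^t)$ as $R$-modules.

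The first reduction is the observation that the family $\{(I')^{\l}\}_{\l}$ in $R'$ is cofinal with the family $\{I^tR' + z^sR'\}_{(t,s)}$: the inclusions $(I')^{\l} \supseteq I^{\l}R' + z^{\l}R'$ and $(I')^{t+s-1} \subseteq I^tR' + z^sR'$ are immediate from expanding $(IR' + (z))^{\l}$ and tracking exponents. Consequently,
\[
\varprojlim_{\l} H^q_{\mathfrak{m}'}(R'/(I')^{\l}) \simeq \varprojlim_{(t,s)} H^q_{\mathfrak{m}'}(R'/(I^tR' + z^sR')),
\]
and I may further restrict the double limit to the cofinal diagonal indexed by $\l = \max(t,s)$.

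The crucial step identifies the terms appearing in this diagonal. As an $R$-module, $R'/(I^tR' + z^sR') \simeq \bigoplus_{j=0}^{s-1}(R/I^t)\, z^j$, which is annihilated by $z^s$. For any $R'$-module $M$ annihilated by a power of $z$, the \v{C}ech complex for $(z)$ reduces to $M \to M_z = 0$, so $\Gamma_{(z)}(M) = M$ and $H^q_{(z)}(M) = 0$ for $q > 0$; the Grothendieck composite-functor spectral sequence for the factorization $\Gamma_{\mathfrak{m}'} = \Gamma_{\mathfrak{m}} \circ \Gamma_{(z)}$ (valid by the reasoning of Example \ref{lccomposite}) then collapses to yield a functorial isomorphism $H^q_{\mathfrak{m}'}(M) \simeq H^q_{\mathfrak{m}}(M)$ of $R$-modules. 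Since $H^q_{\mathfrak{m}}$ commutes with finite direct sums, this produces
\[
H^q_{\mathfrak{m}'}(R'/(I^tR' + z^sR')) \simeq H^q_{\mathfrak{m}}(R/I^t) \otimes_R R'/z^s
\]
as $R'$-modules, with $z$ acting as the natural shift on the direct summands. Applying Lemma \ref{doublelim} with $M_{\l} = H^q_{\mathfrak{m}}(R/I^{\l})$ to the diagonal inverse limit then gives
\[
\varprojlim_{\l}\bigl(H^q_{\mathfrak{m}}(R/I^{\l}) \otimes_R R'/z^{\l}\bigr) \simeq \bigl(\varprojlim_{\l} H^q_{\mathfrak{m}}(R/I^{\l})\bigr)^+ \simeq \bigl(H^q_P(\widehat{X}, \mathcal{O}_{\widehat{X}})\bigr)^+,
\]
which completes the identification.

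The main technical point to watch is careful bookkeeping of $R'$-module structures through the chain of identifications, particularly verifying that the $z$-action obtained after the spectral-sequence collapse agrees with the power-series shift of the $+$-operation, and that the collapse isomorphism is sufficiently functorial to commute with the transition maps in the inverse system. No serious convergence or Mittag-Leffler issue arises: every transition map in the relevant inverse systems is surjective, so the Mittag-Leffler condition holds trivially and one need not invoke Proposition \ref{mlcohcomm} to interchange any cohomology with an inverse limit.
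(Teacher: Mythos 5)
Your proposal is correct and follows essentially the same route as the paper: replace $\{(I')^{\l}\}$ with the cofinal diagonal family $\{I^{\l}R' + z^{\l}R'\}$, use the composite-functor spectral sequence to drop the $\Gamma_{(z)}$ factor on $z^{\l}$-torsion modules, identify each term with $H^q_{\mathfrak{m}}(R/I^{\l}) \otimes_R R'/z^{\l}$, and finish with Lemma~\ref{doublelim}. The one point where you and the paper diverge is in how the $R'$-module structure on $H^q_{\mathfrak{n}}(R'/J_{\l}) \simeq H^q_{\mathfrak{m}}(R/I^{\l}) \otimes_R R'/z^{\l}$ is pinned down: you decompose $R'/(I^tR' + z^sR')$ as an $R$-module direct sum $\bigoplus_j (R/I^t)z^j$ and invoke additivity of $H^q_{\mathfrak{m}}$, which requires an unstated appeal to the change-of-ring independence theorem to pass from $H^q_{\mathfrak{m}R'}$ (acting on $R'$-modules) to $H^q_{\mathfrak{m}}$ (acting on the restricted $R$-modules), and then a separate argument to verify that the induced $z$-action really is the shift. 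You flag exactly this as ``the main technical point to watch'' but do not resolve it. The paper sidesteps it by applying the change-of-ring principle to $R' \twoheadrightarrow R'/z^{\l}$ and then the flat base change theorem for the flat (indeed free) extension $R \to R'/z^{\l}$, which produces the $R'/z^{\l}$-module isomorphism, and hence the $R'$-module structure, for free. Your approach is sound but the paper's route is the cleaner way to certify the module structure; if you want to complete your version you should make the change-of-ring step explicit and check compatibility of the $z$-actions directly.
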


\begin{proof}
For all $\l$, let $J_{\l}$ be the ideal $I^{\l}R' + (z^{\l})$.  The families $\{J_{\l}\}$ and $\{(I')^{\l}\}$ of ideals of $R'$ are cofinal, and we have isomorphisms
\[
R'/J_{\l} = R'/(I^{\l}R' + (z^{\l})) \simeq R/I^{\l} \otimes_R R'/z^{\l}
\]
as $R'$-modules for all $\l$ (the $R'$-module structure on $R/I^{\l} \otimes_R R'/z^{\l}$ being defined via the natural surjection $R' \rightarrow R'/z^{\l}$).

Denote by $\mathfrak{n}$ (resp. $\mathfrak{m}$) the maximal ideal of $R'$ (resp. $R$).  For all $q$, we have isomorphisms
\[
H^q_P(\widehat{X'}, \mathcal{O}_{\widehat{X'}}) \simeq \varprojlim H^q_{\mathfrak{n}}(R'/(I')^{\l}) \simeq \varprojlim H^q_{\mathfrak{n}}(R'/J_{\l}),
\]
the first isomorphism by Proposition \ref{ogusiso} and the second by the cofinality of $\{J_{\l}\}$ and $\{(I')^{\l}\}$.  We saw above that $R'/J_{\l} \simeq R/I^{\l} \otimes_R R'/z^{\l}$ as $R'$-modules, and therefore 
\[
H^q_{\mathfrak{n}}(R'/J_{\l}) \simeq H^q_{\mathfrak{n}}(R/I^{\l} \otimes_R R'/z^{\l})
\]
as $R'$-modules.  We claim that the right-hand side is isomorphic to $H^q_{\mathfrak{m}}(R/I^{\l}) \otimes_R R'/z^{\l}$ as an $R'$-module.  

The $R'$-module $R/I^{\l} \otimes_R R'/z^{\l}$ is annihilated by a power of $z$, and so
\[
H^i_{(z)}(R/I^{\l} \otimes_R R'/z^{\l}) = R/I^{\l} \otimes_R R'/z^{\l}
\] 
if $i = 0$, and is zero otherwise.  The spectral sequence of Example \ref{lccomposite} corresponding to the composite functor $\Gamma_{\mathfrak{n}} = \Gamma_{\mathfrak{m}R'} \circ \Gamma_{(z)}$ therefore degenerates at $E_2$, and we have isomorphisms 
\[
H^q_{\mathfrak{n}}(R/I^{\l} \otimes_R R'/z^{\l}) \simeq H^q_{\mathfrak{m}R'}(R/I^{\l} \otimes_R R'/z^{\l})
\] 
as $R'$-modules.  By the change of ring principle \cite[Thm. 4.2.1]{brodmann}, it does not matter whether we compute this last local cohomology module over $R'$ or over $R'/z^{\l}$, so in fact 
\[
H^q_{\mathfrak{m}R'}(R/I^{\l} \otimes_R R'/z^{\l}) \simeq H^q_{\mathfrak{m}(R'/z^{\l})}(R/I^{\l} \otimes_R R'/z^{\l})
\] 
as $R'/z^{\l}$-modules.  Finally, since $R'/z^{\l}$ is flat over $R$, the flat base change theorem \cite[Thm. 4.3.2]{brodmann} implies that 
\[
H^q_{\mathfrak{m}(R'/z^{\l})}(R/I^{\l} \otimes_R R'/z^{\l}) \simeq H^q_{\mathfrak{m}}(R/I^{\l}) \otimes_R R'/z^{\l}
\] 
as $R'/z^{\l}$-modules.  The previous two isomorphisms of $R'/z^{\l}$-modules are isomorphisms of $R'$-modules, as well, since the $R'$-structures are defined using the natural surjection $R' \rightarrow R'/z^{\l}$.  Putting these isomorphisms together, we see that
\[
H^q_{\mathfrak{n}}(R'/J_{\l}) \simeq H^q_{\mathfrak{n}}(R/I^{\l} \otimes_R R'/z^{\l}) \simeq H^q_{\mathfrak{m}}(R/I^{\l}) \otimes_R R'/z^{\l}
\]
as $R'$-modules, for all $\l$.  As the isomorphisms of \cite[Thm. 4.2.1, Thm. 4.3.2]{brodmann} are functorial, the isomorphisms above form a compatible system, and passing to the inverse limit, we have
\[
H^q_P(\widehat{X'}, \mathcal{O}_{\widehat{X'}}) \simeq \varprojlim H^q_{\mathfrak{n}}(R'/J_{\l}) \simeq \varprojlim H^q_{\mathfrak{m}}(R/I^{\l}) \otimes_R R'/z^{\l},
\]
and since $\varprojlim H^q_{\mathfrak{m}}(R/I^{\l}) \simeq H^q_P(\widehat{X}, \mathcal{O}_{\widehat{X}})$ (again by Proposition \ref{ogusiso}), the rightmost module above is isomorphic as an $R'$-module to $(H^q_P(\widehat{X}, \mathcal{O}_{\widehat{X}}))^+$ by Lemma \ref{doublelim}, as desired.
\end{proof}

We now consider the de Rham complexes as well, and prove Lemma \ref{dRlcplus}.

\begin{proof}[Proof of Lemma \ref{dRlcplus}]
We retain the notation introduced in the proof of Lemma \ref{lcplus}.  For all $\l$ and $s$, let $J_{\l,s} = I^{\l+s}R' + (z^{\l})$.  (Note that $J_{\l,0} = J_{\l}$.) The families $\{J_{\l,s}\}$ (with $s$ fixed) and $\{(I')^{\l}\}$ of ideals of $R'$ are cofinal.  For all $\l$ and $s$, the derivations $\partial_1, \ldots, \partial_n$ induce (by the Leibniz rule) $k$-linear maps $R'/J_{\l,s} \rightarrow R'/J_{\l,s-1}$, as all of these derivations are $z$-linear.  In turn, these maps induce $k$-linear maps on local cohomology as described in section \ref{formal}.  We can therefore construct, for every $\l$, a ``de Rham-like'' complex
\[
0 \rightarrow H^q_{\mathfrak{n}}(R'/J_{\l,n}) \rightarrow \oplus_{1 \leq i \leq n} H^q_{\mathfrak{n}}(R'/J_{\l,n-1}) \rightarrow \cdots \rightarrow H^q_{\mathfrak{n}}(R'/J_{\l,0}) \rightarrow 0
\]
using the derivations $\partial_1, \ldots, \partial_n$.  We write $H^q_{\mathfrak{n}}(\mathcal{C}_{\l}^{\bullet})$ for this complex (\emph{cf.} Definition \ref{omnibus} below).  The argument in the proof of Lemma \ref{lcplus} applies to all terms of this complex, and using the fact that the differentials in this complex do not involve $z$ or $dz$, we see that this complex is isomorphic to the complex
\[
(0 \rightarrow H^q_{\mathfrak{m}}(R/I^{\l+n}) \rightarrow \oplus_{1 \leq i \leq n} H^q_{\mathfrak{m}}(R/I^{\l+n-1}) \rightarrow \cdots \rightarrow H^q_{\mathfrak{m}}(R/I^{\l}) \rightarrow 0) \otimes R'/z^{\l},
\]
which we write $H^q_{\mathfrak{m}}(C_{\l}^{\bullet}) \otimes R'/z^{\l}$, again anticipating Definition \ref{omnibus}.  We now pass to the inverse limit in $\l$ of both systems of complexes.  For all $s$, we have 
\[
\varprojlim_{\l} H^q_{\mathfrak{n}}(R'/J_{\l,s}) \simeq H^q_P(\widehat{X'}, \mathcal{O}_{\widehat{X'}})
\]
as $R'$-modules, by Proposition \ref{ogusiso} and the cofinality of $\{J_{\l,s}\}$ and $\{(I')^{\l}\}$.  Moreover, by the definition of the inverse limit action, the differentials in the complex $H^q_P(\widehat{X'}, \mathcal{O}_{\widehat{X'}}) \otimes \Omega_R^{\bullet}$ are given by taking the inverse limit of the differentials in the complexes $H^q_{\mathfrak{n}}(\mathcal{C}_{\l}^{\bullet})$.  That is, we have
\[
H^q_P(\widehat{X'}, \mathcal{O}_{\widehat{X'}}) \otimes \Omega_R^{\bullet} \simeq \varprojlim H^q_{\mathfrak{n}}(\mathcal{C}_{\l}^{\bullet}),
\]
for all $\l$, as complexes of $k$-spaces.  On the other hand, again using Proposition \ref{ogusiso}, we have 
\[
\varprojlim H^q_{\mathfrak{m}}(R/I^{\l}) \simeq H^q_P(\widehat{X}, \mathcal{O}_{\widehat{X}})
\]
as $R$-modules, and by the definition of the inverse limit action,
\[
H^q_P(\widehat{X}, \mathcal{O}_{\widehat{X}}) \otimes \Omega_R^{\bullet} \simeq \varprojlim H^q_{\mathfrak{m}}(C_{\l}^{\bullet}),
\]
for all $\l$, as complexes of $k$-spaces.  To conclude the lemma, it suffices to show that
\[
\varprojlim H^q_{\mathfrak{n}}(\mathcal{C}_{\l}^{\bullet}) \simeq (\varprojlim H^q_{\mathfrak{m}}(C_{\l}^{\bullet}))^+
\]
as complexes of $k$-spaces.  We have already shown that we have isomorphisms $H^q_{\mathfrak{n}}(\mathcal{C}_{\l}^{\bullet}) \simeq H^q_{\mathfrak{m}}(C_{\l}^{\bullet}) \otimes R'/z^{\l}$ that clearly form a compatible system, which implies
\[
\varprojlim H^q_{\mathfrak{n}}(\mathcal{C}_{\l}^{\bullet}) \simeq \varprojlim H^q_{\mathfrak{m}}(C_{\l}^{\bullet}) \otimes R'/z^{\l},
\]
and since $\varprojlim H^q_{\mathfrak{m}}(C_{\l}^{\bullet}) \otimes R'/z^{\l} \simeq (\varprojlim H^q_{\mathfrak{m}}(C_{\l}^{\bullet}))^+$ (by applying Lemma \ref{doublelim} to all objects in the complex), the lemma follows.
\end{proof}

We can now begin the proof of Theorem \ref{mainthmB}(a).  We have already reduced ourselves to the case where $R$ and $R'$ are as in the statement of Lemma \ref{dRlcplus}.  With $R$ and $R'$ as in the statement of that lemma, our goal is to compare the spectral sequence $\{\tilde{E}_{r,R}^{p,q}\}$ arising from the surjection $R \rightarrow A$, to the spectral sequence $\{\tilde{\mathbf{E}}_{r,R'}^{p,q}\}$ arising from the surjection $R' \rightarrow A$.  We will first prove that the $E_2$-objects of these spectral sequences are isomorphic.  As we have identified these $E_2$-objects in Lemma \ref{E2cohshape}, this claim is equivalent to the following proposition. As with Lemma \ref{specialcase}, we will later need not only the statement of this proposition, but the specific isomorphisms appearing in its proof.  

\begin{prop}\label{cohspecialcase}
All notation is the same as in Lemma \ref{dRlcplus}.  For all $p$ and $q$, we have
\[
\tilde{E}_{2,R}^{p,q} = H^p_{dR}(H^q_P(\widehat{X}, \mathcal{O}_{\widehat{X}})) \simeq H^p_{dR}(H^q_P(\widehat{X'}, \mathcal{O}_{\widehat{X'}})) = \tilde{\mathbf{E}}_{2,R'}^{p,q}
\]
as $k$-spaces, where the de Rham cohomology is computed by regarding $H^q_P(\widehat{X}, \mathcal{O}_{\widehat{X}})$ as a $\D(R,k)$-module and $H^q_P(\widehat{X'}, \mathcal{O}_{\widehat{X'}})$ as a $\D(R',k)$-module. 
\end{prop}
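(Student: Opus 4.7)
The plan is to mirror the proof of Lemma \ref{specialcase}, but using the $+$-operation of Definition \ref{upperplus} in place of the one in Definition \ref{plusop}, and exploiting the fact that $\partial_z$ acting on the new $+$-operation is \emph{surjective} with kernel isomorphic to the original module, rather than injective with cokernel isomorphic to it. First I would apply the short exact sequence of complexes from Definition \ref{partialdR} to the $\D(R', k)$-module $M = H^q_P(\widehat{X'}, \mathcal{O}_{\widehat{X'}})$,
\[
0 \to M \otimes \Omega_R^{\bullet}[-1] \to M \otimes \Omega_{R'}^{\bullet} \to M \otimes \Omega_R^{\bullet} \to 0,
\]
and take the long exact cohomology sequence, whose connecting homomorphisms coincide, up to sign, with the action of $\partial_z$ on $h^{\bullet}(M \otimes \Omega_R^{\bullet})$ by Lemma \ref{connhom}.

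Next I would invoke Lemma \ref{dRlcplus} to identify $M \otimes \Omega_R^{\bullet}$ with $(N \otimes \Omega_R^{\bullet})^+$, where $N = H^q_P(\widehat{X}, \mathcal{O}_{\widehat{X}})$. Because $(-)^+ = (-)[[z]]$ is a countable direct product, it is exact in the categories of $R$-modules and $k$-spaces (Remark \ref{ab4}) and hence commutes with the formation of cohomology; this yields $h^p(M \otimes \Omega_R^{\bullet}) \simeq (h^p(N \otimes \Omega_R^{\bullet}))^+$. By Definition \ref{upperplus}, the operator $\partial_z$ on any $V^+$ is surjective with kernel canonically isomorphic to $V$ (here $\ch(k) = 0$ is essential in order to invert the factors $\l+1$ appearing in the formula for $\partial_z$).

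Substituting these identifications into the long exact sequence, the outgoing $\partial_z$ from $(h^{p-1}(N \otimes \Omega_R^{\bullet}))^+$ is surjective, so by exactness the preceding boundary map into $h^p(M \otimes \Omega_{R'}^{\bullet})$ is zero; consequently the next map $h^p(M \otimes \Omega_{R'}^{\bullet}) \to (h^p(N \otimes \Omega_R^{\bullet}))^+$ is injective, and by exactness its image is the kernel of the subsequent $\partial_z$, which is $h^p(N \otimes \Omega_R^{\bullet})$. Unwinding the definitions yields $H^p_{dR}(M) \simeq H^p_{dR}(N)$, as desired.

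The main obstacle will be verifying that the isomorphism supplied by Lemma \ref{dRlcplus} is equivariant for the $\partial_z$-action that enters the long exact sequence as the connecting homomorphism --- that is, that the $\partial_z$ on $h^p(M \otimes \Omega_R^{\bullet})$ really corresponds, under that identification, to the standard $\partial_z$ on $(h^p(N \otimes \Omega_R^{\bullet}))^+$. To handle this I would revisit the construction in the proof of Lemma \ref{lcplus}, checking at each step --- the cofinality of $\{J_{\l}\}$ with $\{(I')^{\l}\}$, the change of ring principle, flat base change, and the degeneration of the composite-functor spectral sequence --- that the $\partial_z$-action is preserved. Under the final identification $H^q_{\mathfrak{n}}(R'/J_{\l}) \simeq H^q_{\mathfrak{m}}(R/I^{\l}) \otimes_R R'/z^{\l}$, $\partial_z$ acts trivially on the first tensor factor and in the standard manner on $R'/z^{\l}$, which after passing to the inverse limit via Lemma \ref{doublelim} is precisely the $\partial_z$ of Definition \ref{upperplus}, as required.
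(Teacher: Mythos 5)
Your proposal is correct and follows essentially the same route as the paper's proof: apply the short exact sequence from Definition \ref{partialdR}, identify the connecting map as $\pm\partial_z$ via Lemma \ref{connhom}, invoke Lemma \ref{dRlcplus} and pass cohomology through $(-)^+$ using Remark \ref{ab4}, then exploit that $\partial_z$ on $(-)^+$ is surjective with kernel the original space. Your remark that one must verify the Lemma \ref{dRlcplus} isomorphism is equivariant for the $\partial_z$-action entering as the connecting homomorphism is well taken --- the paper leaves this implicit, and the check you outline (tracing $\partial_z$ through the finite-level identifications $H^q_{\mathfrak{n}}(R'/J_{\l}) \simeq H^q_{\mathfrak{m}}(R/I^{\l}) \otimes_R R'/z^{\l}$, where $\partial_z$ acts trivially on the first tensor factor and by formal differentiation on the second) is exactly what is needed.
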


\begin{proof}
The argument closely parallels that of Lemma \ref{specialcase}.  Consider the short exact sequence
\[
0 \rightarrow \mathcal{O}_{\widehat{X'}} \otimes \widehat{\Omega}_X^{\bullet}[-1] \xrightarrow{\iota} \widehat{\Omega}_{X'}^{\bullet} \xrightarrow{\pi} \mathcal{O}_{\widehat{X'}} \otimes \widehat{\Omega}_X^{\bullet} \rightarrow 0
\]
of complexes of sheaves of $k$-spaces on $\widehat{X'}$, where $\iota$ is simply $\wedge \, dz$.  This sequence is the analogue, for completed sheaves, of the short exact sequence given in Definition \ref{partialdR}, and consists of split exact sequences of finite free $\mathcal{O}_{\widehat{X'}}$-modules.  Apply $H^q_P$ to the entire sequence of complexes: as formal completion and local cohomology commute with finite direct sums, we obtain a short exact sequence
\[
0 \rightarrow H^q_P(\widehat{X'}, \mathcal{O}_{\widehat{X'}}) \otimes \Omega_R^{\bullet}[-1] \rightarrow H^q_P(\widehat{X'}, \mathcal{O}_{\widehat{X'}}) \otimes \Omega_{R'}^{\bullet} \xrightarrow{\pi^{\bullet}_q} H^q_P(\widehat{X'}, \mathcal{O}_{\widehat{X'}}) \otimes \Omega_R^{\bullet} \rightarrow 0
\]
of complexes of $k$-spaces.  The corresponding long exact sequence in cohomology (accounting for the shift of $-1$) is 
\begin{align*}
\cdots \rightarrow h^{p-1}(H^q_P(\widehat{X'}, \mathcal{O}_{\widehat{X'}}) \otimes \Omega_R^{\bullet}) \xrightarrow{\partial_z} h^{p-1}(H^q_P(\widehat{X'}, \mathcal{O}_{\widehat{X'}}) \otimes \Omega_R^{\bullet})
&\rightarrow \\
h^p(H^q_P(\widehat{X'}, \mathcal{O}_{\widehat{X'}}) \otimes \Omega_{R'}^{\bullet}) \xrightarrow{\overline{\pi}^p_q} h^p(H^q_P(\widehat{X'}, \mathcal{O}_{\widehat{X'}}) \otimes \Omega_R^{\bullet}) 
&\xrightarrow{\partial_z} h^p(H^q_P(\widehat{X'}, \mathcal{O}_{\widehat{X'}}) \otimes \Omega_R^{\bullet}) \rightarrow \cdots,
\end{align*}
where we know by Lemma \ref{connhom} that, up to a sign, the connecting homomorphism is $\partial_z$.  By Lemma \ref{dRlcplus}, we have  
\[
H^q_P(\widehat{X'}, \mathcal{O}_{\widehat{X'}}) \otimes \Omega_R^{\bullet} \simeq (H^q_P(\widehat{X}, \mathcal{O}_{\widehat{X}}) \otimes \Omega_R^{\bullet})^+
\]
as complexes of $k$-spaces.  By Remark \ref{ab4}, the formal power series operation on the right-hand side commutes with cohomology (since we are now applying this operation to $k$-spaces, not sheaves).  Taking the cohomology of both sides, we find
\[
h^p(H^q_P(\widehat{X'}, \mathcal{O}_{\widehat{X'}}) \otimes \Omega_R^{\bullet}) \simeq (h^p(H^q_P(\widehat{X}, \mathcal{O}_{\widehat{X}}) \otimes \Omega_R^{\bullet}))^+
\] 
as $k$-spaces for all $p$.  For any $k$-space $M$, the action of $\partial_z$ on $M^+$ is given in Definition \ref{upperplus}, and it is clear from this definition (since $\ch(k) = 0$) that $\coker(\partial_z: M^+ \rightarrow M^+) = 0$ and $\ker(\partial_z: M^+ \rightarrow M^+) \simeq M$, the latter corresponding to the ``constant term'' component of $M^+$.  Returning to the displayed portion of the long exact sequence (with $M = h^p(H^q_P(\widehat{X}, \mathcal{O}_{\widehat{X}}) \otimes \Omega_R^{\bullet})$), the first $\partial_z$ is surjective, and so by exactness the unlabeled arrow is the zero map; this implies that $\overline{\pi}^p_q$ is injective, inducing an isomorphism between the kernel of the second $\partial_z$ and 
\[
h^p(H^q_P(\widehat{X'}, \mathcal{O}_{\widehat{X'}}) \otimes \Omega_{R'}^{\bullet}) = H^p_{dR}(H^q_P(\widehat{X'}, \mathcal{O}_{\widehat{X'}})).
\]
Since this kernel is isomorphic to $h^p(H^q_P(\widehat{X'}, \mathcal{O}_{\widehat{X}}) \otimes \Omega_R^{\bullet}) = H^p_{dR}(H^q_P(\widehat{X}, \mathcal{O}_{\widehat{X}}))$, the proof is complete.
\end{proof}

We next work toward the general case of Theorem \ref{mainthmB}(a).  Our goal is to construct a morphism between the Hodge-de Rham spectral sequences $\{\tilde{E}_{r,R}^{p,q}\}$ and $\{\tilde{\mathbf{E}}_{r,R'}^{p,q}\}$ arising from the two surjections $R \rightarrow A$ and $R' \rightarrow A$ which, at the level of $E_2$-objects, consists of the isomorphisms of Proposition \ref{cohspecialcase}: by Proposition \ref{E1isos}, this is enough.  As in the proof of Theorem \ref{mainthmA}(a), we will construct an ``intermediate'' spectral sequence $\{\tilde{\mathcal{E}}_r^{p,q}\}$.  The analogue of Lemma \ref{ssplus}, however, will be significantly harder to prove, since we are working with inverse limits and must therefore check various Mittag-Leffler conditions.

We begin with definitions of several complexes, collecting pieces of notation introduced in the course of the preceding proofs together with some obvious variations.

\begin{definition}\label{omnibus}
All notation is the same as in Lemma \ref{dRlcplus}.
\begin{enumerate}[(a)]
\item Let $C^{\bullet}_{\l}$ be the complex
\[
0 \rightarrow R/I^{\l+n} \rightarrow \oplus_{1 \leq 1 \leq n} R/I^{\l+n-1} \rightarrow \cdots \rightarrow R/I^{\l} \rightarrow 0
\]
defined using the derivations $\partial_1, \ldots, \partial_n$.  Note that this is a complex of $R$-modules with $k$-linear differentials.
\item Let $\mathcal{C}^{\bullet}_{\l}$ be the complex
\[
0 \rightarrow R'/J_{\l,n} \rightarrow \oplus_{1 \leq 1 \leq n} R'/J_{\l,n-1} \rightarrow \cdots \rightarrow R'/J_{\l,0} \rightarrow 0
\]
defined using the derivations $\partial_1, \ldots, \partial_n$, where for all $\l$ and $s$, $J_{\l,s} = I^{\l+s}R' + (z^{\l})$.  This is a complex of $R'$-modules with $k$-linear differentials, and we have $\mathcal{C}^{\bullet}_{\l} \simeq C^{\bullet}_{\l} \otimes R'/z^{\l}$.
\item Let $H^q_{\mathfrak{m}}(C_{\l}^{\bullet})$ (resp. $H^q_{\mathfrak{n}}(\mathcal{C}^{\bullet}_{\l})$) be the complex obtained by applying local cohomology functors to the previous two complexes.  In the course of the proof of Lemma \ref{dRlcplus}, we remarked that $H^q_{\mathfrak{n}}(\mathcal{C}^{\bullet}_{\l}) \simeq H^q_{\mathfrak{m}}(C_{\l}^{\bullet}) \otimes R'/z^{\l}$.
\item Let
\[
\widetilde{C_{\l}^{\bullet}} = (0 \rightarrow \mathcal{O}_X/\mathcal{I}^{\l+n} \rightarrow \oplus_{1 \leq 1 \leq n} \mathcal{O}_X/\mathcal{I}^{\l+n-1} \rightarrow \cdots \rightarrow \mathcal{O}_X/\mathcal{I}^{\l} \rightarrow 0)
\]
and 
\[
\widetilde{\mathcal{C}_{\l}^{\bullet}} = (0 \rightarrow \mathcal{O}_{X'}/\mathcal{J}_{\l,n} \rightarrow \oplus_{1 \leq 1 \leq n} \mathcal{O}_{X'}/\mathcal{J}_{\l,n-1} \rightarrow \cdots \rightarrow \mathcal{O}_{X'}/\mathcal{J}_{\l,0} \rightarrow 0)
\]
be the sheafified versions of the first two complexes, where $\mathcal{J}_{\l,s} = \widetilde{J_{\l,s}}$ for all $\l$ and $s$.  These can be viewed as complexes of sheaves of $k$-spaces on $\widehat{X}$ (resp. $\widehat{X'}$).  We have $\widehat{\Omega}_X^{\bullet} \simeq \varprojlim \widetilde{C_{\l}^{\bullet}}$ and $\mathcal{O}_{\widehat{X'}} \otimes \widehat{\Omega}_X^{\bullet} \simeq \varprojlim \widetilde{\mathcal{C}_{\l}^{\bullet}}$ in the respective categories of complexes of sheaves.  
\item Finally, we consider a sheaf-theoretic variant of Definition \ref{otimescplx}.  If $\mathcal{F}^{\bullet}$ is a complex whose objects are sheaves of $\mathcal{O}_X$-modules and whose differentials are merely $k$-linear, the complex $\mathcal{F}^{\bullet} \otimes \mathcal{O}_{X'}/z^{\l}$ is the direct sum of $\l$ copies of $\mathcal{F}^{\bullet}$, indexed by $z^i$ for $i = 0, \ldots, \l-1$.  (At the level of objects, $\mathcal{F} \otimes \mathcal{O}_{X'}/z^{\l}$ is shorthand for the $\mathcal{O}_X$-module $\mathcal{F} \otimes_{\mathcal{O}_X} i^{*}(\mathcal{O}_{X'}/\mathcal{Z}^{\l})$, where $i$ is the closed immersion $X \hookrightarrow X'$ and $\mathcal{Z}$ is the sheaf of ideals defining this immersion.)  As an example, we have $\widetilde{\mathcal{C}_{\l}^{\bullet}} \simeq \widetilde{C_{\l}^{\bullet}} \otimes \mathcal{O}_{X'}/z^{\l}$.
\end{enumerate}
\end{definition}

For the complexes of \emph{sheaves} in Definition \ref{omnibus}, we have corresponding spectral sequences for local hypercohomology, and we will need to work with all of these.

\begin{definition}\label{ssomnibus}
If $\mathcal{F}^{\bullet}$ is a complex of sheaves of $k$-spaces on $\widehat{X}$ (or $\widehat{X'}$, which has the same underlying space), the \emph{local hypercohomology spectral sequence} for $\mathcal{F}^{\bullet}$ is the spectral sequence defined in subsection \ref{specseq} with respect to the functor $\Gamma_P$ of sections supported at the closed point.  If $\mathcal{L}^{\bullet, \bullet}$ is any Cartan-Eilenberg resolution of $\mathcal{F}^{\bullet}$ (or, more generally, a double complex resolution satisfying the conditions of Lemma \ref{sscomp}), this spectral sequence is the column-filtered spectral sequence associated with the double complex $\Gamma_P(\widehat{X}, \mathcal{L}^{\bullet, \bullet})$.  It begins $E_1^{p,q} = H^q_P(\widehat{X}, \mathcal{F}^p)$ and has abutment $\mathbf{H}^{p+q}_P(\widehat{X}, \mathcal{F}^{\bullet})$.  We introduce the following notation for the specific hypercohomology spectral sequences we will consider below:
\begin{enumerate}[(a)]
\item $\tilde{E}^{\bullet, \bullet}_{\bullet, R}$ is the local hypercohomology spectral sequence for $\widehat{\Omega}^{\bullet}_X$. (This is precisely the Hodge-de Rham spectral sequence arising from the surjection $R \rightarrow A$.)
\item $\tilde{\mathbf{E}}_{\bullet, R'}^{\bullet, \bullet}$ is the local hypercohomology spectral sequence for $\widehat{\Omega}^{\bullet}_{X'}$.  (This is precisely the Hodge-de Rham spectral sequence arising from the surjection $R' \rightarrow A$.)
\item $\tilde{\mathcal{E}}_{\bullet}^{\bullet, \bullet}$ is the local hypercohomology spectral sequence for $\mathcal{O}_{\widehat{X'}} \otimes \widehat{\Omega}_X^{\bullet}$.
\item For all $\l$, $(\tilde{E_{\l}})_{\bullet, R}^{\bullet, \bullet}$ is the local hypercohomology spectral sequence for $\widetilde{C_{\l}^{\bullet}}$.
\item For all $\l$, $(\tilde{\mathcal{E}_{\l}})_{\bullet}^{\bullet, \bullet}$ is the local hypercohomology spectral sequence for $\widetilde{\mathcal{C}_{\l}^{\bullet}}$.
\end{enumerate}
\end{definition}

The key to the proof of Theorem \ref{mainthmB}(a) is the following analogue of Lemma \ref{ssplus}.  Once this lemma is established, the rest of the proof will closely parallel our work in section \ref{homology}.

\begin{lem}\label{cohssplus}
Let $\tilde{E}_{\bullet, R}^{\bullet, \bullet}$ and $\tilde{\mathcal{E}}_{\bullet}^{\bullet, \bullet}$ be the spectral sequences of Definition \ref{ssomnibus}.  There is an isomorphism
\[
(\tilde{E}_{\bullet, R}^{\bullet, \bullet})^+ \xrightarrow{\sim} \tilde{\mathcal{E}}_{\bullet}^{\bullet, \bullet},
\]
where the object on the left-hand side is obtained by applying the formal power series operation to all the objects and differentials of the spectral sequence $\tilde{E}_{\bullet, R}^{\bullet, \bullet}$.
\end{lem}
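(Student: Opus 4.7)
The plan is to reduce the statement to a finite-truncation analogue and then pass to an inverse limit. By Definition \ref{omnibus}(e), $\widetilde{\mathcal{C}_{\l}^{\bullet}}$ is a finite direct sum of $\l$ copies of $\widetilde{C_{\l}^{\bullet}}$ indexed by $z^i$ for $0 \leq i \leq \l - 1$, and forming the column-filtered spectral sequence of a $\Gamma_P$-acyclic column-wise resolution commutes with finite direct sums at every level (objects, differentials, filtrations, and abutment). This yields a functorial family of isomorphisms of spectral sequences
\[
(\tilde{\mathcal{E}_{\l}})_{\bullet}^{\bullet,\bullet} \simeq (\tilde{E_{\l}})_{\bullet,R}^{\bullet,\bullet} \otimes R'/z^{\l},
\]
which, as $\l$ varies, assemble into an isomorphism of inverse systems of spectral sequences indexed by the surjections $R'/z^{\l+1} \twoheadrightarrow R'/z^{\l}$.

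Since $\widehat{\Omega}_X^{\bullet} \simeq \varprojlim_{\l} \widetilde{C_{\l}^{\bullet}}$ and $\mathcal{O}_{\widehat{X'}} \otimes \widehat{\Omega}_X^{\bullet} \simeq \varprojlim_{\l} \widetilde{\mathcal{C}_{\l}^{\bullet}}$ in the respective categories of complexes of sheaves, the next step is to identify $\tilde{E}_{\bullet,R}^{\bullet,\bullet}$ with $\varprojlim_{\l} (\tilde{E_{\l}})_{\bullet,R}^{\bullet,\bullet}$ and $\tilde{\mathcal{E}}_{\bullet}^{\bullet,\bullet}$ with $\varprojlim_{\l} (\tilde{\mathcal{E}_{\l}})_{\bullet}^{\bullet,\bullet}$, after which the lemma follows from the fact that the formal power series operation applied to any double complex of $k$-spaces coincides with $\varprojlim_{\l}(- \otimes R'/z^{\l})$. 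Realizing the full and truncated spectral sequences through a compatible family of $\Gamma_P$-acyclic resolutions and invoking Lemma \ref{sscomp}, these identifications reduce to interchanging $\varprojlim_{\l}$ past cohomology at every $E_r$-level, which by Proposition \ref{mlcohcomm} is permitted once the Mittag-Leffler condition has been verified on the inverse system of double complexes and on their cohomology objects. At the $E_1$-level, the transitions are induced by surjections of sheaves supported on $Y$; after applying $\Gamma_P$ they become surjective maps on the Artinian local cohomology modules $H^q_{\mathfrak{m}}(R/I^{\l+n-p})$ produced by the argument of Lemma \ref{lcplus}, so the Mittag-Leffler condition is automatic.

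The main obstacle lies in verifying Mittag-Leffler at the $E_r$-levels for $r \geq 2$, where the inverse systems consist of cohomology $k$-spaces whose transition maps are not manifestly surjective. To handle this, I would dualize using Lemma \ref{dualml}: the $k$-linear dual direct system is, via Remark \ref{E1duals} and Theorem \ref{loccohtranspose} together with the identifications of Lemma \ref{E2shape} and Lemma \ref{E2cohshape}, a direct system of cohomology of finitely-generated subcomplexes of the de Rham complex of the Matlis dual of a local cohomology module, a holonomic $\D$-module by Lyubeznik's theorem. Corollary \ref{fgstabdR} then provides strong-sense stability with finite-dimensional stable image for this dual direct system, which via Lemma \ref{dualml} translates into the Mittag-Leffler condition for the original inverse system; the finite-dimensionality of the stable images, combined with Lemma \ref{finddual}, guarantees that the inverse limit agrees with the corresponding $E_r$-object of the full spectral sequence. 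Once Mittag-Leffler has been verified at every level, Proposition \ref{mlcohcomm} completes the interchange of $\varprojlim_{\l}$ and cohomology, and Proposition \ref{E1isos} assembles the finite-level isomorphisms into the desired isomorphism $(\tilde{E}_{\bullet,R}^{\bullet,\bullet})^+ \xrightarrow{\sim} \tilde{\mathcal{E}}_{\bullet}^{\bullet,\bullet}$ of bidegree $(0,0)$.
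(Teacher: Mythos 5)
Your overall architecture matches the paper's: reduce to the finite truncations $\widetilde{\mathcal{C}_{\l}^{\bullet}} \simeq \widetilde{C_{\l}^{\bullet}} \otimes \mathcal{O}_{X'}/z^{\l}$, identify $(\tilde{\mathcal{E}_{\l}})_{\bullet}^{\bullet,\bullet}$ with $(\tilde{E_{\l}})_{\bullet,R}^{\bullet,\bullet} \otimes R'/z^{\l}$ at the double-complex level, pass to inverse limits using Lemma \ref{doublelim}, and justify the commutation of $\varprojlim$ with cohomology via Proposition \ref{mlcohcomm}, Lemma \ref{dualml}, and Corollary \ref{fgstabdR}. So far, so good. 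However, there is a genuine gap in your Mittag--Leffler verification.

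Your dualization argument (Lemma \ref{dualml} applied to the dual direct system, with strong-sense stability supplied by Corollary \ref{fgstabdR}) only gets you Mittag--Leffler for the inverse system $\{(\tilde{E_{\l}})_{2,R}^{p,q}\}$ at the $E_2$-level. But you write ``Once Mittag--Leffler has been verified at every level, Proposition \ref{mlcohcomm} completes the interchange,'' without explaining how to verify it for $r > 2$. This matters: the Mittag--Leffler condition is \emph{not} preserved under passage to cohomology of an inverse system of complexes, so the condition at $E_2$ alone does not yield the condition at $E_3$. The paper closes this gap by extracting from the dualization argument a property strictly stronger than Mittag--Leffler: \emph{eventual finiteness} of the inverse systems (for fixed $\l$, the image of $(\tilde{E_{\l+s}})_{2,R}^{p,q}$ in $(\tilde{E_{\l}})_{2,R}^{p,q}$ is a finite-dimensional $k$-space for $s \gg 0$). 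Eventual finiteness clearly implies Mittag--Leffler, and, unlike Mittag--Leffler itself, it \emph{is} inherited by subquotients, hence propagates inductively to every $(\tilde{E_{\l}})_{r,R}^{p,q}$ with $r \geq 2$. Without isolating this stronger property you cannot justify the term-by-term inverse limit being a well-defined spectral sequence, which is the hard part of the proof.

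One more point: you describe the dual direct system as arising from ``the de Rham complex of the Matlis dual of a local cohomology module, a holonomic $\D$-module by Lyubeznik's theorem.'' Read literally, this is wrong on two counts. The direct limit of the complexes $D(H^q_{\mathfrak{m}}(C_{\l}^{\bullet}))$ is the de Rham complex of $H^{n-q}_I(R)$ \emph{itself}, not of its Matlis dual (the inverse limit of the original system is the de Rham complex of $D(H^{n-q}_I(R))$, which is the opposite identification). And the Matlis dual of a local cohomology module is in general \emph{not} holonomic (Remark \ref{hellus}), so if that were really what you needed, Corollary \ref{fgstabdR} would not apply. Presumably you meant the local cohomology module $H^{n-q}_I(R)$, which is the correct holonomic input to Corollary \ref{fgstabdR}; but as written the sentence attributes holonomy to the wrong object.
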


Lemma \ref{ssplus} was a consequence of the fact that the double complexes giving rise to the two spectral sequences considered there were related by the $+$-operation of Definition \ref{plusop}.  We were therefore able to prove that lemma by working entirely at the level of double complexes.  The analogous reasoning fails here for reasons alluded to in Remark \ref{ab4}: the formal power series operation is an infinite direct \emph{product}.  The obvious extension of this operation to sheaves need not commute with the functor $\Gamma_P$, because taking stalks of sheaves (a direct limit) and direct products of sheaves (an inverse limit) need not commute.  The proof of Lemma \ref{cohssplus} will take place at the level of spectral sequences, not merely double complexes.

We will give the proof of Lemma \ref{cohssplus} in steps, as follows:

\begin{enumerate}
\item $\varprojlim (\tilde{E_{\l}})_{\bullet, R}^{\bullet, \bullet}$, defined by taking the ``term-by-term'' inverse limit of all objects and differentials in the spectral sequences, is again a spectral sequence: each term is derived from its predecessor by taking cohomology.  (An identical proof shows that $\varprojlim (\tilde{\mathcal{E}_{\l}})_{\bullet}^{\bullet, \bullet}$ is a spectral sequence.)
\item The spectral sequences $\tilde{E}^{\bullet, \bullet}_{\bullet, R}$ and $\varprojlim (\tilde{E_{\l}})_{\bullet, R}^{\bullet, \bullet}$ are isomorphic.  (An identical proof works for $\tilde{\mathcal{E}}_{\bullet}^{\bullet, \bullet}$ and $\varprojlim (\tilde{\mathcal{E}_{\l}})_{\bullet}^{\bullet, \bullet}$.)
\item For all $\l$, the spectral sequences $(\tilde{\mathcal{E}_{\l}})_{\bullet}^{\bullet, \bullet}$ and $(\tilde{E_{\l}})_{\bullet, R}^{\bullet, \bullet} \otimes R'/z^{\l}$ are isomorphic, via isomorphisms compatible with the transition maps for varying $\l$ (the latter spectral sequence, defined by the natural extension of Definition \ref{otimescplx} to spectral sequences, is a direct sum of $\l$ copies of $(\tilde{E_{\l}})_{\bullet, R}^{\bullet, \bullet}$).
\item There is an isomorphism $(\tilde{E}_{\bullet, R}^{\bullet, \bullet})^+ \xrightarrow{\sim} \tilde{\mathcal{E}}_{\bullet}^{\bullet, \bullet}$ of spectral sequences (the general statement).
\end{enumerate}

Step (4) follows immediately from step (3) by applying Lemma \ref{doublelim} to all objects of the spectral sequences, so we will not give it a separate proof below.  Steps (2) and (3) are not difficult.  Step (1), which is necessary in order for the later steps to make sense, is more difficult and depends crucially on our work in section \ref{mittleff}.

\begin{proof}[Proof of step (1)]
We show first that the $E_2$-term $\varprojlim (\tilde{E_{\l}})_{2, R}^{\bullet, \bullet}$ of the ``term-by-term'' inverse limit is obtained from the $E_1$-term by taking cohomology: that is, we show that for all $p$ and $q$, we have isomorphisms
\[
\varprojlim (\tilde{E_{\l}})_{2, R}^{p,q} = \varprojlim h^p(H^q_{\mathfrak{m}}(C_{\l}^{\bullet})) \simeq h^p(\varprojlim H^q_{\mathfrak{m}}(C_{\l}^{\bullet})) = h^p(\varprojlim (\tilde{E_{\l}})_{1, R}^{\bullet,q})
\]
of $k$-spaces.  This is the assertion that for the inverse system $\{H^q_{\mathfrak{m}}(C_{\l}^{\bullet})\}$ of complexes of $k$-spaces, taking cohomology commutes with inverse limits.  By Proposition \ref{mlcohcomm}, it suffices to check that for all $p$ and $q$, the inverse systems $\{H^q_{\mathfrak{m}}(C_{\l}^p)\}$ and $\{h^p(H^q_{\mathfrak{m}}(C_{\l}^{\bullet}))\}$ both satisfy the Mittag-Leffler condition.  By \cite[Thm. 7.1.3]{brodmann}, each $H^q_{\mathfrak{m}}(C_{\l}^p)$ is an Artinian $R$-module; as the transition maps in this inverse system are $R$-linear, induced by the canonical $R$-linear maps $C_{\l+1}^p \rightarrow C_{\l}^p$, the Mittag-Leffler condition for this first system is immediate.

In order to verify the Mittag-Leffler condition for the second system, we consider the Matlis dual of the first system.  For all $q$ and $\l$, the differentials in the complex $H^q_{\mathfrak{m}}(C_{\l}^{\bullet})$ have Matlis duals by Proposition \ref{klindual}(a), since they are $k$-linear maps between Artinian $R$-modules.  The transition maps in the \emph{direct} system $\{D(H^q_{\mathfrak{m}}(C_{\l}^{\bullet}))\}$ of complexes are $R$-linear in each degree, and each $D(H^q_{\mathfrak{m}}(C_{\l}^p))$ is a finitely generated $R$-module (since it is the Matlis dual of an Artinian $R$-module).  In fact, by Theorem \ref{groth}, $D(H^q_{\mathfrak{m}}(C_{\l}^p))$ is a direct sum of $n \choose p$ copies of the $R$-module $\Ext_R^{n-q}(R/I^{\l+n-p},R)$, and so $\varinjlim D(H^q_{\mathfrak{m}}(C_{\l}^p))$ is a direct sum of $n \choose p$ copies of $\varinjlim \Ext_R^{n-q}(R/I^{\l+n-p},R) \simeq H^{n-q}_I(R)$.  Finally, the de Rham complex of the $\D(R,k)$-module $H^{n-q}_I(R)$ is the direct limit of the Matlis dual complexes $D(H^q_{\mathfrak{m}}(C_{\l}^{\bullet}))$, since the inverse limit of their Matlis duals is the de Rham complex of $D(H^{n-q}_I(R)) = H^q_P(\widehat{X}, \mathcal{O}_{\widehat{X}})$ by Theorem \ref{loccohtranspose} and the definition of the inverse limit action on $H^q_P(\widehat{X}, \mathcal{O}_{\widehat{X}})$.

The direct system $\{D(H^q_{\mathfrak{m}}(C_{\l}^{\bullet}))\}$ thus satisfies the hypotheses of Corollary \ref{fgstabdR}: it is a direct system of complexes with $k$-linear differentials whose objects are finitely generated $R$-modules, the transition maps are $R$-linear in each degree, and the direct limit is the de Rham complex $H^{n-q}_I(R) \otimes \Omega_R^{\bullet}$ of a holonomic $\D(R,k)$-module.  We conclude from that corollary that for all $p$ and $\l$, the images of $h^{n-p}(D(H^q_{\mathfrak{m}}(C_{\l}^{\bullet})))$ in $h^{n-p}(D(H^q_{\mathfrak{m}}(C_{\l+s}^{\bullet})))$ stabilize in the strong sense as $s$ varies, with finite-dimensional stable image.  By Corollary \ref{artinian} and Proposition \ref{klindual}, all the Matlis duals (of objects and differentials) in the direct system $\{D(H^q_{\mathfrak{m}}(C_{\l}^{\bullet}))\}$ coincide with $k$-linear duals.  Since $k$-linear dual is a contravariant, exact functor, we thus have $h^{n-p}(D(H^q_{\mathfrak{m}}(C_{\l}^{\bullet}))) \simeq (h^p(H^q_{\mathfrak{m}}(C_{\l}^{\bullet})))^{\vee}$, and by Lemma \ref{dualml}, the inverse system $\{h^p(H^q_{\mathfrak{m}}(C_{\l}^{\bullet}))\}$ satisfies the Mittag-Leffler condition, as desired.

Examining the proof of Lemma \ref{dualml}, we see that in fact we can conclude something stronger than the Mittag-Leffler condition, namely the following: for all $p$ and $q$, given $\l$, there exists $s$ such that the image of $h^p(H^q_{\mathfrak{m}}(C_{\l+s}^{\bullet}))$ in $h^p(H^q_{\mathfrak{m}}(C_{\l}^{\bullet}))$ is a finite-dimensional $k$-space.  That is, the inverse system $\{h^p(H^q_{\mathfrak{m}}(C_{\l}^{\bullet}))\}$ is \emph{eventually finite}.  Since any descending chain of $k$-subspaces of a finite-dimensional $k$-space must terminate, it is clear that eventual finiteness implies the Mittag-Leffler condition.  But what is more, eventual finiteness is inherited by cohomology: if, for any $r$, the inverse system $\{(\tilde{E_{\l}})_{r, R}^{p,q}\}$ is eventually finite, so is the inverse system $\{(\tilde{E_{\l}})_{r+1, R}^{p,q}\}$, since the objects $(\tilde{E_{\l}})_{r+1, R}^{p,q}$ are subquotients of the objects $(\tilde{E_{\l}})_{r, R}^{p,q}$.  By induction on $r$, we conclude that the $E_{r+1}$-term of the ``term-by-term'' inverse limit, $\varprojlim (\tilde{E_{\l}})_{r+1, R}^{\bullet, \bullet}$, is obtained from the $E_r$-term by taking cohomology, and so $\varprojlim (\tilde{E_{\l}})_{\bullet, R}^{\bullet, \bullet}$ is a well-defined spectral sequence.
\end{proof}

\begin{proof}[Proof of step (2)]
Since $\widehat{\Omega}_X^{\bullet} \simeq \varprojlim \widetilde{C_{\l}^{\bullet}}$ as complexes of sheaves on $\widehat{X}$, we have projection maps $\pi_{\l}: \widehat{\Omega}_X^{\bullet} \rightarrow \widetilde{C_{\l}^{\bullet}}$ for all $\l$.  As described in subsection \ref{specseq}, each such projection map induces a morphism between the corresponding local hypercohomology spectral sequences.  By the universal property of inverse limits, this family of projection maps induces a morphism of spectral sequences $\tilde{E}^{\bullet, \bullet}_{\bullet, R} \rightarrow \varprojlim (\tilde{E_{\l}})_{\bullet, R}^{\bullet, \bullet}$ (where the right-hand side is a well-defined spectral sequence by step (1)).  We claim this is an isomorphism; by Proposition \ref{E1isos}, it suffices to check this on the $E_1$-objects of both sides.  By definition, for all $p$ and $q$, we have $\tilde{E}_{1,R}^{p,q} = H^q_P(\widehat{X}, \widehat{\Omega}^p_X)$, a direct sum of $n \choose p$ copies of $H^q_P(\widehat{X}, \mathcal{O}_{\widehat{X}})$, and for all $\l$, we have $(\tilde{E}_{\l})_{1,R}^{p,q} = H^q_P(\widehat{X}, \widetilde{C_{\l}^p})$, a direct sum of $n \choose p$ copies of $H^q_{\mathfrak{m}}(R/I^{\l+n-p})$.  The induced map $\tilde{E}_{1,R}^{p,q} \rightarrow \varprojlim (\tilde{E}_{\l})_{1,R}^{p,q}$ is therefore a finite direct sum of copies of the canonical map $H^q_P(\widehat{X}, \mathcal{O}_{\widehat{X}}) \rightarrow \varprojlim H^q_{\mathfrak{m}}(R/I^{\l+n-p})$, which we already know to be an isomorphism by Proposition \ref{ogusiso}.
\end{proof}

\begin{proof}[Proof of step (3)]
This is the only step in the proof where we can work entirely at the level of the double complexes giving rise to the spectral sequences.  Let $\l$ be given.  Choose a Cartan-Eilenberg resolution $\widetilde{C_{\l}^{\bullet}} \rightarrow \mathcal{L}_{\l}^{\bullet, \bullet}$ in the category of sheaves of $k$-spaces on $\widehat{X}$.  By Remark \ref{ab4}, this category satisfies axiom $AB4$, and so direct sums are exact; furthermore, direct sums of injective sheaves are again injective, and the functor $\Gamma_P$ commutes with direct sums.  We conclude that the double complex $\mathcal{L}_{\l}^{\bullet, \bullet} \otimes \mathcal{O}_{X'}/z^{\l}$ (which is simply a finite direct sum of copies of $\mathcal{L}_{\l}^{\bullet, \bullet}$) satisfies the conditions of Lemma \ref{sscomp}, and so the double complex $\Gamma_P(\widehat{X}, \mathcal{L}_{\l}^{\bullet, \bullet} \otimes \mathcal{O}_{X'}/z^{\l})$ gives rise to the local hypercohomology spectral sequence for the complex $\widetilde{C_{\l}^{\bullet}} \otimes \mathcal{O}_{X'}/z^{\l} \simeq \widetilde{\mathcal{C}_{\l}^{\bullet}}$.  On the one hand, this spectral sequence is $(\tilde{\mathcal{E}_{\l}})_{\bullet}^{\bullet, \bullet}$ by definition.  On the other hand, since $\Gamma_P$ commutes with direct sums, we have $\Gamma_P(\widehat{X}, \mathcal{L}_{\l}^{\bullet, \bullet} \otimes \mathcal{O}_{X'}/z^{\l}) \simeq \Gamma_P(\widehat{X}, \mathcal{L}_{\l}^{\bullet, \bullet}) \otimes R'/z^{\l}$ as double complexes of $k$-spaces, and since cohomology of $k$-spaces commutes with direct sums, this isomorphism induces an isomorphism $(\tilde{\mathcal{E}_{\l}})_{\bullet}^{\bullet, \bullet} \simeq (\tilde{E_{\l}})_{\bullet, R}^{\bullet, \bullet} \otimes R'/z^{\l}$ at the level of spectral sequences, as desired.  The fact that these isomorphisms are compatible with the transition maps for varying $\l$ follows from the fact that the same is true for the isomorphisms $\widetilde{C_{\l}^{\bullet}} \otimes \mathcal{O}_{X'}/z^{\l} \simeq \widetilde{\mathcal{C}_{\l}^{\bullet}}$, since as discussed in subsection \ref{specseq}, the association of a local hypercohomology spectral sequence with a complex of sheaves is functorial in the complex.  This completes the proof of Lemma \ref{cohssplus}.
\end{proof}

We are now ready to complete the proof of Theorem \ref{mainthmB}.

\begin{proof}[Proof of Theorem \ref{mainthmB}(a)]
Consider again the short exact sequence from the proof of Proposition \ref{cohspecialcase}:
\[
0 \rightarrow \mathcal{O}_{\widehat{X'}} \otimes \widehat{\Omega}_X^{\bullet}[-1] \xrightarrow{\iota} \widehat{\Omega}_{X'}^{\bullet} \xrightarrow{\pi} \mathcal{O}_{\widehat{X'}} \otimes \widehat{\Omega}_X^{\bullet} \rightarrow 0.
\]
As described in subsection \ref{specseq}, the morphism of complexes $\pi$ induces a morphism between the corresponding spectral sequences for local hypercohomology, given in Definition \ref{ssomnibus}.  That is, there is an induced morphism
\[
\pi_{\bullet}^{\bullet, \bullet}: \tilde{\mathbf{E}}_{\bullet, R'}^{\bullet, \bullet} \rightarrow \tilde{\mathcal{E}}_{\bullet}^{\bullet, \bullet}.
\]
Identifying first $\tilde{\mathcal{E}}_{\bullet}^{\bullet, \bullet}$ with $(\tilde{E}_{\bullet, R}^{\bullet, \bullet})^+$ (by Lemma \ref{cohssplus}) and then $\tilde{E}_{\bullet, R}^{\bullet, \bullet}$ with the ``constant term'' component of $(\tilde{E}_{\bullet, R}^{\bullet, \bullet})^+$, we see that this further induces a morphism
\[ 
\psi_{\bullet}^{\bullet,\bullet}: \tilde{\mathbf{E}}_{\bullet, R'}^{\bullet, \bullet} \rightarrow \tilde{E}_{\bullet, R}^{\bullet, \bullet}
\]
given in every degree by $\pi_{\bullet}^{\bullet, \bullet}$ followed by the projection of $\tilde{\mathcal{E}}_{\bullet}^{\bullet, \bullet} \simeq (\tilde{E}_{\bullet, R}^{\bullet, \bullet})^+$ on its constant term component.  If $r=2$, the maps $\psi_2^{p,q}$ are precisely the inverses of the isomorphisms $H^p_{dR}(H^q_P(\widehat{X}, \mathcal{O}_{\widehat{X}})) \simeq H^p_{dR}(H^q_P(\widehat{X'}, \mathcal{O}_{\widehat{X'}}))$ appearing in the proof of Proposition \ref{cohspecialcase}, which were induced by the morphism of complexes $\pi$ and the projection of $(\tilde{E}_{2,R}^{p,q})^+ = (H^p_{dR}(H^q_P(\widehat{X}, \mathcal{O}_{\widehat{X}})))^+$ on its constant term component.  Therefore the morphism $\psi^{\bullet,\bullet}_{\bullet}$ of spectral sequences is an isomorphism at the $E_2$-level.  By Proposition \ref{E1isos}, it follows that $\psi$ is an isomorphism at all later levels, including the abutments.  The proof is complete.
\end{proof}

\section{Some open questions}\label{openqs}

In this final section, we pose two questions and state a conjecture left open by our work above.

Our first question concerns coefficient fields.  \emph{A priori}, the isomorphism classes of the spectral sequences $\{E_{r,R}^{p,q}\}$ and $\{\tilde{E}_{r,R}^{p,q}\}$, as well as the integers $\rho_{p,q}$ of Definition \ref{rho}, depend on the choice of coefficient field $k \subset A$.  

\begin{question}\label{cohfield}
Are the isomorphism classes of the spectral sequences $\{E_{r,R}^{p,q}\}$ and $\{\tilde{E}_{r,R}^{p,q}\}$, as well as the integers $\rho_{p,q}$ of Definition \ref{rho}, independent of the choice of coefficient field $k \subset A$?
\end{question}

In the case where the local cohomology modules $H^q_I(R)$ are supported only at the maximal ideal $\mathfrak{m}$, the $\rho_{p,q}$ are indeed independent of this choice by Remark \ref{lyunos} (since the $\lambda_{p,q}$ are known to be independent), which provides supporting evidence for a positive answer to Question \ref{cohfield}.

Our second question concerns degeneration of the spectral sequences.  In general, the $E_1$-objects $E_1^{p,q} = H^q_Y(X, \Omega_X^p)$ in the \emph{local} Hodge-de Rham homology spectral sequence (for a complete local ring $A$ with coefficient field $k$ of characteristic zero) are not even finitely generated as $R$-modules (where $R \twoheadrightarrow A$ is as in section \ref{homology}), so the spectral sequence need not degenerate at $E_1$.  However, this obstruction does not exist for the $E_2$-term since, by Theorem \ref{mainthmA}(b), the $E_2$-objects are finite-dimensional $k$-spaces.  Therefore, the question of possible degeneration at $E_2$ reduces to a comparison of dimensions: do we have equalities $\dim_k \mathbf{H}^l_Y(X, \Omega_X^{\bullet}) = \sum_{p+q=l} \dim_k E_2^{p,q} = \sum_{p+q=l} \dim_k H^p_{dR}(H^q_I(R))$ for all $\l$?

\begin{question}\label{degen}
Does the local Hodge-de Rham homology spectral sequence (for a complete local ring $A$ with coefficient field $k$ of characteristic zero) degenerate at $E_2$? 
\end{question}

Note that by Theorems \ref{hartind}(c) and \ref{dualE2}, degeneration of the homology spectral sequence at $E_2$ is equivalent to degeneration of the cohomology spectral sequence at $E_2$, so it is enough to answer the question for the homology sequence.  Even if the answer to Question \ref{degen} is negative in general, it would be interesting to find conditions on the complete local ring $A$ under which such degeneration occurs.

Finally, we restate a conjecture which appeared already in Section \ref{intro}:

\begin{conjecture}\label{dualss}
Let $A$, $k$, and $R$ be as in the statement of Theorem \ref{mainthmA}.  Beginning with the $E_2$-terms, the homology and cohomology spectral sequences are $k$-dual to each other: for all $r \geq 2$, and for all $p$ and $q$, $E_r^{n-p,n-q} \simeq (\tilde{E}_r^{pq})^{\vee}$, and similarly for the differentials.
\end{conjecture}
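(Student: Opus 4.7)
The plan is to upgrade the $E_2$-level isomorphism of Theorem \ref{dualE2} to an isomorphism of entire spectral sequences from $E_2$ onward. By Proposition \ref{E1isos}, it suffices to construct a morphism $\phi: \tilde{E}_\bullet^{\bullet,\bullet} \to (E_\bullet^{n-\bullet,n-\bullet})^\vee$ of spectral sequences whose components at $r=2$ coincide with the isomorphisms of Theorem \ref{dualE2}; since $\phi$ would then be an isomorphism on the $E_2$-page, Proposition \ref{E1isos} would propagate the isomorphism to every subsequent page and to both filtered abutments, including the compatibility of differentials asserted in the conjecture.

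To construct $\phi$, I would work at the level of double complexes. Begin with the homology double complex $\Gamma_Y(X, \widetilde{\mathcal{I}^{\bullet,\bullet}})$ of Definition \ref{Jbicomplexes}, whose $(p,q)$-entry is a direct sum of copies of the local cohomology module $\Gamma_I(\mathcal{I}^q)$ indexed by basis elements of $\Omega_R^{n-p}$. Applying the Matlis dual functor of section \ref{dmod} term-by-term produces a new double complex, with horizontal duality given by the Koszul calculation of Proposition \ref{dualcoh2} and vertical duality obtained from Corollary \ref{dualright} and Proposition \ref{klindual} applied to the injective-resolution differentials. Via Proposition \ref{ogusiso} combined with Theorem \ref{loccohtranspose} and Remark \ref{E1duals}, the $E_1$-rows of this dualized complex are naturally identified with the de Rham complexes of the left $\D$-modules $H^{n-q}_P(\widehat{X}, \mathcal{O}_{\widehat{X}})$, which by Lemma \ref{E2cohshape} coincide with the $E_1$-rows of the cohomology spectral sequence. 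It would then remain to show, using a version of Lemma \ref{sscomp} adapted to the formal setting, that the dualized double complex actually computes $\tilde{E}_\bullet^{\bullet,\bullet}$; once this is established, $\phi$ arises directly from the tautological Matlis-duality pairing.

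The main obstacle is verifying that the dualized complex genuinely computes the cohomology spectral sequence rather than merely a sequence with the same $E_2$-page. The Matlis dual of an injective $R$-module is typically not $\Gamma_P$-acyclic as a sheaf on $\widehat{X}$, so the columns of the dualized complex do not \emph{prima facie} satisfy the hypotheses of Lemma \ref{sscomp}. Overcoming this will likely require a Mittag-Leffler analysis in the spirit of Lemma \ref{cohssplus}, together with a careful comparison between the Cousin-complex structure of $\mathcal{I}^\bullet$ (used in section \ref{homology}) and the inverse-limit structure of $\mathcal{O}_{\widehat{X}} = \varprojlim \mathcal{O}_X/\mathcal{I}^\l$ (used in section \ref{cohom}). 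A secondary obstacle is the non-holonomy of the cohomology $E_1$-objects highlighted in Remark \ref{hellus}: while the row-wise Koszul duality of Proposition \ref{dualcoh2} applies without holonomy, the stabilization and finiteness results of section \ref{mittleff} do not, so a direct extension of the proof of Theorem \ref{ddualcoh} to higher-page compatibility seems out of reach. New ideas---perhaps a stratification of the cohomology sequence by holonomic subquotients inherited from the $\D$-module filtration on $D(H^q_I(R))$, or a direct $\Sigma$-continuous compatibility check using Proposition \ref{sigmabij}---will likely be needed to bridge this gap.
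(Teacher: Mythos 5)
You are attempting to prove Conjecture~\ref{dualss}, which the paper explicitly leaves \emph{open}: the author writes in Section~\ref{intro} that ``at present we are able only to prove the preceding statement'' (\emph{i.e.}, only the $E_2$-object duality of Theorem~\ref{dualE2}), and Section~\ref{openqs} reiterates that the $k$-duality ``asserted in Conjecture~\ref{dualss}'' has so far been proved only for the $E_2$-objects. There is therefore no proof in the paper to compare your attempt against, and your proposal should not be read as closing the conjecture. To your credit, you frame much of it as a research plan and you correctly identify the two principal obstacles; but the places where you write ``this is the plan'' or ``new ideas will likely be needed'' are precisely where the conjecture remains open, and no amount of appealing to Proposition~\ref{E1isos} helps until the morphism $\phi$ actually exists.

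Let me be more specific about why the gaps are real and not merely technical. First, Proposition~\ref{E1isos} applies once one has a \emph{morphism of spectral sequences}, but the $E_2$-isomorphisms of Theorem~\ref{dualE2} are produced by the long chain of isomorphisms in the proof of Proposition~\ref{dualcoh1}: double dualization $U \to U^{\vee\vee}$, stratification of the de Rham complex by the approximations $\mathcal{M}^\bullet_\ell$, inverse limits justified by Mittag-Leffler via Proposition~\ref{mitlef}, and Lemma~\ref{finddual}. These isomorphisms are constructed at the level of cohomology objects, not induced by a map of (double) complexes, and there is no evident reason they should intertwine the $d_r$'s and comparison isomorphisms $\alpha_r^{p,q}$ for $r \geq 2$. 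Establishing that compatibility is exactly the content of the conjecture. Second, your proposed remedy of term-by-term Matlis-dualizing $\Gamma_Y(X,\widetilde{\mathcal{I}^{\bullet,\bullet}})$ runs into the problem you name, but the difficulty is deeper than acyclicity: the dualized columns compute $D(H^q_I(R))$ on cohomology, but they are not resolutions (acyclic or otherwise) of any object in a category where Lemma~\ref{sscomp} or a formal-scheme variant could apply, so it is unclear what spectral sequence, if any, the dualized double complex computes. Third, on non-holonomy: Proposition~\ref{mitlef} and Corollary~\ref{fgstabdR} are essential to the Mittag-Leffler verifications in both Proposition~\ref{dualcoh1} and the proof of Lemma~\ref{cohssplus}, and both require holonomy of the underlying $\D$-module. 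Your suggested ``stratification by holonomic subquotients'' of $D(H^q_I(R))$ is unsupported: Remark~\ref{hellus} shows these Matlis duals can have infinitely many associated primes, so the standard $\D$-module filtrations one might try need not have holonomic or even finite-length pieces. As stated, your proposal identifies the right obstacles but does not overcome them; the statement remains a conjecture.
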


So far, we have only proved the $k$-duality asserted in Conjecture \ref{dualss} for the $E_2$-\emph{objects}.  We remark that although our proof of Theorem \ref{mainthmB}(a) is independent of Theorem \ref{mainthmA}, the former becomes an immediate corollary of the latter if Conjecture \ref{dualss} is known.   

\bibliographystyle{plain}

\bibliography{switalaDR}

\begin{thebibliography}{10}

\bibitem{atiyah}
M.~Atiyah and I.~Macdonald.
\newblock {\em Introduction to commutative algebra}.
\newblock Addison-Wesley, Reading, 1969.

\bibitem{bjork}
J.-E. Bj{\"o}rk.
\newblock {\em Rings of differential operators}, volume~21 of {\em
  North-Holland Mathematical Library}.
\newblock North-Holland Publishing, Amsterdam-New York, 1979.

\bibitem{brodmann}
M.~Brodmann and R.~Sharp.
\newblock {\em Local cohomology}, volume 136 of {\em Cambridge Studies in
  Advanced Mathematics}.
\newblock Cambridge University Press, Cambridge, second edition, 2013.

\bibitem{eisenbud}
D.~Eisenbud.
\newblock {\em Commutative algebra with a view toward algebraic geometry},
  volume 150 of {\em Graduate Texts in Mathematics}.
\newblock Springer, New York, 1995.

\bibitem{tohoku}
A.~Grothendieck.
\newblock Sur quelques points d'alg\`{e}bre homologique.
\newblock {\em Tohoku Math. J.}, 9:119--221, 1957.

\bibitem{ega31}
A.~Grothendieck and J.~Dieudonn\'{e}.
\newblock El\'{e}ments de g\'{e}om\'{e}trie alg\'{e}brique {III}: \'{E}tude
  cohomologique des faisceaux coh\'{e}rents, premi\`{e}re partie.
\newblock {\em Publ. Math. IH\'{E}S}, 11, 1961.

\bibitem{EGAIV}
A.~Grothendieck and J.~Dieudonn\'{e}.
\newblock El\'{e}ments de g\'{e}om\'{e}trie alg\'{e}brique {IV}: \'{E}tude
  locale des sch\'{e}mas et des morphismes de sch\'{e}mas, quatri\`{e}me
  partie.
\newblock {\em Publ. Math. IH\'{E}S}, 32, 1967.

\bibitem{SGA2}
A.~Grothendieck and M.~Raynaud.
\newblock {\em Cohomologie locale des faisceaux coh\'{e}rents et
  th\'{e}or\`{e}mes de {L}efschetz locaux et globaux ({S}{G}{A} 2)}, volume~4
  of {\em Documents Math\'{e}matiques}.
\newblock Soci\'{e}t\'{e} Math\'{e}matique de France, Paris, 2005.

\bibitem{residues}
R.~Hartshorne.
\newblock {\em Residues and duality}, volume~20 of {\em Springer Lecture Notes
  in Mathematics}.
\newblock Springer-Verlag, Berlin, 1966.

\bibitem{hartshorneLC}
R.~Hartshorne.
\newblock {\em Local cohomology}, volume~41 of {\em Springer Lecture Notes in
  Mathematics}.
\newblock Springer-Verlag, Berlin, 1967.

\bibitem{derham}
R.~Hartshorne.
\newblock On the de {R}ham cohomology of algebraic varieties.
\newblock {\em Publ. Math. IH\'{E}S}, 45:5--99, 1975.

\bibitem{hartshorneAG}
R.~Hartshorne.
\newblock {\em Algebraic geometry}, volume~52 of {\em Graduate Texts in
  Mathematics}.
\newblock Springer, New York, 1977.

\bibitem{hellus}
M.~Hellus.
\newblock Local cohomology and {M}atlis duality.
\newblock Habilitationsschrift, University of Leipzig, 2007.

\bibitem{hotta}
R.~Hotta, K.~Takeuchi, and T.~Tanisaki.
\newblock {\em $\mathcal{D}$-modules, perverse sheaves, and representation
  theory}, volume 236 of {\em Progress in Mathematics}.
\newblock Birkh\"{a}user, Boston, 2008.

\bibitem{kunz}
E.~Kunz.
\newblock {\em Residues and duality for projective algebraic varieties},
  volume~47 of {\em University Lecture Series}.
\newblock Amer. Math. Soc., Providence, 2008.

\bibitem{lang}
S.~Lang.
\newblock {\em Algebra}, volume 211 of {\em Graduate Texts in Mathematics}.
\newblock Springer, New York, revised third edition, 2002.

\bibitem{lyubeznik}
G.~Lyubeznik.
\newblock Finiteness properties of local cohomology modules.
\newblock {\em Invent. Math.}, 113(1):41--55, 1993.

\bibitem{socle}
G.~Lyubeznik.
\newblock On some local cohomology invariants of local rings.
\newblock {\em Math. Zeit.}, 254:627--640, 2006.

\bibitem{matlis}
E.~Matlis.
\newblock Injective modules over {N}oetherian rings.
\newblock {\em Pacific J. Math.}, 8:511--528, 1958.

\bibitem{matsumura}
H.~Matsumura.
\newblock {\em Commutative ring theory}, volume~8 of {\em Cambridge Studies in
  Advanced Mathematics}.
\newblock Cambridge University Press, Cambridge, 1986.

\bibitem{mebkhout}
Z.~Mebkhout.
\newblock Local cohomology of analytic spaces.
\newblock {\em Publications of the Research Institute for Mathematical
  Sciences}, 12:247--256, 1977.

\bibitem{luisemily}
L.~N\'{u}{\~n}ez-Betancourt and E.~Witt.
\newblock Generalized {L}yubeznik numbers.
\newblock {\em Nagoya Math. J.}, 215:169--201, 2014.

\bibitem{ogus}
A.~Ogus.
\newblock Local cohomological dimension of algebraic varieties.
\newblock {\em Ann. of Math.}, 98:1--34, 1973.

\bibitem{rotman}
J.~Rotman.
\newblock {\em An introduction to homological algebra}.
\newblock Universitext. Springer, New York, second edition, 2009.

\bibitem{serre}
J.-P. Serre.
\newblock {\em Local algebra}.
\newblock Springer Monographs in Mathematics. Springer, New York, 2000.

\bibitem{sharp}
R.~Sharp.
\newblock The {C}ousin complex for a module over a commutative {N}oetherian
  ring.
\newblock {\em Math. Zeit.}, 112(5):340--356, 1969.

\bibitem{nsexpos}
N.~Switala.
\newblock Van den {E}ssen's theorem on the de {R}ham cohomology of a holonomic
  $\mathcal{D}$-module over a formal power series ring.
\newblock {\em Expo. Math.}, 2016.
\newblock \texttt{http://dx.doi.org/10.1016/j.exmath.2016.09.002}.

\bibitem{essthesis}
A.~van~den Essen.
\newblock {\em Fuchsian modules}.
\newblock PhD thesis, Katholieke universiteit Nijmegen, 1979.

\bibitem{kernel}
A.~van~den Essen.
\newblock Le noyau de l'op\'{e}rateur $\frac{\partial}{\partial x_n}$ agissant
  sur un $\mathcal{D}_n$-module.
\newblock {\em C.R. Acad. Sci. Paris}, 288:687--690, 1979.

\bibitem{example}
A.~van~den Essen.
\newblock Un $\mathcal{D}$-module holonome tel que le conoyau de
  l'op\'{e}rateur $\frac{\partial}{\partial x_n}$ soit non-holonome.
\newblock {\em C.R. Acad. Sci. Paris}, 295:455--457, 1982.

\bibitem{cokernel}
A.~van~den Essen.
\newblock Le conoyau de l'op\'{e}rateur $\frac{\partial}{\partial x_n}$
  agissant sur un $\mathcal{D}_n$-module holonome.
\newblock {\em C.R. Acad. Sci. Paris}, 296:903--906, 1983.

\bibitem{essen}
A.~van~den Essen.
\newblock The cokernel of the operator $\frac{\partial}{\partial x_n}$ acting
  on a $\mathcal{D}_n$-module {II}.
\newblock {\em Compos. Math.}, 56(2):259--269, 1985.

\bibitem{weibel}
C.~Weibel.
\newblock {\em An introduction to homological algebra}, volume~38 of {\em
  Cambridge Studies in Advanced Mathematics}.
\newblock Cambridge University Press, Cambridge, 1994.

\end{thebibliography}

\end{document}